\numberwithin{equation}{section}
\newtheorem{theorem}{Theorem}[section]
\newtheorem{lemma}[theorem]{Lemma}
\newtheorem{definition}[theorem]{Definition}
\newtheorem{corollary}[theorem]{Corollary}
\newtheorem{proposition}[theorem]{Proposition}
\theoremstyle{definition}
\newtheorem{remark}[theorem]{Remark}
\newtheorem{example}[theorem]{Example}
\newcommand{\R}{{\mathbb{R}}}
\DeclareSymbolFont{AMSb}{U}{msb}{m}{n}
\newcommand{\N}{{\mathbb{N}}}
\newcommand{\Z}{{\mathbb{Z}}}
\newcommand{\Q}{Q}
\renewcommand{\P}{{\mathbb{P}}}
\renewcommand{\d}{{\mbox{d}}}
\newcommand{\W}{{\mathbb{W}}}
\newcommand{\EE}{{\mathbb{E}}}
\newcommand{\leb}{{\mathcal{L}}}
\newcommand{\CCost}{{\sf{Cost}}}
\newcommand{\Cost}{{\frak{Cost}}}
\newcommand{\CCo}{{{\sf C}}}
\newcommand{\supp}{{\mbox{supp}}}
\begin{document}
 
\title{Optimal Transport between Random Measures}

\author{Martin Huesmann}
\thanks{M. Huesmann gratefully acknowledges funding through the SFB 611 and a BIGS scholarship.}
\address{Universit\"at Bonn\\
Institut f\"{u}r angewandte Mathematik\\
Endenicher Allee 60\\
53115 Bonn\\
Germany}
\email{huesmann@iam.uni-bonn.de}

\date{}

\begin{abstract}
We study couplings $q^\bullet$ of two equivariant random measures $\lambda^\bullet$ and $\mu^\bullet$ on a Riemannian manifold $(M,d,m)$. Given a cost function we ask for minimizers of the mean transportation cost per volume. In case the minimal/optimal cost is finite and $\lambda^\omega\ll m$ we prove that there is a unique equivariant coupling minimizing the mean transportation cost per volume. Moreover, the optimal coupling is induced by a transportation map, $q^\bullet=(id,T)_*\lambda^\bullet.$ We show that the optimal transportation map can be approximated by solutions to classical optimal transportation problems on bounded regions. In case of $L^p-$cost the optimal transportation cost per volume defines a metric on the space of equivariant random measure with unit intensity.
\end{abstract}
\maketitle

\section{Introduction and Statement of Main Results}
Let  $(M,d,m)$ be a connected smooth non-compact Riemannian manifold with Riemannian distance $d$, and Riemannian volume $m$. Assume that there is a group $G$ of isometries of $M$ acting properly discontinuously, cocompactly and freely on $M$. A random measure $\lambda^\bullet$ on $M$ is a measure valued random variable modeled on some probability space $(\Omega,\mathfrak A,\P).$ We assume that the probability space admits a measurable flow $(\theta_g)_{g\in G}$ which we interpret as the action of $G$  on the support of $\lambda^\omega.$ A random measure $\lambda^\bullet$ is called equivariant if 
$$ \lambda^{\theta_g\omega}(g\ \cdot) = \lambda^\omega(\cdot) \quad \text{for all } \omega\in\Omega, g\in G.$$
We will assume that $\P$ is stationary, that is $\P$ is invariant under the flow $\theta$. In particular, this implies that $\lambda^\bullet(B)\stackrel{d}{=}\lambda^\bullet(gB)$ for any $g\in G$ and Borel set $B$. All random measures will be defined on the \emph{same} probability space.
\medskip\\
We want to extend the theory of optimal transportation to the case of equivariant random measure $\lambda^\bullet, \mu^\bullet$ on $M$. Due to the almost sure infinite mass of $\lambda^\omega$ and $\mu^\omega$ the usual notion of optimality, namely being a minimizer of the total transportation cost, is not meaningful. Therefore, we restrict our investigation to the case of equivariant random measures. For, equivariance allows to transform local quantities into global quantities. To be more precise, if an equivariant coupling can be locally improved it can also be globally improved.
\medskip\\
Hence, given two equivariant random measures $(\lambda^\bullet,\mu^\bullet)$ of equal intensity on $M$, we are interested in couplings $q^\bullet$ of $\lambda^\bullet$ and $\mu^\bullet$, i.e. measure valued random variables $\omega\mapsto q^\omega$ such that for any $\omega\in\Omega$ the measure $q^\omega$ on $M\times M$ is a coupling of $\lambda^\omega$ and $\mu^\omega$. We look for minimizers of the \emph{mean transportation cost}
$$ \mathfrak C(q^\bullet)\ :=\ \sup_{B\in\text{Adm}(M)}\frac1{m(B)}\EE\left[\int_{M\times B} c(x,y)\ q^\bullet(dx,dy)\right],$$
where $\text{Adm}(M)$ is the set of all bounded Borel sets that can be written as the union of ``translates'' of fundamental regions (see section \ref{section optimality}). For example for $M=\R^d, G=\Z^d$ acting by translation, a typical set would be a finite union of unit cubes. We always consider cost functions of the form $c(x,y)=\vartheta(d(x,y))$ for some continuous strictly increasing function $\vartheta:\R_+\to\R_+$ with $\vartheta(0)=0$ and $\lim_{r\to\infty}\vartheta(r)=\infty.$ Additionally, we assume that the classical Monge problem between two compactly supported probability measures $\lambda$ and $\mu$ with $\lambda\ll m$ has a unique solution.
\medskip\\
A coupling $q^\bullet$ of $\lambda^\bullet$ and $\mu^\bullet$ is called \emph{optimal} if it is equivariant and minimizes the mean transportation cost among all equivariant couplings. The set of all equivariant couplings between $\lambda^\bullet$ and $\mu^\bullet$ will be denoted by $\Pi_e(\lambda^\bullet, \mu^\bullet).$ We will show that there always is at least one optimal coupling as soon as the optimal mean transportation cost is finite. A natural question is in which cases  can  we say more about the optimal coupling? When is it unique? Is it possible to construct it? Can we say something about its geometry?  The first main result states

\begin{theorem}\label{main thm 1}
Let $(\lambda^\bullet,\mu^\bullet)$ be two equivariant random measures on M. If the optimal mean  transportation cost is finite
$$\mathfrak c_{e,\infty} \ =\ \inf_{q^\bullet\in\Pi_e(\lambda^\bullet,\mu^\bullet)} \mathfrak C(q^\bullet)\ <\ \infty$$
and  $\lambda^\omega$ is absolutely continuous to the volume measure m for almost all $\omega$, then there is a unique optimal coupling $q^\bullet$ between $\lambda^\bullet$ and $\mu^\bullet$. It can be represented as $q^\omega=(id,T^\omega)_*\lambda^\omega$ for some measurable map $T^\omega:\supp(\lambda^\omega)\to\supp(\mu^\omega)$ measurably only dependent on the $\sigma-$algebra generated by $(\lambda^\bullet,\mu^\bullet)$.
\end{theorem}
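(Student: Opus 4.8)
The plan is to establish existence first, then uniqueness, then the Monge (map) structure, proceeding by a \emph{localization} argument that transfers the classical theory of optimal transport on bounded domains to the equivariant setting.

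\textbf{Existence.} First I would argue that $\Pi_e(\lambda^\bullet,\mu^\bullet)$ is nonempty (the independent equivariant coupling works) and that $\mathfrak C$ is lower semicontinuous with respect to an appropriate topology on equivariant couplings --- the natural candidate being the topology of weak convergence of the \emph{Palm measures} (or equivalently, vague convergence of the associated ``typical'' couplings on a fundamental domain, using equivariance to reduce to a compact region). Combined with a tightness/compactness statement for the sublevel sets $\{\mathfrak C(q^\bullet)\le \mathfrak c_{e,\infty}+1\}$, which follows because the cost $c=\vartheta\circ d$ is coercive and $\vartheta(r)\to\infty$, this yields a minimizer $q^\bullet$ by the direct method. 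Here equivariance is essential: it lets one represent $\mathfrak C(q^\bullet)$ as a single expectation over a fundamental region, so the infinite-mass obstruction disappears.

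\textbf{Uniqueness.} The key structural fact is that if $q_0^\bullet, q_1^\bullet$ are both optimal, then so is $q_{1/2}^\bullet := \tfrac12(q_0^\bullet + q_1^\bullet)$, by linearity of $\mathfrak C$. Then I would invoke a \emph{cyclical-monotonicity-type} argument adapted to the equivariant setting: optimality forces the support of each $q_i^\omega$ to be $c$-cyclically monotone (this is proved by the standard ``local improvement implies global improvement'' principle stated in the introduction --- any finite cycle swap on a bounded set can be made equivariant by periodization, strictly lowering $\mathfrak C$ if the support were not monotone). Since $\lambda^\omega\ll m$, classical results (Gangbo--McCann / Brenier-type, which is exactly the hypothesis that the compactly supported Monge problem has a unique solution) say that a $c$-cyclically monotone coupling with absolutely continuous first marginal is concentrated on a graph. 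Applying this to $q_{1/2}^\bullet$: its support is a graph over $\supp\lambda^\omega$, which forces $q_0^\omega = q_1^\omega$ for $\lambda^\omega$-a.e. $x$, hence $q_0^\bullet = q_1^\bullet$.

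\textbf{Map structure and measurability.} The graph property of the (now unique) optimal $q^\bullet$ gives the map $T^\omega$ directly: $q^\omega = (id, T^\omega)_*\lambda^\omega$ where $T^\omega(x)$ is the unique $y$ with $(x,y)\in\supp q^\omega$. For the measurability claim --- that $T^\bullet$ depends only on $\sigma(\lambda^\bullet,\mu^\bullet)$ --- I would note that the optimal coupling is characterized by an intrinsic variational property expressed purely in terms of $(\lambda^\bullet,\mu^\bullet)$, so by uniqueness it must be $\sigma(\lambda^\bullet,\mu^\bullet)$-measurable; a clean way to see this is to condition on $\sigma(\lambda^\bullet,\mu^\bullet)$ and observe the conditional coupling is still equivariant and optimal, hence equals $q^\bullet$ by uniqueness. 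A measurable selection theorem then ensures $(\omega,x)\mapsto T^\omega(x)$ is jointly measurable.

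\textbf{Main obstacle.} I expect the hardest part to be making the ``local improvement implies global improvement'' principle fully rigorous in a way that yields $c$-cyclical monotonicity of the support almost surely and simultaneously for the averaged coupling --- one must carefully periodize a bounded rearrangement into an equivariant modification, control the change in $\mathfrak C$ (which is a supremum over admissible sets, not a single integral), and handle the possibility that the cost is infinite on part of the periodized modification. Getting the topology on equivariant couplings right so that both lower semicontinuity and compactness hold is a close second; this presumably requires the earlier sections' machinery on Palm measures and the admissible sets $\mathrm{Adm}(M)$.
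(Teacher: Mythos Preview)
Your overall architecture matches the paper's: existence via lower semicontinuity and tightness (Proposition~\ref{abstract exist}), then uniqueness by taking the midpoint of two optimizers and using that an optimal equivariant coupling must be induced by a map. The substantive difference is in how the ``local improvement $\Rightarrow$ global improvement'' step is executed.

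You propose to show $c$-cyclical monotonicity directly, by periodizing a bad cycle swap across $G$-translates. The paper instead introduces the intermediate notion of \emph{local optimality} (Definition~\ref{def loc-opt}, Proposition~\ref{loc-opt}): for every bounded $A$, the restriction $1_{M\times A}q^\omega$ is the unique optimal coupling of its own marginals. Theorem~\ref{opt loc opt} shows optimal $\Rightarrow$ locally optimal by a different mechanism: if the coupling is ``inefficient'' on some $gB_r$, one replaces $1_{M\times hB_r}q^\bullet$ by the \emph{unique} optimal coupling $\tilde q^\bullet_{hB_r}$ between the same marginals (existence/uniqueness from Theorem~\ref{eu:Q+q}), and then \emph{averages} the restrictions $1_{M\times hB_0}\tilde q^\bullet_{gB_r}$ over $g\in h\Lambda_r$. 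This averaging trick---identical to the one driving the construction in Section~\ref{s:construction}---is what makes the cost estimate on a single fundamental region go through and guarantees the result is an equivariant coupling. Your periodization idea is morally equivalent but would require a measurable selection of bad cycles in $\omega$ and a careful check that the swapped balls do not interact across translates; the paper's route sidesteps this by leaning on the canonical (unique) bounded-domain optimizer instead of a chosen cycle. The equivalence of local optimality with $c$-cyclical monotonicity and with the map representation is established separately (Proposition~\ref{loc-opt}) via exhaustion by compact $K_n$ and gluing the resulting maps; you invoke the classical Gangbo--McCann result directly, which is fine but implicitly uses the same exhaustion for infinite measures.

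For the factor property you argue abstractly via conditioning on $\sigma(\lambda^\bullet,\mu^\bullet)$ and uniqueness. The paper instead deduces it as a corollary of the approximation $Q_{gB_r}\to Q^\infty$ (Section~\ref{s:construction}), since each $Q_{gB_r}$ is already a deterministic function of $(\lambda^\bullet,\mu^\bullet)$. Your argument is shorter and does not need the growth assumption on $G$; the paper's buys the explicit construction at the same time.
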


In particular, considering  $\lambda^\bullet=m$ being the Riemannian volume measure and $\mu^\bullet$ a point process on $M$ the optimal transportation map $T^\omega$ defines a \emph{fair factor allocation} for $\mu^\bullet$. The inverse map of $T^\omega$ assigns to each point (``center'') $\xi$ of $\mu^\omega$ a set (``cell'') of volume $\mu^\omega(\xi)$. If the point process is simple, all the cells will have mass one. In the  case of $M=\R^d$ and quadratic cost $c(x,y)=|x-y|^2$ all cells will be convex polytopes of volume one, they constitute a Laguerre tessellation (see \cite{lautensack2007}). In the case of linear cost $c(x,y)=|x-y|$ all cells will be starlike with respect to their center, the allocation becomes a Johnson-Mehl diagram (see \cite{142747}). In the light of these results one might interpret the optimal coupling  as a generalized tessellation. If $\mu^\bullet$ is even invariant under the action of $\R^d$ the optimal cost between the Lebesgue measure $\leb$ and $\mu^\bullet$ is given by 
$$\mathfrak c_{e,\infty}\ =\ \EE[\vartheta(|T(0)|)],$$
 recovering a quality factor studied by Peres et alii in the context of allocations (e.g. \cite{phasegravity}).
\medskip\\
Moreover, we prove that the optimal coupling $Q^\infty$, if it is unique, can be obtained as the limit of classical optimal couplings of $\lambda^\bullet$ and $\mu^\bullet$ restricted to bounded sets. For the construction we need to impose an additional growth assumption on the group $G$. The assumptions on the group action imply that $G$ is finitely generated. Let $S$ be a generating set und consider the Cayley graph of $G$ with respect to $S$, $\Delta(G, S)$. Let $\Lambda_r$ denote the closed $2^r$ neighbourhood of the identity of $\Delta(G,S)$. We will assume that $G$ satisfies some strong kind of amenability or otherwise said a certain growth condition, namely
$$\lim_{r\to\infty}\frac{|\Lambda_r\triangle g\Lambda_r|}{|\Lambda_r|}\ =\ 0,$$
for all $g\in G$, where $|\cdot|$ denotes the cardinality and $\triangle$ the symmetric difference. Let $B_0$ be a fundamental region and $B_r=\Lambda_r B_0$. Let $Q_{B_r}$ be the unique optimal \emph{semicoupling} between $\lambda^\bullet$ and $1_{B_r}\mu^\bullet$, that is the unique optimal coupling between $\rho\cdot\lambda^\bullet$ and $1_{B_r}\mu^\bullet$  for some optimal choice of density $\rho$. Put
$$ \tilde Q^r_g \ :=\  \frac1{|\Lambda_r|}\sum_{h\in g\Lambda_r} Q_{hB_r}. $$

\begin{figure*}
\begin{center}
 \includegraphics[scale=1.2]{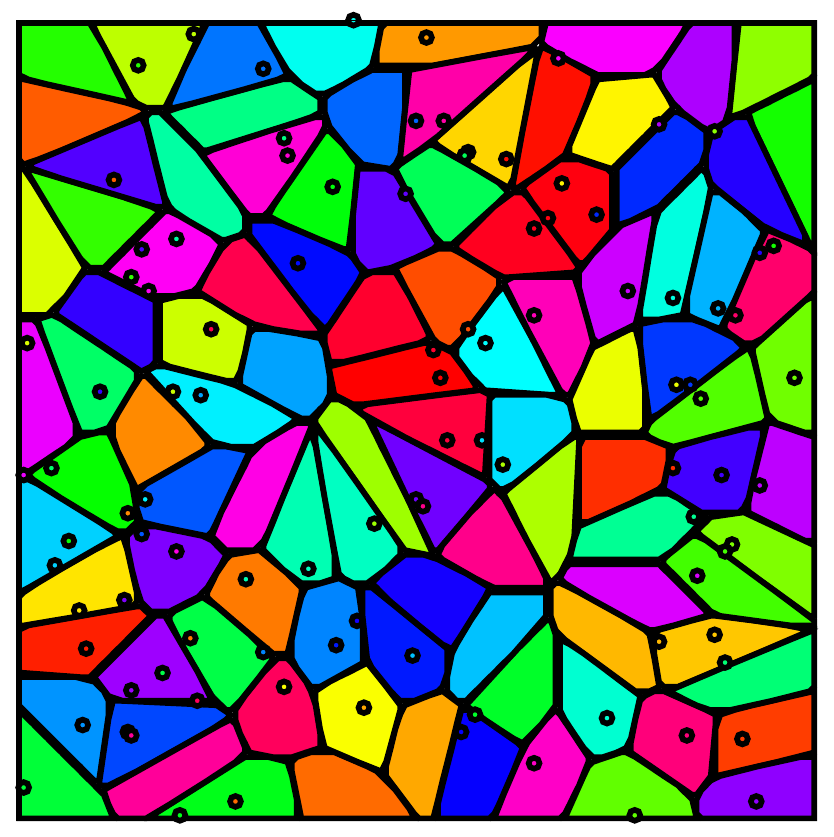}
\end{center}
\caption{Coupling of Lebesgue and 100 points in the cube with $c(x,y)=|x-y|^2.$}
\end{figure*}

\begin{figure*}
 \begin{center}
  \includegraphics[scale=1.2]{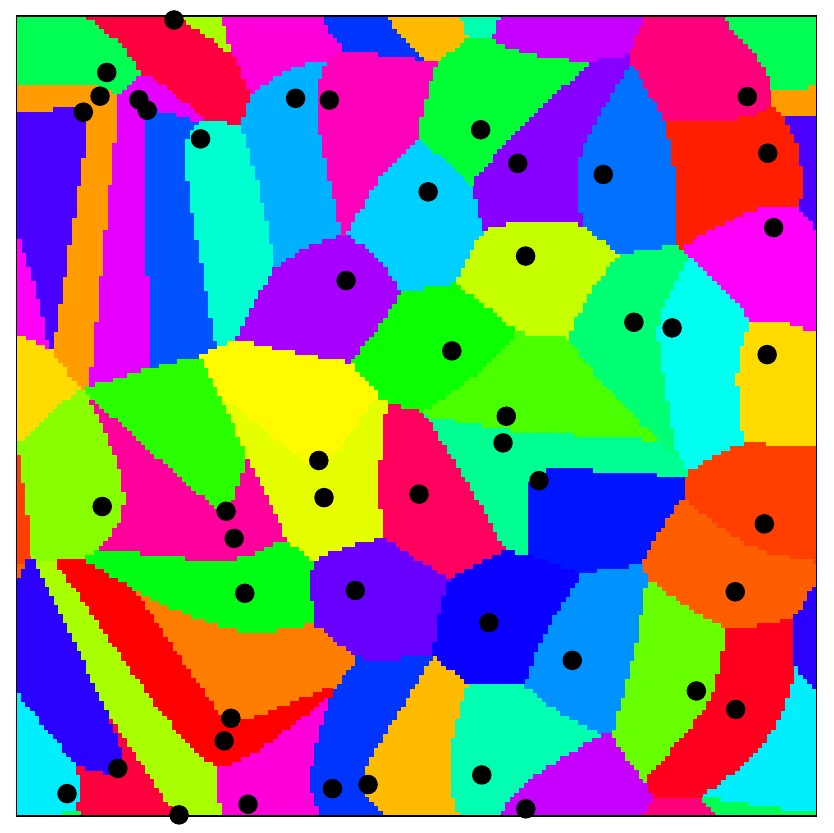}
 \end{center}
\caption{Coupling of volume measure and 49 points on a torus with cost function $c(x,y)=d(x,y).$}
\end{figure*}

\begin{theorem}\label{main thm 2}
Let $(\lambda^\bullet,\mu^\bullet)$ be two equivariant random measures on M, such that the optimal mean transportation cost are finite, $\mathfrak c_{e,\infty}<\infty.$ Assume, that $\lambda^\omega$ is absolutely continuous to the volume measure m for almost all $\omega$. Then, for every $g\in G$ 
$$ \tilde Q^r_g\ \to\ Q^\infty \quad \text{vaguely}$$
in $\mathcal M(M\times M\times\Omega).$
\end{theorem}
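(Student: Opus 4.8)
\emph{Strategy.} The plan is: (i) show that $\{\tilde Q^r_g\}_r$ is vaguely relatively compact in $\mathcal M(M\times M\times\Omega)$; (ii) identify every subsequential limit as an equivariant coupling of $\lambda^\bullet$ and $\mu^\bullet$; (iii) show that every such limit is optimal, so that by the uniqueness part of Theorem~\ref{main thm 1} it must equal $Q^\infty$; (iv) conclude that $\tilde Q^r_g\to Q^\infty$ along the full sequence, for every $g$. For (i): the first marginal of $Q_{hB_r}$ has the form $\rho_{h,r}\lambda^\bullet$ with $0\le\rho_{h,r}\le 1$, so the first marginal of $\tilde Q^r_g$ has $\lambda^\bullet$-density $\le 1$; hence $\tilde Q^r_g$ puts mass at most $\EE[\lambda^\bullet(K)]<\infty$ on $K\times M$, uniformly in $r,g$, for every relatively compact $K$. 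Together with the fact that the second marginal of $\tilde Q^r_g$ is dominated by the fixed random measure $\mu^\bullet$, this furnishes the local mass bounds needed for vague relative compactness, and we may pass to a subsequence $\tilde Q^{r_k}_g\to\bar Q$.

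\emph{$\bar Q$ is an equivariant coupling.} For equivariance, note that by equivariance of $(\lambda^\bullet,\mu^\bullet)$, $G$-invariance of $c$, and uniqueness of the optimal semicoupling, the flow $\theta_{g'}$ transforms $Q_{hB_r}$ into $Q_{g'^{-1}hB_r}$; hence the $g'$-shift of $\tilde Q^r_g$ equals $\tilde Q^r_{g'^{-1}g}$. The stated growth condition says precisely that $(\Lambda_r)$ is a F\o lner sequence, and, being stable under replacing a group element by one of its conjugates, it yields $|g\Lambda_r\,\triangle\,g'^{-1}g\Lambda_r|=o(|\Lambda_r|)$; since each $Q_{hB_r}$ carries uniformly bounded mass on compacts, $\tilde Q^r_g-\tilde Q^r_{g'^{-1}g}\to 0$ vaguely, and letting $r\to\infty$ shows that $\bar Q$ is invariant under $\theta$, i.e.\ equivariant (in particular the limit does not depend on $g$). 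For the marginals: as $B_r$ is a disjoint union of $|\Lambda_r|$ translates of a fundamental region, the F\o lner property gives $\tfrac1{|\Lambda_r|}\sum_{h\in g\Lambda_r}1_{hB_r}(y)\to 1$ for $y$ in any fixed fundamental region, so the second marginals of $\tilde Q^r_g$ (dominated by $\mu^\bullet$) converge to $\mu^\bullet$, whence the second marginal of $\bar Q$ is $\mu^\bullet$. The first marginal of $\bar Q$ is $\le\lambda^\bullet$; equality requires ruling out escape of source mass to infinity. This uses the a priori estimate $\EE[\int_{M\times B_r}c\,dQ_{B_r}]\le m(B_r)\,\mathfrak c_{e,\infty}$, obtained by comparing $Q_{B_r}$ pathwise with the competing semicoupling $Q^\infty$ restricted to $M\times B_r$ and invoking optimality and equivariance of $Q^\infty$ (Theorem~\ref{main thm 1}); together with $\vartheta(r)\to\infty$ it shows that the total ($\EE$-)mass transported by $Q_{B_r}$ over distance $\ge R$ is $O(m(B_r)/\vartheta(R))$. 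Combined with the F\o lner-smallness of a collar of $B_r$ of any fixed width, this gives $\EE[(\text{first marginal of }\tilde Q^r_g)(B_0)]\to\EE[\lambda^\bullet(B_0)]=\EE[\mu^\bullet(B_0)]$, and since $\bar Q$ is equivariant with first marginal $\le\lambda^\bullet$ and the right intensity, its first marginal is $\lambda^\bullet$. Hence $\bar Q\in\Pi_e(\lambda^\bullet,\mu^\bullet)$.

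\emph{$\bar Q$ is optimal, and conclusion.} Fix a fundamental region $B_0$. For an equivariant coupling $q^\bullet$ the supremum defining $\mathfrak C$ is independent of the admissible set, so $\mathfrak C(q^\bullet)=\tfrac1{m(B_0)}\EE[\int_{M\times B_0}c\,dq^\bullet]$. Shifting by $h$ and using stationarity and $G$-invariance of $c$ gives $\EE[\int_{M\times B_0}c\,dQ_{hB_r}]=\EE[\int_{M\times h^{-1}B_0}c\,dQ_{B_r}]$; summing over the disjoint translates $\{h^{-1}B_0:h\in g\Lambda_r\}$ and using that the second marginal of $Q_{B_r}$ sits in $B_r$,
$$\sum_{h\in g\Lambda_r}\EE\Bigl[\int_{M\times B_0}c\,dQ_{hB_r}\Bigr]\ \le\ \EE\Bigl[\int_{M\times B_r}c\,dQ_{B_r}\Bigr]\ \le\ m(B_r)\,\mathfrak c_{e,\infty}.$$
Dividing by $|\Lambda_r|=m(B_r)/m(B_0)$ gives $\EE[\int_{M\times B_0}c\,d\tilde Q^r_g]\le m(B_0)\,\mathfrak c_{e,\infty}$ for every $r$. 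Since $c\ge 0$ is continuous (and $B_0$ may be taken open up to an $m$-null set), $q\mapsto\EE[\int_{M\times B_0}c\,dq]$ is lower semicontinuous along the vague convergence, so $\EE[\int_{M\times B_0}c\,d\bar Q]\le m(B_0)\,\mathfrak c_{e,\infty}$, i.e.\ $\mathfrak C(\bar Q)\le\mathfrak c_{e,\infty}$ by equivariance of $\bar Q$. As $\bar Q\in\Pi_e(\lambda^\bullet,\mu^\bullet)$, this is an equality, $\bar Q$ is optimal, and Theorem~\ref{main thm 1} yields $\bar Q=Q^\infty$. Thus every subsequence of $(\tilde Q^r_g)_r$ has a further subsequence converging vaguely to $Q^\infty$, so $\tilde Q^r_g\to Q^\infty$ vaguely, uniformly in $g$.

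\emph{Main obstacle.} The delicate step is the marginal identification in (ii), namely excluding that the restricted optimal semicouplings $Q_{B_r}$ drag an asymptotically non-negligible amount of source mass in from far away; this is precisely where the quantitative input is needed — the a priori cost bound for the restricted semicouplings combined with the divergence $\vartheta(r)\to\infty$. By contrast the optimality step (iii) is soft, reducing to that same a priori inequality together with lower semicontinuity.
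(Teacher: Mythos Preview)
Your overall strategy—vague relative compactness, identification of every subsequential limit as an optimal equivariant coupling, then invoking the uniqueness of Theorem~\ref{main thm 1}—is exactly the paper's strategy. The paper executes it differently, however: instead of working directly with $\tilde Q^r_g$ and vague convergence on $M\times M\times\Omega$, it localises to the finite–mass slices $Q^r_g:=1_{M\times gB_0}\tilde Q^r_g$, proves \emph{weak} tightness of each slice via the same averaging/cost identity you use in step~(iii), extracts a common subsequential limit $Q^\infty_g$ for all $g$ by a diagonal argument, and then reassembles $Q^\infty:=\sum_{g} Q^\infty_g$. The advantage of slicing is that on each piece the second marginal is exactly $1_{gB_0}\mu^\bullet$ and is preserved under weak limits, while the first-marginal inequality for the sum is handled by a separate F\o lner estimate (Lemma~\ref{Qg-disjoint}); no ``mass escape'' analysis is needed.

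Your direct route is viable, but the marginal identification in step~(ii) has a gap. From ``$(\pi_2)_*\tilde Q^r_g\to\mu^\bullet$'' you jump to ``$(\pi_2)_*\bar Q=\mu^\bullet$''; this does not follow, because vague convergence $\tilde Q^r_g\to\bar Q$ does not pass to second marginals—mass can escape in the $x$-variable. What plugs the hole is precisely the uniform cost bound you establish in step~(iii): the estimate $\EE\bigl[\int_{M\times kB_0}c\,d\tilde Q^r_g\bigr]\le m(B_0)\,\mathfrak c_{e,\infty}$ (which your translation argument gives for every $k$, not just $k=\mathrm{id}$) combined with $\vartheta(R)\to\infty$ controls $\tilde Q^r_g(\complement(A)_R\times A\times\Omega)$ uniformly in $r$, and then the cutoff argument of Proposition~\ref{abstract exist}(iv) yields $(\pi_2)_*\bar Q=\mu^\bullet$. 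Once that is in hand, the first-marginal equality follows for free from equivariance of $\bar Q$ and the mass-transport principle (Lemma~\ref{semicoupling gleich coupling}): an equivariant measure with second marginal $\mu^\bullet$ of intensity $1$ and first marginal $\le\lambda^\bullet$ is automatically a coupling. Your separate computation of $\EE[(\pi_1)_*\tilde Q^r_g(B_0)]\to 1$ is therefore unnecessary and, as written, does not yield the conclusion anyway: convergence of these expectations does not give $\EE[(\pi_1)_*\bar Q(B_0)]=1$ under mere vague convergence.
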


For the proof of this theorem the assumption of absolute continuity is only needed to ensure uniqueness of $Q_{gB_r}$ and $Q^\infty$. If we do not have absolute continuity but uniqueness of $Q_{gB_r}$ and $Q^\infty$ the same theorem with the same proof holds. 
\medskip\\
In the case of absolute continuity we can even say a bit more and get rid of the mixing. The unique optimal coupling is given by a map, that is
$$Q^\infty=(id,T)_*\lambda^\bullet.$$
Moreover, the optimal semicoupling $Q_{gB_r}$ is  given by
$$Q_{gB_r}=(id,T_{g,r})_*(\rho_{g,r}\lambda^\bullet),$$
for some measurable map $T_{g,r}$ and some density $\rho_{g,r}$. Then, we have
\begin{theorem}\label{main thm 3}
For every $g\in G$ 
$$T_{g,r}\ \to T\quad \text{ locally in } \lambda^\bullet\otimes \P \text{ measure }.$$  
\end{theorem}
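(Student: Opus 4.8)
I would prove Theorem \ref{main thm 3} by upgrading a vague convergence of the finite semicouplings to convergence of the transport maps. The key reduction is that it suffices to prove $Q_{gB_r}\to Q^\infty$ vaguely in $\mathcal{M}(M\times M\times\Omega)$ for each fixed $g$ — i.e.\ directly, without the averaging used in Theorem \ref{main thm 2}. Once this is known, the first marginals converge, $\rho_{g,r}\lambda^\bullet\to\lambda^\bullet$ vaguely, and since $\rho_{g,r}\le 1$ this forces $\rho_{g,r}\to 1$ locally in $\lambda^\bullet\otimes\P$ measure. To pass from the graph couplings to the maps I would run the classical argument: fix a compact $K\subset M$ and $\varepsilon>0$, pick $\varphi\in C_c(M)$ with $0\le\varphi\le 1$ and $\varphi\equiv 1$ on $K$; by Lusin's theorem $(x,\omega)\mapsto T^\omega(x)$ coincides, off a set of arbitrarily small $\lambda^\bullet\otimes\P$ measure, with a continuous map $\tilde T$, so $(x,y,\omega)\mapsto\varphi(x)\,\big(d(y,\tilde T^\omega(x))\wedge 1\big)$ is nonnegative, bounded and continuous with compact support in the $M\times M$ variables; testing it against $Q_{gB_r}=(id,T_{g,r})_*(\rho_{g,r}\lambda^\bullet)$ and against $Q^\infty=(id,T)_*\lambda^\bullet$, and controlling the error over the exceptional set by means of $\rho_{g,r}\lambda^\bullet\to\lambda^\bullet$, yields
\begin{equation*}
\EE\int_K\big(d(T^\omega_{g,r}(x),T^\omega(x))\wedge 1\big)\,\rho^\omega_{g,r}(x)\,\lambda^\omega(dx)\ \longrightarrow\ 0 ,
\end{equation*}
whence, using $\rho_{g,r}\to 1$ once more, $T_{g,r}\to T$ in $\lambda^\bullet\otimes\P$ measure on $K$.

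It then remains to prove $Q_{gB_r}\to Q^\infty$, which I would establish as in the proof of Theorem \ref{main thm 2}, but for the un-averaged family. First, $\{Q_{gB_r}\}_r$ is relatively compact in the vague topology: its marginal in the first $M$-variable is dominated by $\lambda^\bullet$ uniformly in $r$, so tested against functions with compact support in the $M\times M$ variables the family is uniformly bounded; moreover, restricting the optimal coupling $Q^\infty$ to $\{(x,y):y\in gB_r\}$ exhibits a semicoupling of a part of $\lambda^\bullet$ with $1_{gB_r}\mu^\bullet$ of mean cost $\EE\int_{M\times gB_r}c\,dQ^\infty\le m(gB_r)\,\mathfrak c_{e,\infty}$, hence $\EE\int c\,dQ_{gB_r}\le m(gB_r)\,\mathfrak c_{e,\infty}$, and together with $\lim_{r\to\infty}\vartheta(r)=\infty$ and the fact (as in the proof of Theorem \ref{main thm 2}) that the cost of $Q_{gB_r}$ restricted to $M\times K'$ stays bounded for fixed compact $K'$, this prevents escape of mass to infinity. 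Let $\bar Q$ be a subsequential limit; $G$ being countable, after a diagonal extraction we may assume $Q_{g'B_r}\to\bar Q_{g'}$ for every $g'\in G$, with $\bar Q_g=\bar Q$.

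The limit $\bar Q$ is equivariant. From the exact identity $Q_{gB_r}^{\theta_h\omega}=(h,h)_*Q_{h^{-1}gB_r}^{\omega}$ — valid for each $r$ by equivariance of $\lambda^\bullet,\mu^\bullet$ and uniqueness of the finite optimal semicoupling — one gets $\bar Q_g^{\theta_h\omega}=(h,h)_*\bar Q_{h^{-1}g}^{\omega}$, so equivariance of $\bar Q$ is equivalent to the independence of $\bar Q_{g'}$ of the centre $g'$. This independence holds because $1_{gB_r}\mu^\bullet$ and $1_{g'B_r}\mu^\bullet$ differ only on $gB_r\triangle g'B_r$, a set of intensity $o(m(gB_r))$ by the amenability hypothesis on $G$, so that replacing $g$ by $g'$ is a perturbation of the target marginal localised in a boundary layer of the large region $gB_r$; thanks to absolute continuity, hence uniqueness and stability of the classical transport problem, such a perturbation leaves the optimal semicoupling asymptotically unchanged on every fixed compact, whence $\bar Q_{g'}=\bar Q_{g''}$ for all $g',g''$. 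Next, $\bar Q$ is a coupling of $\lambda^\bullet$ and $\mu^\bullet$: its second marginal is $\mu^\bullet$ since $gB_r\uparrow M$ and — by the cost bound above — no mass is lost; its first marginal is a priori $\le\lambda^\bullet$, but, the two marginals of an equivariant coupling having the same intensity, it has intensity $1$ and therefore equals $\lambda^\bullet$. Finally $\mathfrak C(\bar Q)\le\mathfrak c_{e,\infty}$: for a fixed admissible $B$ and $r$ large enough that $B$ lies well inside $gB_r$, lower semicontinuity of $q\mapsto\EE\int_{M\times B}c\,dq$ under vague convergence together with $\limsup_r\frac1{m(B)}\EE\int_{M\times B}c\,dQ_{gB_r}\le\mathfrak c_{e,\infty}$ (which follows as in the proof of Theorem \ref{main thm 2} from the mean-cost bound and the asymptotically uniform distribution of the cost over the bulk) gives $\frac1{m(B)}\EE\int_{M\times B}c\,d\bar Q\le\mathfrak c_{e,\infty}$. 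Hence $\bar Q\in\Pi_e(\lambda^\bullet,\mu^\bullet)$ is optimal, so $\bar Q=Q^\infty$ by Theorem \ref{main thm 1}; since every subsequential limit is $Q^\infty$, we conclude $Q_{gB_r}\to Q^\infty$.

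The main obstacle is the equivariance of $\bar Q$, i.e.\ the independence of the limit of the centre of the exhausting sets $gB_r$: this is precisely what the averaging over $\Lambda_r$ accomplished in Theorem \ref{main thm 2}, and dispensing with it requires a genuine quantitative stability statement — a perturbation of the target marginal supported in a boundary layer of $gB_r$ of vanishing relative intensity perturbs the optimal semicoupling only in a correspondingly thin collar — for which absolute continuity, through uniqueness and stability of the classical Monge problem, is essential. A secondary technical point is the verification that no source or target mass is lost in passing to the vague limit, where the mean-cost bound $\EE\int c\,dQ_{gB_r}\le m(gB_r)\,\mathfrak c_{e,\infty}$, the divergence of $\vartheta$ at infinity, and the F\o lner property of $\Lambda_r$ are the inputs.
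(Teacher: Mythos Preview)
Your strategy is to prove the \emph{un-averaged} convergence $Q_{gB_r}\to Q^\infty$ directly and then deduce the map convergence by a Lusin-type argument. The second half is fine and is essentially the content of the paper's Lemma~\ref{ae-convergence}(i). The first half, however, has a genuine gap that you yourself identify: you need the subsequential limit $\bar Q$ to be equivariant, equivalently that $\bar Q_{g'}$ does not depend on the centre $g'$, and you appeal to a ``quantitative stability statement'' (perturbing the target by a boundary layer of $gB_r$ moves the optimal semicoupling only in a thin collar) that is neither proved nor available from the classical theory. Stability of the Monge problem gives continuity of the optimizer under weak convergence of the marginals, not a quantitative localisation of the effect of a boundary perturbation of growing sets. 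A second, related gap is the claim that $\EE\int_{M\times K'}c\,dQ_{gB_r}$ stays bounded for fixed compact $K'$: the bound you derive is $\Cost(Q_{gB_r})\le m(gB_r)\,\mathfrak c_{e,\infty}$, which blows up, and there is no a~priori reason why the cost of transporting into a fixed box $hB_0$ from within $Q_{gB_r}$ should be uniformly controlled --- this is precisely the difficulty the paper singles out as the motivation for the averaging.

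The paper's proof avoids both issues by reversing the order of the argument. It starts from the averaged convergence $\tilde Q^r_g\to Q^\infty$ of Theorem~\ref{main thm 2}, interprets the average as an expectation over a uniformly random centre $h\in g\Lambda_r$, and applies part~(ii) of Lemma~\ref{ae-convergence} (the ``second randomization'') to obtain that $\rho_{h,r}\cdot d(T_{h,r},T)\to 0$ in $(\lambda^\bullet\otimes\P)\otimes\theta'_r$-measure. Together with the density monotonicity $\rho_{g,r}\nearrow\rho$ (Lemma~\ref{ugly proof}) this already yields $T_{g,r}\to T$ in measure for at least one $g$; then part~(i) of Lemma~\ref{ae-convergence} gives $Q_{gB_r}\to Q^\infty$ for that $g$, and equivariance of the finite semicouplings transfers this to every $g$. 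So the un-averaged convergence $Q_{gB_r}\to Q^\infty$ is a \emph{consequence}, not the starting point, and no stability estimate is needed. If you want to salvage your approach you would have to supply and prove the localisation statement you invoke; the paper's route shows this is unnecessary.
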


Analogous results will be obtained in the more general case of optimal semicouplings between $\lambda^\bullet$ and $\mu^\bullet$ where $\lambda^\bullet$ has intensity one and $\mu^\bullet$ has intensity $\beta\in (0,\infty)$ (see Theorem \ref{uniqueness}, Theorem \ref{Q infty}, Proposition \ref{t-map convergence} and section \ref{s:other}). In the case $\beta \leq 1$, $\lambda^\bullet$ is allowed to not transport all of its mass. There will be some areas from which nothing is transported and the $\mu^\bullet$ mass can choose its favorite $\lambda^\bullet$ mass. In the case $\beta \geq 1$ the situation is the opposite. There is too much $\mu^\bullet$ mass. Hence, $\lambda^\bullet$ can choose its favorite $\mu^\bullet$ mass and some part of the $\mu^\bullet$ mass will not be satisfied, that is they will not get enough or even any of the $\lambda^\bullet$ mass.
\medskip\\
As a special case of our result, we recover the results by Huesmann and Sturm in \cite{huesmann2010optimal}. They studied couplings between the Lebesgue measure and an equivariant point process of intensity $\beta\in (0,1]$. They showed that there is a unique optimal semicoupling and also proved an approximation result by solutions to transport problems on bounded regions.
\medskip\\
Furthermore, in \cite{huesmann2010optimal} necessary and sufficient conditions have been derived implying the finiteness of the mean asymptotic transportation cost in the case of transporting the Lebesgue measure to a Poisson point process. By applying the same techniques similar estimates can be achieved for the case of a compound Poisson process with iid weights $(X_i)_{i\in \N},$ i.e. $\mu^\omega=\sum_{i\in\N} X_i\delta_{z_i}.$ In the case that $X_1$ is exponentially distributed it is possible to use the algorithm by Mark\'o and Timar \cite{marko2011poisson} to construct an equivariant coupling between the Lebesgue measure and $\mu^\bullet$ with optimal tail, i.e. with finite mean transportation cost for the cost function $c(x,y)=\exp(\kappa\cdot |x-y|^d)$ for some positive $\kappa$ and dimension $d\geq 3.$ 
\medskip\\
In the case of $c(x,y)=d^p(x,y)$ for $p\in[1,\infty)$ we write the optimal mean transportation cost between $\lambda^\bullet$ and $\mu^\bullet$ as $\W_p^p(\lambda^\bullet,\mu^\bullet)$, i.e.
$$\W_p^p(\lambda^\bullet,\mu^\bullet)\ =\ \inf_{q^\bullet\in \Pi_e(\lambda^\bullet,\mu^\bullet)} \mathfrak C(q^\bullet).$$
Let 
$$\mathcal P_p=\{\text{equivariant random measures $\mu^\bullet$ on $M$ with unit intensity s.t. $\W_p(m,\mu^\bullet)<\infty$}\}$$
Then $\W_p$ defines a metric on $\mathcal P_p$ which implies the vague convergence of the Campbell measures (see Propositions \ref{metric} and \ref{half stability}). Moreover, if we take two sequences of equivariant random measures $(\lambda^\bullet_n)_{n\in\N}$ and $(\mu^\bullet_n)_{n\in\N}$ such that their Campbell measures converge vaguely to some equivariant random measures $\lambda^\bullet, \mu^\bullet$, i.e.
$$ \lambda^\bullet_n\P\to \lambda^\bullet\P, \qquad \mu_n^\bullet\P\to\mu^\bullet\P,$$
and the optimal mean transportation cost converge $\W_p(\lambda^\bullet_n, \mu^\bullet_n)\to\W_p(\lambda^\bullet, \mu^\bullet)$, then the optimal semicouplings between $\lambda_n^\bullet$ and $\mu^\bullet_n$ converge to the optimal semicoupling between $\lambda^\bullet$ and $\mu^\bullet$ (see Proposition \ref{nearly stability}).
\medskip\\
It is clear that if the restriction of $\lambda^\omega\ll m$ is lifted there will not be a unique semicoupling in general. However, in the case of two independent Poisson processes on $\R^d$ we conjecture that there is a unique optimal semicoupling given that the mean transportation cost is finite. In particular, this would imply that the optimal coupling between two Poisson processes is a matching.
\medskip\\
Matchings of two independent Poisson processes have been intensely studied in \cite{holroyd2009geometric, matching09}. However, there are still a couple of open questions. Solving the conjecture on optimal couplings between two Poisson processes might help solve some of them. In \cite{last2008invariant} Last and Thorisson studied equivariant transports between random measures in a rather general setting. In the recent article \cite{last2011unbiased} Last, M\"orters and Thorisson constructed an equivariant transport between two diffuse random measures to study unbiased shifts of Brownian motion. They also derive some moment estimates on the typical transport distance. Fair allocations have been studied and constructed, e.g. in \cite{extra-heads} \cite{stable-marriage, gravity, marko2011poisson} and references therein. However, \cite{huesmann2010optimal} is, to our knowledge, so far the only article studying couplings of two random measures under the additional requirement of being cost minimizing, that is optimal.

\subsection{Outline}
In section \ref{s:su} we introduce the setting and objects we work with. Section \ref{s:bd semi} is devoted to the proof of a key technical lemma, the existence and uniqueness of optimal semicouplings on bounded sets. In section \ref{s:u} we prove Theorem \ref{main thm 1}. Theorem  \ref{main thm 2} and Theorem \ref{main thm 3} are proved in section  \ref{s:construction}. In all these sections we always assume that the second marginal has intensity $\beta\leq 1$. In section \ref{s:other} we treat the case of $\beta\geq 1$. In section \ref{s: cost estimate} we state the estimates on the compound Poisson process. Finally in section \ref{s: stability} we show that $\W_p$ defines a metric on $\mathcal P_p$ and prove the stability result.

\section{Set-up}\label{s:su}
In this section we will explain the general set-up, some basic concepts and derive the first result, a general existence result by a compactness argument.

\subsection{The setting}
From now on we will always assume to work in the following setting. $(M, d, m)$ will denote a complete connected smooth non-compact Riemannian manifold with Riemannian distance $d$ and Riemannian volume measure $m$. The Borel sets on $M$ will be denoted by $\mathcal B(M)$. Given a map $S$ and a measure $\rho$ we denote the push forward of $\rho$ under S by $S_*\rho$, i.e. $S_*\rho(A)=\rho(S^{-1}(A))$ for any Borel set $A$. Given any product $X=\Pi_{i=1}^n X_i$ of measurable spaces, the projection onto the i--th space will be denoted by $\pi_i$. Given a set $A\subset M$ its complement will be denoted by $\complement A$ and the indicator function of $A$ by $1_A$.\\

We will assume that there is a   group $G$ of isometries acting on $M$. For a set $A\subset M$ we write $\tau_gA\ :=\ gA\ =\ \{ga:\ a\in A\}$.  For a point $x\in M$ its \emph{orbit} under the group action of G is defined as $Gx=\{gx\ :\ g\in G\}.$ Its \emph{stabilizer} is defined as $G_x=\{g\in G\ :\ gx=x\}$ the elements of G that fix x.

\begin{definition}[Group action]
 Let G act on M. We say that the action is
\begin{itemize}
 \item \emph{properly discontinuous} if for any $x\in M$ and any compact $K\subset M$ $gx\in K$ for only finitely many $g\in G$.
\item \emph{cocompact} if $M/G$ is compact in the quotient topology.
\item \emph{free} if $gx=x$ for one $x\in M$ implies $g=id$, that is the stabilizer for every point is trivial.
\end{itemize}
\end{definition}

We will assume that the group action is properly discontinuous, cocompact and free. By Theorem 3.5 in \cite{bowditch2006course} this already implies that G is finitely generated and therefore countable.

\begin{definition}[Fundamental region]
A measurable subset $B_0\subset M$ is defined to be a fundamental region for G if
\begin{itemize}
 \item[i)] $\bigcup_{g\in G} gB_0 = M$
\item[ii)] $B_0\cap gB_0=\emptyset$ for all $id\neq g\in G$.
\end{itemize}
The family $\{gB_0\ :\ g\in G\}$ is also called tessellation of M.
\end{definition}

There are many different choices of fundamental regions. We will choose a special one, namely a certain subset of the Dirichlet region with respect to some fixed point p. However, each fundamental region has the same volume  and therefore defines a tiling of M in pieces of equal volume. Indeed, we have the following Lemma.

\begin{lemma}\label{fundamental region have same mass}
 Let $F_1$ and $F_2$ be two fundamental regions for G. Assume $m(F_1)<\infty$. Then $m(F_1)=m(F_2).$
\end{lemma}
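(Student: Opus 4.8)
The plan is to use the defining properties of a fundamental region to decompose each $F_i$ into countably many pieces indexed by $G$, and then exploit the fact that $G$ acts by isometries (so $m$ is $G$-invariant) to rearrange the pieces. Concretely, since $\bigcup_{g\in G} gF_2 = M$ and the translates $gF_2$ are pairwise disjoint, the family $\{gF_2 : g\in G\}$ partitions $M$ up to nothing; intersecting with $F_1$ gives the disjoint decomposition
$$F_1 \;=\; \bigcup_{g\in G}\bigl(F_1\cap gF_2\bigr).$$
By countable additivity of $m$ (recall $G$ is countable, as noted after the definition of the group action),
$$m(F_1)\;=\;\sum_{g\in G} m\bigl(F_1\cap gF_2\bigr).$$
Symmetrically, using that $\{hF_1 : h\in G\}$ partitions $M$,
$$m(F_2)\;=\;\sum_{h\in G} m\bigl(F_2\cap hF_1\bigr).$$

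Next I would relate the two sums term by term. For a fixed $g\in G$, apply the isometry $g^{-1}$: since $m$ is invariant under $G$ (the elements of $G$ are isometries of $(M,d,m)$), we have
$$m\bigl(F_1\cap gF_2\bigr)\;=\;m\bigl(g^{-1}F_1\cap F_2\bigr)\;=\;m\bigl(F_2\cap g^{-1}F_1\bigr).$$
Summing over $g\in G$ and re-indexing via $h=g^{-1}$ (a bijection of $G$), the right-hand side becomes exactly $\sum_{h\in G} m(F_2\cap hF_1) = m(F_2)$. Hence $m(F_1)=m(F_2)$. The hypothesis $m(F_1)<\infty$ guarantees the manipulations are with genuinely finite quantities, so no $\infty-\infty$ issues arise; in fact it also forces $m(F_2)<\infty$.

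The only genuine point to be careful about — the ``main obstacle'', such as it is — is the measurability and the handling of the boundary overlaps: a fundamental region need not be literally a strict partition of $M$ (property (ii) only asks $B_0\cap gB_0=\emptyset$, which here is stated as exact disjointness, so this is clean), and one must make sure each set $F_1\cap gF_2$ is Borel (it is, as an intersection of a Borel set with the image of a Borel set under a homeomorphism) so that $m$ applies and countable additivity is legitimate. Everything else is the bookkeeping of reindexing a countable sum, which is routine once $G$-invariance of $m$ is invoked.
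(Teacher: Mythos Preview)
Your argument is correct and is essentially identical to the paper's own proof: decompose $F_1$ via the partition $\{gF_2\}_{g\in G}$, use $G$-invariance of $m$ to rewrite $m(F_1\cap gF_2)=m(g^{-1}F_1\cap F_2)$, and re-sum. The paper compresses this into a single displayed chain of equalities, while you spell out the countability of $G$, measurability, and the role of the finiteness hypothesis, but the mathematical content is the same.
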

\begin{proof}
 As $F_1\cap gF_2$ and $F_1\cap hF_2$ are disjoint for $g\neq h$ by the defining property of fundamental regions we have
$$ m(F_1) = \sum_{g\in G} m(F_1\cap gF_2) = \sum_{g\in G} m(g^{-1}F_1\cap F_2) = m(F_2).$$
\end{proof}

By scaling of the volume measure $m$ we can assume that $m(B_0)=1$. This assumption is just made to simplify some notations.

As G is finitely generated, there are finitely many elements $a_1,\ldots,a_k \in G$ such that every $g\in G$ can be written as a word in these letters and their inverses. The set $S=\{a_1,\ldots, a_k\}$ is called a generating set. The generating set is not unique, e.g. $\Z$ is generated by $\{1\}$ but also by $\{2,3\}$. We will fix one finite generating set for G. It does not matter which one as the results will be independent from the specific choice. \\
Given the generating set S. We can construct a graph $\Delta\ =\ \Delta(G, S)$ as follows. Put $V(\Delta)=G$ as the vertices. For each $g\in G$ and $a\in S$ we connect $g$ and $ag$ by a directed edge labeled with a. The same edge with opposite orientation is labeled by $a^{-1}$. This gives a regular graph of degree $2|S|$. We endow $\Delta$ with the word metric $d_\Delta$ which coincides with the usual graph distance.

\begin{definition}[Cayley graph]
 If S is a generating set of G, then $\Delta(G, S)$ is called Cayley graph of G with respect to S.
\end{definition}

We denote the closed $2^r$ neighbourhood of the identity element in $\Delta$ by $\Lambda_r$, that is $\Lambda_r\ =\ \{g\in G:\ d_\Delta(1,g)\leq 2^r\}$. The boundary of $\Lambda_r$ is defined as $\partial \Lambda_r=\{h\notin \Lambda_r\ : \exists g\in\Lambda_r \text{ s.t. }\ d_{\Delta}(h,g)=1\}.$ By $B_r$ we denote the range of the action of $\Lambda_r$ on the fundamental domain $B_0$, that is $B_r\ =\ \bigcup_{g\in \Lambda_r} gB_0$.\\

We will need to control the mass that is close to the boundary of $B_r$, that is the growth of $B_r$. In section \ref{s:construction}, we will assume that 
$$\lim_{r\to\infty}\frac{|\Lambda_r\triangle g\Lambda_r|}{|\Lambda_r|}\ =\ 0\quad \forall\ g\in G.$$

\medskip

Several times we will use a rather simple but very powerful tool, the \emph{mass transport principle}. It already appeared in the proof of Lemma \ref{fundamental region have same mass}. It is a kind of conservation of mass formula for invariant transports.
\begin{lemma}[mass transport principle]
  Let $f:G\times G\to \R_+$ be a function which is invariant under the diagonal action of G, that is $f(u,v)=f(gu,gv)$ for all $g,u,v\in G$. Then we have 
$$\sum_{v\in G} f(u,v)\ =\  \sum_{v\in G} f(v,u).$$
\end{lemma}
\begin{proof}
 $$\sum_{v\in G} f(u,v)\ =\ \sum_{g\in G} f(u,gu)\ =\ \sum_{g\in G} f(g^{-1} u,u)\ =\ \sum_{v\in G} f(v,u).$$
\end{proof}

For a more general version we refer to \cite{benjamini1999group} and \cite{last2008invariant}. 

\medskip

Recall the disintegration theorem for finite measures (e.g. see Theorem 5.1.3 in \cite{AGS} or III-70 in \cite{dellacherie1978probabilities}).
\begin{theorem}[Disintegration of measures]\label{disintegration theorem}
 Let X, Y be Polish spaces, and let $\gamma$ be a finite Borel measure on $X\times Y$. Denote by $\mu$ and $\nu$ the marginals of $\gamma$ on the first and second factor respectively. Then, there exist two measurable families of probability measures $(\gamma_x)_{x\in X}$ and $(\gamma_y)_{y\in Y}$ such that
$$\gamma(dx,dy)=\gamma_x(dy)\mu(dx)=\gamma_y(dx)\nu(dy).$$
\end{theorem}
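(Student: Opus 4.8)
\emph{Proof proposal.} The statement is classical (it is the existence of regular conditional probabilities on standard Borel spaces), so I will only indicate the route. Normalising, assume $\gamma$ is a probability measure, whence $\mu$ and $\nu$ are probabilities too. Since $Y$ is Polish it has a countable basis, so we may fix a countable Boolean algebra $\mathcal A=\{A_n\}_{n\in\N}$ of Borel subsets of $Y$ generating $\mathcal B(Y)$. For each $n$ the set function $E\mapsto\gamma(E\times A_n)$, $E\in\mathcal B(X)$, is a finite measure on $X$ dominated by $\mu$, so by Radon--Nikodym it has a density $h_n\colon X\to[0,1]$ with $\int_E h_n\,d\mu=\gamma(E\times A_n)$ for all $E$. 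One should read $h_n(x)$ as the conditional probability ``$\gamma_x(A_n)$'', and the task is to show these local pieces assemble into honest measures.

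The first genuine point is to make the family $(h_n)_{n\in\N}$ behave like a finitely additive set function for $\mu$-a.e.\ $x$ \emph{simultaneously}. Each of the countably many identities $h_\emptyset=0$, $h_Y=1$, $0\le h_n\le1$, and $h_{A_i\cup A_j}=h_{A_i}+h_{A_j}$ whenever $A_i\cap A_j=\emptyset$, holds for $\mu$-a.e.\ $x$, by the defining property of the densities and additivity of $\gamma(\cdot\times\cdot)$ in the second argument. Intersecting these countably many conull sets yields a Borel set $X_0$ with $\mu(X_0)=1$ such that, for every $x\in X_0$, the assignment $A_n\mapsto\gamma_x(A_n):=h_n(x)$ is a finitely additive probability content on $\mathcal A$.

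The crux — and the step I expect to be the main obstacle — is to upgrade, for $\mu$-a.e.\ $x$, this finitely additive content on $\mathcal A$ to a genuine countably additive Borel probability measure on $Y$; this is exactly where Polishness is indispensable. Using inner regularity of the finite measure $\gamma$ one can select, for each $n$ and $k$, a compact $K_{n,k}\subset A_n$ whose $\gamma$-section content is within $2^{-k}$ of that of $A_n$; enlarge $\mathcal A$ to contain all the $K_{n,k}$ (it stays countable) and repeat the previous paragraph, so that on $X_0$ one moreover has $\gamma_x(A_n)\le\gamma_x(K_{n,k})+2^{-k}$ for all $n,k$. This compact inner approximation forces the content $\gamma_x$ to be continuous at $\emptyset$ along $\mathcal A$, hence countably additive there, and Carathéodory's extension theorem produces a unique Borel probability measure, still denoted $\gamma_x$, on $Y$. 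For $x\notin X_0$ put $\gamma_x:=\nu$, so $\gamma_x$ is defined for every $x$.

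It remains to verify measurability, the disintegration identity, and uniqueness, all of which are routine. For fixed $A\in\mathcal A$ the map $x\mapsto\gamma_x(A)$ is Borel by construction; the family of Borel sets $A$ for which $x\mapsto\gamma_x(A)$ is Borel is closed under monotone limits and contains $\mathcal A$, hence equals $\mathcal B(Y)$ by the monotone class theorem, so $(\gamma_x)_{x\in X}$ is a measurable family of probability measures. The measures $\gamma(dx,dy)$ and $\gamma_x(dy)\,\mu(dx)$ agree on all rectangles $E\times A_n$ by the choice of $h_n$, hence on $\mathcal B(X)\otimes\mathcal B(Y)=\mathcal B(X\times Y)$ by a $\pi$--$\lambda$ argument; in particular the second marginal of $\gamma_x(dy)\,\mu(dx)$ is $\nu$. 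Uniqueness is immediate: any other disintegration $(\gamma'_x)$ satisfies $\gamma_x(A_n)=\gamma'_x(A_n)$ for $\mu$-a.e.\ $x$ and each $n$, and intersecting the countably many conull sets gives $\gamma_x=\gamma'_x$ for $\mu$-a.e.\ $x$. Finally, the family $(\gamma_y)_{y\in Y}$ over $\nu$ is obtained by running the same argument with the two factors interchanged.
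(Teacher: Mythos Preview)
The paper does not prove this theorem; it merely recalls it with references to Theorem~5.1.3 in \cite{AGS} and III-70 in \cite{dellacherie1978probabilities}. Your sketch is the standard construction of regular conditional probabilities and is correct in outline, so there is nothing to compare against.

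One step, however, does not go through as written. From inner regularity of $\nu$ you obtain compact $K_{n,k}\subset A_n$ with $\nu(A_n\setminus K_{n,k})<2^{-k}$, which after enlarging $\mathcal A$ gives only the integral bound $\int_X\gamma_x(A_n\setminus K_{n,k})\,\mu(dx)<2^{-k}$. This does \emph{not} yield the pointwise inequality $\gamma_x(A_n)\le\gamma_x(K_{n,k})+2^{-k}$ for all $x\in X_0$ that you assert. What you can salvage, via $\sum_k 2^{-k}<\infty$ and Fubini, is $\gamma_x(A_n\setminus K_{n,k})\to0$ for $\mu$-a.e.\ $x$ and each fixed $n$; intersecting over $n$ gives a.e.\ inner regularity of $\gamma_x$ at the \emph{original} sets $A_n$. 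You then still need inner regularity for every element of the enlarged algebra (the Boolean combinations involving the $K_{n,k}$), which forces you either to iterate the enlargement countably often and close up, or---more cleanly---to set up the algebra differently from the outset. The standard textbook route sidesteps this entirely: use that any uncountable Polish space is Borel isomorphic to $[0,1]$, reduce to $Y=[0,1]$, and build $\gamma_x$ from conditional distribution functions $F_x(q)=h_{(-\infty,q]}(x)$ for $q\in\mathbb{Q}\cap[0,1]$, whose a.e.\ monotonicity and right-continuity along the rationals follow directly from the countably many Radon--Nikodym identities. The remainder of your argument (measurability via monotone classes, the disintegration identity on rectangles, uniqueness, and the symmetric construction of $(\gamma_y)_{y\in Y}$) is fine.
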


\subsection{Couplings and Semicouplings}\label{section 2.2}

For each Polish space $X$ (i.e. complete separable metric space) the set of Radon measures on $X$ -- equipped with its Borel $\sigma$-field -- will be denoted by $\mathcal M(X)$. 
Given any ordered pair of Polish spaces $X,Y$ and  measures $\lambda\in \mathcal M(X), \mu\in\mathcal M(Y)$  we say that a measure $q\in\mathcal M(X\times Y)$ is a \emph{semicoupling} of $\lambda$ and $\mu$, briefly $q\in \Pi_{s}(\lambda,\mu)$,
iff
the (first and second, resp.) marginals satisfy $$(\pi_1)_\ast q\leq \lambda,\qquad(\pi_2)_{\ast}q=\mu,$$ that is, iff
$q(A\times Y)\le \lambda(A)$ and $q(X\times B)=\mu(B)$ for all Borel sets $A\subset X, B\subset Y$.
The semicoupling $q$ is called \emph{coupling}, briefly $q\in \Pi(\lambda,\mu)$, iff in addition
$$(\pi_1)_\ast q= \lambda.$$

\medskip

See also \cite{Figalli2010optimal} for the related concept of \emph{partial coupling}.

\subsection{Random measures on M}\label{random measures on M}
We endow $\mathcal M(M)$   with the vague topology.  The next Lemma summarizes some basic facts about vague topology (e.g. see \cite{Kallenberg1997foundations} or \cite{bauer2001measure})
\begin{lemma}[vague topology]\label{vague topology}
 Let X be a locally compact second countable Haussdorff space. Then,
\begin{itemize}
 \item[i)] $\mathcal M(X)$ is a Polish space in the vague topology.
\item[ii)]  $A\subset \mathcal M(X)$ is vaguely relatively compact iff $\sup_{\mu\in A} \mu(f) <\infty$ for all $f\in C_c(X).$
\item[iii)] If $\mu_n\stackrel{v}{\to}\mu$ and $B\subset X$ relatively compact with $\mu(\partial B)=0$ then $\mu_n(B)\to\mu(B).$
\end{itemize}

\end{lemma}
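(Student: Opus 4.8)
The three assertions are standard facts about the vague topology on the space of Radon measures, and I would follow the classical treatment in the cited references (Kallenberg, Bauer). The plan is to first metrize the topology and prove separability, then establish the compactness criterion (ii), then feed (ii) back to obtain completeness, and finally prove the portmanteau statement (iii) by squeezing between continuous functions. Throughout I use that an LCSC space is $\sigma$-compact, admitting a compact exhaustion $K_1\subset K_2\subset\cdots$ with $K_n\subset \mathrm{int}\,K_{n+1}$ and $\bigcup_n K_n=X$, and that local compactness supplies Urysohn bump functions.

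\emph{Metrizability and separability (first half of (i)).} Each space $C_{K_n}(X)$ of continuous functions supported in $K_n$ is a separable Banach space, so $C_c(X)=\bigcup_n C_{K_n}(X)$ contains a countable uniformly dense family $(f_k)$; I enlarge it so that for every $n$ it contains a function $h_n\in C_c(X)$ with $h_n\geq 1_{K_n}$. I then set $d(\mu,\nu)=\sum_k 2^{-k}\bigl(|\mu(f_k)-\nu(f_k)|\wedge 1\bigr)$ and check that $d$ metrizes the vague topology. The only nontrivial point is that $\mu_j(f_k)\to\mu(f_k)$ for all $k$ forces $\mu_j(g)\to\mu(g)$ for every $g\in C_c(X)$: choosing $f_k\in C_{K_n}(X)$ with $\|g-f_k\|_\infty$ small, the estimate $|\mu_j(g)-\mu_j(f_k)|\leq\|g-f_k\|_\infty\,\mu_j(h_n)$ together with the uniform bound $\sup_j\mu_j(h_n)<\infty$ (itself a consequence of $\mu_j(h_n)\to\mu(h_n)$) closes the argument. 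Separability follows by exhibiting the countable vaguely dense family of finite rational combinations $\sum_i q_i\delta_{x_i}$ with $q_i\in\mathbb{Q}_+$ and $x_i$ ranging over a countable dense subset of $X$.

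\emph{Compactness criterion (ii).} For the direct implication, continuity of $\mu\mapsto\mu(f)$ maps the compact vague closure $\overline A$ onto a compact, hence bounded, subset of $\R$, yielding $\sup_{\mu\in A}\mu(f)<\infty$. For the converse I take any sequence in $A$, use the bounds $\sup_n\mu_n(f_k)<\infty$ along the countable family, and extract by a diagonal argument a subsequence with $\mu_{n_j}(f_k)\to L(f_k)$ for every $k$. I extend $L$ to a positive linear functional on all of $C_c(X)$ by density, the local control $\mu(f)\leq\|f\|_\infty\,\mu(h_n)$ guaranteeing that the approximation is uniform enough, and invoke the Riesz representation theorem to realize $L$ as integration against a genuine Radon measure $\mu$; then $\mu_{n_j}\to\mu$ vaguely. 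Relative sequential compactness is equivalent to relative compactness by the metrizability just established.

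\emph{Completeness (second half of (i)) and (iii).} A $d$-Cauchy sequence has each $\mu_j(f_k)$ Cauchy, hence bounded, which by the estimate of the first paragraph gives $\sup_j\mu_j(f)<\infty$ for all $f\in C_c(X)$; by (ii) the sequence is relatively compact, so some subsequence converges vaguely, and a Cauchy sequence with a convergent subsequence converges, proving $(\mathcal M(X),d)$ complete. For (iii), compactness of $\overline B$ lets me use Urysohn functions $f_j\in C_c(X)$ with $f_j\downarrow 1_{\overline B}$, giving $\limsup_n\mu_n(B)\leq\limsup_n\mu_n(f_j)=\mu(f_j)\downarrow\mu(\overline B)$; dually, inner regularity provides, for each compact $K\subset B^\circ$, a function $g\in C_c(X)$ with $1_K\leq g\leq 1_{B^\circ}$, whence $\liminf_n\mu_n(B)\geq\mu(g)\geq\mu(K)$ and so $\liminf_n\mu_n(B)\geq\mu(B^\circ)$. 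Since $\mu(\partial B)=0$ forces $\mu(B^\circ)=\mu(\overline B)=\mu(B)$, the two estimates collapse to $\mu_n(B)\to\mu(B)$. I expect the completeness step to be the crux, as it is precisely where the compactness criterion feeds back into (i), with the Riesz-representation step in (ii)---verifying that the diagonal limit functional is positive, well defined on all of $C_c(X)$, and represented by an honest Radon measure---the point demanding the most care.
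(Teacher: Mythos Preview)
Your proof is correct and follows the standard route found in the references the paper cites. Note, however, that the paper does not actually prove this lemma: it is stated as a summary of ``basic facts about vague topology'' with a pointer to Kallenberg and Bauer, and no argument is given. So there is nothing to compare against beyond observing that your outline is precisely the classical treatment one finds in those sources---metrize via a countable separating family in $C_c(X)$, get relative compactness by diagonalization plus Riesz, feed (ii) back into completeness, and squeeze with Urysohn functions for (iii). Your identification of the Riesz step as the point needing care is apt; the only minor remark is that positivity and linearity of the limiting functional are immediate from those of the $\mu_{n_j}$, so the real work there is the density extension, which you handle correctly via the uniform local mass bound $\sup_j \mu_j(h_n)<\infty$.
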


 The action of G on M induces an action of G on $\mathcal M(M\times\ldots\times M)$ by push forward with the map $\tau_g$: 
$$(\tau_g)_*\lambda(A_1,\ldots,A_k)\ =\ \lambda((g^{-1}(A_1),\ldots,g^{-1}(A_k))\quad  \forall A_1,\ldots A_k\in\mathcal B(M), k\in \N .$$

A random measure on M is a random variable $\lambda^\bullet$ (the notation with the ``$\bullet$'' is intended to make it easier to distinguish random and non-random measures) modeled on some probability space $(\Omega, \mathfrak{A}, \mathbb P)$ taking values in $\mathcal M(M)$. It can also be regarded as a kernel from $\Omega$ to M. Therefore, we write either $\lambda^\omega(A)$ or $\lambda(\omega,A)$ depending on which property we want to stress. For convenience, we will assume that $\Omega$ is a compact metric space and $\frak{A}$ its completed Borel field. These technical assumptions are only made to simplify the presentation. \\

A point process is a random measure $\mu^\bullet$ taking values in the (vaguely closed) subset of all locally finite counting measures on M. It is called simple iff $\mu^\omega(\{x\})\in\{0,1\}$ for every $x\in M$ and a.e. $\omega\in\Omega$. We call a random measure $\lambda^\bullet$ absolutely continuous iff it is absolutely continuous to the volume measure m on M for a.e. $\omega\in\Omega.$ It is called diffusive iff it has no atoms almost surely. The \emph{intensity measure} of a random measure $\lambda^\bullet$ is a measure on $M$  defined by $A\mapsto \EE[\lambda^\bullet(A)].$ \\
The class of all relatively compact sets in $\mathcal B(M)$ will be denoted by $\hat{\mathcal B}$. For a random measure $\lambda^\bullet$ its class of stochastic continuity sets is defined by $\hat{\mathcal B}_{\lambda^\bullet}\ =\ \{A\in \hat{\mathcal B} : \lambda^\bullet(\partial A)=0\ a.s.\}.$ Convergence in distribution and tightness in $\mathcal M(M)$ can be characterized by
\begin{lemma}[tightness of random measures]\label{general tightness}
 Let $\lambda_1^\bullet,\lambda_2^\bullet,\ldots$ be random measures on M. Then the sequence $(\lambda_n^\bullet)_{n\in\N}$ is relatively compact in distribution iff $(\lambda_n^\bullet(A))_{n\in\N}$ is tight in $\R_+$ for every $A\in\hat{\mathcal B}$.
\end{lemma}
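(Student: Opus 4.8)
The plan is to reduce the characterization of relative compactness in distribution of a sequence of $\mathcal M(M)$-valued random variables to the one-dimensional criterion, using the general theory of weak convergence on Polish spaces together with the structural description of the vague topology given in Lemma \ref{vague topology}. Since $M$ is a locally compact second countable Hausdorff space, part (i) of Lemma \ref{vague topology} tells us $\mathcal M(M)$ is Polish, so Prokhorov's theorem applies: the sequence $(\lambda_n^\bullet)_{n\in\N}$ is relatively compact in distribution if and only if it is tight, i.e. for every $\varepsilon>0$ there is a vaguely compact set $K_\varepsilon\subset\mathcal M(M)$ with $\P[\lambda_n^\bullet\in K_\varepsilon]\ge 1-\varepsilon$ for all $n$. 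So the task is to translate ``tightness in $\mathcal M(M)$'' into the stated pointwise condition.

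First I would prove the easy direction. If $(\lambda_n^\bullet)_{n\in\N}$ is relatively compact in distribution, then for any fixed relatively compact Borel set $A$ and any $f\in C_c(M)$ with $f\ge 1_A$, the evaluation map $\nu\mapsto\nu(f)$ is vaguely continuous on $\mathcal M(M)$, hence $\lambda_n^\bullet(f)$ is relatively compact in distribution in $\R_+$, hence tight; since $\lambda_n^\bullet(A)\le\lambda_n^\bullet(f)$, the family $(\lambda_n^\bullet(A))_{n\in\N}$ is tight in $\R_+$ as well. For the converse, fix a countable family $(f_k)_{k\in\N}\subset C_c(M)$ which is ``rich enough'' to determine the vague topology and to provide the compactness criterion of Lemma \ref{vague topology}(ii) — concretely, exhaust $M$ by relatively compact open sets $U_j$ with $\overline{U_j}\subset U_{j+1}$ and pick continuous compactly supported $f_j$ with $1_{\overline{U_j}}\le f_j\le 1_{U_{j+1}}$. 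Given $\varepsilon>0$, the assumed tightness in $\R_+$ of $(\lambda_n^\bullet(U_{j+1}))_{n\in\N}$ gives constants $c_j<\infty$ with $\P[\lambda_n^\bullet(U_{j+1})>c_j]\le\varepsilon 2^{-j}$ for all $n$; then $\lambda_n^\bullet(f_j)\le\lambda_n^\bullet(U_{j+1})\le c_j$ off an event of probability $\le\varepsilon2^{-j}$. Set $K_\varepsilon=\{\nu\in\mathcal M(M):\nu(f_j)\le c_j\ \forall j\}$. By Lemma \ref{vague topology}(ii) this set has the property that any $g\in C_c(M)$ is dominated by a multiple of some $f_j$ (since $\supp g$ is covered by finitely many $U_j$, hence contained in some single $U_{j+1}$, so $g\le\|g\|_\infty f_j$), so $\sup_{\nu\in K_\varepsilon}\nu(g)<\infty$; moreover $K_\varepsilon$ is vaguely closed as an intersection of closed sets. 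Therefore $K_\varepsilon$ is vaguely relatively compact, and being closed it is vaguely compact. Finally $\P[\lambda_n^\bullet\notin K_\varepsilon]\le\sum_j\P[\lambda_n^\bullet(f_j)>c_j]\le\sum_j\varepsilon2^{-j}=\varepsilon$ uniformly in $n$, which is exactly Prokhorov tightness, hence relative compactness in distribution.

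The main point requiring care — and the step I expect to be the mild obstacle — is the bookkeeping that turns tightness of the scalar quantities $\lambda_n^\bullet(A)$ for \emph{all} $A\in\hat{\mathcal B}$ into a \emph{single} compact set $K_\varepsilon$ in $\mathcal M(M)$ with a uniform-in-$n$ probability bound: one must choose a countable cofinal family of test sets (or test functions), allocate the error budget $\varepsilon$ dyadically across this family, and then invoke the vague relative compactness criterion of Lemma \ref{vague topology}(ii) to conclude that the resulting level set is genuinely relatively compact rather than merely bounded on the chosen $f_j$. The only measure-theoretic subtlety is that $\hat{\mathcal B}$ contains arbitrary relatively compact Borel sets while $C_c(M)$-evaluations are what make the vague topology tick; this is bridged, as above, by sandwiching indicators of relatively compact open sets between continuous compactly supported functions, which is possible because $M$ is locally compact Hausdorff (Urysohn). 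No use of equivariance, stationarity, or the manifold/group structure is needed here; the statement is a general fact about random Radon measures on a locally compact second countable Hausdorff space.
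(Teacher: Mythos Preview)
Your argument is correct. The paper does not give its own proof of this lemma but simply refers to Lemma~14.15 in Kallenberg's \emph{Foundations of Modern Probability}; your proposal reproduces essentially the standard argument found there (Prokhorov on the Polish space $\mathcal M(M)$, together with the vague relative compactness criterion of Lemma~\ref{vague topology}(ii) applied to a countable cofinal family of compactly supported test functions).
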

\begin{theorem}[convergence of random measures]\label{convergence of rm}
Let $\lambda^\bullet, \lambda_1^\bullet, \lambda_2^\bullet,\ldots$ be random measures on M. Then, these conditions are equivalent:
\begin{itemize}
 \item[i)] $\lambda_n^\bullet\stackrel{d}\to \lambda^\bullet$
\item[ii)]  $\lambda_n^\bullet (f)\stackrel{d}\to \lambda^\bullet (f)$ for all $f\in C_c(M)$
\item[iii)] $(\lambda_n^\bullet(A_1),\ldots,\lambda_n^\bullet(A_k))\stackrel{d}\to (\lambda^\bullet(A_1),\ldots,\lambda^\bullet(A_k))$ for all $A_1,\ldots,A_k\in\hat{\mathcal B}_{\lambda^\bullet}, k\in\N.$
\end{itemize}
If $\lambda^\bullet$ is a simple point process or a diffusive random measure, it is also equivalent that 
\begin{itemize}
 \item[iv)] $\lambda_n^\bullet(A)\stackrel{d}\to\lambda^\bullet(A)$ for all $A\in\hat{\mathcal B}_{\lambda^\bullet}.$
\end{itemize}
\end{theorem}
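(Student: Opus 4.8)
The plan is to prove the chain of equivalences (i) $\Leftrightarrow$ (ii) $\Leftrightarrow$ (iii), and then (under the extra hypothesis) (iii) $\Leftrightarrow$ (iv), using the fact that $\mathcal M(M)$ with the vague topology is Polish (Lemma \ref{vague topology}(i)) so that convergence in distribution is metrizable and the usual portmanteau/continuous-mapping machinery applies. First I would record that, since $\mathcal M(M)$ is Polish, a sequence of $\mathcal M(M)$-valued random variables converges in distribution iff $\EE[F(\lambda_n^\bullet)]\to\EE[F(\lambda^\bullet)]$ for all bounded continuous $F$ on $\mathcal M(M)$. The implication (i) $\Rightarrow$ (ii) is then immediate: for fixed $f\in C_c(M)$ the evaluation map $\nu\mapsto\nu(f)$ is vaguely continuous by definition of the vague topology, so by the continuous mapping theorem $\lambda_n^\bullet(f)\stackrel d\to\lambda^\bullet(f)$. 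Likewise (i) $\Rightarrow$ (iii): for $A_1,\dots,A_k\in\hat{\mathcal B}_{\lambda^\bullet}$ the map $\nu\mapsto(\nu(A_1),\dots,\nu(A_k))$ is continuous at $\nu=\lambda^\bullet$-almost every realization by Lemma \ref{vague topology}(iii) (since $\lambda^\bullet(\partial A_i)=0$ a.s.), so again the continuous mapping theorem, in the form valid for maps continuous only on a set of full limit-measure, gives the claim.

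Next I would close the loop by showing (ii) $\Rightarrow$ (i) and (iii) $\Rightarrow$ (i). The standard route is via tightness plus separation of limit points. From (ii), the real sequence $\lambda_n^\bullet(f)$ converges in distribution, hence is tight in $\R_+$ for every $f\in C_c(M)$; choosing for each $A\in\hat{\mathcal B}$ a function $f\in C_c(M)$ with $1_A\le f$ gives tightness of $(\lambda_n^\bullet(A))_n$, so by Lemma \ref{general tightness} the sequence $(\lambda_n^\bullet)_n$ is relatively compact in distribution. Any subsequential limit $\tilde\lambda^\bullet$ satisfies $\tilde\lambda^\bullet(f)\stackrel d=\lambda^\bullet(f)$ for all $f\in C_c(M)$ by (ii) and the already-proved (i) $\Rightarrow$ (ii); since the laws of the random variables $\nu(f)$, $f\in C_c(M)$, determine the law of a random element of $\mathcal M(M)$ (a monotone-class / $\pi$-$\lambda$ argument on the generating vague-Borel $\sigma$-field, together with the Cramér--Wold type fact that finite-dimensional distributions of $(\nu(f_1),\dots,\nu(f_m))$ are determined by one-dimensional ones of $\nu(\sum t_i f_i)$), all subsequential limits coincide with the law of $\lambda^\bullet$, whence (i). For (iii) $\Rightarrow$ (i) the argument is the same, using that $\hat{\mathcal B}_{\lambda^\bullet}$ is a $\cap$-stable generating class (the collection of $A$ with $\lambda^\bullet(\partial A)=0$ a.s. contains a basis of relatively compact sets and is stable under finite intersections up to null boundaries) so that the finite-dimensional laws $(\lambda^\bullet(A_1),\dots,\lambda^\bullet(A_k))$, $A_i\in\hat{\mathcal B}_{\lambda^\bullet}$, pin down the law of $\lambda^\bullet$.

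For the final equivalence, (iii) $\Rightarrow$ (iv) is trivial. For (iv) $\Rightarrow$ (iii) when $\lambda^\bullet$ is a simple point process or a diffusive random measure, I would argue that single-set convergence upgrades to joint convergence: given $A_1,\dots,A_k\in\hat{\mathcal B}_{\lambda^\bullet}$, all finite unions and intersections formed from them again lie in $\hat{\mathcal B}_{\lambda^\bullet}$ (their boundaries are contained in $\bigcup_i\partial A_i$, a $\lambda^\bullet$-null set a.s.), so by (iv) each such set has $\lambda_n^\bullet$-mass converging in distribution; in the diffusive case one then reconstructs the joint law on the algebra generated by the $A_i$ via inclusion–exclusion and the fact that a diffusive (resp. simple) random measure is determined by its values on a generating ring because there is no mass/no multiple points on the negligible boundaries. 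Tightness of the vector $(\lambda_n^\bullet(A_1),\dots,\lambda_n^\bullet(A_k))_n$ follows coordinatewise from (iv), and identification of the limit uses the diffusivity/simplicity exactly as in Kallenberg's treatment.

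I expect the main obstacle to be the identification-of-limits step, i.e.\ verifying that the one-dimensional laws $\{\text{law of }\nu(f): f\in C_c(M)\}$ (resp.\ the laws of $\nu$ evaluated on $\hat{\mathcal B}_{\lambda^\bullet}$) genuinely determine the law of a random measure — this is the place where one must be careful about the generating $\sigma$-field of $\mathcal M(M)$ and where the simple/diffusive hypothesis is essential for (iv). The rest is routine portmanteau-theorem bookkeeping; in fact this is a manifold-valued restatement of standard results (Kallenberg, \emph{Random Measures}, and \cite{Kallenberg1997foundations}), so I would ultimately cite those for the technical core and only spell out the adaptations to the present $(M,d,m)$ setting.
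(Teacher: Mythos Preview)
Your proposal is correct and aligns with the paper's treatment: the paper does not prove this result at all but simply refers to Lemma~14.15 and Theorem~14.16 of \cite{Kallenberg1997foundations}, which is exactly what you end up doing after sketching the standard tightness-plus-identification argument. Your outline of (i)$\Leftrightarrow$(ii)$\Leftrightarrow$(iii) via the continuous mapping theorem and Lemma~\ref{general tightness} is the standard route and is fine; the only place where your sketch is genuinely thin is (iv)$\Rightarrow$(iii), where ``reconstructing the joint law via inclusion--exclusion'' is not quite the mechanism---the actual argument in Kallenberg uses that for simple point processes or diffuse random measures the one-dimensional distributions over a dissecting ring determine the full law---but since both you and the paper ultimately defer to Kallenberg for this, there is no discrepancy to flag.
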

For the proof of these statements we refer to Lemma 14.15 and Theorem 14.16 of \cite{Kallenberg1997foundations}.

Just as in Lemma 11.1.II of \cite{daley2007introduction} we can derive the following result on continuity sets of a random measure $\lambda^\bullet$:
\begin{lemma}\label{cont set}
 Let $\lambda^\bullet$ be a random measure on M, $A \in \mathcal B(M)$ be bounded and $(A)_r$ be the r-neighbourhood of A in M. Then for all but a countable set of $r\in\R_+$ we have $(A)_r\in  \hat{\mathcal B}_{\lambda^\bullet}$.
\end{lemma}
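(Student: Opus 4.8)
The plan is to deduce the statement from the classical fact that a bounded monotone function on $\R_+$ has at most countably many discontinuities, applied to a well-chosen bounded deterministic functional of $\lambda^\bullet$.

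\textbf{Set-up.} Write $(A)_r=\{x\in M:d(x,A)<r\}$ (the closed-neighbourhood convention is treated identically). First I would record the elementary inclusion $\partial (A)_r\subset L_r:=\{x\in M:d(x,A)=r\}$ together with the observation that the level sets $(L_r)_{r\ge 0}$ are pairwise disjoint. Since $A$ is bounded we may fix $p\in M$, $\rho>0$ with $A\subset \overline{B(p,\rho)}$, so that $\{x:d(x,A)\le r\}\subset \overline{B(p,\rho+r)}$ is closed and bounded, hence compact because $M$ is complete (Hopf--Rinow). As $\lambda^\omega$ is a Radon measure it is finite on compacts, so $\lambda^\omega(\{x:d(x,A)\le r\})<\infty$ for a.e.\ $\omega$ and every $r\ge 0$.

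\textbf{Main step.} I would then introduce $h:\R_+\to[0,1]$, $h(r):=\EE\big[\exp\big(-\lambda^\bullet(\{x:d(x,A)\le r\})\big)\big]$, which is well defined because $\omega\mapsto \lambda^\omega(\{d(\cdot,A)\le r\})$ is measurable, and non-increasing in $r$ since the sets $\{d(\cdot,A)\le r\}$ increase with $r$. Let $N\subset\R_+$ be its (countable) set of discontinuities. For $r\notin N$ the function $h$ is left-continuous at $r$; on the other hand, as $r'\uparrow r$ one has $\{d(\cdot,A)\le r'\}\uparrow\{d(\cdot,A)<r\}$, so by continuity from below of measures and dominated convergence $h(r-)=\EE[\exp(-\lambda^\bullet(\{d(\cdot,A)<r\}))]$. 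Hence, for $r\notin N$,
$$\EE\Big[\exp\big(-\lambda^\bullet(\{d(\cdot,A)<r\})\big)-\exp\big(-\lambda^\bullet(\{d(\cdot,A)\le r\})\big)\Big]=h(r-)-h(r)=0,$$
and since the integrand is non-negative it vanishes almost surely, which forces $\lambda^\bullet(L_r)=\lambda^\bullet(\{d(\cdot,A)\le r\})-\lambda^\bullet(\{d(\cdot,A)<r\})=0$ a.s. Because $\partial(A)_r\subset L_r$, this yields $\lambda^\bullet(\partial(A)_r)=0$ a.s., i.e.\ $(A)_r\in\hat{\mathcal B}_{\lambda^\bullet}$, for all $r\in\R_+\setminus N$, and $N$ is countable.

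\textbf{Expected obstacle.} The only genuinely delicate point is that the exceptional set $N$ must be a \emph{single} countable set valid simultaneously for almost every $\omega$: for a fixed $\omega$ the image measure of $\lambda^\omega$ (restricted to a large compact) under $x\mapsto d(x,A)$ is finite and hence has only countably many atoms, but the union of these atom sets over all $\omega$ could a priori be uncountable. Passing to the single bounded deterministic monotone function $h$ is exactly the device that circumvents this; everything else is routine and parallels the argument of Lemma 11.1.II in \cite{daley2007introduction}.
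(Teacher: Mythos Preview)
Your proof is correct and follows essentially the same route the paper points to: it does not give its own argument but refers to Lemma 11.1.II in \cite{daley2007introduction}, whose proof is precisely this ``pass to a single deterministic bounded monotone function (a Laplace-type transform of $\lambda^\bullet$ on the increasing family $\{d(\cdot,A)\le r\}$) and use that it has only countably many jumps'' device. Your identification of the obstacle---that the $\omega$-wise countable atom sets need not have countable union---and its resolution via $h$ are exactly the point of that lemma.
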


\medskip

A random measure $\lambda^\bullet:\Omega\to\mathcal M(M)$ is called G-invariant or just invariant if the distribution of $\lambda^\bullet$ is invariant under the action of G, that is, iff
 $$(\tau_g)_*\lambda^\bullet \quad \stackrel{(d)}=\quad \lambda^\bullet$$
for all $g\in G$. A random measure $q^\bullet:\Omega\to\mathcal M(M\times M)$ is called invariant if its distribution is invariant under the diagonal action of G.\\
If $(\Omega, \mathfrak A)$ admits a measurable flow $\theta_g:\Omega\to\Omega, g\in G, $ that is a measurable mapping $(\omega, g)\mapsto \theta_g\omega$ with $\theta_0$ the identity on $\Omega$ and 
$$\theta_g \circ \theta_h \ =\ \theta_{gh},\quad g,h\in G,$$
then a random measure $\lambda^\bullet:\Omega\to\mathcal M(M)$ is called G-equivariant or just equivariant iff
$$ \lambda(\theta_g \omega,gA)\ =\ \lambda(\omega,A),$$
for all $g\in G, \omega\in\Omega, A\in\mathcal B(M)$. We can think of $\lambda(\theta_g\omega,\cdot)$ as $\lambda(\omega,\cdot)$ shifted by g. Indeed, let $\mathfrak M$ be the cylindrical $\sigma-$algebra generated by the evaluation functionals $A\mapsto \mu(A), A\in \mathcal B(M), \mu\in\mathcal M$. As in example 2.1 of \cite{last2008invariant}, consider the measurable space $(\mathcal M, \mathfrak M)$ and define for $\mu\in\mathcal M, g\in G$ the measure $\theta_g\mu(A)=\mu(g^{-1}A)$. Then, $\{\theta_g, g\in G\}$ is a measurable flow and the identity is an equivariant measure. A random measure $q^\bullet :\Omega\to\mathcal M(M\times M)$ is called equivariant iff
$$ q^{\theta_g\omega}(gA,gB)\ =\  q^{\omega}(A,B),$$
for all $g\in G, \omega\in\Omega, A, B \in\mathcal B(M).$ 

\begin{example}\label{equivariance of maps}
 Let $q^\bullet$ be an equivariant random measure on $M\times M$ given by $q^\omega=(id,T^\omega)_*\lambda^\omega$ for some measurable map $T^\bullet$ and some equivariant random measure $\lambda^\bullet.$ The equivariance condition 
$$\int_A 1_B(y) \delta_{T^{\theta_g\omega}(gx)}(d(gy))\lambda^{\theta_g\omega}(dx)=q^{\theta_g\omega}(gA,gB)=q^\omega(A,B)=\int_A 1_B(y)\delta_{T^\omega(x)}(dy)\lambda^\omega(dx),$$
translates into an equivariance condition for the transport maps:
$$ T^{\theta_g\omega}(gx)=gT^\omega(x).$$
\end{example}

A probability measure $\P$ is called \emph{stationary} iff 
$$\P\circ\theta_g=\P$$
 for all $g\in G$. Given a measure space $(\Omega,\mathfrak A)$ with a measurable flow $(\theta_g)_{g\in G}$ and a stationary probability measure $\P$ any equivariant measure is automatically invariant. The advantage of this definition is that the sum of equivariant measures is again equivariant, and therefore also invariant. The sum of two invariant random measures does not have to be invariant (see Remark \ref{invariant plus invariant}).\\

We say that a random measure $\lambda^\bullet$ has subunit intensity iff $\EE[\lambda^\bullet(A)]\leq m(A)$ for all $A\in\mathcal B(M)$. If equality holds in the last  statement we say that the random measure has unit intensity. An invariant random measure has subunit (or unit) intensity iff its intensity
$$\beta\ =\ \EE[\lambda^\bullet(B_0)]$$
is $\leq 1$ (or $=1$ resp.). Given a random measure, the measure $(\lambda^\bullet\mathbb P)(dy,d\omega):=\lambda^\omega(dy)\,\mathbb P(d\omega)$ on $M\times \Omega$ is called  \emph{Campbell measure} of the random measure $\lambda^\bullet$.\\

\begin{example}
\begin{itemize}
 \item[i)] The Poisson point process with intensity measure m. It is characterized by
\begin{itemize}
\item for each Borel set $A\subset M$ of finite volume the random variable $\omega\mapsto \mu^{\omega}(A)$ is Poisson distributed with parameter $ m(A)$ and
\item for disjoint sets $A_1,\ldots A_k\subset M$ the random variables $\mu^{\omega}(A_1),\ldots,\mu^{\omega}(A_k)$ are independent.
\end{itemize}
It can be written as 
$$\mu^\omega=\sum_{\xi\in \Xi(\omega)}\delta_\xi$$
    with some countable set $\Xi(\omega)\subset M$ without accumulation points.
\item[ii)] The compound Poisson process is a Poisson process with random weights instead of unit weights. It is compounded with another distribution giving the weights of the different atoms. It can be written as
$$\mu^\omega=\sum_{\xi\in \Xi(\omega)} X_\xi \delta_\xi$$
for some iid sequence $(X_\xi)_{\xi\in\Xi(\omega)}$ independent of the Poisson point process. For example one could take $X_\xi$ to be a Poisson random variable or an exponentially distributed random variable. If $X_\xi$ has distribution $\gamma$ we say $\mu^\bullet$ is a $\gamma-$compound Poisson process.
\end{itemize}

\end{example}

From now on we will always assume that we are given two equivariant random measures $\lambda^\bullet$ and $\mu^\bullet$ modeled on some probability space $(\Omega,\mathfrak A,\P)$ admitting a measurable flow $(\theta_g)_{g\in G}$ such that $\P$ is stationary. We will assume that $\Omega$ is a compact metric space. Moreover, we will assume that $\lambda^\bullet$ is absolutely continuous and $\lambda^\bullet$ and $\mu^\bullet$ are almost surely not the zero measure. Note that the invariance implies that $\mu^\omega(M)=\lambda^\omega(M)=\infty$ for almost every $\omega$ (e.g. see Proposition 12.1.VI in \cite{daley2007introduction}).

\subsection{Semicouplings of $\lambda^\bullet$ and $\mu^\bullet$}
A semicoupling of the random measures $\lambda^\bullet$ and $\mu^\bullet$ is a measurable map $q^\bullet: \ \Omega\to\mathcal M(M\times M)$ s.t. for $\mathbb P$-a.e. $\omega\in\Omega$
$$q^\omega\ \mbox{ is a semicoupling of } \lambda^\omega \mbox{ and } \mu^\omega.$$
Its Campbell measure is given by $Q=q^\bullet\P\in\mathcal M(M\times M\times \Omega)$. $Q$ is a semicoupling between the Campbell measures $\lambda^\bullet\P$ and $\mu^\bullet\P$ in the sense that
$$ Q(M\times \cdot \times \cdot )\ =\ \mu^\bullet\P\ \text{ and } \ Q(\cdot\times M\times\cdot)\ \leq\ \lambda^\bullet\P.$$
$Q$ could also be regarded as semicoupling between $\lambda^\bullet\P$ and $\mu^\bullet\P$ on $M\times\Omega\times M\times\Omega$ which is concentrated on the diagonal of $\Omega\times \Omega$. It could be interesting to relax this last condition on $Q$ and allow different couplings of the randomness. However, we will not do so and only consider semicouplings of $\lambda^\bullet\P$ and $\mu^\bullet\P$ that are concentrated on the diagonal of $\Omega\times\Omega$. We will always identify these semicouplings with measures on $M\times M\times \Omega$. 

Given such a semicoupling $Q\in\mathcal M(M\times M\times \Omega)$ we can disintegrate (see Theorem \ref{disintegration theorem}) $Q$ to get a measurable map $q^\bullet: \ \Omega\to\mathcal M(M\times M)$ which is a semicoupling of $\lambda^\bullet$ and $\mu^\bullet$.

\medskip

According to this one-to-one correspondence between $q^\bullet$ --- semicoupling of $\lambda^\bullet$ and $\mu^\bullet$ --- and $Q=q^\bullet\mathbb P$ --- semicoupling  of $\lambda^\bullet \mathbb P$ and $\mu^\bullet\mathbb P$ ---  we will freely switch between them.  And quite often, we will simply speak of \emph{semicouplings of $\lambda^\bullet$ and $\mu^\bullet$}.

\medskip

We denote the set of all semicouplings between $\lambda^\bullet$ and $\mu^\bullet$ by $\Pi_s(\lambda^\bullet,\mu^\bullet)$. The set of all equivariant semicouplings between $\lambda^\bullet$ and $\mu^\bullet$ will be denoted by $\Pi_{es}(\lambda^\bullet,\mu^\bullet)$.

\medskip

A \emph{factor} of some random variable X is a random variable Y which is measurable with respect to $\sigma(X).$ This is equivalent to the existence of a deterministic function $f$ with $Y=f(X)$. In other words, a factor is a rule such that given X we can construct Y. A \emph{factor semicoupling} is a semicoupling of $\lambda^\bullet$ and $\mu^\bullet$ which is a factor of $\lambda^\bullet$ and $\mu^\bullet$.

\subsection{The Monge-Kantorovich problem}
Let  $\lambda, \mu$ be two probability measures on $M$. Moreover, let a cost function $c: M\times M\to\R$ be given. The Monge-Kantorovich problem is to find a minimizer of
$$\int_{M\times M} c(x,y)\ q(dx,dy)$$
among all couplings $q$ of $\lambda$ and $\mu$. A minimizing coupling is called \emph{optimal coupling}. If the optimal coupling $q$ is induced by a transportation map, i.e. $q=(id,T)_*\lambda$, we say that $q$ is a solution to the Monge problem. There are rather general existence and uniqueness results for optimal couplings. We always assume that the cost function $c(x,y)=\vartheta(d(x,y))$ is such that there is a unique solution to the Monge problem between $\lambda$ and $\mu$ whenever $\lambda\ll m.$ For conditions on $\vartheta$ such that this assumption is satisfied we refer to section \ref{s:bd semi}.
\medskip\\
It can be shown that any optimal coupling is concentrated on a $c-$cyclical monotone set. A set $A\subset X\times X$ is called $c-$cyclical monotone if for all $n\in\N$ and $(x_i,y_i)_{i=1}^n\in A^n$ it holds that
$$ \sum_{i=1}^n c(x_i,y_i) \leq \sum_{i=1}^n c(x_i,y_{i+1}),$$
where $y_1=y_{n+1}.$ If the cost function is reasonably well behaved (continuous is more than sufficient, see \cite{betterplans}), also the reverse direction holds. Any coupling which is concentrated on a $c-$cyclical monotone plan is optimal. For further details and applications of mass transport theory we refer to \cite{Rachev-Ruesch, Villani1, villani2009optimal}

\subsection{Cost functionals}

Throughout this article,
$\vartheta$ will be a strictly increasing, continuous function from $\mathbb R_+$ to $\mathbb R_+$ with $\vartheta(0)=0$ and $\lim\limits_{r\to\infty}\vartheta(r)=\infty$.
Given a \emph{scale function} $\vartheta$ as above we define the \emph{cost function}
$$c(x,y)=\vartheta\left(d(x,y)\right)$$
on $M \times M$, the \emph{cost functional} $$\CCost(q)=\int_{M\times M}c(x,y)\, q(dx,dy)$$
on $\mathcal M(M\times M)$
and the \emph{mean cost functional}
$$\Cost(Q)=\int_{M\times M\times\Omega}c(x,y)\ Q(dx,dy,d\omega)$$
on $\mathcal M(M\times M\times\Omega)$.

We have the following basic result on existence and uniqueness of optimal semicouplings the proof of which is deferred to section \ref{s:bd semi}. The first part of the theorem, the existence and uniqueness of an optimal semicoupling, is very much in the spirit of an analogous result by Figalli \cite{Figalli2010optimal} on existence and (if enough mass is transported) uniqueness of an optimal partial coupling. However, in our case the second marginal is arbitrary whereas in \cite{Figalli2010optimal} it is absolutely continuous.

\begin{theorem}\label{eu:Q+q}
(i) For each bounded Borel set $A\subset M$ there exists a unique semicoupling $\Q_A$ of $\lambda^\bullet\P$ and $(1_A\mu^\bullet)\mathbb P$ which minimizes the mean cost functional $\Cost(.)$.

(ii) The measure  $\Q_A$ can be disintegrated as $\Q_A(dx,dy,d\omega):=q_A^\omega(dx,dy)\,{\mathbb P}(d\omega)$ where for $\mathbb P$-a.e. $\omega$ the measure $q_A^\omega$ is the unique minimizer of the cost functional $\CCost(.)$ among the semicouplings of $\lambda^\omega$ and $1_A\mu^\omega$.

(iii) $\Cost(\Q_A)=\int_\Omega\CCost(q_A^\omega)\, \mathbb P(d\omega).$
\end{theorem}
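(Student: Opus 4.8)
The plan is to treat Theorem~\ref{eu:Q+q} as an optimal-transport problem on the enlarged (compact) product space and then push the structure down to fibers. Throughout we may assume $m(A)<\infty$, so $(1_A\mu^\bullet)\P$ is a finite measure on the Polish (indeed compact) space $M\times\Omega$, while $\lambda^\bullet\P$ is a (possibly infinite, but locally finite) Radon measure on $M\times\Omega$; note $\Cost(Q)=\int c(x,y)\,Q(dx,dy,d\omega)$ only sees the part of $\lambda^\bullet\P$ within bounded distance of $\supp(1_A\mu^\bullet)$, so after localizing we work with two genuinely finite measures. The set of semicouplings $Q$ of $\lambda^\bullet\P$ and $(1_A\mu^\bullet)\P$ with $\Cost(Q)<\infty$ is nonempty (any local matching of the finite mass $m(A)$ of $\mu$ to nearby $\lambda$-mass works, using $\lambda^\bullet(M)=\infty$ a.s.) and, by Lemma~\ref{vague topology}(ii), vaguely relatively compact since the total mass of any such $Q$ equals $m(A)$; lower semicontinuity of $Q\mapsto\Cost(Q)$ under vague convergence (the cost $c$ is continuous and nonnegative, use truncation $c\wedge N$ and monotone convergence) then yields existence of a minimizer $\Q_A$ by the direct method.

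For uniqueness (part (i)) I would run the standard $c$-cyclical monotonicity argument, adapted to semicouplings: any optimal $\Q_A$ is concentrated on a $c$-cyclically monotone set in $(M\times\Omega)\times(M\times\Omega)$ --- but since the cost $c(x,y)$ does not depend on the $\Omega$-coordinates and $Q$ lives on the diagonal of $\Omega\times\Omega$, cyclical monotonicity is really a statement about the $M\times M$ projection, fiberwise. If $\Q_A^1,\Q_A^2$ were two optimizers, then $\tfrac12(\Q_A^1+\Q_A^2)$ is also optimal, hence $c$-cyclically monotone, and the usual gluing/interchange argument forces $\Q_A^1=\Q_A^2$ --- here the absolute continuity $\lambda^\bullet\ll m$ (a.s.) is what rules out the nonuniqueness that a semicoupling could otherwise exhibit through the choice of \emph{which} $\lambda$-mass to leave untransported: absolute continuity together with the Monge-uniqueness assumption on $\vartheta$ pins down both the transported density $\rho$ and the map $T$. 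The cleanest route is the one the paper defers to section~\ref{s:bd semi}: reduce the semicoupling problem to an ordinary Monge--Kantorovich problem between $\rho\cdot\lambda^\bullet\P$ and $(1_A\mu^\bullet)\P$ for an optimal density $\rho$, and invoke the assumed uniqueness of the Monge solution when the source is absolutely continuous; the optimal $\rho$ itself is unique by a convexity/strict-monotonicity argument on the cost.

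For parts (ii) and (iii): given the (unique) optimal $\Q_A$, apply the Disintegration Theorem~\ref{disintegration theorem} with respect to the $\Omega$-marginal (which is $\P$, since the second marginal of $\Q_A$ is $(1_A\mu^\bullet)\P$ whose $\Omega$-marginal is $\P$, as $\mu^\omega\neq 0$ a.s.) to write $\Q_A(dx,dy,d\omega)=q_A^\omega(dx,dy)\,\P(d\omega)$ with $\omega\mapsto q_A^\omega$ measurable. Then $\Cost(\Q_A)=\int_\Omega\CCost(q_A^\omega)\,\P(d\omega)$ is immediate from Fubini/Tonelli applied to the nonnegative integrand $c(x,y)$ --- this is part (iii). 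For part (ii), one checks first that $\P$-a.e.\ $q_A^\omega$ is a semicoupling of $\lambda^\omega$ and $1_A\mu^\omega$ (the marginal constraints on $\Q_A$ disintegrate to marginal constraints on $q_A^\omega$ for a.e.\ $\omega$; here one uses countability of a generating algebra of $\mathcal B(M)$ to move from ``for all $B$, a.e.\ $\omega$'' to ``a.e.\ $\omega$, for all $B$''), and then that $q_A^\omega$ is \emph{optimal} fiberwise. The latter is where the main obstacle lies: if some positive-$\P$-measure set of fibers admitted a strictly cheaper semicoupling $\tilde q^\omega$, one would want to glue $\{\tilde q^\omega\}$ into a measurable semicoupling $\tilde\Q_A$ of the Campbell measures beating $\Q_A$, contradicting optimality of $\Q_A$ --- this requires a measurable selection argument (e.g.\ via a Kuratowski--Ryll-Nardzewski / von Neumann selection theorem applied to the multifunction $\omega\mapsto\{\text{optimal semicouplings in fiber }\omega\}$, which is closed-valued and measurable), and one must verify the glued object is a bona fide Campbell-type semicoupling and that its cost integrates the fiberwise costs. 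Uniqueness fiberwise then follows from the same $c$-cyclical monotonicity argument as in (i) applied in each fiber, again using $\lambda^\omega\ll m$. I expect the measurable-selection/gluing step for fiberwise optimality to be the only genuinely delicate point; everything else is the direct method plus disintegration bookkeeping.
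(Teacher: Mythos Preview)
Your overall architecture (direct method for existence at the Campbell level, disintegration, fiberwise optimality) is sound, but there is a genuine gap in the uniqueness argument, and your order of operations makes life harder than necessary.

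\textbf{The gap.} You write that ``the optimal $\rho$ itself is unique by a convexity/strict-monotonicity argument on the cost.'' This does not work: the map $\rho\mapsto\inf_{T}\int c(x,T(x))\rho(x)\,\lambda(dx)$ subject to $T_*(\rho\lambda)=1_A\mu$ is \emph{not} strictly convex in $\rho$ in any way that forces uniqueness. Two different densities $\rho_1,\rho_2$ can a priori yield the same infimal cost, and $c$-cyclical monotonicity of the supports of the corresponding optimal couplings says nothing about which $\lambda$-mass was left behind. The paper's device for pinning down $\rho$ is Figalli's observation (Proposition~\ref{fig obs}): if $q$ is an optimal semicoupling with first-marginal density $f_q$, then $q+(id,id)_*\big((f-f_q)\,m\big)$ is the unique optimal \emph{coupling} between $\lambda$ and $\mu+(f-f_q)\,m$. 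Monge-uniqueness for this full coupling then forces the transport map to be the identity on $\{f>f_q\}$, and from there one shows (Proposition~\ref{u sc bd set}, step (v)) that any two optimal semicouplings share the same graph \emph{and} the same density. Without this trick you have no mechanism to rule out distinct optimal $\rho$'s.

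\textbf{The structural difference.} The paper reverses your order: it first proves deterministic (fiberwise) existence and uniqueness, then shows the map $(\lambda,\mu)\mapsto q$ is \emph{continuous} (Lemma~\ref{continuity bd sc}, via the one-point-compactification trick turning the semicoupling into a genuine coupling on $M\cup\{\eth\}$ and invoking classical stability). Continuity plus measurability of $\omega\mapsto(\lambda^\omega,1_A\mu^\omega)$ immediately gives measurability of $\omega\mapsto q_A^\omega$, so $Q_A:=q_A^\bullet\,\P$ is well defined \emph{a priori}, and the equivalence ``$Q$ minimizes $\Cost$ $\Leftrightarrow$ $q^\omega$ minimizes $\CCost$ for a.e.\ $\omega$'' finishes all three parts at once. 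This sidesteps your measurable-selection step entirely: because the fiberwise minimizer is unique, there is nothing to select. Your route is not wrong, but it front-loads the Campbell-level uniqueness (where the Figalli trick is awkward to run on $M\times\Omega$) and back-loads a selection argument that the paper never needs.
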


For  a bounded Borel set $A\subset M$ , the \emph{transportation cost on $A$} is given by the random variable $\CCo_{A}:\Omega\to[0,\infty]$ as $$\CCo_{A}(\omega):=\CCost(q_{A}^\omega)=\inf\{\CCost(q^\omega):\ q^\omega\text{ semicoupling of $\lambda^\omega$ and $1_A\,\mu^\omega$}\}.$$

\begin{lemma}\label{super}
\begin{enumerate}
	\item If $A_1,\ldots,A_n$ are disjoint then $\forall \omega\in\Omega$ $$\CCo_{\bigcup\limits_{i=1}^nA_i}(\omega)\quad\geq \quad \sum_{i=1}^n \CCo_{A_i}(\omega)$$
	\item If  $A_1=gA_2$ for some $g\in G$, then $\CCo_{A_1}$ and $\CCo_{A_2}$ are identically distributed.
\end{enumerate}
\end{lemma}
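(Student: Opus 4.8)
The plan is to establish the two parts by elementary manipulations of semicouplings, using Theorem \ref{eu:Q+q} to identify $\CCo_A(\omega)$ with the optimal cost functional on bounded pieces. For part (1), fix $\omega$ and suppose $q_i^\omega$ is an optimal semicoupling of $\lambda^\omega$ and $1_{A_i}\mu^\omega$ for each $i$, so that $\CCost(q_i^\omega)=\CCo_{A_i}(\omega)$. The key observation is that, since the $A_i$ are disjoint, the second marginals $1_{A_i}\mu^\omega$ are mutually singular; hence one cannot simply add the $q_i^\omega$ and stay a semicoupling, because the first marginals may overlap and exceed $\lambda^\omega$. Instead I would argue in the other direction: let $q^\omega$ be an optimal semicoupling of $\lambda^\omega$ and $1_{\bigcup A_i}\mu^\omega$, and decompose it as $q^\omega=\sum_{i=1}^n q^\omega\llcorner(M\times A_i)$, using $1_{\bigcup A_i}\mu^\omega = \sum_i 1_{A_i}\mu^\omega$ and disjointness. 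Each piece $q^\omega\llcorner(M\times A_i)$ has second marginal exactly $1_{A_i}\mu^\omega$ and first marginal $\leq (\pi_1)_*q^\omega\leq\lambda^\omega$, so it is a (generally non-optimal) semicoupling of $\lambda^\omega$ and $1_{A_i}\mu^\omega$; therefore $\CCost\big(q^\omega\llcorner(M\times A_i)\big)\geq\CCo_{A_i}(\omega)$. Summing over $i$ and using additivity of $\CCost$ over the disjoint-in-$y$ decomposition gives $\CCo_{\bigcup A_i}(\omega)=\CCost(q^\omega)=\sum_i\CCost\big(q^\omega\llcorner(M\times A_i)\big)\geq\sum_i\CCo_{A_i}(\omega)$, as desired.

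For part (2), suppose $A_1=gA_2$ for some $g\in G$. I would use equivariance of $\lambda^\bullet$ and $\mu^\bullet$ together with stationarity of $\P$, plus the fact that $\tau_g$ is an isometry so $c(gx,gy)=c(x,y)$. Given an optimal semicoupling $q_{A_2}^\omega$ of $\lambda^\omega$ and $1_{A_2}\mu^\omega$, push it forward by $(\tau_g,\tau_g)$ to obtain $(\tau_g,\tau_g)_*q_{A_2}^\omega$; by equivariance this is a semicoupling of $\lambda^{\theta_g\omega}$ and $1_{gA_2}\mu^{\theta_g\omega}=1_{A_1}\mu^{\theta_g\omega}$ and it has the same cost. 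This yields $\CCo_{A_1}(\theta_g\omega)\leq\CCo_{A_2}(\omega)$, and applying the same argument with $g^{-1}$ gives the reverse inequality, so $\CCo_{A_1}(\theta_g\omega)=\CCo_{A_2}(\omega)$ for all $\omega$. Since $\P$ is stationary, $\P\circ\theta_g=\P$, and hence $\CCo_{A_1}$ and $\CCo_{A_2}$ have the same distribution.

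The only real subtlety is part (1): one must resist the temptation to superpose optimal couplings on the pieces (which would give the wrong-direction inequality or fail the marginal constraint) and instead \emph{split} an optimal coupling on the union, observing that restriction in the second variable is compatible with the semicoupling constraint precisely because the constraint on the first marginal is an inequality. Everything else is bookkeeping: additivity of the cost functional under the decomposition $q^\omega=\sum_i q^\omega\llcorner(M\times A_i)$, and the identification $\CCo_A(\omega)=\CCost(q_A^\omega)$ supplied by Theorem \ref{eu:Q+q}(ii). For part (2) the main point is simply that isometries preserve the cost and equivariance transports semicouplings to semicouplings, so no genuine obstacle arises there.
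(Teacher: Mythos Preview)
Your proposal is correct and matches the paper's own argument essentially line for line: for (i) the paper likewise restricts an optimal semicoupling on the union to $M\times A_i$, notes each restriction is a semicoupling for the piece, and sums; for (ii) the paper simply invokes the joint invariance of $\lambda^\bullet$ and $\mu^\bullet$, which is exactly what your equivariance-plus-stationarity computation unpacks. Your version is slightly more detailed (in particular the pointwise identity $\CCo_{A_1}(\theta_g\omega)=\CCo_{A_2}(\omega)$), but there is no substantive difference in approach.
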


\begin{proof}
Property (ii)  follows directly from the   joint invariance of $\lambda^\bullet$ and $\mu^\bullet$.  The intuitive argument for (i) is, that minimizing the cost on $\bigcup_i A_i$ is more restrictive than doing it separately on each of the $A_i$. The more detailed argument is the following.
Given any semicoupling $q^\omega$ of $\lambda^\omega$ and $1_{\bigcup_iA_i} \mu^\omega$ then for each $i$ the measure
$q_i^\omega:=1_{M\times A_i}q^\omega$ is a semicoupling of $\lambda^\omega$ and $1_{A_i} \mu^\omega$. Choosing $q^\omega$ as the minimizer of $\CCo_{\bigcup\limits_{i=1}^nA_i}(\omega)$ yields
$$\CCo_{\bigcup_{i}A_i}(\omega)=\CCost(q^\omega)=\sum_i\CCost(q_i^\omega)\ge\sum_{i} \CCo_{A_i}(\omega).$$
\end{proof}

\subsection{Standard tessellations}

In this section, we construct the fundamental region $B_0$ and thereby a tessellation or a tiling of $M$. We will call this tessellation a standard tessellation. The specific choice of fundamental domain is not really important for us. However, we will choose one to fix ideas.

We now define the Dirichlet region. To this end let $p\in M$ be arbitrary. Due to the assumption of freeness, the stabilizer of p is trivial. Construct the Voronoi tessellation with respect to Gp, the orbit of p. The cell containing p is the Dirichlet region. 

\begin{definition}[Dirichlet region]
 Let $p\in M$ be arbitrary. The Dirichlet region of G centered at p is defined by
$$ D_p(G)\ =\ \{x\in M\ :\ d(x,p)\leq d(x,gp)\ \forall g\in G\}.$$
\end{definition}

From now on we will fix p and write for simplicity of notation $D=D_p(G)$. We want to construct a fundamental domain from D. For every $x\in \overset{\circ}{D}$ we have $d(x,p)<d(gx,p)$ for every $id\neq g\in G$, that is $|Gx\cap D|=1$, where $|H|$ denotes the cardinality of H. However, if $x\in\partial D$ we have $x\in D\cap gD\neq\emptyset$ for some $g\in G$. This implies that $|Gx\cap D|\geq 2$. Yet, for the fundamental region, $B_0$, we need exactly one representative from every orbit. Hence, we need to chose from any orbit $Gx$ intersecting the boundary of D exactly one representative $z\in Gx\cap \partial D$. Let $V$ be a measurable selection of these and finally define $B_0\:=\ \overset{\circ}{D}\cup V.$  By definition, $B_0$ is a fundamental region. Such a measurable selection exists by Theorem 17 and the following Corollary in \cite{dellacherie1975ensembles}.

\begin{example}
 Considering $\R^d$ with group action translations by $\Z^d$ a choice for the fundamental region would be $B_0=[0,1)^d$. If we consider $M=\mathbb{H}^2$ the two dimensional hyperbolic space we can take for G a Fuchsian group acting cocompactly and freely, that is, with no elliptic elements. Then, the closure of the Dirichlet region becomes a hyperbolic polygon (see \cite{katok1992fuchsian}).
\end{example}

\subsection{Optimality}\label{section optimality}

The standard notion of optimality -- minimizers of $\CCost$ or $\Cost$ -- is not well adapted to our setting. For example for any semicoupling $q^\bullet$ between the Lebesgue measure and a Poisson point process of intensity $\beta\leq 1$ we have $\Cost(q^\bullet)=\infty$. Hence, we need to introduce a different notion which we explain in this section.\\
The collection of admissible sets is defined as 
$$\text{Adm}(M)=\{B\in\mathcal B(M): \exists I\subset G, 1\leq|I|<\infty, F \text{ fundamental region} : B=\bigcup_{g\in I}gF\}.$$

For a semicoupling $q^\bullet$ between $\lambda^\bullet$ and $\mu^\bullet$ the \emph{mean transportation cost} of $q^\bullet$ is defined by
$$ \mathfrak C(q^\bullet)\ :=\ \sup_{B\in\text{Adm}(M)}\frac1{m(B)}\EE\left[\int_{M\times B} c(x,y)\ q^\bullet(dx,dy)\right].$$

\begin{definition}
 A semicoupling $q^\bullet$ between $\lambda^\bullet$ and $\mu^\bullet$ is called
\begin{itemize}
 \item[i)] \emph{asymptotically optimal} iff
$$ \mathfrak C(q^\bullet) = \inf_{\tilde q^\bullet\in\Pi_{es}(\lambda^\bullet,\mu^\bullet)}\mathfrak C(\tilde q^\bullet)\ =:\ \mathfrak c_{e,\infty}.$$
\item[ii)] \emph{optimal} iff $q^\bullet$ is equivariant and asymptotically optimal.
\end{itemize}
We will also use several times the quantity
$$\inf_{\tilde q^\bullet\in\Pi_{s}(\lambda^\bullet,\mu^\bullet)}\mathfrak C(\tilde q^\bullet)\ =:\ \mathfrak c_{\infty}.$$
Obviously $\mathfrak c_\infty \leq \mathfrak c_{e,\infty}.$
\end{definition}

Note that the set of optimal semicouplings is convex. This will be useful for the proof of uniqueness.

\begin{remark}
 Equivariant semicouplings $q^\bullet$ are invariant. Hence, they are asymptotically optimal iff  
$$\mathfrak C(q^\bullet)=\EE\left[\int_{M\times B_0} c(x,y) q^\bullet(dx,dy)\right]=\mathfrak c_{e,\infty}.$$
 Because of the invariance, the supremum does not play any role. Moreover, for two different fundamental regions $B_0$ and $\tilde B_0$ define 
$$f(g,h)\ =\ \EE[\CCost(1_{M\times (gB_0\cap h\tilde B_0)}q^\bullet)].$$
Then, for $k\in G$ and equivariant $q^\bullet$ we have $f(g,h)=f(kg,kh).$ Hence, we can apply the mass transport principle to get
$$ \EE\left[\int_{M\times B_0} c(x,y) q^\bullet(dx,dy)\right] = \sum_{h\in G} f(id,h)=\sum_{g\in G} f(g,id) = \EE\left[\int_{M\times \tilde B_0} c(x,y) q^\bullet(dx,dy)\right].$$
Thus, the specific choice of fundamental region is not important for the cost functional $\mathfrak C(\cdot)$ if we restrict to equivariant semicouplings.
\end{remark}

\begin{remark}\label{invariant plus invariant}
The notion of optimality explains why we restrict to stationary probability measures and equivariant random measures. If $\lambda^\bullet$ and $\mu^\bullet$ are just invariant, there does not have to be any invariant semicoupling between them.  Indeed, take $\lambda^\bullet$ a Poisson point process of unit intensity in $\R^d$. It can be written as $\mu^\omega=\sum_{\xi\in\Xi(\omega)}\delta_\xi$. Define $\lambda^\omega:=\sum_{\xi\in\Xi(\omega)}\delta_{-\xi}$ to be the Poisson process that we get if we reflect the first one at the origin. Then $\lambda^\bullet$ and $\mu^\bullet$ are invariant but not jointly invariant, e.g. consider the set $[0,1)^d\times[-1,0)^d$, and not both of them can be equivariant. 
\end{remark}

\subsection{An abstract existence result}
Given that the mean transportation cost is finite the existence of an optimal semicoupling can be shown by an abstract compactness result. A similar reasoning is used to prove Corollary 11 in \cite{holroyd2009geometric}.  

\begin{proposition}\label{abstract exist}
 Let $\lambda^\bullet$ and $\mu^\bullet$ be two equivariant random measures on M with  intensities 1 and $\beta\leq 1$ respectively. Assume that  $\inf_{q^\bullet\in \Pi_{es}(\lambda^\bullet,\mu^\bullet)}\mathfrak C(q^\bullet)=\mathfrak c_{e,\infty}<\infty$, then there exists some equivariant semicoupling $q^\bullet$ between $\lambda^\bullet$ and $\mu^\bullet$ with $\mathfrak C(q^\bullet)=\mathfrak c_{e,\infty}.$
\end{proposition}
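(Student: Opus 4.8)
The plan is to obtain an optimal equivariant semicoupling as a vague limit of a minimizing sequence, using the compactness of $\mathcal M(M\times M\times\Omega)$ in the vague topology together with tightness coming from the finite-cost bound, and then to check that the limit is again an equivariant semicoupling with no larger mean cost.

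\textbf{Step 1: A minimizing sequence and tightness.} Pick a sequence $(q_n^\bullet)_{n\in\N}\subset\Pi_{es}(\lambda^\bullet,\mu^\bullet)$ with $\mathfrak C(q_n^\bullet)\to\mathfrak c_{e,\infty}$, and pass to the Campbell measures $Q_n=q_n^\bullet\P\in\mathcal M(M\times M\times\Omega)$. By the remark after the definition of optimality, equivariance makes the supremum in $\mathfrak C$ redundant, so $\EE[\CCost(1_{M\times B_0}q_n^\bullet)]=\mathfrak C(q_n^\bullet)$ is uniformly bounded, say by $C$. The second marginal of each $Q_n$ is $\mu^\bullet\P$, which is a fixed Radon measure; restricted to any admissible $B\in\text{Adm}(M)$ the total mass $\EE[\mu^\bullet(B)]=\beta\,m(B)$ is finite. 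Combined with the cost bound and the properties of $\vartheta$ (strictly increasing, $\vartheta(r)\to\infty$), the mass of $Q_n$ on $\{d(x,y)\ge R\}\times B\times\Omega$ is at most $C\,m(B)/\vartheta(R)$, which is small uniformly in $n$. Hence for every relatively compact $B\subset M$ and every $f\in C_c(M\times M\times\Omega)$ the integrals $Q_n(f)$ stay bounded, and by Lemma \ref{vague topology}(ii) the sequence $(1_{M\times B\times\Omega}Q_n)_n$ is vaguely relatively compact; a diagonal argument over an exhaustion of $M$ by admissible sets yields a subsequence with $Q_n\to Q$ vaguely in $\mathcal M(M\times M\times\Omega)$.

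\textbf{Step 2: The limit is an equivariant semicoupling.} The marginal conditions $(\pi_2)_*Q_n=\mu^\bullet\P$ and $(\pi_1)_*Q_n\le\lambda^\bullet\P$ pass to the vague limit on stochastic continuity sets (Lemma \ref{vague topology}(iii) together with Lemma \ref{cont set}), using that no mass of $Q_n$ escapes to spatial infinity in the first variable because of the cost bound; this gives $Q\in\Pi_s(\lambda^\bullet\P,\mu^\bullet\P)$. Equivariance is preserved under vague limits: each $Q_n$ is invariant under the diagonal action of the (countable) group $G$ on $M\times M$ combined with $\theta_g$ on $\Omega$, and this invariance is a closed condition in the vague topology, so $Q$ is equivariant. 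Disintegrating $Q$ over $\Omega$ (Theorem \ref{disintegration theorem}) produces the desired equivariant semicoupling $q^\bullet$.

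\textbf{Step 3: Lower semicontinuity of the cost.} It remains to show $\mathfrak C(q^\bullet)\le\mathfrak c_{e,\infty}$, i.e. $\EE[\CCost(1_{M\times B_0}q^\bullet)]\le\liminf_n\EE[\CCost(1_{M\times B_0}q_n^\bullet)]$. Since $c(x,y)=\vartheta(d(x,y))$ is continuous and nonnegative, the functional $Q\mapsto\int c\,dQ$ is lower semicontinuous under vague convergence (approximate $c\wedge k$ by functions in $C_c$, monotone limit in $k$); restricting the integration to $M\times B_0\times\Omega$ with $B_0$ a stochastic continuity set gives the required inequality. Combined with Step 1, $\mathfrak C(q^\bullet)=\mathfrak c_{e,\infty}$, so $q^\bullet$ is optimal.

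\textbf{Main obstacle.} The delicate point is controlling mass in the \emph{first} (source) variable: $\lambda^\omega$ has infinite total mass, so $Q_n(\,\cdot\times M\times\cdot\,)\le\lambda^\bullet\P$ is not itself a finite measure, and one must rule out that mass $q_n^\bullet$ transports into $B_0$ escapes to spatial infinity on the $\lambda$-side. This is exactly where the structural hypothesis $c=\vartheta(d)$ with $\vartheta(r)\to\infty$ is used: a uniform bound on $\EE[\CCost(1_{M\times B_0}q_n^\bullet)]$ forces the source mass feeding $B_0$ to be concentrated, up to $\varepsilon$, in a fixed bounded neighbourhood of $B_0$, which restores the tightness needed both for compactness and for passing the marginal inequality to the limit.
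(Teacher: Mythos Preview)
Your proposal is correct and follows essentially the same route as the paper: take a minimizing sequence of equivariant semicouplings, pass to a vague limit of the Campbell measures, verify that the limit is again an equivariant semicoupling, and conclude by lower semicontinuity of the cost. One small point worth noting: the paper obtains tightness more directly from the \emph{first}-marginal inequality alone (for $f\in C_c$ with $\supp(f)\subset A\times M\times\Omega$, simply $Q_n(f)\le\|f\|_\infty\,m(A)$), so the cost bound and the diagonal argument you invoke in Step~1 are not needed there; the cost bound enters exactly where you identify it in your ``main obstacle'', namely to prevent loss of mass in the second marginal when passing to the limit.
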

\begin{proof}
As $\mathfrak c_{e,\infty}<\infty$ there is a sequence $q^\bullet_n\in\Pi_{is}(\lambda^\bullet,\mu^\bullet) $ such that $\mathfrak C(q^\bullet_n)= c_n\searrow \mathfrak c_{e,\infty}$. Moreover, we can assume that the transportation cost is uniformly bounded by $c_n\leq 2\mathfrak c_{e,\infty} =: c$ for all n. We claim that there is a subsequence $(q^\bullet_{n_k})_{k\in\N}$ of $(q^\bullet_n)_{n\in\N}$ converging to some $q^\bullet\in\Pi_{is}(\lambda^\bullet,\mu^\bullet)$ with $\mathfrak C(q^\bullet)=\mathfrak c_{e,\infty}.$ We prove this in four steps:\\

i) The functional $\mathfrak C(\cdot)$ is lower semicontinuous:\\
It is sufficient to prove that the functional $\CCost(\cdot)$ is lower semicontinuous. Let $(\rho_n)_{n\in\N}$ be any sequence of couplings between finite measures converging to some measure $\rho$ in the vague topology. If $\CCost(\rho_n)=\infty$ for almost all n we are done. Hence, we can assume, that the transportation cost are bounded. Let $(B_0)_r$ denote the r-neighbourhood of $B_0$. For $k\in\R$ let $\phi_k:M\times M\to [0,1]$ be nice cut off functions with $\phi_k(x,y)=1$ on $(B_0)_k \times (B_0)_k$ and $\phi_k(x,y)=0$ if $x\in\complement((B_0)_{k+1})$ or $y\in\complement((B_0)_{k+1})$. Then, we have using continuity of the cost function c(x,y) and by the definition of vague convergence
\begin{eqnarray*}
 \liminf_{n\to\infty}\CCost(\rho_n) &=& \liminf_{n\to\infty} \int_{M\times M} c(x,y) \rho_n(dx,dy)\\
&=& \liminf_{n\to\infty} \sup_{k\in\N} \int_{M\times M} \phi_k(x,y)\ c(x,y) \rho_n(dx,dy)\\
&\geq& \sup_k \liminf_{n\to\infty} \int_{M\times M}\phi_k(x,y)\  c(x,y) \rho_n(dx,dy)\\
&=& \sup_k \int_{M\times M}\phi_k(x,y)\  c(x,y) \rho(dx,dy) = \CCost(\rho).
\end{eqnarray*}
Applying this to $1_{M\times B_0}q^\bullet_n$ shows the lower semicontinuity of $\mathfrak C(\cdot).$

ii) The sequence $(q^\bullet_n)_{n\in\N}$ is tight in $\mathcal M(M\times M\times \Omega)$:\\
Put $f\in C_c(M\times M\times \Omega)$. According to Lemma \ref{vague topology} we have to show $\sup_{n\in\N} q_n^\bullet\P(f)\leq M_f<\infty$ for some constant $M_f$. To this end let $A\subset M$ compact be such that $\supp(f)\subset A\times M\times \Omega.$ We estimate
\begin{eqnarray*}
 \int_{M\times M\times\Omega} f(x,y,\omega) q_n^\omega(dx,dy)\P(d\omega) &\leq &\|f\|_\infty\ \lambda^\bullet\P(A\times\Omega)\\
&\leq& \|f\|_\infty \ m(A) =: M_f.
\end{eqnarray*}
Hence, there is some measure $q^\bullet$ and a subsequence $q^\bullet_{n_k}$ with $q^\bullet_{n_k}\to q^\bullet$ in vague topology on $\mathcal M(M\times M\times \Omega)$. By lower semicontinuity, we have $\mathfrak C(q^\bullet)\leq \liminf \mathfrak C(q^\bullet_{n_k})= \mathfrak c_{e,\infty}.$ Now we have a candidate. We still need to show that it is admissible.\\

iii) $q^\bullet$ is equivariant:\\
Take any continuous compactly supported $f\in C_c(M\times M\times \Omega)$.  By definition of vague convergence
$$ \int f(x,y,\omega) q^\omega_{n_k}(dx,dy)\P(d\omega) \to \int f(x,y,\omega) q^\omega(dx,dy)\P(d\omega).$$
As all the $q^\bullet_{n_k}$ are equivariant, we have for any $g\in G$
\begin{eqnarray*}
 \int f(x,y,\omega) q^\omega_{n_k}(dx,dy)\P(\omega)&=&\int f(g^{-1}x,g^{-1}y,\theta_g\omega) q^{\theta_g\omega}_{n_k}(dx,dy)\P(d\omega)\\
&\to& \int f(g^{-1}x,g^{-1}y,\theta_g\omega) q^{\theta_g\omega}(dx,dy)\P(d\omega).
\end{eqnarray*}

Putting this together, we have for any $g\in G$
$$ \int f(x,y,\omega) q^\omega(dx,dy)\P(d\omega) = \int f(g^{-1}x,g^{-1}y,\theta_g\omega) q^{\theta_g\omega}(dx,dy)\P(d\omega).$$
Hence, $q^\bullet$ is equivariant.\\

iv) $q^\bullet$ is a semicoupling of $\lambda^\bullet$ and $\mu^\bullet$:\\

Fix $h\in C_c(M\times \Omega).$ Put $A\subset M$ compact  such that $\supp(h)\subset A\times \Omega$ and $A\in Adm(M).$ Denote the $R-$neighbourhood of A by $A_R$. By the uniform bound on transportation cost we have
\begin{equation}
q_n^\bullet\P(\complement(A_R),A,\Omega)\leq m(A) \frac{c}{\vartheta(R)},\label{uniform bnd on cplg} 
\end{equation}

uniformly in n. Let $f_R:M\to [0,1]$ be a continuous compactly supported function such that $f_R(x)=1$ for $x\in A_R$ and $f_R(x)=0$ for $x\in\complement A_{R+1}$. As $q_n^\bullet\P$ is a semicoupling of $\lambda^\bullet$ and $\mu^\bullet$ we have due to monotone convergence
\begin{eqnarray*}
\int_{M\times\Omega}h(y,\omega)\mu^\omega(dy)\P(d\omega)&=&\int_{M\times M\times \Omega} h(y,\omega)q_n^\omega(dx,dy)\P(d\omega) \\
&=& \lim_{R\to\infty} \int_{M\times M\times \Omega} f_R(x)h(y,\omega) q_n^\omega(dx,dy)\P(d\omega).
\end{eqnarray*}
Because of the uniform bound (\ref{uniform bnd on cplg}) we have
$$\left|\int_{M\times \Omega} h(x,\omega) \mu^\omega(dx)\P(d\omega)-\int_{M\times M\times \Omega}f_R(x)h(y,\omega) q_{n_k}^\omega(dx,dy)\P(d\omega)\right|\leq m(A) \frac{c\cdot \|h\|_\infty}{\vartheta(R)}.$$
Taking first the limit of $n_k\to\infty$ and then the limit of $R\to\infty$ we conclude using vague convergence and monotone convergence that
\begin{eqnarray*}
0\ &=& \lim_{R\to\infty}\lim_{k\to\infty}\left|\int_{M\times \Omega} h(y,\omega) \mu^\omega(dy)\P(d\omega)-\int_{M\times M\times \Omega}f_R(x)h(y,\omega) q_{n_k}^\omega(dx,dy)\P(d\omega)\right|\\
&=& \lim_{R\to\infty}\left|\int_{M\times \Omega} h(y,\omega) \mu^\omega(dy)\P(d\omega)-\int_{M\times M\times \Omega}f_R(x)h(y,\omega) q^\omega(dx,dy)\P(d\omega)\right|\\
&=& \left|\int_{M\times \Omega} h(y,\omega) \mu^\omega(dy)\P(d\omega)-\int_{M\times M\times \Omega}h(y,\omega) q^\omega(dx,dy)\P(d\omega)\right|
\end{eqnarray*}
This shows that the second marginal equals $\mu^\bullet$. For the first marginal we have for any $k\in C_c(M\times \Omega)$
$$\int_{M\times\Omega} k(x,\omega) q^\omega_{n_k}(dx,dy)\P(d\omega) \leq \int_{M\times\Omega} k(x,\omega)\lambda^\omega(dx)\P(d\omega).$$
In particular, using the function $f_R$ from above we have,
$$\int_{M\times\Omega} f_R(y)\ k(x,\omega) q^\omega_{n_k}(dx,dy)\P(d\omega) \leq \int_{M\times\Omega} k(x,\omega)\lambda^\omega(dx)\P(d\omega).$$
Taking the limit $n_k\to\infty$ yields by vague convergence
$$\int_{M\times\Omega} f_R(y)\ k(x,\omega) q^\omega(dx,dy)\P(d\omega) \leq \int_{M\times\Omega} k(x,\omega)\lambda^\omega(dx)\P(d\omega).$$
Finally taking the supremum over $R$ shows that $q^\bullet$ is indeed a semicoupling of $\lambda^\bullet$ and $\mu^\bullet$.
\end{proof}

\begin{remark}
\begin{itemize}
 \item[i)]   This coupling need not be a factor coupling. We do not know if it is in general true or not that $\mathfrak c_\infty=\mathfrak c_{e,\infty}$, that is, if minimizing the functional $\mathfrak C(\cdot)$ over all semicouplings is the same as minimizing over all equivariant semicouplings. However, in the case that the balls $\Lambda_r\subset G$ are F\o lner sets, we can show equality (see Corollary \ref{lim inf change} and Remark \ref{rem inf eq inf}).
\item[ii)] The same proof shows the existence of optimal semicouplings between $\lambda^\bullet$ and $\mu^\bullet$ with intensities 1 and $\beta\geq 1$ respectively. In this case the ``semi'' is on the side of $\mu^\bullet$ (see also section \ref{s:other}).
\end{itemize}
\end{remark}

\begin{lemma}\label{semicoupling gleich coupling}
 Let $q^\bullet$ be an invariant semicoupling of two random measures $\lambda^\bullet$ and $\mu^\bullet$ with intensities 1 and $\beta\leq 1$ respectively. Then, $q^\bullet$ is a coupling iff $\beta=1$.
\end{lemma}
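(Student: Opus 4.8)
The plan is to use the mass transport principle (in its measure-theoretic incarnation, as already exploited in Lemma \ref{fundamental region have same mass} and in the remark on the independence of the fundamental region) together with the intensity bookkeeping for invariant random measures. Write $\nu^\omega := (\pi_1)_\ast q^\omega$ for the first marginal; by definition of semicoupling $\nu^\omega \le \lambda^\omega$ for a.e.\ $\omega$, and $q^\bullet$ is a coupling exactly when $\nu^\omega = \lambda^\omega$ a.s., equivalently when the nonnegative random measure $\lambda^\bullet - \nu^\bullet$ vanishes a.s. Since $q^\bullet$ is invariant, so is $\nu^\bullet$ (push-forward by $\pi_1$ commutes with the diagonal $G$-action), and hence $\lambda^\bullet - \nu^\bullet$ is an invariant random measure. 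The key point is that an invariant random measure with intensity zero is a.s.\ the zero measure, so it suffices to show that the intensity of $\lambda^\bullet - \nu^\bullet$ equals $1 - 1 = 0$ precisely when $\beta = 1$.

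First I would compute $\EE[\nu^\bullet(B_0)]$. Using $(\pi_2)_\ast q^\omega = \mu^\omega$ and the second marginal being fully $\mu^\bullet$, one has $\EE[\nu^\bullet(M)]$ ``$=$'' $\EE[\mu^\bullet(M)]$ in the appropriate localized sense; to make this rigorous per fundamental region, define for $g,h \in G$ the quantity $f(g,h) := \EE\big[ q^\bullet(gB_0 \times hB_0) \big]$, which is invariant under the diagonal $G$-action because $q^\bullet$ is invariant and $m$ is $G$-invariant. Then $\EE[\nu^\bullet(B_0)] = \sum_{h \in G} f(\mathrm{id},h)$ and $\EE[\mu^\bullet(B_0)] = \sum_{g \in G} f(g,\mathrm{id})$, and the mass transport principle gives these two sums are equal. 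Hence $\EE[\nu^\bullet(B_0)] = \EE[\mu^\bullet(B_0)] = \beta$. On the other hand $\EE[\lambda^\bullet(B_0)] = 1$ since $\lambda^\bullet$ has unit intensity, so $\EE\big[(\lambda^\bullet - \nu^\bullet)(B_0)\big] = 1 - \beta$.

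Now conclude in both directions. If $\beta = 1$, then the invariant nonnegative random measure $\lambda^\bullet - \nu^\bullet$ has intensity $\EE\big[(\lambda^\bullet-\nu^\bullet)(B_0)\big] = 0$; since $\{gB_0 : g\in G\}$ tiles $M$ and by invariance each $\EE\big[(\lambda^\bullet-\nu^\bullet)(gB_0)\big] = 0$, we get $(\lambda^\bullet - \nu^\bullet)(gB_0) = 0$ a.s.\ for every $g$, hence a.s.\ $\lambda^\bullet = \nu^\bullet$ on all of $M$, i.e.\ $q^\bullet$ is a coupling. Conversely, if $q^\bullet$ is a coupling then $\nu^\bullet = \lambda^\bullet$ a.s., so $\beta = \EE[\nu^\bullet(B_0)] = \EE[\lambda^\bullet(B_0)] = 1$.

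The only mild subtlety — and the step I would be most careful about — is the rigorous justification that $\EE[\nu^\bullet(B_0)] = \EE[\mu^\bullet(B_0)]$, i.e.\ that the mass $q^\bullet$ assigns with first coordinate in $B_0$ (over all second coordinates) equals the mass it assigns with second coordinate in $B_0$ (over all first coordinates). This is exactly a mass transport / Campbell-measure argument: the possibly-infinite rearrangement of the double sum $\sum_{g,h} f(g,h)$ is legitimate because all terms are nonnegative (Tonelli), and the diagonal-invariance of $f$ is what the mass transport principle consumes. Everything else — push-forward commuting with the action, invariance of $\nu^\bullet$, and "invariant random measure with zero intensity is a.s.\ zero" — is routine given the set-up in Section \ref{s:su}.
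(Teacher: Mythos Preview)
Your proof is correct and follows essentially the same route as the paper: define $f(g,h)=\EE[q^\bullet(gB_0\times hB_0)]$, apply the mass transport principle to obtain $\EE[q^\bullet(B_0,M)]=\EE[q^\bullet(M,B_0)]=\beta$, compare with $\EE[\lambda^\bullet(B_0)]=1$, and then use that a nonnegative invariant random measure with zero intensity vanishes a.s. Your write-up is in fact slightly more explicit than the paper's about the last step (passing from $\EE[(\lambda^\bullet-\nu^\bullet)(B_0)]=0$ to $\lambda^\bullet=\nu^\bullet$ a.s.\ via the tiling and countability of $G$), which is a good thing.
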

\begin{proof}
 This is another application of the mass transport principle. Let $B_0$ be a fundamental region and define $f(g,h)=\EE[q^\bullet(gB_0,hB_0)]$. By invariance of $q^\bullet$, we have $f(g,h)=f(kg,kh)$ for any $k\in G$. Hence, we get
$$  1=\EE[\lambda^\bullet(B_0)]\geq \EE[q^\bullet(B_0,M)] = \sum_{g\in G} f(id,g) = \sum_{h\in G} f(h,id) = \EE[q^\bullet(M,B_0)]=\beta.$$
We have equality iff $\beta=1$. By definition of semicoupling, we also have $q^\omega(A,M)\leq \lambda^\omega(A)$ for any $A\subset M$. Hence, in the case of equality we must have $q^\omega(A,M) = \lambda^\omega(A)$ for $\P-$almost all $\omega.$
\end{proof}

\begin{remark}
 The remark above applies again. Considering the case of intensity $\beta\geq 1$ gives that $q^\bullet $ is a coupling iff $\beta=1$.
\end{remark}

\subsection{Assumptions}
Let us summarize the setting and assumptions we work with in the rest of the article.

\begin{itemize}
 \item M will  be a smooth connected non-compact Riemannian manifold with Riemannian volume measure m, such that there is a group G of isometries acting properly discontinuously, cocompactly and freely on M.
\item $B_0$ will denote the chosen fundamental region.
\item $c(x,y)=\vartheta(d(x,y))$ with $\vartheta:\R_+\to\R_+$ such that $\vartheta(0)=0$ and $\lim_{r\to\infty}\vartheta(r)=\infty.$ Given two compactly supported probability measures on M, $\lambda\ll m$ and $\mu$ arbitrary, we will assume that the optimal transportation problem admits a unique solution which is induced by a measurable map T, i.e. $q=(id,T)_*\lambda.$
\item $(\Omega,\mathfrak A,\P)$ will be a  probability space admitting a measurable flow $(\theta_g)_{g\in G}$. $\P$ is assumed to be stationary and $\Omega$ is assumed to be a compact metric space.
\item $\lambda^\bullet$ and $\mu^\bullet$ will be equivariant measure of intensities one respectively $\beta \in (0,\infty).$ Moreover, we assume that $\lambda^\bullet$ is absolutely continuous.

\end{itemize}

\section{Optimal Semicouplings on bounded sets}\label{s:bd semi}
The goal of this section is to prove Theorem \ref{eu:Q+q}, the crucial existence and uniqueness result for optimal semicouplings between $\lambda^\bullet$ and $\mu^\bullet$ restricted to a bounded set. The strategy will be to first prove existence and uniqueness of optimal semicouplings $q=q^\omega$ for deterministic measures $\lambda=\lambda^\omega$ and $\mu=\mu^\omega$. Secondly, we will show that the map $\omega\mapsto q^\omega$ is measurable, which will allow us to deduce Theorem \ref{eu:Q+q}.
\medskip\\  
Optimal semicouplings are solutions of a twofold optimization problem: the optimal choice of a density $\rho\le1$ of the first marginal $\lambda$ and subsequently the optimal choice of a coupling between $\rho\lambda$ and $\mu$.
This twofold optimization problem can also be interpreted as a transport problem with free boundary values.

Throughout this section, we fix the cost function $c(x,y)=\vartheta(d(x,y))$ with
$\vartheta$ -- as before -- being a strictly increasing, continuous function from $\mathbb R_+$ to $\mathbb R_+$ with $\vartheta(0)=0$ and $\lim\limits_{r\to\infty}\vartheta(r)=\infty$. As already mentioned, we additionally assume that the optimal transportation problem between two compactly supported probability measures $\lambda$ and $\mu$ such that $\lambda\ll m$ has a unique solution given by a transportation map, e.g. the optimal coupling is given by $q=(id,T)_*\lambda$. There are very general results on the uniqueness of the solution to the Monge problem for which we refer to chapters 9 and 10 of \cite{villani2009optimal}. To be more concrete we state a uniqueness result for compact manifolds due to McCann \cite{mccann01polar} and  an uniqueness result by Huesmann and Sturm in the simple but for us very interesting case that the measure $\mu$ is discrete.

\begin{theorem}[McCann]\label{u mfd}
Let N be a compact manifold, $\lambda\ll m$ and $\mu$ be probability measures and $c(x,y)=\int_0^{d(x,y)} \tau(s)ds$ with $\tau:\R_+\to\R$ continuously increasing and $\tau(0)=0$. Then, there is a measurable map $T:M\to M\cup \{\eth\}$ such that the unique optimal coupling between $\lambda$ and $\mu$ is given by $q=(id,T)_*\lambda.$
\end{theorem}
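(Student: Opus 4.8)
\emph{Proof idea.} The plan is to deduce the statement from Kantorovich duality together with the differentiability theory of $c$-concave functions, using crucially that $h(r):=\int_0^r\tau(s)\,ds$ is \emph{strictly} convex (because $\tau$ is strictly increasing) and locally Lipschitz (because $\tau$ is continuous), so that $c(x,y)=h(d(x,y))$ is a Lipschitz function on the compact product $N\times N$.

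First I would record that an optimal coupling $q_0$ of $\lambda$ and $\mu$ exists: the set of couplings is weakly compact (the marginals are tight, being probability measures on the compact space $N$) and $q\mapsto\int c\,dq$ is weakly lower semicontinuous since $c\ge 0$ is continuous. By the duality facts recalled above, $q_0$ is concentrated on a $c$-cyclically monotone set, hence on the $c$-subdifferential $\partial^c\psi=\{(x,y):\psi(x)+\psi^c(y)=c(x,y)\}$ of some $c$-concave Kantorovich potential $\psi\colon N\to\R$. Since $c$ is Lipschitz on $N\times N$ and $N$ is compact, $\psi$ is Lipschitz, hence differentiable $m$-almost everywhere by Rademacher's theorem, hence differentiable $\lambda$-almost everywhere because $\lambda\ll m$.

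Next I would extract the transport map. Fix $x_0$ at which $\psi$ is differentiable and $y_0$ with $(x_0,y_0)\in\partial^c\psi$; then $x\mapsto\psi(x)-c(x,y_0)$ is maximised at $x_0$, so $\nabla\psi(x_0)=\nabla_xc(x_0,y_0)$ whenever $d(\cdot,y_0)$ is differentiable at $x_0$. Off the cut locus one has $\nabla_xd(x_0,y_0)=-\,d(x_0,y_0)^{-1}\exp_{x_0}^{-1}(y_0)$, so the identity becomes $\tau\big(d(x_0,y_0)\big)\,d(x_0,y_0)^{-1}\exp_{x_0}^{-1}(y_0)=-\nabla\psi(x_0)$; because $r\mapsto\tau(r)$ is strictly increasing and $\exp_{x_0}$ is a diffeomorphism onto the complement of the cut locus, this equation has at most one solution $y_0$, and that solution depends measurably on $x_0$. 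Calling it $T(x_0)$ (and setting $T:=\eth$ on the exceptional $\lambda$-null set), the fibre of $\partial^c\psi$ over $\lambda$-a.e.\ $x$ is the single point $T(x)$; since $q_0$ is concentrated on $\partial^c\psi$ and has first marginal $\lambda$, it follows that $q_0=(id,T)_*\lambda$. For uniqueness I would use that the set of optimal couplings is convex: if $q_0$ and $q_1$ are both optimal, then so is $\tfrac{1}{2}(q_0+q_1)$, which by the same argument is carried by a graph; since $q_0\le q_0+q_1$, both $q_0$ and $q_1$ are carried by that graph and therefore coincide.

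I expect the main obstacle to be the cut locus, where $c(x,\cdot)$ and $d(x,\cdot)$ fail to be differentiable, so one must justify that for $\lambda$-a.e.\ $x$ the gradient identity above can still be extracted, e.g.\ by showing that the partner $y$ with $(x,y)\in\supp q_0$ avoids the cut locus of $x$. This is precisely the technical heart of McCann's argument; it rests on the semiconcavity of $x\mapsto d(x,y)$, the fact that a $c$-concave function cannot have its $c$-subdifferential straddle the cut locus at a point of differentiability, and the absolute continuity of $\lambda$. The details are in \cite{mccann01polar}, and the general manifold theory is developed in Chapters 9--10 of \cite{villani2009optimal}.
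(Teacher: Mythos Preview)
The paper does not prove this theorem at all: it is stated as a black box and attributed to McCann \cite{mccann01polar}, with a pointer to Chapters~9--10 of \cite{villani2009optimal} for the general theory. Your sketch is precisely the outline of McCann's original argument --- existence by compactness, Kantorovich duality and $c$-concave potentials, Rademacher to get a.e.\ differentiability of $\psi$ under $\lambda\ll m$, the twist condition coming from strict monotonicity of $\tau$, and the cut-locus analysis as the technical core --- so there is nothing to compare: you have reconstructed what the paper merely cites.
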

The ``cemetery'' $\eth$ in the statement is not really important. This is the place where all points outside of the support of $\lambda$ are sent. We just include it to make some notations easier.

If we assume $\mu$ to be discrete, Lemma 6.1 in \cite{huesmann2010optimal} shows that we can actually take $\vartheta$ to be any continuous strictly increasing function.

\begin{lemma}\label{leb-dirac} Given a finite set $\Xi=\{\xi_1,\ldots,\xi_k\}\subset M$, positive numbers $(a_i)_{1\leq i\leq k}$ summing to one and a probability density $\rho\in L^1(M, m)$. Consider the cost function $c(x,y)=\vartheta(d(x,y))$ for some continuous strictly increasing function $\vartheta:\R_+\to\R_+$ such that $\vartheta(0)=0$ and $\lim_{r\to\infty}\vartheta(r)=\infty.$ If $dim(M)=1$ we exclude the case $\vartheta(r)=r.$

i) There exists a unique coupling $q$ of $\rho\cdot m$ and $\sigma=\sum_{i=1}^k a_i\delta_{\xi_i}$ which minimizes the cost functional $\CCost(\cdot)$.

ii) There exists a ($ m$-a.e. unique) map $T:\{\rho>0\}\to\Xi$ with $T_*(\rho\cdot m)=\sigma$ which minimizes
$\int c(x,T(x))\rho(x)\, m(dx)$.

iii) There exists a ($ m$-a.e. unique) map $T:\{\rho>0\}\to\Xi$ with $T_*(\rho\cdot m)=\sigma$ which is $c$-monotone (in the sense that the closure of $\{(x,T(x)): \ \rho(x)>0\}$ is a
$c$-cyclically monotone set).

iv) The minimizers in (i), (ii) and (iii) are related by $q=(Id,T)_*(\rho\cdot m)$ or, in other words,
$$q(dx,dy)\ =\ \delta_{T(x)}(dy)\, \rho(x)\, m(dx).$$
\end{lemma}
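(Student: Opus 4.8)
The plan is to reduce the four assertions to a single uniqueness statement about $c$-monotone maps, and to obtain that statement from a first-order (Kantorovich duality / differentiation) argument. First I would set $\sigma=\sum_{i=1}^k a_i\delta_{\xi_i}$ and observe that any coupling $q$ of $\rho\cdot m$ and $\sigma$ disintegrates as $q(dx,dy)=\sum_{i=1}^k q_i(dx)\,\delta_{\xi_i}(dy)$ where the $q_i$ are nonnegative measures on $\{\rho>0\}$ with $\sum_i q_i=\rho\cdot m$ and $q_i(M)=a_i$. Thus choosing a coupling is the same as choosing a measurable partition (up to $m$-null sets) of $\{\rho>0\}$ into cells $M_i$ with $\int_{M_i}\rho\,dm=a_i$, together with possibly splitting mass; and the cost is $\sum_i\int \vartheta(d(x,\xi_i))\,q_i(dx)$. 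Existence of a minimizer $q$ in (i) is the standard compactness argument: the set of couplings is weak-$*$ compact (the marginals are fixed and tight) and $\CCost$ is lower semicontinuous in the vague topology, exactly as in step (i) of the proof of Proposition \ref{abstract exist}. Any minimizer is concentrated on a $c$-cyclically monotone set by the usual cyclical-monotonicity argument for optimal plans.

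Next I would prove that a minimizer is in fact induced by a map, i.e. the $q_i$ are mutually singular and each is of the form $1_{M_i}\rho\cdot m$. Suppose not; then there is a set $E$ of positive $m$-measure on which two of the measures, say $q_1$ and $q_2$, are both nontrivial in the sense that their Radon--Nikodym densities with respect to $\rho\cdot m$ are both in $(0,1)$ on $E$. Cyclical monotonicity applied to pairs $(x,\xi_1),(x',\xi_2)$ with $x,x'\in E$ forces
\[
\vartheta(d(x,\xi_1))+\vartheta(d(x',\xi_2))\ \le\ \vartheta(d(x,\xi_2))+\vartheta(d(x',\xi_1))
\]
and, by symmetry (swapping the roles of $x$ and $x'$), the reverse inequality, hence equality $m\otimes m$-a.e. on $E\times E$. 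This says $x\mapsto \vartheta(d(x,\xi_1))-\vartheta(d(x,\xi_2))$ is $m$-a.e. constant on $E$. Because $\vartheta$ is strictly increasing and continuous and $d(\cdot,\xi_1)\neq d(\cdot,\xi_2)$ off the (measure-zero) equidistant hypersurface, the level sets of this function are $m$-null — this is where the hypothesis $\dim M\ge 2$ or ($\dim M=1$ and $\vartheta(r)\ne r$) enters, since in dimension one with linear cost the function $d(x,\xi_1)-d(x,\xi_2)$ genuinely is locally constant. Hence $m(E)=0$, a contradiction, so the optimal $q$ is carried by a graph and (iv) holds: $q=(\mathrm{Id},T)_*(\rho\cdot m)$ with $T(x)=\xi_i$ for $x\in M_i$.

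Then I would establish uniqueness. Given (iv), uniqueness of $q$ in (i), of the minimizing map in (ii), and of the $c$-monotone map in (iii) are all the same statement: that the partition $\{M_i\}$ is $m$-a.e. uniquely determined. For this I invoke the existence of a Kantorovich potential: there are numbers $\psi_1,\dots,\psi_k$ (the dual variables attached to the atoms $\xi_i$) such that any optimal map sends $m$-a.e. $x$ to an $\xi_i$ achieving $\min_i\big(\vartheta(d(x,\xi_i))-\psi_i\big)$. Two optimal maps can disagree only on the set where this minimum is attained at two distinct indices $i\ne j$, i.e. where $\vartheta(d(x,\xi_i))-\vartheta(d(x,\xi_j))=\psi_i-\psi_j$; by the same level-set argument as above this set is $m$-null. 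Therefore $T$ is $m$-a.e. unique, and pushing forward, $q$ is unique. A $c$-cyclically monotone map in the sense of (iii) is automatically optimal (continuity of $c$ suffices, cf.\ the discussion of \cite{betterplans} in the excerpt), so (iii) reduces to (ii). This also shows the map in (ii) and (iii) coincides with the one in (iv).

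The main obstacle is the map/uniqueness step in dimension one: the whole argument hinges on the level sets of $x\mapsto\vartheta(d(x,\xi_i))-\vartheta(d(x,\xi_j))$ being $m$-negligible, and for $M$ a one-dimensional manifold with $\vartheta(r)=r$ this fails on the arc between $\xi_i$ and $\xi_j$, which is precisely why that case is excluded. One must check carefully that, under the stated hypotheses, the set $\{x: d(x,\xi_i)=d(x,\xi_j)\}$ and, more importantly, each level set $\{x: \vartheta(d(x,\xi_i))-\vartheta(d(x,\xi_j))=\mathrm{const}\}$ has zero Riemannian volume — using that $\vartheta$ is a continuous strictly increasing bijection so it is equivalent to control the level sets of $d(\cdot,\xi_i)-d(\cdot,\xi_j)$, together with the fact that distance functions on a Riemannian manifold of dimension $\ge 2$ cannot differ by a constant on a set of positive measure. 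Everything else (lower semicontinuity, compactness, cyclical monotonicity, duality) is routine and can be quoted from the cited literature, in particular \cite{villani2009optimal} and Lemma 6.1 of \cite{huesmann2010optimal}.
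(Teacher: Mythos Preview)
The paper does not give its own proof of this lemma; it is simply quoted from \cite{huesmann2010optimal} (Lemma~6.1 there). Your outline --- existence by compactness and lower semicontinuity, cyclical monotonicity to force the optimal plan onto a graph, and Kantorovich duality plus a measure--zero level--set claim for uniqueness --- is the standard route and almost certainly what is done in \cite{huesmann2010optimal}, so there is nothing to compare against in the present paper beyond noting that your sketch fills in what is here only a citation.

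That said, your sketch has one concrete gap. You claim that since $\vartheta$ is a strictly increasing bijection it is ``equivalent to control the level sets of $d(\cdot,\xi_i)-d(\cdot,\xi_j)$''. This is false: from $\vartheta(a)-\vartheta(b)=c$ one cannot conclude $a-b=\mathrm{const}$ (take $\vartheta(r)=r^2$). What one does get on a level set of $\vartheta(d(\cdot,\xi_i))-\vartheta(d(\cdot,\xi_j))$ is the relation $d(\cdot,\xi_j)=f\bigl(d(\cdot,\xi_i)\bigr)$ with $f=\vartheta^{-1}(\vartheta(\cdot)-c)$ strictly increasing. In dimension $\geq 2$ one can then finish by a coarea argument: slicing by geodesic spheres $\{d(\cdot,\xi_i)=r\}$, the level set meets each slice inside a geodesic sphere about $\xi_j$, hence in a set of zero $(n-1)$--dimensional Hausdorff measure, and integrating over $r$ gives $m$--measure zero. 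So the conclusion you want survives, but the reduction you wrote does not; you should replace it by this sphere--intersection argument. In dimension $1$ this repair is unavailable, and the bare hypothesis ``$\vartheta(r)\neq r$'' does not by itself rule out positive--measure level sets (a $\vartheta$ that is affine on some subinterval already causes trouble); here you really do need to invoke whatever argument \cite{huesmann2010optimal} actually gives for the one--dimensional case rather than the heuristic in your final paragraph.
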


\begin{remark}
 In the case that $dim(M)=1$ and cost function $c(x,y)=d(x,y)$ the optimal coupling between an absolutely continuous measure and a discrete measure need not  be unique. In higher dimensions this is the case, as we get strict inequalities in the triangle inequalities. A counterexample for one dimension is the following. Take $\lambda$ to be the Lebesgue measure on $[0,1]$ and put $\mu=\frac13 \delta_0 + \frac23\delta_{1/16}.$ Then, for any $a\in [1/16,1/3]$
$$q_a(dx,dy)\ =\ 1_{[0,a)}(x)\delta_0(dy)\lambda(dx)+1_{[a,2/3 + a)}(x)\delta_{1/16}(dy)\lambda(dx) + 1_{[a+2/3,1]}(x)\delta_0(dy)\lambda(dx)$$
is an optimal coupling of $\lambda$ and $\mu$ with $\CCost(q_a)= 11/24$.
\end{remark}

\begin{remark}\label{shape of optimal transport map}
 In the case of cost function $c(x,y)=\frac1p d^p(x,y)$ the optimal transportation map is given by 
$$ T(x)\ =\ \exp\left(d(x,\xi_j)\frac{\nabla \Phi_j(x)}{|\nabla \Phi_j(x)|}\right) $$
for functions $\Phi_i(z)=-\frac{1}{p}d^p(z,\xi_i)+b_i$ with constants $b_i$ and $j$ such that $\Phi_j(x)=\max_{1\leq i\leq k} \Phi_i(x)$ (e.g. see \cite{mccann01polar}).
\end{remark}

Given two deterministic measures $\lambda=f\cdot m$ for some compactly supported density $f$ (in particular $\lambda\ll m$) and an arbitrary finite measure $\mu$ with $\supp(\mu)\subset A$ for some compact set $A$ such that $\mu(M)\leq \lambda(M)<\infty$. We are looking for minimizers of 
$$\CCost(q)=\int c(x,y)q(dx,dy)$$
under all semicouplings $q$ of $\lambda$ and $\mu$. The key step is a nice observation by Figalli, namely Proposition 2.4 in \cite{Figalli2010optimal}. The version we state here is adapted to our setting.
\begin{proposition}[Figalli]\label{fig obs}
 Let q be a $\CCost$ minimizing semicoupling between $\lambda$ and $\mu$. Write $f_q\cdot m=(\pi_1)_*q$. Consider the Monge-Kantorovich problem:
$$\text{minimize } C(\gamma)\ =\ \int_{M\times M} c(x,y) \gamma(dx,dy)$$
among all $\gamma$ which have $\lambda$ and $\mu + (f-f_q)\cdot m$ as first and second marginals, respectively. Then, the unique minimizer is given by
$$ q + (id\times id)_*(f-f_q)\cdot m.$$
\end{proposition}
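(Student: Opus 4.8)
The plan is to argue by a direct comparison/exchange argument, exploiting the optimality of $q$ as a semicoupling and the uniqueness of classical optimal transport. First I would observe that the candidate measure $\gamma_0 := q + (id\times id)_*(f-f_q)\cdot m$ is indeed admissible for the stated Monge--Kantorovich problem: its first marginal is $(\pi_1)_*q + (f-f_q)\cdot m = f_q\cdot m + (f-f_q)\cdot m = f\cdot m = \lambda$, and its second marginal is $\mu + (f-f_q)\cdot m$ since $(id\times id)_*(f-f_q)\cdot m$ has both marginals equal to $(f-f_q)\cdot m$. Note $f - f_q \ge 0$ because $(\pi_1)_*q \le \lambda$, so this is a genuine nonnegative measure. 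Since $\lambda \ll m$, the classical Monge--Kantorovich problem between $\lambda$ and $\mu + (f-f_q)\cdot m$ has a \emph{unique} optimal coupling (by our standing assumption, applied after normalizing to probability measures, or via Theorem \ref{u mfd} / Lemma \ref{leb-dirac} in the discrete case). So it suffices to show that $\gamma_0$ is optimal for this problem.

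Next I would suppose, for contradiction, that some coupling $\gamma$ of $\lambda$ and $\mu + (f-f_q)\cdot m$ has strictly smaller cost, $C(\gamma) < C(\gamma_0)$. The idea is to extract from $\gamma$ a cheaper semicoupling of $\lambda$ and $\mu$, contradicting optimality of $q$. Disintegrate $\gamma$ with respect to its second marginal using Theorem \ref{disintegration theorem}: $\gamma(dx,dy) = \gamma_y(dx)\,(\mu + (f-f_q)\cdot m)(dy)$. Now ``remove'' the part of the target mass corresponding to $(f-f_q)\cdot m$: set $q' := $ the restriction of $\gamma$ obtained by keeping, for each $y$, only the $\mu$-portion of the target, i.e. $q'(dx,dy) := \gamma_y(dx)\,\mu(dy)$ if $\mu$ and $(f-f_q)\cdot m$ are mutually singular, or more carefully, use a Radon--Nikodym splitting of $\mu + (f - f_q)m$ along its second-marginal disintegration to peel off exactly a copy of $\mu$. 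Then $q'$ is a semicoupling of $\lambda$ and $\mu$: its second marginal is $\mu$ by construction, and its first marginal is $\le \lambda$ since we only kept part of $\gamma$. The main obstacle, which I expect to be the technical heart, is making this ``peeling'' rigorous when $\mu$ and $(f-f_q)\cdot m$ overlap — one needs to choose a measurable decomposition $\mu = \mu_1$, $(f-f_q)m$ pieces so that the corresponding sub-measure of $\gamma$ has exactly second marginal $\mu$; the cleanest route is probably to work on the level of the second-marginal disintegration and split each fiber measure $\gamma_y$ proportionally.

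Having produced such a $q'$, I would then compare costs. We have $C(\gamma) = \CCost(q') + \int c\, d(\gamma - q')$, and $\gamma - q'$ is a coupling of $\lambda - (\pi_1)_*q'$ and $(f-f_q)\cdot m$; in particular $\int c\,d(\gamma - q') \ge 0$, so $\CCost(q') \le C(\gamma) < C(\gamma_0)$. On the other hand, $C(\gamma_0) = \CCost(q) + \int c\, d\big((id\times id)_*(f-f_q)\cdot m\big) = \CCost(q) + 0 = \CCost(q)$, since the diagonal transport $(id\times id)_*$ has zero cost (because $c(x,x) = \vartheta(d(x,x)) = \vartheta(0) = 0$). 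Combining, $\CCost(q') < \CCost(q)$, contradicting the assumption that $q$ is a $\CCost$-minimizing semicoupling of $\lambda$ and $\mu$. Hence $\gamma_0$ is optimal, and by the uniqueness of the classical optimal coupling between $\lambda$ and $\mu + (f-f_q)\cdot m$, it is \emph{the} minimizer.

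Finally, I would double-check the degenerate aspects: that $\mu + (f-f_q)\cdot m$ is finite (it has total mass $\mu(M) + \lambda(M) - (\pi_1)_*q(M) = \mu(M) + \lambda(M) - \mu(M) = \lambda(M) < \infty$) and compactly supported (supported in $A \cup \supp(f)$, both compact), so that the standing uniqueness hypothesis genuinely applies after rescaling to probability measures; and that the case $\lambda(M) = 0$ is trivial. The one genuinely delicate point, as noted, is the measurable peeling of $\mu$ out of $\gamma$, and I would spell that out carefully using the second-marginal disintegration; everything else is bookkeeping with marginals and the vanishing of diagonal cost.
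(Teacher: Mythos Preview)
The paper does not actually prove this proposition: it is stated with attribution to Figalli (Proposition~2.4 in \cite{Figalli2010optimal}) and used as a black box in the subsequent Proposition~\ref{u sc bd set}. Your argument is correct and is essentially Figalli's original proof: show that $\gamma_0$ is admissible with $C(\gamma_0)=\CCost(q)$ because the diagonal part has zero cost, then show that any strictly cheaper $\gamma$ would yield a strictly cheaper semicoupling $q'$ of $\lambda$ and $\mu$, contradicting optimality of $q$; uniqueness then comes from the standing hypothesis on the Monge problem since $\lambda\ll m$ and both marginals are compactly supported and finite.

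One remark: the ``peeling'' step you flag as delicate is in fact immediate and does not require any case distinction about singularity. Since $(f-f_q)\cdot m\ge 0$, we have $\mu\le \nu:=\mu+(f-f_q)\cdot m$ as measures, hence $\mu\ll\nu$ with Radon--Nikodym density $\rho:=\tfrac{d\mu}{d\nu}\le 1$. Setting $q'(dx,dy):=\rho(y)\,\gamma(dx,dy)$ gives $(\pi_2)_*q'=\rho\,\nu=\mu$, $(\pi_1)_*q'\le(\pi_1)_*\gamma=\lambda$, and $\CCost(q')=\int c\,\rho\,d\gamma\le\int c\,d\gamma=C(\gamma)$. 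No disintegration or fiberwise splitting is needed.
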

This allows us to show that all minimizers of $\CCost$ are concentrated on the same graph which also gives us uniqueness:
\begin{proposition}\label{u sc bd set}
 There is a unique $\CCost$ minimizing semicoupling between $\lambda$ and $\mu$. It is given as $q = (id,T)_*(\rho\cdot\lambda)$ for some measurable map $T:M\to M\cup\{\eth\}$ and density $\rho$.
\end{proposition}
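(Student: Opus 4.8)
The plan is to combine Figalli's observation (Proposition \ref{fig obs}) with the standing assumption that the classical Monge problem between $\lambda\ll m$ and an arbitrary compactly supported finite measure has a unique solution induced by a map. First I would fix any $\CCost$-minimizing semicoupling $q$ between $\lambda$ and $\mu$ (existence follows from a routine lower-semicontinuity plus tightness argument on the weakly compact set of semicouplings, exactly as in the proof of Proposition \ref{abstract exist} restricted to fixed $\omega$; $\CCost$ is lower semicontinuous and the constraint set is vaguely compact). Write $(\pi_1)_*q=f_q\cdot m$ with $f_q\le f$. By Proposition \ref{fig obs}, $\gamma_q:=q+(\mathrm{id}\times\mathrm{id})_*(f-f_q)\cdot m$ is \emph{the} unique optimal coupling of $\lambda$ and the probability measure $\nu_q:=\mu+(f-f_q)\cdot m$ (after normalizing, both have the same total mass $\lambda(M)$; if $\mu(M)<\lambda(M)$ one works with the common mass). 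Since $\lambda\ll m$ and $\nu_q$ is compactly supported, the Monge assumption (or Theorem \ref{u mfd} / Lemma \ref{leb-dirac} in the relevant cases) gives a measurable map $S_q:M\to M\cup\{\eth\}$ with $\gamma_q=(\mathrm{id},S_q)_*\lambda$, and $S_q$ is $\lambda$-a.e. uniquely determined as the unique $c$-cyclically monotone map pushing $\lambda$ to $\nu_q$.

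The key point is then to show that $S_q$ does not actually depend on the choice of minimizer $q$, so that in particular the diagonal part is forced. Suppose $q_0,q_1$ are two $\CCost$-minimizing semicouplings. Their midpoint $q_{1/2}=\tfrac12(q_0+q_1)$ is again a semicoupling of $\lambda$ and $\mu$, and by linearity of $\CCost$ it is again a minimizer; moreover $f_{q_{1/2}}=\tfrac12(f_{q_0}+f_{q_1})$. Applying Proposition \ref{fig obs} to all three and using that the associated full couplings $\gamma_{q_0},\gamma_{q_1},\gamma_{q_{1/2}}$ are the unique optimal couplings for the marginal pairs $(\lambda,\nu_{q_0})$, $(\lambda,\nu_{q_1})$, $(\lambda,\nu_{q_{1/2}})$ respectively, we get the identity $\gamma_{q_{1/2}}=\tfrac12(\gamma_{q_0}+\gamma_{q_1})$ (add the two Figalli decompositions and divide by two; the added-back diagonal masses average correctly). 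Hence $\tfrac12(\gamma_{q_0}+\gamma_{q_1})$ is the \emph{unique} optimizer for $(\lambda,\nu_{q_{1/2}})$, so it is induced by a map; but a convex combination $\tfrac12((\mathrm{id},S_{q_0})_*\lambda+(\mathrm{id},S_{q_1})_*\lambda)$ is induced by a map only if $S_{q_0}=S_{q_1}$ $\lambda$-a.e. (on the set where $\lambda$ has positive density, if the two maps disagreed on a set of positive measure the first marginal disintegration would not be a Dirac mass there). Therefore $S_{q_0}=S_{q_1}=:S$ $\lambda$-a.e., and consequently $q_0$ and $q_1$ are both determined as the restriction of $(\mathrm{id},S)_*\lambda$ obtained by removing the diagonal mass $(f-f_q)\cdot m$ on $\{S=\mathrm{id}\}$ — but $f-f_{q_0}=f-f_{q_1}$ since both equal $f-(\pi_1)_*\!$ of the respective minimizer and these now coincide. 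So $q_0=q_1$, giving uniqueness, and setting $\rho:=f_q/f$ (defined $m$-a.e. on $\{f>0\}$, with $\rho\le 1$) and $T:=S$ on $\{\rho>0\}$ yields $q=(\mathrm{id},T)_*(\rho\cdot\lambda)$.

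The main obstacle I anticipate is making the step ``a convex combination of two graph-couplings is a graph-coupling iff the maps agree'' fully rigorous, together with correctly bookkeeping the diagonal masses so that equality of the full couplings $\gamma_{q_i}$ really descends to equality of the semicouplings $q_i$ rather than only of $f_{q_i}$ and the off-diagonal part. Concretely one disintegrates $\gamma_{q_{1/2}}$ with respect to its first marginal $\lambda$: uniqueness of the optimal coupling forces the conditional measures to be Dirac, while from $\gamma_{q_{1/2}}=\tfrac12(\gamma_{q_0}+\gamma_{q_1})=\tfrac12((\mathrm{id},S_{q_0})_*\lambda+(\mathrm{id},S_{q_1})_*\lambda)$ the conditional at $x$ is $\tfrac12(\delta_{S_{q_0}(x)}+\delta_{S_{q_1}(x)})$, which is a Dirac iff $S_{q_0}(x)=S_{q_1}(x)$; hence agreement $\lambda$-a.e. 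Everything else — existence of a minimizer, measurability of the constructed objects (deferred to the next part of the section), and the inequality $\rho\le 1$ — is routine given the results already available in the excerpt.
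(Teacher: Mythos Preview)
Your approach is correct and shares the paper's overall strategy: existence via lower semicontinuity and tightness, then uniqueness by combining Figalli's observation (Proposition~\ref{fig obs}) with the standing Monge assumption and a convex-combination argument. Your genuine simplification is the algebraic identity $\gamma_{q_{1/2}}=\tfrac12(\gamma_{q_0}+\gamma_{q_1})$ for the Figalli-extended couplings (immediate from $f_{q_{1/2}}=\tfrac12(f_{q_0}+f_{q_1})$), which lets you conclude $S_{q_0}=S_{q_1}$ $\lambda$-a.e.\ directly from the fact that an average of two graph measures over the \emph{same} first marginal is a graph measure only if the maps agree. The paper instead works with the semicouplings $q_i$ themselves (whose first marginals differ), uses the consequence $S_i(x)=x$ on $\{f>f_{q_i}\}$ of Proposition~\ref{fig obs}, and finishes with a more hands-on contradiction argument on the set $\{f_{q_1}>f_{q_2}\}$; your route bypasses that computation entirely.

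One step needs tightening: your closing sentence ``$f-f_{q_0}=f-f_{q_1}$ since both equal $f-(\pi_1)_*$ of the respective minimizer and these now coincide'' is circular as written --- you have not yet shown the minimizers coincide. The clean argument is: from $S_{q_0}=S_{q_1}=:S$ $\lambda$-a.e.\ you get $\gamma_{q_0}=(\mathrm{id},S)_*\lambda=\gamma_{q_1}$; equating second marginals gives $\mu+(f-f_{q_0})\,m=\mu+(f-f_{q_1})\,m$, hence $f_{q_0}=f_{q_1}$ $m$-a.e.; then $q_i=\gamma_{q_i}-(\mathrm{id},\mathrm{id})_*(f-f_{q_i})\,m$ yields $q_0=q_1$.
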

\begin{proof}
  (i) The functional $\CCost(\cdot)$ is lower semicontinuous on $\mathcal M(M\times M)$ wrt weak convergence of measures. Indeed, take a sequence of measures $(q_n)_{n\in\N}$ converging weakly to some q. Then we have by continuity of the cost function $c(\cdot,\cdot)$:
\begin{eqnarray*}
 \int c(x,y)\ q(dx,dy) &=& \sup_{k\in\N} \int c(x,y) \wedge k\ q(dx,dy)\\
 &=& \sup_{k\in\N} \lim_{n\to\infty} \int c(x,y) \wedge k\ q_n(dx,dy)\\
& \leq& \liminf_{n\to\infty}\int c(x,y)\ q_n(dx,dy).
\end{eqnarray*}
(ii) Let $\mathcal O$ denote the set of all semicouplings of $\lambda$ and $\mu$ and $\mathcal O_1$ denote the set of all semicouplings q satisfying $\CCost(q)\leq 2\inf_{q\in\mathcal O}\CCost(q)=:2c$. Then $\mathcal O_1$ is relatively compact wrt weak topology. Indeed, $q(M\times \complement A)=0$ for all $q\in \mathcal O_1$ and
$$q(\complement (A_r)\times A)\le\frac1{\vartheta\left(r\right)}\cdot \CCost(q) \le\frac2{\vartheta\left(r\right)} c$$
for each $r>0$ where $A_r$ denotes the closed $r$-neighborhood of $A$ in $M$. Thus, for any $\epsilon>0$ there exists a compact set $K={A_r}\times A$ in $M\times M$ such that $q(\complement K)\le\epsilon$ uniformly in $q\in \mathcal O_1$.\\
(iii) The set $\mathcal O$ is closed wrt weak topology. Indeed, if $q_n\to q$ then $(\pi_1)_*q_n\to (\pi_1)_*q$ and $(\pi_2)_*q_n\to (\pi_2)_*q$. Hence $\mathcal O_1$ is compact and $\CCost$ attains its minimum on $\mathcal O$. Let q denote one such minimizer. Its first marginal is absolutely continuous to m. By Theorem \ref{u mfd} and Lemma \ref{leb-dirac}, there is a measurable map $T:M\to M\cup\{\eth\}$ and  densities $\tilde f_q, f_q$ such that $q=(id,T)_*(\tilde f_q\cdot\lambda)=(id,T)_*( f_q\cdot m)$.\\
(iv) Given a minimizer of $\CCost$, say q. By Proposition \ref{fig obs}, $\tilde q:= q + (id,id)_*(f-f_q)\cdot m$ solves 
$$\min C(\gamma)=\int c(x,y) \gamma(dx,dy)$$
under all $\gamma$ which have $\lambda$ and $\mu + (f-f_q) m$ as first respectively second marginals, where $f_q \cdot m=(\pi_1)_*q$ as above. By Theorem \ref{u mfd} and Lemma \ref{leb-dirac}, there is a measurable map $S$ such that $\tilde q=(id,S)_*\lambda$. That is, $\tilde q$ and in particular q are concentrated on the graph of $S$. By definition $\tilde q= q + (id,id)_*(f-f_q)\cdot m$ and, therefore, we must have $S(x)=x$ on $\{f>f_q\}$.\\
(v) This finally allows us to deduce uniqueness. By the previous step, we know that any convex combination of optimal semicouplings is concentrated on a graph. This implies that all optimal semicouplings are concentrated on the \emph{same} graph. Moreover, Proposition \ref{fig obs} implies that if we do not transport all the $\lambda$ mass in one point we leave it where it is. Hence, all optimal semicouplings choose the same density $\rho$ of $\lambda$ and therefore coincide.\\
Assume there are two optimal semicouplings $q_1$ and $q_2$. Then $q_3:=\frac12 (q_1+q_2)$ is optimal as well. By the previous step for any $i\in\{1,2,3\}$, we get  maps $S_i$ such that $q_i$ is concentrated on the graph of $S_i$. Moreover, we have $S_3(x)=x$ on the set $\{f>f_{q_3}\}=\{f>f_{q_1}\}\cup\{f>f_{q_2}\}$, where again $f_{q_i}\cdot m=(\pi_1)_* q_i$. As $q_3$ is concentrated on the graph of $S_3$, $q_1$ and $q_2$ must be concentrated on the same graph. Hence, we have $S_3=S_i$ on $\{f_{q_i}>0\}$ for $ i=1,2.$ We also know from the previous step that $S_i(x)=x$ on $\{f>f_{q_i}\}\subset\{f>f_{q_3}\}$. This gives, that $S_3=S_1=S_2$ on $\{f>0\}$.\\
We still need to show that $\{f_{q_1}>0\}=\{f_{q_2}>0\}$. Put $A_1 := \{f_{q_1}>f_{q_2}\}$ and $A_2 := \{f_{q_2}>f_{q_1}\}$ and assume $m(A_1)>0$. As $A_1\subset\{f>f_{q_2}\}$ we know that $S_3(x)=x$ on $A_1$ and similarly $S_3(x)=x$ on $A_2$. Now consider 
$$A\ :=\ S_3^{-1}(A_1)\ =\ (A\cap \{f_{q_1}=f_{q_2}\})\cup (A\cap A_1) \cup (A\cap A_2).$$
As $S_3(A_2)\subset A_2$ and $A_1\cap A_2=\emptyset$ we have $A\cap A_2=\emptyset$. Therefore, we can conclude
\begin{eqnarray*}
\mu(A_1)&=&(S_3)_*f_{q_1}m(A_1)\ =\ f_{q_1}m(A_1)+f_{q_1}m(A\cap\{f_{q_1}=f_{q_2}\})\\
&>& f_{q_2}m(A_1) + f_{q_2}m(A\cap\{f_{q_1}=f_{q_2}\})\\ 
&=& (S_3)_*f_{q_2}m(A_1)\ =\ \mu(A_1),
\end{eqnarray*}
which is a contradiction, proving $q_1=q_2$.
\end{proof}

\begin{remark}
 Let $q=(id,T)_*(\rho\lambda)$ be the optimal semicoupling of $\lambda$ and $\mu$. If $\mu$ happens to be discrete, we have $\rho(x)\in\{0,1\}$ m almost everywhere. Indeed, assume the contrary. Then, there is $\xi\in \supp(\mu)$ such that on $ U:= T^{-1}(\xi)$ we have  $\rho\in(0,1)$ on some set of positive $\lambda$ measure. Let $R$ be such that $\lambda(U\cap B(\xi,R))= \mu(\{\xi\}),$ where $B(\xi,R)$ denotes the ball of radius $R$ around $\xi$. Put $V=U\cap B(\xi,R)$ and 
$$\tilde q(dx,dy)\ =\ q(dx,dy) - 1_U(x)\rho(x)\delta_{\xi}(dy)\lambda(dx) + 1_V(x)\delta_{\xi}(dy)\lambda(dx).$$
This means, we take the same transportation map, but use the $\lambda$ mass more efficiently. $\tilde q$ leaves some $\lambda$ mass far out and instead uses the same amount of $\lambda$ mass which is closer to the target $\xi$. By construction, we have $\CCost(q)>\CCost(\tilde q)$ contradicting optimality of q.
\end{remark}

We  showed the existence and uniqueness of optimal semicouplings between deterministic measures. The next step in the proof of Theorem \ref{eu:Q+q} is to show the measurability of the mapping $\omega\mapsto \Phi(\lambda^\omega,1_A\mu^\omega)=q_A^\omega$ the unique optimal semicoupling between $\lambda^\omega$ and $1_A\mu^\omega$. The mapping $\omega\mapsto (\lambda^\omega,1_A\mu^\omega)$ is measurable by definition. Hence, we have to show that $(\lambda^\omega,1_A\mu^\omega)\mapsto \Phi(\lambda^\omega,1_A\mu^\omega)$ is measurable. We will show a bit more, namely that this mapping is actually continuous. We start with a simple but important observation about optimal semicouplings.
\medskip\\
Denote the one-point compactification of M by $M\cup\{\eth\}$ and let $\tilde\vartheta(r)$ be such that it is equal to $\vartheta(r)$ on a very large box, say $[0,K]$ and then tends continuously to zero such that $\tilde c(x,\eth)=\tilde\vartheta(d(x,\eth))=\lim_{r\to\infty}\tilde\vartheta(r)=0$ for any $x\in M$. By a slight abuse of notation, we also write $\eth:M\to \{\eth\}$ for the map $x\mapsto \eth.$

\begin{lemma}
Let two measures $\lambda$ and $\mu$ on M be given such that $\infty>\lambda(M)=N\geq\mu(M)=\alpha$ and assume there is a ball $B(x,K/2)$ such that $\supp(\lambda),\supp(\mu)\subset B(x,K/2)$. Then, q is an optimal semicoupling between $\lambda$ and $\mu$ wrt to the cost function $c(\cdot,\cdot)$ iff $\tilde q=q + (id,\eth)_*(1-f_q)\cdot\lambda$ is an optimal \emph{coupling} between $\lambda$ and $\tilde \mu=\mu+(N-\alpha)\delta_\eth$ wrt the cost function $\tilde c(\cdot,\cdot),$ where $(\pi_1)_*q=f_q\lambda.$
\end{lemma}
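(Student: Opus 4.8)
The plan is to establish the equivalence by showing that the map $q \mapsto \tilde q := q + (id,\eth)_*(1-f_q)\cdot\lambda$ is a cost-preserving bijection between semicouplings of $(\lambda,\mu)$ with respect to $c$ and couplings of $(\lambda,\tilde\mu)$ with respect to $\tilde c$. First I would verify that $\tilde q$ is indeed a coupling of $\lambda$ and $\tilde\mu$: its second marginal is $(\pi_2)_*q + (N-\alpha)\delta_\eth = \mu + (N-\alpha)\delta_\eth = \tilde\mu$, and its first marginal is $f_q\lambda + (1-f_q)\lambda = \lambda$. Conversely, given any coupling $\gamma$ of $\lambda$ and $\tilde\mu$ with respect to $\tilde c$, restricting away the mass sent to $\eth$, i.e. setting $q := 1_{M\times M}\gamma$, produces a semicoupling of $\lambda$ and $\mu$ (the second marginal is $\tilde\mu$ restricted to $M$, which is $\mu$, and the first marginal is dominated by $\lambda$); moreover $\gamma$ must equal $q + (id,\eth)_*(1-f_q)\cdot\lambda$ because the only admissible way to couple $\lambda$ with the atom $(N-\alpha)\delta_\eth$ leaves the ``unused'' $\lambda$-mass $(1-f_q)\lambda$ to be matched with $\eth$ — here I would use that $\lambda$ and $\mu$ are supported in $B(x,K/2)$, so $d(\cdot,\eth)$ is irrelevant to optimality within $M$, and any optimal $\gamma$ cannot send mass to $\eth$ from a point where $f_q<1$ while simultaneously transporting genuine mass elsewhere without contradiction — actually this last identification is automatic from the marginal constraints alone, since once $q = 1_{M\times M}\gamma$ is fixed, the remaining mass of $\gamma$ lives on $M\times\{\eth\}$ with first marginal exactly $(1-f_q)\lambda$, hence equals $(id,\eth)_*(1-f_q)\cdot\lambda$.

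Next I would check that the transformation preserves cost up to an additive constant that does not depend on $q$. Since $\tilde c(x,\eth) = 0$ for all $x\in M$ and $\tilde c = c$ on $B(x,K/2)\times B(x,K/2) \supset \supp(\lambda)\times\supp(\mu)$, we get
$$\int \tilde c\, d\tilde q \;=\; \int_{M\times M} c\, dq \;+\; \int_{M\times\{\eth\}}\tilde c(x,\eth)\,(id,\eth)_*(1-f_q)\cdot\lambda(dx) \;=\; \CCost(q) + 0 \;=\; \CCost(q).$$
So in fact the cost is preserved exactly, not merely up to a constant. Combined with the bijection from the previous paragraph, this immediately yields: $q$ minimizes $\CCost$ among semicouplings of $(\lambda,\mu)$ if and only if $\tilde q$ minimizes $\int\tilde c\,d(\cdot)$ among couplings of $(\lambda,\tilde\mu)$, which is the claim.

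The step I expect to be the only mildly delicate point is the ``only if'' direction of the bijection, namely arguing that an arbitrary optimal coupling $\gamma$ of $(\lambda,\tilde\mu)$ with respect to $\tilde c$ really is of the form $q + (id,\eth)_*(1-f_q)\cdot\lambda$ with $(\pi_1)_*q = f_q\lambda \le \lambda$ — one must rule out pathological couplings that split $\lambda$-mass at a single point partly to $\eth$ and partly into $M$ in a way incompatible with the stated form. But this is not actually an obstacle: writing $\gamma = \gamma|_{M\times M} + \gamma|_{M\times\{\eth\}}$ and setting $q := \gamma|_{M\times M}$, the piece $\gamma|_{M\times\{\eth\}}$ is trivially of the form $(id,\eth)_*\nu$ for some $\nu$ on $M$, and the first-marginal constraint forces $f_q\lambda + \nu = \lambda$, i.e. $\nu = (1-f_q)\cdot\lambda$; in particular $f_q \le 1$ so $q$ is a genuine semicoupling. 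The decomposition is thus forced, and no optimality is even needed for the structural part — optimality only enters through the cost identity. Hence the equivalence follows by a direct bookkeeping argument with no real analytic content beyond the elementary observations above.
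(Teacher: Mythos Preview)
Your proposal is correct and follows essentially the same approach as the paper: establish that the map $q \mapsto \tilde q$ is a cost-preserving correspondence between semicouplings of $(\lambda,\mu)$ and couplings of $(\lambda,\tilde\mu)$, and conclude. The paper's own proof is in fact considerably terser---it only writes out the forward direction (any semicoupling $q$ yields a coupling $\tilde q$ with identical cost) and asserts the equivalence; your explicit verification that every coupling $\gamma$ of $(\lambda,\tilde\mu)$ decomposes as $1_{M\times M}\gamma + (id,\eth)_*(1-f_q)\lambda$ for a genuine semicoupling $q$ fills in the step the paper leaves implicit.
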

\begin{proof}
Let q be any semicoupling between $\lambda$ and $\mu$. Then $\tilde q=q + (id,\eth)_*(1-f_q)\cdot\lambda$ defines a coupling between $\lambda$ and $\tilde \mu$. Moreover, the transportation cost of the semicoupling and the one of the coupling are exactly the same, that is $\CCost(q)=\CCost(\tilde q).$ Hence, q is optimal iff $\tilde q$ is optimal.
\end{proof}

This allows to deduce the continuity of $\Phi$ from the classical theory of optimal transportation.

\begin{lemma}\label{continuity bd sc}
 Given a sequence of measures $(\lambda_n)_{n\in\N}$ converging vaguely to some $\lambda$, all absolutely continuous to m with $\lambda_n(M)=\lambda(M)=\infty.$ Moreover, let $(\mu_n)_{n\in\N}$ be a sequence of finite measures converging weakly to some finite measure $\mu$, all concentrated on some bounded set $A\subset M$. Let $q_n$ be the optimal semicoupling between $\lambda_n$ and $\mu_n$ and $q$ be the optimal semicoupling between $\lambda$ and $\mu$. Then, $q_n$ converges weakly to q. In particular, the map $(\lambda,\mu)\mapsto \Phi(\lambda,\mu)=q$ is continuous.
\end{lemma}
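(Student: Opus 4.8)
The plan is to reduce this stability statement to the classical stability theory of optimal transport, by combining the one-point-compactification reduction of the preceding lemma with a truncation of the (infinite-mass) first marginals.

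\emph{Uniform control and tightness.} First I would show $\sup_n\CCost(q_n)<\infty$. The masses $\alpha_n:=\mu_n(M)$ are bounded since $\mu_n\to\mu$ weakly, and since $\lambda\ll m$ and $\lambda_n\to\lambda$ vaguely one can choose a ball $B=B(x_0,R)$ with $\bar A\subset B$, $\lambda(\partial B)=0$ and $\lambda_n(B)\to\lambda(B)>\sup_n\alpha_n$; then for every $n$ any semicoupling of $1_B\lambda_n$ and $\mu_n$ has cost at most $(\sup_{B\times\bar A}c)\,\alpha_n=:C$, so $\CCost(q_n)\le C$. Since $\vartheta(r)\to\infty$, the first marginal $(\pi_1)_*q_n$ charges $\complement(A)_r$ by at most $C/\vartheta(r)$, while $(\pi_2)_*q_n=\mu_n$ is carried by the compact set $\bar A$ with bounded total mass; as $M$ is proper, $\overline{(A)_r}\times\bar A$ is compact, so $(q_n)_n$ is uniformly tight and admits weakly convergent subsequences. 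If $q_{n_k}\to q_\infty$ weakly, then $(\pi_2)_*q_\infty=\lim_k\mu_{n_k}=\mu$, and for $0\le f\in C_c(M)$ one has $\int f\,d(\pi_1)_*q_\infty=\lim_k\int f\,d(\pi_1)_*q_{n_k}\le\liminf_k\int f\,d\lambda_{n_k}=\int f\,d\lambda$ by vague convergence; hence $q_\infty\in\Pi_s(\lambda,\mu)$.

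\emph{Optimality of the limit.} By lower semicontinuity of $\CCost$ with respect to weak convergence (as established in the proof of Proposition \ref{u sc bd set}), $\CCost(q_\infty)\le\liminf_k\CCost(q_{n_k})$, so it suffices to prove $\limsup_n\CCost(q_n)\le\CCost(q)$. Fix $B$ as above and let $q_n^B$, $q^B$ be the optimal semicouplings of $(1_B\lambda_n,\mu_n)$ and $(1_B\lambda,\mu)$; since $1_B\lambda_n\le\lambda_n$, $q_n^B$ is a semicoupling of $\lambda_n$ and $\mu_n$, so $\CCost(q_n)\le\CCost(q_n^B)$. By the preceding lemma, $q_n^B$ and $q^B$ correspond, cost-preservingly, to the optimal \emph{couplings} of $1_B\lambda_n$ and $\mu_n+(\lambda_n(B)-\alpha_n)\delta_\eth$, respectively $1_B\lambda$ and $\mu+(\lambda(B)-\alpha)\delta_\eth$, for the bounded continuous cost $\tilde c$ on the compact space $\bar B\cup\{\eth\}$, and this correspondence is a homeomorphism for weak convergence. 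Since $1_B\lambda_n\to 1_B\lambda$ and $\mu_n\to\mu$ weakly and $\lambda_n(B)\to\lambda(B)$, $\alpha_n\to\alpha$, the marginals converge weakly, so the classical continuity of the optimal transport cost under weak convergence of the marginals gives $\CCost(q_n^B)\to\CCost(q^B)$. Finally, letting $B\uparrow M$, $\CCost(q^B)$ is nonincreasing, bounded below by $\CCost(q)$, and converges to $\CCost(q)$ (an optimal semicoupling never uses $\lambda$-mass from arbitrarily far away, so in fact $q^B=q$ for $B$ large). Hence $\limsup_n\CCost(q_n)\le\CCost(q^B)\to\CCost(q)$.

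\emph{Conclusion.} Combining the two bounds, $\CCost(q_\infty)\le\CCost(q)$; since $q$ is the unique optimal semicoupling of $\lambda$ and $\mu$ (uniqueness deduced from Proposition \ref{u sc bd set} applied to $(1_B\lambda,\mu)$ for $B$ large, both minimizers being carried by $B$), we get $q_\infty=q$. Thus every subsequence of $(q_n)$ has a further subsequence converging weakly to $q$, whence $q_n\to q$ weakly, which is exactly the asserted continuity of $\Phi$. The main obstacle is the bookkeeping in the middle step: reconciling the infinite mass of $\lambda_n$ with the finite-mass, bounded-support hypotheses of the preceding lemma and of classical stability, i.e.\ checking that the semicoupling/coupling correspondence is jointly weakly continuous and that the truncation error $\CCost(q^B)-\CCost(q)$ vanishes. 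The remaining ingredients (tightness, lower semicontinuity, uniqueness) are routine.
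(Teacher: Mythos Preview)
Your argument is correct. It uses the same two ingredients as the paper---the one-point-compactification lemma and truncation to a bounded region---but organizes them differently. The paper observes first that optimality forces $\supp(q_n)\subset B(x,R_2)\times A$ for a fixed $R_2$, then truncates $\lambda_n$ to $\tilde\lambda_n=1_{B(x,r_n)}\lambda_n$ with $\tilde\lambda_n(M)\equiv N$ constant; this makes $q_n$ \emph{equal} to the optimal semicoupling of $(\tilde\lambda_n,\mu_n)$, so after the compactification lemma the $q_n$ themselves become optimal couplings on a fixed compact space, and classical stability (Theorem~5.20 in \cite{villani2009optimal}) yields $q_n\to q$ directly. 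You instead run a variational argument: tightness from the cost bound, lower semicontinuity of $\CCost$, an upper bound $\limsup_n\CCost(q_n)\le\CCost(q^B)$ via the truncated problems and convergence of their costs, and finally identification of the limit by uniqueness. This is slightly longer because you use the compactification lemma only to control costs rather than to pin down the $q_n$ themselves, but it has the advantage of being a template that works verbatim in settings where one cannot so cleanly identify $q_n$ with a truncated problem. Both routes are valid; the paper's is more direct, yours more robust.
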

\begin{proof}
 i)\ As $(\mu_n)_{n\in\N}$ converge to $\mu$ and $\mu$ is finite, we can assume that $\sup_n\mu_n(M),\mu(M)\leq\alpha<\infty.$ As $\lambda_n$ and $\lambda$ have infinite mass for any $x\in M$ and $k\in \R$ there is a radius $R(x,k)<\infty$ such that $\lambda(B(x,R(x,k)))\geq k$, where $B(x,R)$ denotes the closed ball around x of radius R. Fix an arbitrary $x\in A$ and set $R_1=R(x,\alpha)+\mbox{diam}(A)$ and $R_2=R(x,2\alpha)+\mbox{diam}(A).$ Because $(\lambda_n)_{n\in\N}$ converge to $\lambda$ we can assume that for any n $\lambda_n(B(x,R_2))\geq \lambda(B(x,R_1)=N> \alpha.$

\medskip

ii)\ Optimality of $q_n$ and $q$ implies that $\supp(q_n)\subset B(x,R_2)\times A$ and $\supp(q)\subset B(x,R_1)\times A$. Because otherwise there is still some mass lying closer to the target than the mass which is transported into the target. For any n let $r_n\leq R_2$ be such that $\lambda_n(B(x,r_n))=N$. Such choices exist as $\lambda_n\ll m$ for all n. Then, we even know that $\supp(q_n)\subset B(x,r_n)\times A$. Set $\tilde \lambda_n=1_{B(x,r_n)}\lambda_n$ and $\tilde\lambda=1_{B(x,R_1)}\lambda$. Then the optimal semicoupling between $\lambda_n$ and $\mu_n$ is the same as the optimal semicoupling between $\tilde\lambda_n$ and $\mu_n$ and similarly the optimal semicoupling between $\lambda$ and $\mu$ is the same as the optimal semicoupling between $\tilde\lambda$ and $\mu$. Moreover, because for any n $\tilde \lambda_n$ is compactly supported with total mass N the vague convergence $\lambda_n\to\lambda$ implies weak convergence of $\tilde \lambda_n\to\tilde\lambda.$

\medskip

iii)\ Now we are in a setting where we can apply the previous Lemma. Set $K=2R_2$ and define $\tilde\vartheta, \tilde\mu_n,\tilde\mu$ as above. Then $\tilde q_n$ and $\tilde q$ are optimal couplings between $\tilde\lambda_n$ and $\tilde\mu_n$ and $\tilde\lambda$ and $\tilde\mu$ respectively wrt to the cost function $\tilde c(\cdot,\cdot)$. The cost function $\tilde c$ is continuous and $M$ and $M\cup\{\eth\}$ are Polish spaces. Hence, we can apply the stability result of the classical optimal transportation theory (e.g. Theorem 5.20 in \cite{villani2009optimal}) to conclude that $\tilde q_n\to\tilde q$ weakly and therefore $q_n\to q$ weakly.
\end{proof}

Take a pair of equivariant random measure $(\lambda^\bullet,\mu^\bullet)$ with $\lambda^\omega\ll m$ as usual. For a given $\omega\in\Omega$ we want to apply the results of the previous Lemma to a fixed realization $(\lambda^\omega,\mu^\omega)$. Then, for any bounded Borel set $A\subset M$, there is a unique optimal semicoupling $q_A^\omega$ between $\lambda^\omega$ and $1_A\mu^\omega,$ that is, a unique minimizer of the cost function $\CCost$ among all semicouplings of $\lambda^\omega$ and $1_A\mu^\omega$.

\begin{lemma}\label{qA measurable}
 For each bounded Borel set $A\subset M$ the map $\omega\mapsto q^\omega_A$ is measurable.
\end{lemma}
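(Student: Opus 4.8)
The plan is to exhibit $\omega\mapsto q_A^\omega$ as the composition of a measurable map into the space of data with a continuous (hence Borel) solution map, invoking Lemma \ref{continuity bd sc}. First I would verify that $\omega\mapsto(\lambda^\omega,1_A\mu^\omega)$ is measurable as a map from $\Omega$ into $\mathcal M(M)\times\mathcal M(\bar A)$, where $\mathcal M(M)$ carries the vague topology and $\mathcal M(\bar A)$ the weak topology (note that $\bar A$ is compact, since $A$ is bounded and $M$ is complete). Measurability of $\omega\mapsto\lambda^\omega$ is built into the definition of a random measure. For the cutoff, the point is that the Borel $\sigma$-algebra of the vague topology coincides with the cylinder $\sigma$-algebra generated by the evaluation maps $\mu\mapsto\mu(B)$, $B\in\mathcal B(M)$ (Lemma \ref{vague topology}), and that $\omega\mapsto(1_A\mu^\omega)(B)=\mu^\omega(A\cap B)$ is measurable for every Borel $B$ because $\mu^\bullet$ is a random measure.

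Next I would check that the random data almost surely lies in the domain on which the optimal semicoupling is defined and unique. By the standing assumptions $\lambda^\omega\ll m$ for $\P$-a.e.\ $\omega$, and since $\lambda^\bullet$ is a.s.\ not the zero measure and $\P$ is stationary, $\lambda^\omega(M)=\infty$ a.s.; moreover $1_A\mu^\omega$ is a finite measure concentrated on $\bar A$. Hence for $\P$-a.e.\ $\omega$ the pair $(\lambda^\omega,1_A\mu^\omega)$ lies in the Borel set $D$ of pairs $(\lambda,\mu)$ with $\lambda\ll m$, $\lambda(M)=\infty$, and $\mu$ finite on $\bar A$, and on $D$ the map $\Phi\colon(\lambda,\mu)\mapsto q_A$ of Proposition \ref{u sc bd set} (extended to infinite-mass $\lambda$ via the restriction argument in the proof of Lemma \ref{continuity bd sc}) is well defined. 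By Lemma \ref{continuity bd sc}, $\Phi$ is continuous on $D$; since the spaces involved are Polish, continuity implies Borel measurability, and composing with the measurable map $\omega\mapsto(\lambda^\omega,1_A\mu^\omega)$ yields measurability of $\omega\mapsto q_A^\omega$ (redefining it arbitrarily on the $\P$-null set where the pair leaves $D$, using completeness of $\mathfrak A$).

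The main obstacle is precisely the measurability of $\omega\mapsto 1_A\mu^\omega$: restriction by an arbitrary bounded Borel set $A$ is not a continuous operation in the vague topology, so one cannot simply compose continuous maps and must instead argue through the coincidence of the vague Borel $\sigma$-algebra with the cylinder $\sigma$-algebra. The remainder is essentially bookkeeping — confirming that the random data almost surely lands in the regime covered by Lemma \ref{continuity bd sc}, and reading off Borel measurability of $\Phi$ from the continuity statement already established there.
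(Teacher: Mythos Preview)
Your proposal is correct and follows essentially the same approach as the paper: factor $\omega\mapsto q_A^\omega$ as the measurable data map $\omega\mapsto(\lambda^\omega,1_A\mu^\omega)$ followed by the continuous solution map $\Phi$ from Lemma~\ref{continuity bd sc}. You are more careful than the paper about the measurability of $\omega\mapsto 1_A\mu^\omega$ (which the paper dismisses with ``by definition of random measures''), but this extra care is justified and your resolution via the cylinder $\sigma$-algebra is the standard one.
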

\begin{proof}
 We saw that the map $\Phi:(\lambda^\omega, 1_A\mu^\omega)=q_A^\omega$ is continuous. By definition of random measures the map $\omega\mapsto (\lambda^\omega, 1_A\mu^\omega)$ is measurable. Hence, the map
$$\omega \mapsto \Phi(\lambda^\omega,1_A\mu^\omega)=q_A^\omega$$
is measurable.
\end{proof}

The uniqueness and measurably of $q^\omega_A$ allows us to finally deduce

\begin{theorem}
 (i) For each bounded Borel set $A\subset M$ there exists a unique semicoupling $\Q_A$ of $\lambda^\bullet\P$ and $(1_A\mu^\bullet)\mathbb P$ which minimizes the mean cost functional $\Cost(.)$.

(ii) The measure  $\Q_A$ can be disintegrated as $\Q_A(dx,dy,d\omega):=q_A^\omega(dx,dy)\,{\mathbb P}(d\omega)$ where for $\mathbb P$-a.e. $\omega$ the measure $q_A^\omega$ is the unique minimizer of the cost functional $\CCost(.)$ among the semicouplings of $\lambda^\omega$ and $1_A\mu^\omega$.

(iii) $\Cost(\Q_A)=\int_\Omega\CCost(q_A^\omega)\, \mathbb P(d\omega).$
\end{theorem}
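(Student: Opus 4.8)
The plan is to obtain Theorem~\ref{eu:Q+q} by gluing together the fibrewise existence and uniqueness of Proposition~\ref{u sc bd set} with the measurability from Lemma~\ref{qA measurable}, using the disintegration theorem (Theorem~\ref{disintegration theorem}) to interchange the mean cost functional $\Cost$ with the pointwise cost functional $\CCost$. First I would fix the bounded Borel set $A$. For $\P$-a.e.\ $\omega$ the pair $(\lambda^\omega,1_A\mu^\omega)$ satisfies $(1_A\mu^\omega)(M)=\mu^\omega(A)<\infty\le\lambda^\omega(M)$, and after replacing $\lambda^\omega$ by its restriction to a large enough ball of mass $\mu^\omega(A)$ --- exactly the reduction carried out in steps (i)--(ii) of the proof of Lemma~\ref{continuity bd sc} --- Proposition~\ref{u sc bd set} applies and produces the unique $\CCost$-minimizing semicoupling $q_A^\omega$ of $\lambda^\omega$ and $1_A\mu^\omega$, of the form $(id,T^\omega)_*(\rho^\omega\lambda^\omega)$. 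By Lemma~\ref{qA measurable} the assignment $\omega\mapsto q_A^\omega$ is measurable, so $\Q_A(dx,dy,d\omega):=q_A^\omega(dx,dy)\,\P(d\omega)$ is a well-defined measure on $M\times M\times\Omega$; it is finite because $\Q_A(M\times M\times\Omega)=\int_\Omega\mu^\omega(A)\,\P(d\omega)=\EE[\mu^\bullet(A)]<\infty$, and it is a semicoupling of $\lambda^\bullet\P$ and $(1_A\mu^\bullet)\P$, its $(y,\omega)$-marginal being $\int_\Omega 1_A\mu^\omega\,\P(d\omega)=(1_A\mu^\bullet)\P$ and its $(x,\omega)$-marginal being $\le\int_\Omega\lambda^\omega\,\P(d\omega)=\lambda^\bullet\P$.

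Next I would prove optimality and uniqueness simultaneously. Take any semicoupling $Q$ of $\lambda^\bullet\P$ and $(1_A\mu^\bullet)\P$. Disintegrating in the $\Omega$-variable as in Section~\ref{section 2.2} gives a measurable family $q^\omega$ with $Q(dx,dy,d\omega)=q^\omega(dx,dy)\,\P(d\omega)$, and the Campbell-measure marginal conditions descend to the fibres so that $q^\omega$ is a semicoupling of $\lambda^\omega$ and $1_A\mu^\omega$ for $\P$-a.e.\ $\omega$. Since $c\ge0$, the disintegration formula gives
$$\Cost(Q)=\int_{M\times M\times\Omega}c(x,y)\,Q(dx,dy,d\omega)=\int_\Omega\CCost(q^\omega)\,\P(d\omega),$$
which in particular proves (iii) when applied to $Q=\Q_A$. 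Fibrewise optimality yields $\CCost(q^\omega)\ge\CCost(q_A^\omega)$ for $\P$-a.e.\ $\omega$, hence $\Cost(Q)\ge\Cost(\Q_A)$ and $\Q_A$ is a minimizer. If $Q$ is also a minimizer then $\int_\Omega\CCost(q^\omega)\,\P(d\omega)=\int_\Omega\CCost(q_A^\omega)\,\P(d\omega)$ while $\CCost(q^\omega)\ge\CCost(q_A^\omega)$ pointwise, forcing equality for $\P$-a.e.\ $\omega$; the fibrewise uniqueness in Proposition~\ref{u sc bd set} then gives $q^\omega=q_A^\omega$ for $\P$-a.e.\ $\omega$, i.e.\ $Q=\Q_A$. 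This establishes (i), while (ii) is simply the construction of the (now unique) minimizer together with its fibrewise minimality.

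I expect the only genuinely delicate point to be the reduction of the fibrewise problem to the finite-mass regime of Proposition~\ref{u sc bd set} --- since $\lambda^\omega$ has infinite mass --- and, to a lesser extent, the verification that the marginal conditions on the Campbell measures descend $\P$-a.e.\ to the fibres (a monotone-class argument over a countable generating algebra of Borel sets). The first is identical to the argument already given in the proof of Lemma~\ref{continuity bd sc}, namely that any $\CCost$-optimal semicoupling is supported on $B(x,r)\times A$ with $\lambda^\omega(B(x,r))=\mu^\omega(A)$; the rest is routine bookkeeping with disintegration and the nonnegativity of $c$.
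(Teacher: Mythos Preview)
Your proof is correct and uses the same core ingredients as the paper: the disintegration identity $\Cost(Q)=\int_\Omega\CCost(q^\omega)\,\P(d\omega)$, the fibrewise existence and uniqueness from Proposition~\ref{u sc bd set}, and the measurability from Lemma~\ref{qA measurable}. The only difference is in how existence of the minimizer is obtained. The paper first runs a separate compactness/lower-semicontinuity argument directly on $\mathcal M(M\times M\times\Omega)$ to produce \emph{some} minimizer $\Q_A$, and only afterwards identifies it via disintegration with the fibrewise minimizers. You instead construct the candidate $\Q_A=q_A^\bullet\P$ directly from the fibrewise minimizers and then verify optimality by comparing against an arbitrary competitor via disintegration. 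Your route is slightly more economical, since the compactness argument on the big space is redundant once one has the measurable fibrewise minimizers in hand; the paper's route has the minor advantage that existence does not rely on Lemma~\ref{qA measurable}. The two delicate points you flag (reduction to finite mass, descent of marginal conditions to the fibres) are exactly the ones the paper also handles, the first in Lemma~\ref{continuity bd sc} and the second implicitly in Section~2.4.
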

\begin{proof}
The existence of a minimizer is proven along the same lines as in the previous proposition: We choose an approximating sequences
$\Q_n$ in $\mathcal M(M\times M\times\Omega)$ -- instead of  a sequence $q_n$ in $\mathcal M(M \times M)$ --
minimizing the lower semicontinuous functional $\Cost(\cdot)$.
Existence of a limit follows as before from tightness of the set of all semicouplings $\Q$ with $\Cost(\Q)\le 2\inf_{\tilde\Q}\Cost(\tilde\Q)$.

For each semicoupling $\Q$ of $\lambda^\bullet$ and $1_A\mu^\bullet$ with disintegration as $q^\bullet\mathbb P$ we obviously have
$$\Cost(\Q)=\int_\Omega\CCost(q^\omega)\, d\mathbb P(\omega).$$
Hence, $\Q$ is a minimizer of the functional $\Cost(\cdot)$ (among all semicouplings of  $\lambda^\bullet$ and $1_A\mu^\bullet$) if and only if for $\mathbb P$-a.e. $\omega\in\Omega$ the measure $q^\omega$ is a minimizer of the functional $\CCost(.)$ (among all semicouplings of $\lambda^\omega$ and $1_A\mu^\omega$).

Uniqueness of the minimizer of $\CCost(\cdot)$ therefore implies uniqueness of the minimizer of $\Cost(\cdot)$.
\end{proof}

\begin{corollary}\label{invariance of Q}
The optimal semicouplings $Q_A=q_A^\bullet\P$ are equivariant in the sense that
$$ Q_{gA}(gC,gD,\theta_g\omega)= Q_A(C,D,\omega),$$
for any $g\in G$ and  $C,D\in\mathcal B(M).$
\end{corollary}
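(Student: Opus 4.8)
The plan is to exhibit the group action as a cost-preserving bijection between the relevant sets of semicouplings and then invoke the uniqueness assertion of Theorem~\ref{eu:Q+q}. First I would collect the three invariances at our disposal. Since $G$ acts by isometries, $c(gx,gy)=\vartheta(d(gx,gy))=\vartheta(d(x,y))=c(x,y)$ for every $g\in G$. Since $\lambda^\bullet$ and $\mu^\bullet$ are equivariant, $\lambda^{\theta_g\omega}(g\,\cdot\,)=\lambda^\omega(\cdot)$ and $\mu^{\theta_g\omega}(g\,\cdot\,)=\mu^\omega(\cdot)$; combined with the trivial identity $1_{gA}(gy)=1_A(y)$ this gives $(\tau_g)_*\lambda^\omega=\lambda^{\theta_g\omega}$ and $(\tau_g)_*(1_A\mu^\omega)=1_{gA}\mu^{\theta_g\omega}$. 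Finally $\P$ is stationary, $\P\circ\theta_g=\P$.

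Next I would establish the semicoupling correspondence. If $q\in\Pi_s(\lambda^\omega,1_A\mu^\omega)$, then $(\tau_g\times\tau_g)_*q$ has first marginal $(\tau_g)_*(\pi_1)_*q\le(\tau_g)_*\lambda^\omega=\lambda^{\theta_g\omega}$ and second marginal $(\tau_g)_*(1_A\mu^\omega)=1_{gA}\mu^{\theta_g\omega}$, so it belongs to $\Pi_s(\lambda^{\theta_g\omega},1_{gA}\mu^{\theta_g\omega})$; and by the cost invariance $\CCost((\tau_g\times\tau_g)_*q)=\int c(gx,gy)\,q(dx,dy)=\CCost(q)$. Since $(\tau_g\times\tau_g)_*$ is a bijection with inverse $(\tau_{g^{-1}}\times\tau_{g^{-1}})_*$, it maps $\Pi_s(\lambda^\omega,1_A\mu^\omega)$ onto $\Pi_s(\lambda^{\theta_g\omega},1_{gA}\mu^{\theta_g\omega})$ preserving $\CCost$, hence carries the unique minimizer to the unique minimizer. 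By Theorem~\ref{eu:Q+q}(ii) this forces $q_{gA}^{\theta_g\omega}=(\tau_g\times\tau_g)_*q_A^\omega$, i.e.\ $q_{gA}^{\theta_g\omega}(gC,gD)=q_A^\omega(C,D)$ for all $C,D\in\mathcal B(M)$.

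Finally I would pass to the Campbell measures. For bounded measurable $F$ on $M\times M\times\Omega$, substituting $\omega\mapsto\theta_g\omega$, using $\P\circ\theta_g=\P$, and inserting the previous identity,
\[
\int F\,dQ_{gA}=\int_\Omega\!\!\int F(x,y,\theta_g\omega)\,q_{gA}^{\theta_g\omega}(dx,dy)\,\P(d\omega)=\int_\Omega\!\!\int F(gx,gy,\theta_g\omega)\,q_A^\omega(dx,dy)\,\P(d\omega),
\]
which equals $\int F(gx,gy,\theta_g\omega)\,Q_A(dx,dy,d\omega)$; in other words $(\tau_g\times\tau_g\times\theta_g)_*Q_A=Q_{gA}$. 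Specializing $F$ to indicator functions, and using that $\theta_g$ is a bijection of $\Omega$, recovers the pointwise form $Q_{gA}(gC,gD,\theta_g\omega)=Q_A(C,D,\omega)$ stated in the corollary.

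I do not anticipate a genuine obstacle: the only conceptual input is uniqueness of the bounded-set optimal semicoupling (Theorem~\ref{eu:Q+q}), and the rest is bookkeeping --- keeping the directions of the pushforwards $\tau_g$, $\tau_{g^{-1}}$, $\theta_g$ straight, applying stationarity of $\P$ in the substitution step, and reading the displayed equivariance as an identity of measures under the joint action $\tau_g\times\tau_g\times\theta_g$.
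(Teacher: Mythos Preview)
Your argument is correct and follows exactly the line the paper intends: the paper's proof is the one-sentence observation that $q_A^\omega$ is a deterministic (in fact continuous) function of $(\lambda^\omega,1_A\mu^\omega)$ together with the equivariance of $\lambda^\bullet,\mu^\bullet$, and you have simply unpacked this by exhibiting $(\tau_g\times\tau_g)_*$ as a cost-preserving bijection of semicoupling sets and invoking uniqueness from Theorem~\ref{eu:Q+q}(ii). The final passage to Campbell measures via stationarity of $\P$ is a harmless elaboration; the displayed identity in the corollary is already the kernel statement $q_{gA}^{\theta_g\omega}(gC,gD)=q_A^\omega(C,D)$ that you obtained directly.
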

\begin{proof}
This is a consequence of the equivariance of $\lambda^\bullet$ and $\mu^\bullet$ and the fact that $q_A^\omega$ is a deterministic function of $\lambda^\omega$ and $1_A\mu^\omega$.
\end{proof}

\section{Uniqueness}\label{s:u}

The aim of this section is to prove  Theorem \ref{main thm 1}, the uniqueness of optimal semicouplings. Moreover, the representation of optimal semicouplings, that we get as a byproduct of the  uniqueness statement, allows to draw several conclusions about the geometry of the cells of the induced allocations.

Throughout this section we fix two equivariant random measures $\lambda^\bullet$ and $\mu^\bullet$ of unit resp. subunit intensities on M with finite optimal mean transportation cost $\mathfrak c_{e,\infty}$. Moreover, we assume that $\lambda^\bullet$ is absolutely continuous.

\begin{proposition}\label{loc-opt}
 Given a semicoupling $q^\omega$ of $\lambda^\omega$ and $\mu^\omega$ for fixed $\omega\in\Omega$, then the following properties are equivalent.
\begin{enumerate}
\item For each bounded Borel set $A\subset M$, the measure $1_{M\times A}q^\omega$ is the unique optimal coupling of the measures $\lambda_A^\omega(\cdot):=q^\omega(\cdot,A)$ and $1_A\mu^\omega$.
\item The support of $q^\omega$ is $c$-cyclically monotone, more precisely,
$$\sum_{i=1}^N c(x_i,y_i)\le \sum_{i=1}^N c(x_i,y_{i+1})$$
for any $N\in\N$ and any choice of points $(x_1, y_1), \ldots , (x_N, y_N)$ in $\mathrm{supp}(q^\omega)$
with the convention $y_{N+1} = y_1$.
\item There exists a nonnegative density $\rho^\omega$ and a $c$-cyclically monotone map $T^\omega: \{\rho^\omega>0\}\to M$ such that
\begin{equation}\label{trans-map}
q^\omega=\left(Id,T^\omega\right)_*(\rho^\omega\,\lambda^\omega).
\end{equation}
Recall that, by definition,  a map $T$ is $c$-cyclically monotone iff the closure of its graph $\{(x,T(x)):\ x\in \{\rho^\omega>0\} \}$ is a $c$-cyclically monotone set.
\end{enumerate}
\end{proposition}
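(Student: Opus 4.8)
The plan is to establish the cycle of implications $(1)\Rightarrow(2)\Rightarrow(3)\Rightarrow(1)$. Two facts will be used repeatedly. First, for a continuous cost a coupling of finite measures is optimal between its own marginals \emph{if and only if} it is concentrated on a $c$-cyclically monotone set (both directions, see \cite{betterplans}); note that by continuity the closure of a $c$-cyclically monotone set is again $c$-cyclically monotone, so an optimal coupling of (compactly supported) finite measures actually has $c$-cyclically monotone \emph{support}. Second, the bounded-region theory --- Theorem \ref{u mfd}, Lemma \ref{leb-dirac} and Proposition \ref{u sc bd set} --- which says that an optimal (semi)coupling with compactly supported, absolutely continuous first marginal is unique and is induced by a map.

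For $(1)\Rightarrow(2)$ I would argue locally: given finitely many points $(x_1,y_1),\dots,(x_N,y_N)\in\supp(q^\omega)$, pick a bounded \emph{open} set $A$ containing $\{y_1,\dots,y_N\}$; openness of $A$ guarantees that each $(x_i,y_i)$ still lies in $\supp(1_{M\times A}q^\omega)$. By hypothesis $1_{M\times A}q^\omega$ is an optimal coupling of $\lambda_A^\omega$ and $1_A\mu^\omega$, hence has $c$-cyclically monotone support, and the cyclical inequality for the chosen points follows. Since the points were arbitrary, $\supp(q^\omega)$ is $c$-cyclically monotone.

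For $(2)\Rightarrow(3)$: as $q^\omega$ is a semicoupling, $(\pi_1)_*q^\omega\le\lambda^\omega$, so $(\pi_1)_*q^\omega=\rho^\omega\lambda^\omega$ with $0\le\rho^\omega\le 1$. For bounded Borel sets $B,A$ the restriction $1_{B\times A}q^\omega$ is concentrated on the $c$-cyclically monotone set $\supp(q^\omega)$ and has compactly supported finite marginals of equal mass, the first absolutely continuous; hence it is optimal between them, so it coincides with the unique optimal semicoupling of Proposition \ref{u sc bd set} and is carried by a map. Letting $B\uparrow M$ and then $A\uparrow M$, and invoking the $m$-a.e.\ uniqueness of these maps to see that they agree on overlaps, one glues them into a single measurable $T^\omega$ on $\{\rho^\omega>0\}$ with $q^\omega=(Id,T^\omega)_*(\rho^\omega\lambda^\omega)$; after modifying $T^\omega$ on a $\rho^\omega\lambda^\omega$-null set its graph over $\{\rho^\omega>0\}$ lies in $\supp(q^\omega)$, which by (2) is $c$-cyclically monotone, so $T^\omega$ is a $c$-cyclically monotone map. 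I expect this gluing/exhaustion bookkeeping --- tracking the domains of the windowed maps and checking that they coincide almost everywhere --- to be the only genuine work in the whole proposition; everything else is a formal consequence of the continuity of $c$ and the already-established bounded-region theory.

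For $(3)\Rightarrow(1)$: for bounded $A$ the measure $1_{M\times A}q^\omega$ is supported in $\supp(q^\omega)$, the closure of the graph of the $c$-cyclically monotone map $T^\omega$, hence concentrated on a $c$-cyclically monotone set; by the sufficiency direction (a $c$-cyclically monotone carrier forces optimality for continuous cost) it is an optimal coupling of $\lambda_A^\omega$ and $1_A\mu^\omega$. For uniqueness, if $\gamma$ is another such optimal coupling then $\tfrac12(1_{M\times A}q^\omega+\gamma)$ is optimal as well, hence $c$-cyclically monotone, and the localization argument of the previous step (its first marginal $\lambda_A^\omega$ being absolutely continuous) shows it is carried by a graph over $\lambda_A^\omega$; since a Dirac disintegration cannot be written as a nontrivial average of two probability kernels, $\gamma=1_{M\times A}q^\omega$.
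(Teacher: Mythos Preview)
Your proof is correct and follows essentially the same strategy as the paper: the paper establishes the cycle in the order $(iii)\Rightarrow(ii)\Rightarrow(i)\Rightarrow(iii)$, dismissing the first two implications as immediate consequences of Theorem~\ref{u mfd} and Lemma~\ref{leb-dirac} and carrying out the exhaustion/gluing argument for $(i)\Rightarrow(iii)$ via a single nested sequence $K_n\nearrow M$ in the second variable, whereas you run the cycle the other way and perform the gluing in $(2)\Rightarrow(3)$. Your additional restriction in the first variable (the bounded set $B$) does not appear in the paper but is a sensible precaution, since it keeps the first marginal compactly supported so that the bounded-region uniqueness results of Section~\ref{s:bd semi} apply on the nose; the paper's single exhaustion is slightly cleaner but tacitly invokes those results for first marginals that are merely finite and absolutely continuous.
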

\begin{proof}
 $(iii)\Rightarrow(ii)\Rightarrow(i)$ follows from Theorem \ref{u mfd} and Lemma \ref{leb-dirac}.  \\
\smallskip

$(i)\Rightarrow (iii):$ Take a nested sequence of convex sets $(K_n)_n$ such that $K_n\nearrow M$. By assumption $1_{M\times K_n}q^\omega$ is the unique optimal coupling between $\lambda_{K_n}^\omega\ll m$ and $1_{K_n}\mu^\omega$. By Proposition \ref{u sc bd set} or Theorem \ref{u mfd}, there exists a density $\rho_n^\omega$ and a map $T_n^\omega:\{\rho_n^\omega>0\}\to M$ such that $1_{M\times K_n}q^\omega=(id,T_n^\omega)_* (\rho_n^\omega \lambda_{K_n}^\omega)$. Set $A_n^\omega=\{\rho_n^\omega>0\}$. As $K_n\subset K_{n+1}$ we have $A_n^\omega\subset A_{n+1}^\omega$. Subtransports of optimal transports are optimal again. Therefore, we have 
$$T_{n+1}^\omega =T_n^\omega \ \text{ on } A_n^\omega$$
 implying $\rho^\omega_{n+1} =\rho^\omega_n\ \text{ on } A_n^\omega.$ Hence, the limits 
$$T^\omega=\lim_n T_n^\omega,\ A^\omega=\lim_n A_n^\omega\ \text{ and }\ \rho^\omega=\lim_n\rho_n^\omega$$
 exist and define a c-cyclically monotone map $T^\omega:A^\omega\to M$ such that on $A^\omega \times M:$
\begin{equation*}
q^\omega=(id,T^\omega)_*(\rho^\omega\lambda^\omega).
\end{equation*}
\end{proof}

\begin{figure*}
\begin{center}
\includegraphics[scale=0.6]{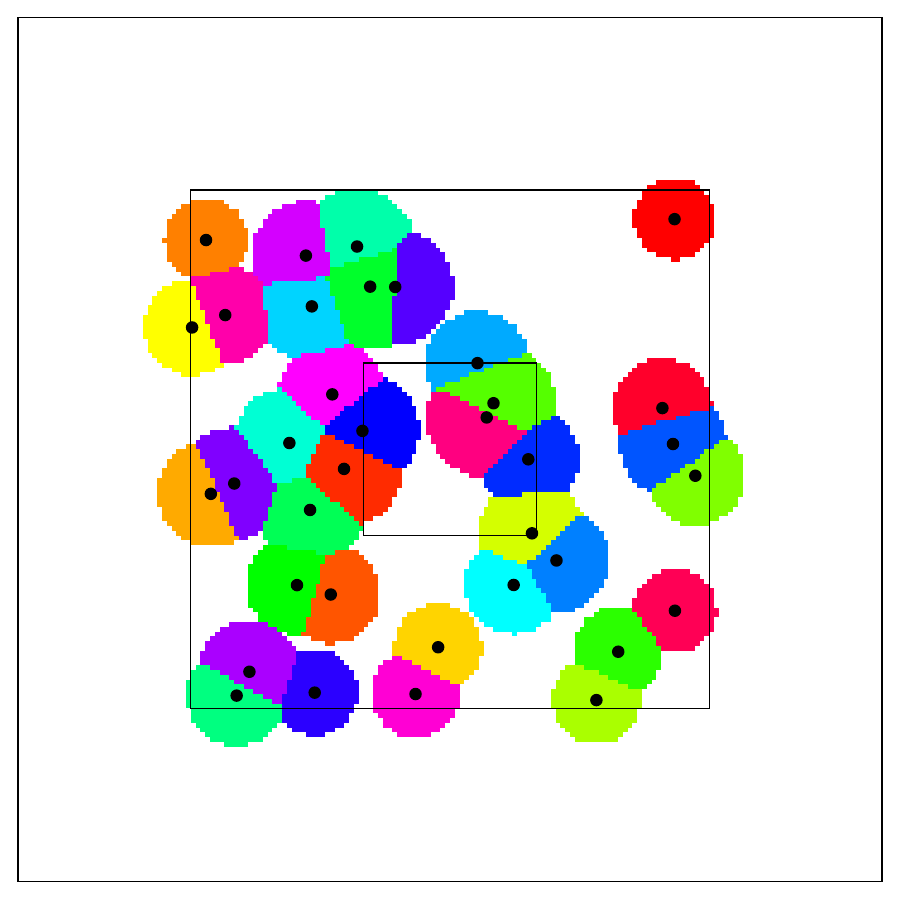} 
\includegraphics[scale=0.6]{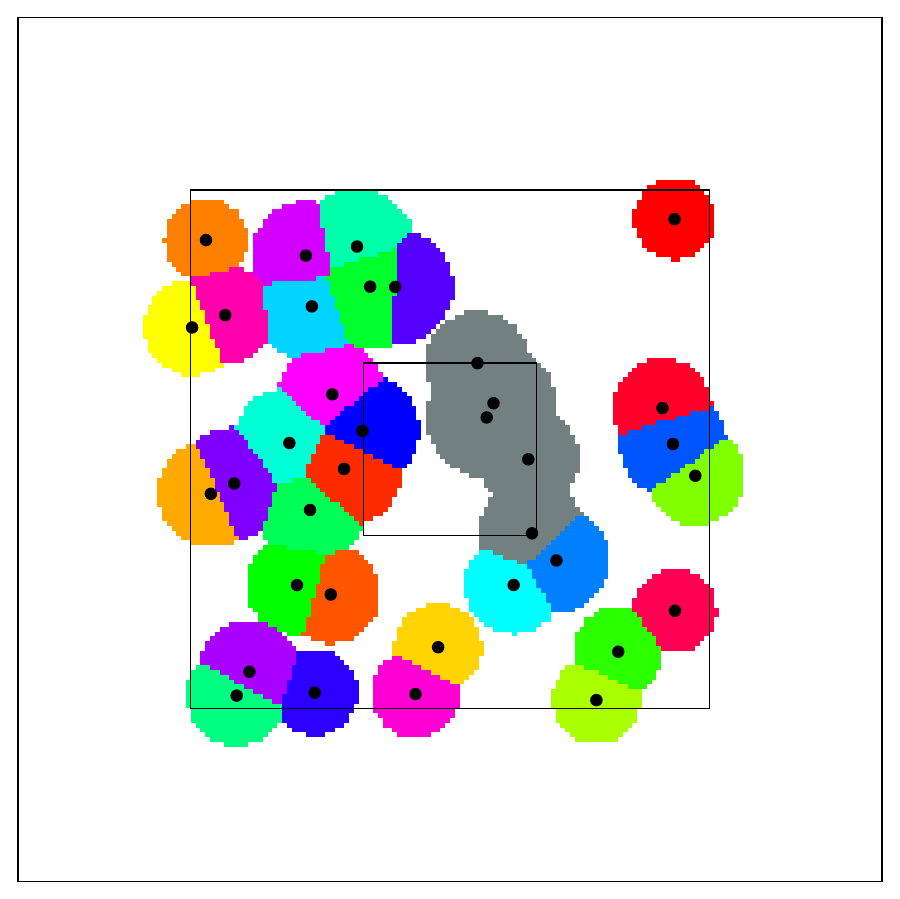}
\end{center}
\caption{The left picture is a semicoupling of Lebesgue and 36 points with cost function $c(x,y)=|x-y|^4$. In the right picture, the five points within the small cube can choose new partners from the mass that was transported to them in the left picture (corresponding to the measure $\lambda_A^\omega$). If the semicoupling on the left hand side is locally optimal, then the points in the small cube on the right hand side will choose from the gray region exactly the partners they have in the left picture. }\label{figure 4}
\end{figure*}

\begin{remark} In the sequel, any \emph{transport map} $T^\omega:A^\omega\to M$ as above
will be extended to a map $T^\omega:M\to M\cup\{\eth\}$  by putting
$T^\omega(x):=\eth$ for all $x\in M\setminus A^\omega$ where $\eth$ denotes an isolated point added to $M$ ('point at infinity', 'cemetery'). Then (\ref{trans-map}) reads
\begin{equation}\label{trans-map2}
q^\omega=\left(Id,T^\omega\right)_*\rho^\omega\lambda^\omega \quad\mbox{on }M\times M.
\end{equation}
Moreover, we put $c(x,T^\omega(x))\ =\ c(x,\eth)\ :=\ 0$ for $x\in M\setminus A^\omega$. If we know a priori that $\rho^\omega(x)\in\{0,1\}$ almost surely (\ref{trans-map2}) simplifies to
\begin{equation}\label{trans-map3}
q^\omega=\left(Id,T^\omega\right)_*\lambda^\omega \quad\mbox{on }M\times M.
\end{equation}
\end{remark}

\begin{definition}\label{def loc-opt}
 A semicoupling $Q=q^\bullet\mathbb P$ of $\lambda^\bullet$ and $\mu^\bullet$ is called \emph{locally optimal} iff  some (hence every)  property  of the previous proposition is satisfied for $\mathbb P$-a.e. $\omega\in\Omega$.
\end{definition}

\begin{remark}\ \label{remark on loc opt}
(i) \quad
Asymptotic optimality is not sufficient for uniqueness and it does not imply local optimality: Consider the Lebesgue measure $\lambda^\bullet = \leb = \lambda$ and a Poisson point process $\mu^\bullet$ of unit intensity on $\R^d$. Let us fix the cost function $c(x,y)=|x-y|^2.$ Lemma \ref{super} shows that $\mathfrak c_0$ the optimal mean transportation cost on the unit cube, that is the cost of the optimal semicoupling of $\lambda$ and  $1_{[0,1)^d}\mu^\bullet,$ is strictly less than the optimal mean transportation cost on a big cube, say $[0,10^{10})^d$. Moreover, for any semicoupling $q^\bullet$ between $\lambda$ and $\mu^\bullet$  Lemma \ref{super} implies that
$$\mathfrak C(q^\bullet)=\liminf_{n\to\infty} \frac1{\lambda(B_n)} \EE\left[\CCost(1_{\R^d\times B_n}q^\bullet)\right],$$
for $B_n=[-2^{n-1},2^{n-1})^d$. In other words, it is more costly to transport in one big cube than in many small cubes separately. For all $n\in\N$, let $\rho_n:\Omega\times\R^d\to [0,1]$ be the unique optimal density for the transport problem between $\lambda$ and $1_{B_n}\mu^\bullet$, that is the optimal semicoupling is given by $q_n^\bullet=(id,T_n^\bullet)_*(\rho_n^\bullet\lambda).$ Let $\kappa_n^\bullet$ be the following semicoupling between $\lambda$ and $1_{B_n}\mu^\bullet$
$$\kappa^\omega(dx,dy)\ =\ q_0^\omega(dx,dy) + \sigma_n^\omega(dx,dy),$$
where $\sigma^\omega_n$ is the unique optimal \emph{coupling} between $(\rho_n^\omega-\rho_0^\omega)\cdot\lambda$ and $1_{B_n\setminus B_0}\mu^\omega.$ Let $f_n:\Omega\times\R^d\to[0,1]$ be such that $1_{R^d\times (B_n\setminus B_0)}q_n^\omega=(id,T^\omega_n)_*(f_n^\omega\lambda).$ Denote by $\W_2$  the expectation of the usual $L^2-$ Wasserstein distance. Then, we can estimate using the triangle inequality
\begin{eqnarray*}
\Cost^{1/2}(\kappa^\bullet_n)\ &=&\ \Cost^{1/2}(q_0^\bullet)+ \W_2((\rho_n-\rho_0)\cdot\lambda, 1_{(B_n\setminus B_0)}\mu^\bullet)  \\
& \leq& \ \Cost^{1/2}(q_0^\bullet) + \Cost^{1/2}(q^\bullet_n) + \W_2((\rho_n-\rho_0)\cdot\lambda, f_n\cdot \lambda).
\end{eqnarray*}
 Set $Z_l=\mu^\bullet(B_l)$. Note that $(\rho_n^\omega-\rho_0^\omega)$ and $f_n^\omega$ coincide on a set of Lebesgue measure of mass at least $Z_n^\omega-Z_0^\omega$. This allows to estimate $\W_2((\rho_n-\rho_0)\cdot\lambda, f_n\cdot \lambda)$ very roughly from above (for similar less rough and much more detailed estimates we refer to  the cost estimates in section 5 of \cite{huesmann2010optimal}). We have to transport mass of amount at most $Z_0$ at most a distance $R_1=2h\cdot Z_n^{1/d}+2\sqrt{d}2^n$ for some constant $h$, e.g. $h=(\Gamma(\frac{d}{2} +1))^{1/d}$ would do. Indeed, $\rho_n^\omega$ must be supported in a $h\cdot Z^{1/d}$ neighbourhood of $B_n$ because we could otherwise produce a cheaper semicoupling (see Lemma \ref{continuity bd sc}). This gives using the estimates on Poisson moments of Lemma 5.11 in \cite{huesmann2010optimal}
\begin{eqnarray*}
&&\W^2_2((\rho_n-\rho_0)\cdot\lambda, f_n\cdot \lambda) \leq \EE\left[R_1^2\cdot Z_0\right]\\
&\leq& C_1\ \left(\EE[Z_0^2]^{1/2}\EE[Z_n^{4/d}]^{1/2}+\lambda(B_n)^{2/d}+2\cdot\EE[Z_0^2]^{1/2}\lambda(B_n)^{1/d}\EE[Z_n^{2/d}]^{1/2}\right)\\
& \leq& C_2\ \lambda(B_n)^{2/d}, 
\end{eqnarray*}
for some constants $C_1$ and $C_2$. In particular, if we take $d\geq 3$ this shows that
$$\liminf_{n\to\infty} \frac1{\lambda(B_n)}\Cost(\kappa^\bullet_n)=\liminf_{n\to\infty} \frac1{\lambda(B_n)}\Cost(q^\bullet_n).$$
Hence, as in Proposition \ref{abstract exist} we can show that $\kappa_n^\bullet$ converges along a subsequence to some semicoupling $\kappa^\bullet$ between $\lambda$ and $\mu^\bullet$ which is asymptotically optimal but not locally optimal.
\medskip
 
(ii) \quad Local optimality does not imply asymptotic optimality and it is not sufficient for uniqueness: For instance in the case $M=\R^d, c(x,y)=|x-y|^2$, given any coupling $q^\bullet$ of $\leb$ and a Poisson point process $\mu^\bullet$ and $z\in\R^d\setminus\{0\}$ then
$$\tilde q^\omega(dx,dy):=q^\omega(d(x+z),dy)$$
defines another locally optimal coupling  of $\leb$ and $\mu^\bullet$. Not all of them can be asymptotically optimal.

\medskip

(iii) \quad The name \emph{local optimality} might be misleading in the context of semicouplings. Consider a Poisson process $\mu^\bullet$ of intensity 1/2 and let $q^\bullet$ be an optimal coupling between $1/2 \leb$ and $\mu^\bullet$. Then, it is locally optimal (see Theorem \ref{opt loc opt}) according to this definition. However, as we left half of the Lebesgue measure laying around we can everywhere locally produce a coupling with less cost. In short, the optimality does not refer to the choice of density only to the use of the chosen density.

\medskip

(iv)\quad
Note that local optimality --- in contrast to asymptotic optimality and equivariance --- is not preserved under convex combinations. It is an open question if local optimality and asymptotic optimality imply uniqueness.
\end{remark}

Given two random measures $\gamma^\bullet, \eta^\bullet: \Omega\to\mathcal M(M)$ with $\gamma^\omega(M) = \eta^\omega(M)<\infty$ for all $\omega\in\Omega$
we define the  \emph{optimal mean transportation cost} by
$$\Cost(\gamma^\bullet,\eta^\bullet)\ :=\ \inf\left\{\Cost(q^\bullet): \ q^\omega\in\Pi(\gamma^\omega,\eta^\omega) \ \mbox{for a.e. }\omega \in \Omega\right\}.$$

Given a semicoupling $q^\bullet$ of $\lambda^\bullet$ and $\mu^\bullet$ and  a bounded Borel set $A\subset M$ recall the definition of $\lambda_A^\bullet$ from Prop. \ref{loc-opt}. We define
the \emph{efficiency of the semicoupling $q^\bullet$ on the set $A$} by
$$\mathfrak {eff}_{A}(q^\bullet):=\frac{\Cost(\lambda^\bullet_A\, ,\,1_A\mu^\bullet)}{\Cost(1_{M\times A}q^\bullet)}.$$
It is a number in $(0,1]$. The semicoupling $q^\bullet$ is said to be efficient on $A$ iff $\mathfrak {eff}_{A}(q^\bullet)=1$. Otherwise, it is inefficient on $A$. As noted in the remark above in the case of true \emph{semi}couplings this notion might mislead the intuition.

\begin{lemma} 
(i) $q^\bullet$ is locally optimal if and only if $\mathfrak {eff}_{A}(q^\bullet)=1$ for all bounded Borel sets  $A\subset M$.

(ii) $\mathfrak {eff}_{A}(q^\bullet)=1$ for some $A\subset M$ implies $\mathfrak {eff}_{A'}(q^\bullet)=1$ for all $ A'\subset A$, where we set $0/0=1$. 
\end{lemma}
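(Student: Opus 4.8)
The plan is to first distill the condition $\mathfrak{eff}_A(q^\bullet)=1$ into a purely pointwise statement, namely that for $\mathbb{P}$-almost every $\omega$ the measure $1_{M\times A}q^\omega$ is \emph{the} optimal coupling of $\lambda_A^\omega$ and $1_A\mu^\omega$. One inequality is immediate: since $\lambda_A^\omega(\cdot)=q^\omega(\cdot,A)$ and $(\pi_2)_\ast(1_{M\times A}q^\omega)=1_A\mu^\omega$, the measure $1_{M\times A}q^\omega$ is always a coupling of these marginals, so $\Cost(\lambda_A^\bullet,1_A\mu^\bullet)\le\Cost(1_{M\times A}q^\bullet)$ and $\mathfrak{eff}_A(q^\bullet)\in(0,1]$. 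The substance is the disintegration identity $\Cost(\lambda_A^\bullet,1_A\mu^\bullet)=\int_\Omega\inf_{p\in\Pi(\lambda_A^\omega,1_A\mu^\omega)}\CCost(p)\,\mathbb{P}(d\omega)$. Here I would use that $\lambda_A^\omega\le\lambda^\omega$ is absolutely continuous and carries the same finite mass $\mu^\omega(A)$ as $1_A\mu^\omega$, so that Proposition \ref{u sc bd set} together with the continuity and measurability statements of Lemmas \ref{continuity bd sc} and \ref{qA measurable} furnish a \emph{measurable} selection $\omega\mapsto\hat q_A^\omega$ of the unique optimal couplings; this random coupling attains the infimum defining $\Cost(\lambda_A^\bullet,1_A\mu^\bullet)$. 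Combining this with the trivial identity $\Cost(1_{M\times A}q^\bullet)=\int_\Omega\CCost(1_{M\times A}q^\omega)\,\mathbb{P}(d\omega)$ and the pointwise inequality $\CCost(1_{M\times A}q^\omega)\ge\CCost(\hat q_A^\omega)$, equality of the (finite) integrals forces $\CCost(1_{M\times A}q^\omega)=\CCost(\hat q_A^\omega)$ for a.e.\ $\omega$, and absolute continuity of $\lambda_A^\omega$ promotes ``an optimal coupling'' to ``the unique optimal coupling''. Thus $\mathfrak{eff}_A(q^\bullet)=1$ is equivalent to property (i) of Proposition \ref{loc-opt} holding at the fixed set $A$ for a.e.\ $\omega$.

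Granting this reduction, part (ii) follows from the principle that subtransports of optimal transports are optimal. If $\mathfrak{eff}_A(q^\bullet)=1$ and $A'\subset A$, then for a.e.\ $\omega$ the measure $1_{M\times A}q^\omega$ is the optimal coupling of $\lambda_A^\omega$ and $1_A\mu^\omega$, and $1_{M\times A'}q^\omega=1_{M\times A'}\bigl(1_{M\times A}q^\omega\bigr)$ is obtained from it by shrinking the target from $A$ to $A'$. Its support lies inside the $c$-cyclically monotone set $\supp(1_{M\times A}q^\omega)$, hence is itself $c$-cyclically monotone, so $1_{M\times A'}q^\omega$ is an optimal coupling of its own marginals $\lambda_{A'}^\omega$ and $1_{A'}\mu^\omega$ for a.e.\ $\omega$. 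Applying the reduction in the reverse direction (uniqueness again coming from $\lambda_{A'}^\omega\ll m$) then gives $\mathfrak{eff}_{A'}(q^\bullet)=1$, the convention $0/0=1$ taking care of the degenerate case where no mass has to be transported into $A'$.

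For part (i), one implication is immediate after the reduction: local optimality is by definition property (i) of Proposition \ref{loc-opt} at \emph{every} bounded Borel set for a.e.\ $\omega$, hence $\mathfrak{eff}_A(q^\bullet)=1$ for all such $A$. For the converse I would fix a nested convex exhaustion $K_n\nearrow M$; applied to each $A=K_n$, the reduction yields a full-measure set $\Omega_n$ on which $1_{M\times K_n}q^\omega$ is the unique optimal coupling of $\lambda_{K_n}^\omega$ and $1_{K_n}\mu^\omega$, and on the full-measure intersection $\Omega_0=\bigcap_n\Omega_n$ this holds simultaneously for all $n$. The proof of the implication $(i)\Rightarrow(iii)$ in Proposition \ref{loc-opt} uses only the sets of such an exhaustion, so it runs verbatim on $\Omega_0$ and delivers property (iii) there; by the equivalences of Proposition \ref{loc-opt} property (i) then holds on $\Omega_0$ at \emph{all} bounded Borel sets, i.e.\ $q^\bullet$ is locally optimal.

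The main obstacle is the disintegration identity $\Cost(\lambda_A^\bullet,1_A\mu^\bullet)=\int_\Omega\inf_p\CCost(p)\,\mathbb{P}(d\omega)$: this is exactly where absolute continuity of $\lambda^\bullet$ is essential, since without uniqueness and the continuous dependence of the optimal coupling on $(\lambda_A^\omega,1_A\mu^\omega)$ one cannot produce the measurable selection $\hat q_A^\bullet$ realizing the infimum. A minor secondary point is the passage from ``for each $A$, a.e.\ $\omega$'' to ``a.e.\ $\omega$, for all $A$'', handled by the countable exhaustion above; one also tacitly keeps the relevant costs finite (automatic whenever $q^\bullet$ has finite mean transportation cost), which is the only regime in which the efficiency ratio is meaningful.
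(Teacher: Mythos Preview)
Your proof is correct and follows essentially the same approach as the paper: reduce $\mathfrak{eff}_A(q^\bullet)=1$ to the pointwise statement that $1_{M\times A}q^\omega$ is the optimal coupling of its marginals for a.e.\ $\omega$, then invoke the ``subtransports of optimal transports are optimal'' principle for (ii). You are in fact more careful than the paper on two points it leaves implicit: the measurable selection needed for the disintegration identity $\Cost(\lambda_A^\bullet,1_A\mu^\bullet)=\int_\Omega\inf_p\CCost(p)\,\mathbb P(d\omega)$, and the passage from ``for each $A$, a.e.\ $\omega$'' to ``a.e.\ $\omega$, for all $A$'' via a countable exhaustion and Proposition~\ref{loc-opt}.
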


\begin{proof} (i) Let $A$ be given and  $\omega\in\Omega$ be fixed. Then
 $1_{M \times A}q^\omega$ is the optimal semicoupling of the measures $\lambda^\omega_A$ and $1_A\mu^\omega$ if and only if
\begin{equation}\label{eff-path}
\CCost(1_{M \times A}q^\omega)=\CCost\left(\lambda_A^\omega\, ,\,1_A\mu^\omega\right).
\end{equation}
On the other hand, $\mathfrak {eff}_{A}(q^\bullet)=1$ is equivalent to
$$\EE\left[\CCost(1_{M \times A}q^\bullet)\right]=\EE\left[\CCost\left(\lambda_A^\bullet\, ,\,1_A\mu^\bullet\right)\right].$$
The latter, in turn, is equivalent to (\ref{eff-path}) for $\mathbb P$-a.e. $\omega\in\Omega$.

(ii) \ If the transport $q^\bullet$ restricted to $M\times A$ is optimal then also each of its sub-transports.
\end{proof}

Remember that due to the stationarity of $\P$ equivariance of $q^\bullet$ translates into invariance of its distribution. The next Theorem is a key step in establishing uniqueness because it shows that every optimal semicoupling is induced by a map.

\begin{theorem}\label{opt loc opt}
 Every optimal semicoupling between $\lambda^\bullet$ and $\mu^\bullet$ is locally optimal.
\end{theorem}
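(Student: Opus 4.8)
The plan is to argue by contradiction: suppose $q^\bullet$ is an optimal (hence equivariant and asymptotically optimal) semicoupling which is \emph{not} locally optimal. By the lemma preceding the theorem, there is then a bounded Borel set $A\subset M$ on which $q^\bullet$ is inefficient, i.e. $\mathfrak{eff}_A(q^\bullet)<1$, so that
$$\EE\left[\CCost\left(\lambda_A^\bullet,1_A\mu^\bullet\right)\right]<\EE\left[\CCost(1_{M\times A}q^\bullet)\right].$$
Since $A$ is bounded it is contained in a finite union of translates of the fundamental region; after enlarging $A$ (using part (ii) of the efficiency lemma, which guarantees inefficiency is inherited downward from the complement — more precisely, if $q^\bullet$ were efficient on the enlarged set it would be efficient on $A$) we may assume $A\in\mathrm{Adm}(M)$, say $A=\bigcup_{g\in I}gB_0$ for a finite $I\subset G$.

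The key idea is a \emph{local surgery made global by equivariance}. On the set $A$, replace the piece $1_{M\times A}q^\omega$ by the optimal coupling between $\lambda_A^\omega$ and $1_A\mu^\omega$ — this keeps the second marginal on $A$ equal to $1_A\mu^\omega$ and the first marginal still dominated by $\lambda^\omega$, so the result is again a semicoupling of $\lambda^\bullet$ and $\mu^\bullet$; call it $\hat q^\bullet$. It strictly lowers the expected cost over $A$. Then I would symmetrize: average the translates $(\tau_g)_*\hat q^{\theta_{g^{-1}}\bullet}$ over a large F{\o}lner-type window, or — more in the spirit of the later sections — observe that one can tile $M$ by disjoint $G$-translates $\{g_jA\}$ of $A$ (this requires $A$ to be a union of fundamental regions, which is why we arranged $A\in\mathrm{Adm}(M)$), perform the improving surgery independently on each tile, and obtain a semicoupling $\tilde q^\bullet$ which agrees with the improvement on every tile. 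Using the mass transport principle / invariance of $\P$ together with Lemma \ref{super} and the remark characterizing $\mathfrak C$ on equivariant semicouplings, one gets
$$\mathfrak C(\tilde q^\bullet)=\frac{1}{m(A)}\EE\left[\CCost(1_{M\times A}\tilde q^\bullet)\right]=\frac{1}{m(A)}\EE\left[\CCost\left(\lambda_A^\bullet,1_A\mu^\bullet\right)\right]<\frac{1}{m(A)}\EE\left[\CCost(1_{M\times A}q^\bullet)\right]\le\mathfrak C(q^\bullet)=\mathfrak c_{e,\infty},$$
contradicting asymptotic optimality. (One must check $\mathfrak c_{e,\infty}<\infty$ is preserved, which is immediate since $\tilde q^\bullet$ costs no more than $q^\bullet$.)

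The main obstacle is making the surgery genuinely \emph{equivariant} rather than merely invariant-in-distribution. The tiled-surgery measure $\tilde q^\bullet$ is built as a deterministic (factor) function of $(\lambda^\bullet,\mu^\bullet)$ — on the tile $g_jA$ one uses the optimal coupling furnished by Theorem \ref{eu:Q+q}, which by Corollary \ref{invariance of Q} is equivariant — so equivariance should follow, but one has to be careful that the choice of tiling and of the indexing can be made $G$-consistently; alternatively, as is standard, one only needs $\tilde q^\bullet$ to be \emph{asymptotically} at most as costly as an equivariant competitor, and equivariance of the final object can be recovered by the compactness argument of Proposition \ref{abstract exist} applied to translation-averages of $\tilde q^\bullet$, whose cost does not increase under such averaging by convexity and lower semicontinuity of $\mathfrak C$. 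A secondary technical point is the passage from ``inefficient on some bounded $A$'' to ``inefficient on some $A\in\mathrm{Adm}(M)$'': here one uses that $A$ is contained in some admissible $A'$ and that efficiency on $A'$ would force efficiency on $A\subset A'$, so inefficiency propagates upward to an admissible set. Once these bookkeeping issues are handled, the contradiction is clean and the theorem follows.
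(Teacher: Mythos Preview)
Your overall strategy matches the paper's: assume $q^\bullet$ is optimal but not locally optimal, locate a set on which it is inefficient, and manufacture a strictly cheaper equivariant competitor. The gap is in the implementation. You claim that after enlarging $A$ to an admissible set $\bigcup_{g\in I}gB_0$ one can ``tile $M$ by disjoint $G$-translates $\{g_jA\}$''. This is false in general: a finite subset $I\subset G$ need not admit $J\subset G$ with $\{jI:j\in J\}$ partitioning $G$ (already for $G=\mathbb Z$, $I=\{0,1,3\}$ fails), and there is no mechanism for enlarging $I$ to a monotile without extra hypotheses on $G$ such as residual finiteness, which are not assumed here. Even when a tiling exists, your $\tilde q^\bullet$ is only equivariant under the subgroup preserving the tiling, so the identity $\mathfrak C(\tilde q^\bullet)=\frac{1}{m(A)}\EE[\CCost(1_{M\times A}\tilde q^\bullet)]$ (which uses full $G$-equivariance via the Remark after the definition of optimality) is unjustified, and $\tilde q^\bullet$ is not a competitor for $\mathfrak c_{e,\infty}$. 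Your fallback remedies (F{\o}lner averaging, or the compactness argument of Proposition~\ref{abstract exist} applied to translation averages) require the growth condition \eqref{amenability}, which the paper imposes only in Section~\ref{s:construction}, not for this theorem.

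The paper sidesteps all of this with a different averaging that needs neither tiling nor amenability. One takes the inefficiency on a ball $hB_r$ (any bounded $A$ sits inside some $gB_r$, and inefficiency propagates upward by part (ii) of the efficiency lemma), lets $\tilde q^\bullet_{gB_r}$ be the optimal re-coupling of $\lambda^\bullet_{gB_r}$ and $1_{gB_r}\mu^\bullet$ for \emph{every} $g\in G$, and sets
\[
\bar q^\bullet_{hB_0}\ :=\ \frac{1}{|\Lambda_r|}\sum_{g\in h\Lambda_r}1_{M\times hB_0}\,\tilde q^\bullet_{gB_r}.
\]
The balls $gB_r$ overlap heavily, but because $\Lambda_r$ is a symmetric ball in the Cayley graph, summing over all $g$ with $hB_0\subset gB_r$ and restricting to $hB_0$ is, by invariance of the $\tilde q^\bullet_{gB_r}$, the same in expectation as fixing $hB_r$ and summing over all $gB_0\subset hB_r$; this gives $\Cost(\bar q^\bullet_{hB_0})=\frac{1}{|\Lambda_r|}\EE[\CCost(\tilde q^\bullet_{hB_r})]\le \eta\cdot\EE[\CCost(1_{M\times hB_0}q^\bullet)]$. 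Then $\bar q^\bullet=\sum_{h\in G}\bar q^\bullet_{hB_0}$ is genuinely $G$-equivariant (it is built from the equivariant family $\tilde q^\bullet_{gB_r}$ in a $G$-covariant way) and strictly cheaper than $q^\bullet$, giving the contradiction. The only structural fact used is $\Lambda_r=\Lambda_r^{-1}$; no monotile and no F{\o}lner condition are needed.
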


\begin{proof}
 Assume we are given a semicoupling $q^\bullet$ of $\lambda^\bullet$ and $\mu^\bullet$ that is equivariant but not locally optimal. According to the previous lemma, the latter implies that there is $g\in G$ and $r\in\N$ such that $q^\bullet$ is not efficient on $gB_r$, i.e.
$$\eta = \mathfrak {eff}_{gB_r}(q^\bullet)<1.$$
By invariance, this implies that $\eta = \mathfrak {eff}_{hB_r}(q^\bullet)<1$ for all $h\in G$. Hence, for any $h\in G$ there is a coupling $\tilde q^\bullet_{hB_r}$ of $\lambda^\bullet_{hB_r}$ and $1_{hB_r}\mu^\bullet$, the unique optimal coupling, which is more efficient than $1_{M\times hB_r} q^\bullet$, i.e. such that
$$ \EE[\CCost(\tilde q^\bullet_{hB_r})] \leq \eta \cdot \EE[\CCost(1_{M\times hB_r} q^\bullet)].$$
Moreover, because of the equivariance of $q^\bullet$ we have $\tilde q^\omega_{hB_r}(dx,dy)= \tilde q^{\theta_g\omega}_{ghB_r}(d(gx),d(gy))$ (see also Corollary \ref{invariance of Q}). Hence, all convex combinations of the measures $\tilde q^\omega_{hB_r}$ will have similar equivariance properties.

We would like to have the estimate above also for the restriction of $\tilde q^\bullet_{hB_r}$ to $M\times hB_0$ in order to produce a semicoupling with less transportation cost than $q^\bullet$. This is not directly possible as we cannot control the contribution of $hB_0$ to the cost of $\tilde q^\bullet_{hB_r}$. However, we can use a trick which we will also use for the construction in the next section that will give us the desired result. Remember that $\Lambda_r$ denotes the $2^r$ neighbourhood of the identity in the Cayley graph of $G$. Set
$$\bar{q}^\bullet_{hB_0} = \frac1{|\Lambda_r|}\sum_{g\in h\Lambda_r} 1_{M\times hB_0} \tilde q^\bullet_{gB_r}.$$
Then we have 
\begin{eqnarray*}
 \Cost(\bar{q}^\bullet_{hB_0})&=&  \frac1{|\Lambda_r|}\sum_{g\in h\Lambda_r} \EE\left[\int_{M\times hB_0} c(x,y) \tilde q^\bullet_{gB_r}(dx,dy)\right]\\
&=& \frac1{|\Lambda_r|}\cdot \EE\left[\int_{M\times hB_r} c(x,y) \tilde q^\bullet_{hB_r}(dx,dy)\right]\\
&=& \frac1{|\Lambda_r|}\cdot \EE\left[\CCost(\tilde q^\bullet_{hB_r})\right].
\end{eqnarray*}
The second equality holds because fixing $hB_0$ and summing over all $gB_r$ containing $hB_0$ is, due to the invariance of $\tilde q^\bullet_{gB_r}$, the same as fixing $hB_r$ and summing over all $gB_0$ contained in $hB_r$. Put $\bar{q}^\bullet = \sum_{h\in G} \bar{q}^\bullet_{hB_0}$. By construction, $\bar{q}^\bullet$ is an equivariant semicoupling of $\lambda^\bullet$ and $\mu^\bullet$. Furthermore, for any $g\in G$ we have
$$ \EE[\CCost(\bar{q}^\bullet_{gB_0})] \leq \eta \cdot \EE[1_{M\times gB_0} q^\bullet].$$
This means, that $q^\bullet$ is not asymptotically optimal.

\end{proof}

\begin{remark}
 We really need to consider $r>1$ in the above proof as it can happen that $q^\bullet$ is efficient on every fundamental region but not locally optimal. Indeed, consider $\mu=\sum_{z\in\Z^2} \delta_z$ and let $q^\bullet$ denote the coupling transporting one quarter of the Lebesgue measure of the square of edge length 2 centered at z to z. This is efficient on every fundamental region, which contains exactly one $z\in\Z^2$, but not efficient on say $[0,5)^2$.
\end{remark}

\begin{theorem}
 Assume that $\mu^\bullet$ has intensity one, then there is a unique optimal coupling between $\lambda^\bullet$ and $\mu^\bullet$.
\end{theorem}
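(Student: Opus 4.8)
The plan is to deduce this from the existence result in Proposition~\ref{abstract exist}, the structural result Theorem~\ref{opt loc opt}, and the convexity of the set of optimal semicouplings, using Lemma~\ref{semicoupling gleich coupling} to upgrade ``semicoupling'' to ``coupling'' throughout. For existence: since $\mathfrak c_{e,\infty}<\infty$ is a standing assumption in this section, Proposition~\ref{abstract exist} provides an equivariant semicoupling $q^\bullet$ with $\mathfrak C(q^\bullet)=\mathfrak c_{e,\infty}$. Being equivariant, $q^\bullet$ is invariant, and since $\mu^\bullet$ has intensity $\beta=1$, Lemma~\ref{semicoupling gleich coupling} shows that $q^\bullet$ is in fact a coupling; hence an optimal coupling exists.

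For uniqueness I would argue as follows. Let $q_1^\bullet$ and $q_2^\bullet$ be two optimal couplings and set $q_3^\bullet:=\tfrac12(q_1^\bullet+q_2^\bullet)$. Then $q_3^\bullet$ is again an equivariant semicoupling of $\lambda^\bullet$ and $\mu^\bullet$, and since for equivariant semicouplings the functional $\mathfrak C(q^\bullet)=\EE[\int_{M\times B_0}c(x,y)\,q^\bullet(dx,dy)]$ is linear, we get $\mathfrak C(q_3^\bullet)=\mathfrak c_{e,\infty}$; so $q_3^\bullet$ is an optimal semicoupling as well. By Theorem~\ref{opt loc opt}, $q_3^\bullet$ is locally optimal, so by Proposition~\ref{loc-opt}(iii) there are a density $\rho_3^\omega$ and a $c$-cyclically monotone map $T_3^\omega$ with $q_3^\omega=(id,T_3^\omega)_*(\rho_3^\omega\lambda^\omega)$ for $\P$-a.e.\ $\omega$. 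Invoking Lemma~\ref{semicoupling gleich coupling} once more---now applied to the invariant semicoupling $q_3^\bullet$, and again because $\beta=1$---the first marginal of $q_3^\omega$ is $\lambda^\omega$, i.e.\ $\rho_3^\omega\equiv1$ $\lambda^\omega$-a.e.; thus $q_3^\omega=(id,T_3^\omega)_*\lambda^\omega$ is concentrated on the graph of $T_3^\omega$ for a.e.\ $\omega$.

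To finish, note that $q_i^\omega\le 2q_3^\omega$ as measures on $M\times M$ for $i=1,2$, so each $q_i^\omega$ is concentrated on the graph of $T_3^\omega$ as well; since moreover $q_i^\omega$, being a coupling, has first marginal $\lambda^\omega$, a measure on $M\times M$ with first marginal $\lambda^\omega$ concentrated on the graph of a measurable map is the push-forward of $\lambda^\omega$ under that map, so $q_i^\omega=(id,T_3^\omega)_*\lambda^\omega$ for a.e.\ $\omega$, whence $q_1^\bullet=q_2^\bullet$.

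The substance of the argument is entirely contained in Theorem~\ref{opt loc opt}, which already packages the crucial fact that every optimal semicoupling is induced by a map. The only genuinely new point, and the place where a little care is needed, is the concluding step---passing from ``the average of two couplings is carried by a graph'' to ``both couplings are carried by that same graph and hence coincide''---which rests on the domination $q_i^\omega\le 2q_3^\omega$ together with the fact that both $q_i^\omega$ have the full first marginal $\lambda^\omega$; this is routine but not completely automatic, since a priori $q_1^\bullet$ and $q_2^\bullet$ need not themselves be locally optimal only their average's local optimality is used.
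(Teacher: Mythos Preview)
Your proof is correct and follows essentially the same route as the paper: form the average $q_3^\bullet=\tfrac12(q_1^\bullet+q_2^\bullet)$, apply Theorem~\ref{opt loc opt} to obtain a map representation, and conclude $q_1^\bullet=q_2^\bullet$. The only cosmetic difference is that the paper applies Theorem~\ref{opt loc opt} to all three couplings and compares the resulting Dirac disintegrations directly, while you apply it only to $q_3^\bullet$ and use the domination $q_i^\omega\le 2q_3^\omega$; these are equivalent. One small correction to your closing commentary: since $q_1^\bullet$ and $q_2^\bullet$ are optimal by assumption, Theorem~\ref{opt loc opt} \emph{does} guarantee they are locally optimal---your argument simply chooses not to use this, which is not the same as saying they ``need not'' be.
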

\begin{proof}
 Assume we are given two optimal couplings $q^\bullet_1$ and $q^\bullet_2$. Then also $q^\bullet:=\frac12 q^\bullet_1+\frac12 q^\bullet_2$ is an optimal coupling because asymptotic optimality and equivariance are stable under convex combination. Hence, by the previous theorem all three couplings --  $q^\bullet_1$, $q^\bullet_2$ and $q^\bullet$ -- are locally optimal. Thus, for a.e. $\omega$ by the results of Proposition \ref{loc-opt} there exist maps $T_1^\omega,T_2^\omega,T^\omega$  such that
\begin{eqnarray*}
\delta_{T^\omega(x)}(dy)\ \lambda^\omega(dx)&=&
q^\omega(dx,dy)\\
&=&
\left(\frac12\delta_{T_1^\omega(x)}(dy)+\frac12\delta_{T_2^\omega(x)}(dy)\right)\,\lambda^\omega(dx)
\end{eqnarray*}
This, however, implies $T_1^\omega(x)=T_2^\omega(x)$ for a.e. $x\in M$. Thus $q_1^\omega=q_2^\omega$. (By Lemma \ref{semicoupling gleich coupling} we know that every invariant semicoupling between $\lambda^\bullet$ and $\mu^\bullet$ has to be a coupling.)
\end{proof}

Before we can prove the uniqueness of optimal semicouplings we have to translate Proposition \ref{fig obs} to this setting.

\begin{proposition}\label{fig obs revisited}
 Assume $\mu^\bullet$ has intensity $\beta\leq 1$ and let $q^\bullet$ be an optimal semicoupling between $\lambda^\bullet$ and $\mu^\bullet$. Let $(\pi_1)_*q^\bullet=\rho\cdot\lambda^\bullet$ for some density $\rho:\Omega\times M\to [0,1].$ Then, 
$$\tilde q^\bullet\ =\ q^\bullet + (id\times id)_*((1-\rho)\cdot\lambda^\bullet)$$
is the unique optimal coupling between $\lambda^\bullet$ and $\hat\mu^\bullet:=\mu^\bullet + (1-\rho)\cdot \lambda^\bullet.$
\end{proposition}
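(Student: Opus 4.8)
The plan is to avoid working with the infinite‑mass measures directly and instead reduce to the uniqueness theorem for optimal couplings of unit intensity (the theorem immediately preceding the statement), combined with two cost comparisons. Throughout, write $\hat{\mathfrak c}$ for the optimal mean transportation cost between $\lambda^\bullet$ and $\hat\mu^\bullet$, and let $\rho\colon\Omega\times M\to[0,1]$ be the density with $(\pi_1)_*q^\bullet=\rho\cdot\lambda^\bullet$.

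\textbf{Step 1: $\tilde q^\bullet$ is an equivariant coupling of $\lambda^\bullet$ and $\hat\mu^\bullet$, and $\hat\mu^\bullet$ has unit intensity.} The first marginal of $(id\times id)_*((1-\rho)\lambda^\bullet)$ is $(1-\rho)\lambda^\bullet$, so $(\pi_1)_*\tilde q^\bullet=\rho\lambda^\bullet+(1-\rho)\lambda^\bullet=\lambda^\bullet$, while $(\pi_2)_*\tilde q^\bullet=\mu^\bullet+(1-\rho)\lambda^\bullet=\hat\mu^\bullet$. Since $q^\bullet$ is equivariant, so is $(\pi_1)_*q^\bullet=\rho\lambda^\bullet$, hence so is the nonnegative measure $(1-\rho)\lambda^\bullet=\lambda^\bullet-(\pi_1)_*q^\bullet$ and its diagonal image; being a sum of equivariant measures, $\tilde q^\bullet$ (and $\hat\mu^\bullet$) is equivariant. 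Applying the mass transport principle to $f(g,h)=\EE[q^\bullet(gB_0,hB_0)]$ exactly as in the proof of Lemma \ref{semicoupling gleich coupling} gives $\EE[(\pi_1)_*q^\bullet(B_0)]=\EE[(\pi_2)_*q^\bullet(B_0)]=\EE[\mu^\bullet(B_0)]=\beta$, whence $\EE[\hat\mu^\bullet(B_0)]=\beta+(1-\beta)=1$.

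\textbf{Step 2: $\mathfrak C(\tilde q^\bullet)=\mathfrak c_{e,\infty}$, so $\hat{\mathfrak c}\le \mathfrak c_{e,\infty}<\infty$.} Since $c(x,x)=\vartheta(0)=0$, the diagonal part of $\tilde q^\bullet$ contributes nothing to any integral appearing in $\mathfrak C(\cdot)$, so $\mathfrak C(\tilde q^\bullet)=\mathfrak C(q^\bullet)=\mathfrak c_{e,\infty}$. In particular $\hat{\mathfrak c}\le\mathfrak C(\tilde q^\bullet)<\infty$, so by the existence result (the coupling version of Proposition \ref{abstract exist}) and the theorem preceding the present statement there is a \emph{unique} optimal coupling between $\lambda^\bullet$ and $\hat\mu^\bullet$. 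It therefore suffices to show $\mathfrak C(\tilde q^\bullet)=\hat{\mathfrak c}$, i.e. the reverse inequality $\hat{\mathfrak c}\ge\mathfrak c_{e,\infty}$.

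\textbf{Step 3 (main step): $\hat{\mathfrak c}\ge\mathfrak c_{e,\infty}$ by reweighting.} Let $\hat q^\bullet$ be any equivariant coupling of $\lambda^\bullet$ and $\hat\mu^\bullet$. Because $\mu^\omega\le\hat\mu^\omega$ for a.e. $\omega$, there is a jointly measurable density $h^\omega(y):=(d\mu^\omega/d\hat\mu^\omega)(y)\in[0,1]$, which is equivariant in the sense $h^{\theta_g\omega}(gy)=h^\omega(y)$ (this follows from equivariance of $\mu^\bullet$ and $\hat\mu^\bullet$). Define $q'^\bullet$ by $q'^\omega(dx,dy):=h^\omega(y)\,\hat q^\omega(dx,dy)$ — equivalently, reweight the Campbell measure $\hat q^\bullet\P$ by the factor $h$ evaluated at the second space coordinate. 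Then $(\pi_2)_*q'^\bullet=h^\bullet\hat\mu^\bullet=\mu^\bullet$ and $(\pi_1)_*q'^\bullet\le(\pi_1)_*\hat q^\bullet=\lambda^\bullet$ (as $h\le1$), so $q'^\bullet$ is an equivariant semicoupling of $\lambda^\bullet$ and $\mu^\bullet$; since $0\le h\le1$ and $c\ge0$, for every admissible $B$ we get $\EE[\int_{M\times B}c\,q'^\bullet]\le\EE[\int_{M\times B}c\,\hat q^\bullet]$, hence $\mathfrak C(q'^\bullet)\le\mathfrak C(\hat q^\bullet)$ and thus $\mathfrak c_{e,\infty}\le\mathfrak C(\hat q^\bullet)$. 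Taking the infimum over $\hat q^\bullet$ yields $\mathfrak c_{e,\infty}\le\hat{\mathfrak c}$. Combining with Step 2, $\mathfrak C(\tilde q^\bullet)=\mathfrak c_{e,\infty}=\hat{\mathfrak c}$, so $\tilde q^\bullet$ is an equivariant, asymptotically optimal — hence optimal — coupling of $\lambda^\bullet$ and $\hat\mu^\bullet$; as $\lambda^\bullet\ll m$, $\hat\mu^\bullet$ has unit intensity and $\hat{\mathfrak c}<\infty$, the uniqueness theorem applies and identifies $\tilde q^\bullet$ as the unique optimal coupling.

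I expect the only delicate point to be the careful handling of Step 3 at the level of Campbell measures: choosing a jointly measurable version of $d\mu^\omega/d\hat\mu^\omega$ and verifying that the reweighted object is still equivariant and concentrated on the diagonal of $\Omega\times\Omega$ (so that it genuinely lies in $\Pi_{es}(\lambda^\bullet,\mu^\bullet)$). An alternative, closer to the sentence preceding the statement, would be to argue fiberwise: using Theorem \ref{opt loc opt} that $q^\bullet$ is locally optimal, apply Figalli's observation (Proposition \ref{fig obs}) to the restrictions $1_{M\times A}q^\omega$, $1_A\mu^\omega$ along an exhaustion $A\nearrow M$ and pass to the limit to see $\supp(\tilde q^\omega)$ is $c$-cyclically monotone, hence $\tilde q^\bullet$ locally optimal; but this route still needs the cost computation of Steps 2–3 to upgrade local optimality to asymptotic optimality, so I would present the argument above.
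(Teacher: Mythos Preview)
Your proof is correct and follows essentially the same strategy as the paper: verify that $\tilde q^\bullet$ is an admissible equivariant coupling with $\mathfrak C(\tilde q^\bullet)=\mathfrak C(q^\bullet)$, then show the reverse inequality $\hat{\mathfrak c}\ge\mathfrak c_{e,\infty}$ by extracting from any coupling of $\lambda^\bullet$ and $\hat\mu^\bullet$ an equivariant semicoupling of $\lambda^\bullet$ and $\mu^\bullet$ with no larger cost, and conclude via the uniqueness theorem for unit-intensity couplings. The only technical difference is in how the semicoupling is extracted: the paper works with the \emph{optimal} coupling $\kappa^\bullet=(id,S)_*\lambda^\bullet$ and uses disintegration along the second marginal to find a density $f$ with $S_*(f\lambda^\bullet)=(1-\rho)\lambda^\bullet$, then sets $\tilde\kappa^\bullet=(id,S)_*((1-f)\lambda^\bullet)$; you instead reweight an \emph{arbitrary} equivariant coupling by $h=d\mu^\bullet/d\hat\mu^\bullet$ on the second coordinate, which is slightly more direct since it does not first invoke the map representation of $\kappa^\bullet$.
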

\begin{proof}
 Because $q^\bullet$ is equivariant by assumption also $\rho\lambda^\bullet(\cdot)=q^\bullet(\cdot,M)$ is equivariant. But then $\hat\mu^\bullet=\mu^\bullet+(1-\rho)\cdot\lambda^\bullet$ is equivariant. Moreover, by assumption we have $\mathfrak C(\tilde q^\bullet)=\mathfrak C(q^\bullet)<\infty$ which implies
$$\inf_{\kappa^\bullet\in\Pi_e(\lambda^\bullet,\hat\mu^\bullet)}\mathfrak C(\kappa^\bullet)<\infty.$$
By the previous theorem, there is a unique optimal coupling $\kappa^\bullet$ between $\lambda^\bullet$ and $\hat\mu^\bullet$ given by $\kappa^\bullet=(id,S)_*\lambda^\bullet.$ Moreover,
$$\mathfrak C(\kappa^\bullet)\ \leq\ \mathfrak C(\tilde q^\bullet)\ =\ \mathfrak C(q^\bullet).$$
Because $S_*\lambda^\bullet=\hat\mu^\bullet$ there is a density $f$ such that $S_*(f\cdot\lambda^\bullet)=(1-\rho)\cdot\lambda^\bullet.$ Indeed, for any $g\in G$ we can disintegrate 
$$1_{M\times gB_0}\kappa^\omega(dx,dy)\ =\ \kappa^{\omega,g}_y(dx)(\mu^\omega(dy)+(1-\rho^\omega(y))\lambda^\omega(dy)).$$ 
The measure  $\sum_{g\in G}\kappa^{\omega,g}_y(dx)((1-\rho^\omega(y))\lambda^\omega(dy))$ does the job.  In particular this implies that
$$\tilde \kappa^\bullet \ =\ (id\times S)_*((1-f)\cdot\lambda^\bullet)$$
is a semicoupling between $\lambda^\bullet$ and $\mu^\bullet$. The mean transportation cost of $\tilde \kappa^\bullet$ are bounded above by the mean transportation cost of $\kappa^\bullet$ as we just transport less mass. Hence, we have
$$\mathfrak C(\tilde \kappa^\bullet)\leq \mathfrak C(\kappa^\bullet)\leq \mathfrak C(\tilde q^\bullet)=\mathfrak C(q^\bullet).$$
As $q^\bullet$ was assumed to be optimal, hence asymptotically optimal, we must have equality everywhere. By uniqueness of optimal couplings this implies that $\tilde q^\bullet=\kappa^\bullet$ almost surely.
\end{proof}

\begin{lemma}
 Assume $\mu^\bullet$ has intensity $\beta\leq 1$ and  let $q^\bullet=(id,T)_*(\rho\cdot\lambda^\bullet)$ be an optimal semicoupling between $\lambda^\bullet$ and $\mu^\bullet$. Then, on the set $\{0<\rho^\omega<1\}$ we have $T^\omega(x)=x.$
\end{lemma}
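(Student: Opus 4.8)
The plan is to reduce the statement to the structure of the associated \emph{full} coupling furnished by Proposition \ref{fig obs revisited}. Since $q^\bullet=(id,T)_*(\rho\cdot\lambda^\bullet)$ is an optimal semicoupling with $(\pi_1)_*q^\bullet=\rho\cdot\lambda^\bullet$, that proposition tells us that
\[
\tilde q^\bullet\ :=\ q^\bullet + (id\times id)_*\big((1-\rho)\cdot\lambda^\bullet\big)
\]
is the \emph{unique} optimal coupling between $\lambda^\bullet$ and $\hat\mu^\bullet:=\mu^\bullet+(1-\rho)\cdot\lambda^\bullet$. Being the unique optimal coupling whose source is an absolutely continuous equivariant measure of unit intensity and whose target $\hat\mu^\bullet$ is an equivariant measure of unit intensity, it is induced by a transport map; indeed, as already recorded inside the proof of Proposition \ref{fig obs revisited} (via the uniqueness theorem for couplings and Proposition \ref{loc-opt}), $\tilde q^\omega=(id,S^\omega)_*\lambda^\omega$ for $\P$-a.e.\ $\omega$ and some measurable map $S^\omega$.

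Next I would compare two disintegrations of $\tilde q^\omega$ with respect to its first marginal, which is $\lambda^\omega$ (note $\rho^\omega\le 1$ $\lambda^\omega$-a.e., because $(\pi_1)_*q^\omega\le\lambda^\omega$, so that $\rho^\omega\lambda^\omega+(1-\rho^\omega)\lambda^\omega=\lambda^\omega$). On the one hand, writing out the definition of $\tilde q^\bullet$ together with $q^\omega=(id,T^\omega)_*(\rho^\omega\lambda^\omega)$ exhibits the disintegration kernel $x\mapsto \rho^\omega(x)\,\delta_{T^\omega(x)}+(1-\rho^\omega(x))\,\delta_x$. On the other hand, the map representation $\tilde q^\omega=(id,S^\omega)_*\lambda^\omega$ gives the kernel $x\mapsto\delta_{S^\omega(x)}$. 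By the $\lambda^\omega$-a.e.\ uniqueness of the disintegration (Theorem \ref{disintegration theorem}), for $\P$-a.e.\ $\omega$ we obtain
\[
\rho^\omega(x)\,\delta_{T^\omega(x)}+(1-\rho^\omega(x))\,\delta_x\ =\ \delta_{S^\omega(x)}\qquad\text{for }\lambda^\omega\text{-a.e.\ }x.
\]

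Finally, on the set $\{0<\rho^\omega<1\}$ the left-hand side is a nontrivial convex combination $t\,\delta_{a}+(1-t)\,\delta_{b}$ with $t=\rho^\omega(x)\in(0,1)$, $a=T^\omega(x)$, $b=x$, and such a measure is a single Dirac mass precisely when $a=b$. Hence $T^\omega(x)=x$ (and, incidentally, $S^\omega(x)=x$) for $\lambda^\omega$-a.e.\ $x\in\{0<\rho^\omega<1\}$, which is exactly the assertion. I do not foresee a serious obstacle: the only point that really needs care — and which I would flag as the crux of the argument — is that the unit intensity of $\hat\mu^\bullet$, hence the uniqueness and map-representability of the optimal coupling $\tilde q^\bullet$, is genuinely available, and this is precisely what Proposition \ref{fig obs revisited} supplies; once that is in hand, the two disintegrations above are directly comparable because they are taken against the \emph{same} reference measure $\lambda^\omega$, and the conclusion is immediate.
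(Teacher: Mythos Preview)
Your proof is correct and follows essentially the same route as the paper: invoke Proposition \ref{fig obs revisited} to obtain $\tilde q^\bullet=(id,S)_*\lambda^\bullet$, then exploit that the diagonal piece $(id\times id)_*((1-\rho)\lambda^\bullet)$ sits inside a measure supported on a graph to force $T^\omega(x)=x$ on $\{0<\rho^\omega<1\}$. Your disintegration comparison is just a more explicit rendering of the paper's terser statement that ``$q^\bullet$ is concentrated on the graph of $S$'' and ``on $\{\rho<1\}$ we have $S(x)=x$''.
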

\begin{proof}
 Just as in the previous proposition consider $\tilde q^\bullet=(id,S)_*\lambda^\bullet$ the optimal coupling between $\lambda^\bullet$ and $\hat\mu^\bullet.$ $\tilde q^\bullet$ is concentrated on the graph of $S$ and therefore also $q^\bullet$ has to be concentrated on the graph of $S$. In particular, this shows that $S=T$ almost everywhere almost surely (we can safely extend $T$ by $S$ on $\{\rho=0\}$). But on $\{\rho<1\}$ we have $S(x)=x$. Hence, we also have $T(x)=x$ on $\{0<\rho<1\}$.
\end{proof}

This finally enables us to prove uniqueness of optimal semicouplings.

\begin{theorem}\label{uniqueness}
 There exists at most one optimal semicoupling of $\lambda^\bullet$ and $\mu^\bullet$.
\end{theorem}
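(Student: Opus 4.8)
The plan is to run the density-cancellation argument from the end of the proof of Proposition~\ref{u sc bd set}, but now in the equivariant setting, using Theorem~\ref{opt loc opt}, Proposition~\ref{fig obs revisited} and the preceding Lemma as inputs. So suppose $q_1^\bullet$ and $q_2^\bullet$ are two optimal semicouplings of $\lambda^\bullet$ and $\mu^\bullet$. Since the set of optimal semicouplings is convex, $q_3^\bullet:=\tfrac12(q_1^\bullet+q_2^\bullet)$ is optimal as well, hence by Theorem~\ref{opt loc opt} and Proposition~\ref{loc-opt} all three are induced by maps: $q_i^\omega=(\mathrm{id},T_i^\omega)_*(\rho_i^\omega\lambda^\omega)$ for densities $\rho_i^\omega\colon M\to[0,1]$, with $\rho_3^\omega=\tfrac12(\rho_1^\omega+\rho_2^\omega)$.

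Next I would manufacture a single transport map serving all three. Applying Proposition~\ref{fig obs revisited} (and its proof) to $q_3^\bullet$, the coupling $\tilde q_3^\bullet=q_3^\bullet+(\mathrm{id}\times\mathrm{id})_*((1-\rho_3^\bullet)\lambda^\bullet)$ is the unique optimal coupling of $\lambda^\bullet$ and $\hat\mu^\bullet=\mu^\bullet+(1-\rho_3^\bullet)\lambda^\bullet$, and is induced by a map, $\tilde q_3^\omega=(\mathrm{id},S^\omega)_*\lambda^\omega$; moreover, as in the proof of the preceding Lemma, $S^\omega(x)=x$ on $\{\rho_3^\omega<1\}$ and $S^\omega=T_3^\omega$ $\lambda^\omega$-a.e.\ on $\{\rho_3^\omega>0\}$. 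Since $q_3^\omega\le\tilde q_3^\omega$ and $q_i^\omega\le 2q_3^\omega$, each $q_i^\omega$ is concentrated on the graph of $S^\omega$; thus $T_i^\omega=S^\omega$ $\lambda^\omega$-a.e.\ on $\{\rho_i^\omega>0\}$, and in particular $(S^\omega)_*(\rho_i^\omega\lambda^\omega)=\mu^\omega$ for $i=1,2$.

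The crux is then to show $\rho_1^\omega=\rho_2^\omega$ $\lambda^\omega$-a.e. Put $A^\omega:=\{\rho_1^\omega>\rho_2^\omega\}$. On $A^\omega$ one has $\rho_3^\omega=\tfrac12(\rho_1^\omega+\rho_2^\omega)<1$, hence $S^\omega=\mathrm{id}$ on $A^\omega$, and likewise on $\{\rho_2^\omega>\rho_1^\omega\}$; so $S^\omega=\mathrm{id}$ on the whole set $\{\rho_1^\omega\neq\rho_2^\omega\}$. Fix a bounded Borel set $B\subset M$. Since $(S^\omega)_*(\rho_i^\omega\lambda^\omega)=\mu^\omega$ and $\mu^\omega(B)<\infty$, for $i=1,2$ we have
$$\int_{(S^\omega)^{-1}(A^\omega\cap B)}\rho_i^\omega(x)\,\lambda^\omega(dx)\ =\ \mu^\omega(A^\omega\cap B)\ <\ \infty .$$
Because $S^\omega=\mathrm{id}$ on $\{\rho_1^\omega\neq\rho_2^\omega\}$, the set $(S^\omega)^{-1}(A^\omega\cap B)$ is the disjoint union of $A^\omega\cap B$ and a subset of $\{\rho_1^\omega=\rho_2^\omega\}$; subtracting the two (now finite) identities gives $\int_{A^\omega\cap B}(\rho_1^\omega(x)-\rho_2^\omega(x))\,\lambda^\omega(dx)=0$, whence $\lambda^\omega(A^\omega\cap B)=0$. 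Letting $B\uparrow M$ through bounded sets, and arguing symmetrically for $\{\rho_2^\omega>\rho_1^\omega\}$, yields $\rho_1^\omega=\rho_2^\omega$ $\lambda^\omega$-a.e.; as this is carried out fibrewise on the full-$\P$-measure event where $q_1^\bullet,q_2^\bullet,q_3^\bullet$ are locally optimal, it holds for $\P$-a.e.\ $\omega$. Writing $\rho^\omega:=\rho_1^\omega=\rho_2^\omega$, we conclude $q_1^\omega=(\mathrm{id},S^\omega)_*(\rho^\omega\lambda^\omega)=q_2^\omega$ for a.e.\ $\omega$, i.e.\ $q_1^\bullet=q_2^\bullet$.

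I expect the main obstacle to be precisely this last density-comparison step: the global pushforward cancellation of Proposition~\ref{u sc bd set} is not available here since $\lambda^\omega$ and $\mu^\omega$ carry infinite mass, so it must be replaced by a localised version on bounded sets $B$, which is legitimate only because $S^\omega$ is forced to coincide with the identity exactly on the region $\{\rho_1^\omega\neq\rho_2^\omega\}$ where the comparison is made. The rest is bookkeeping with the structural results already established.
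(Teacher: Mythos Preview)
Your proof is correct and follows essentially the same route as the paper's: form the midpoint $q_3^\bullet$, invoke Theorem~\ref{opt loc opt} and Proposition~\ref{loc-opt} to obtain map-representations, use the preceding Lemma (via Proposition~\ref{fig obs revisited}) to force $S^\omega=\mathrm{id}$ on $\{\rho_1^\omega\neq\rho_2^\omega\}$, and then compare the pushforwards $(S^\omega)_*(\rho_i^\omega\lambda^\omega)=\mu^\omega$ over the preimage of $A^\omega$. Your localisation to bounded $B$ is in fact a genuine improvement on the paper's presentation, which writes the contradiction $\mu^\omega(A_1^\omega)>\mu^\omega(A_1^\omega)$ without addressing the possibility that $\mu^\omega(A_1^\omega)=\infty$; your subtraction of two equal finite integrals is the clean way to make this step rigorous.
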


\begin{proof} Assume we are given two optimal semicouplings $q^\bullet_1$ and $q^\bullet_2$. Then also $q^\bullet:=\frac12 q^\bullet_1+\frac12 q^\bullet_2$ is an optimal semicoupling. Hence, by   Theorem \ref{opt loc opt} all three couplings --  $q^\bullet_1$, $q^\bullet_2$ and $q^\bullet$ -- are locally optimal. Thus, for a.e. $\omega$ by the results of Proposition \ref{loc-opt} there exist maps $T_1^\omega,T_2^\omega,T^\omega$  and densities $\rho_1^\omega, \rho_2^\omega, \rho^\omega$ such that
\begin{eqnarray*}
\delta_{T^\omega(x)}(dy)\,\rho^\omega(x)\, \lambda^\omega(dx)&=&
q^\omega(dx,dy)\\
&=&
\left(\frac12\delta_{T_1^\omega(x)}(dy)\rho^\omega_1(x)+\frac12\delta_{T_2^\omega(x)}(dy)\rho^\omega_2(x)\right)\,\lambda^\omega(dx)
\end{eqnarray*}
This, however, implies $T_1^\omega(x)=T_2^\omega(x)$ for a.e. $x\in \{\rho_1^\omega>0\}\cap \{\rho_2^\omega>0\}$. In particular, all optimal semicouplings are concentrated on the \emph{same} graph. To show uniqueness, we have to show that $\rho_1^\omega=\rho^\omega_2$ almost everywhere almost surely. To this end, put $A^\omega_1=\{\rho^\omega_1>\rho^\omega_2\}.$ Assume $\lambda^\omega(A_1^\omega)>0.$ On $A^\omega_1$ we have $\rho^\omega<1$. Hence, by the previous Lemma we have $T^\omega(x)=T^\omega_1(x)=T^\omega_2(x)=x.$ Similarly, on $A_2^\omega=\{\rho^\omega_2>\rho^\omega_1\}$ we have $T^\omega(x)=x.$ Hence, we have
$$(T^\omega)^{-1}(A^\omega_1)\cap A^\omega_2 = \emptyset,$$
because $A_i\subset (T^\omega)^{-1}(A_i)$ for $i=1,2$. As $q_1^\omega$ and $q_2^\omega$ are semicouplings, we must have $\mu^\omega(A)=\rho^\omega_i\cdot\lambda^\omega((T^\omega)^{-1}(A))$ for $i=1,2$ and any Borel set $A$. Putting this together gives
\begin{eqnarray*}
 \mu^\omega(A^\omega_1)& =& \rho^\omega_1\cdot\lambda^\omega((T^\omega)^{-1}(A^\omega_1))\\
&=& \rho^\omega_1\cdot\lambda^\omega((T^\omega)^{-1}(A^\omega_1)\cap A^\omega_1) + \rho^\omega_1\cdot\lambda^\omega((T^\omega)^{-1}(A^\omega_1)\cap \{\rho^\omega_1=\rho^\omega_2\})\\
&>& \rho^\omega_2\cdot\lambda^\omega((T^\omega)^{-1}(A^\omega_1)\cap A^\omega_1) + \rho^\omega_2\cdot\lambda^\omega((T^\omega)^{-1}(A^\omega_1)\cap \{\rho^\omega_1=\rho^\omega_2\})\\
&=& \mu^\omega(A^\omega_1).
\end{eqnarray*}
This is a contradiction and therefore proving $\lambda^\omega(A_1^\omega)=0$. Thus, $\rho^\omega_1=\rho^\omega_2$ almost everywhere almost surely and $q_1^\bullet=q_2^\bullet.$
\end{proof}

\subsection{Geometry of tessellations induced by fair allocations}\label{geometry of tessellations}

The fact that any optimal semicoupling is locally optimal allows us to say something about the geometries of the cells of fair allocations to point processes. The following result was already shown for probability measures in section 4 of \cite{sturm-2009} and also in \cite{142747}. We will use the representation of optimal transportation maps recalled in Remark \ref{shape of optimal transport map}.

\begin{corollary} In the case $\vartheta(r)=r^2$, given an optimal coupling $q^\bullet$ of Lebesgue measure $\leb$ and a  point process $\mu^\bullet$ of unit intensity  in $M=\R^d$ (for a Poisson point process this implies $d\geq 3$ as otherwise the mean transportation cost will be infinite, see Theorem 1.3 in \cite{huesmann2010optimal}) then for a.e. $\omega\in\Omega$ there exists a convex function $\varphi^\omega:\R^d\to\R$ (unique up to additive constants) such that
$$q^\omega=\left(Id,\nabla\varphi^\omega\right)_*\leb.$$
In particular, a 'fair allocation rule' is given by the \emph{monotone map} $T^\omega=\nabla\varphi^\omega$.

Moreover, for a.e. $\omega$ and any center $\xi\in\Xi(\omega):=\mathrm{supp}(\mu^\omega)$, the associated cell
 $$S^\omega(\xi)\ = \ {(T^\omega)^{-1}(\{\xi\})}$$
is a convex polytope of volume $\mu^\omega(\xi)\in\N$. If the point process is simple then all these cells have volume 1.
\end{corollary}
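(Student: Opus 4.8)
The plan is to reduce everything to the classical description of quadratic optimal transport between an absolutely continuous measure and a discrete one, using the local optimality theorem of this section as the bridge, and then to glue the bounded pieces together.

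First I would record the structure of $q^\bullet$. Since $\mu^\bullet$ has unit intensity, Lemma \ref{semicoupling gleich coupling} shows that the (unique, by Theorem \ref{uniqueness}) optimal semicoupling is a genuine coupling, so $(\pi_1)_*q^\omega=\leb$. By Theorem \ref{opt loc opt} it is locally optimal, hence by Proposition \ref{loc-opt} there is a $c$-cyclically monotone map $T^\omega$ with $q^\omega=(Id,T^\omega)_*\leb$ and density identically $1$. For $c(x,y)=|x-y|^2$, expanding $|x-y|^2=|x|^2-2\langle x,y\rangle+|y|^2$ in the $c$-cyclical monotonicity inequality and using $\sum_i|y_i|^2=\sum_i|y_{i+1}|^2$ rewrites it as $\sum_{i=1}^N\langle x_i,y_i-y_{i+1}\rangle\ge 0$, i.e.\ $\mathrm{supp}(q^\omega)$ is cyclically monotone in the ordinary sense of convex analysis. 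By Rockafellar's theorem there is a lower semicontinuous convex $\varphi^\omega:\R^d\to\R\cup\{+\infty\}$ with $\mathrm{supp}(q^\omega)\subset\partial\varphi^\omega$. Because the first marginal is $\leb$, the domain of $\varphi^\omega$ has full Lebesgue measure, and a convex set of full measure in $\R^d$ is all of $\R^d$; thus $\varphi^\omega$ is finite and continuous, hence differentiable $\leb$-a.e., and $T^\omega=\nabla\varphi^\omega$ $\leb$-a.e. Uniqueness of $\varphi^\omega$ up to an additive constant follows since a finite convex function on $\R^d$ is determined by its a.e.\ gradient. This gives the first assertion, with $T^\omega=\nabla\varphi^\omega$ the monotone fair allocation rule.

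For the polytopal description I would feed local optimality into Remark \ref{shape of optimal transport map}. Choose bounded open convex sets $K_n\nearrow\R^d$; by Proposition \ref{loc-opt}(i), $1_{M\times K_n}q^\omega$ is the optimal coupling of the absolutely continuous measure $\lambda^\omega_{K_n}=q^\omega(\cdot,K_n)$ and the finitely supported $1_{K_n}\mu^\omega$. Lemma \ref{leb-dirac} and Remark \ref{shape of optimal transport map} (with $p=2$; the cost $|x-y|^2$ differs from $\tfrac12 d^2$ only by a scalar) then produce weights $b^{(n)}_i$ such that on $(T^\omega)^{-1}(K_n)$ the map $T^\omega$ sends $x$ to the $\xi_j\in\Xi(\omega)\cap K_n$ maximizing $\Phi^{(n)}_j(x):=-\tfrac12|x-\xi_j|^2+b^{(n)}_j$; since subtransports of optimal transports are optimal these descriptions are consistent, so the weights stabilize and, after modifying $T^\omega$ on a $\leb$-null set,
$$S^\omega(\xi)=(T^\omega)^{-1}(\{\xi\})=\bigcap_{\xi_i\in\Xi(\omega)}\Big\{x\in\R^d:\ \langle x,\xi_i-\xi\rangle\le\tfrac12\big(|\xi_i|^2-|\xi|^2\big)+b_\xi-b_{\xi_i}\Big\}.$$
Each set in the intersection is a closed half-space, and local finiteness of $\Xi(\omega)$ makes $S^\omega(\xi)$ locally the intersection of finitely many of them, i.e.\ a convex polyhedron. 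Since $q^\omega=(Id,T^\omega)_*\leb$ is a coupling, $\leb(S^\omega(\xi))=q^\omega(M\times\{\xi\})=\mu^\omega(\{\xi\})$, a positive integer because $\mu^\bullet$ is a point process; a full-dimensional convex polyhedron of finite volume is bounded, hence $S^\omega(\xi)$ is a convex polytope of volume $\mu^\omega(\{\xi\})$, which equals $1$ when $\mu^\bullet$ is simple.

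The main obstacle is the globalization step: converting the power-diagram picture valid on each bounded $K_n$ into a single globally consistent convex function and polytopal tessellation of $\R^d$ — in particular verifying that the weights $b^{(n)}_i$ really do stabilize as $n\to\infty$, that the cell of a center with positive mass is genuinely bounded (the finite-volume-plus-polyhedral argument), and that $\varphi^\omega$, $T^\omega$ and the cells depend measurably on $\omega$, which is inherited from Section \ref{s:bd semi} but must be tracked through the limit.
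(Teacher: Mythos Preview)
The paper does not give its own argument here but simply cites Corollary 3.10 of \cite{huesmann2010optimal}, so there is nothing in-paper to compare against. Your self-contained proof is correct and follows exactly the route one expects that reference to take: local optimality (Theorem \ref{opt loc opt} and Proposition \ref{loc-opt}) yields a $c$-cyclically monotone map, which for quadratic cost is cyclically monotone in the convex-analysis sense, whence Rockafellar produces the convex potential; the Laguerre/power-diagram description of the cells then comes from Remark \ref{shape of optimal transport map} on the exhausting sets $K_n$ and the consistency built into Proposition \ref{loc-opt}.

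The globalization concerns you flag are real but resolve as you indicate. The weights $b_i^{(n)}$ are only determined up to a common additive constant, and the equality $T_{n+1}^\omega=T_n^\omega$ on $(T^\omega)^{-1}(K_n)$ forces the \emph{differences} $b_i^{(n)}-b_j^{(n)}$ to be preserved for $\xi_i,\xi_j\in K_n$, so after fixing one weight the others stabilize. Boundedness of $S^\omega(\xi)$ is exactly your finite-volume argument: it has positive Lebesgue measure (hence nonempty interior), and an unbounded convex polyhedron with interior contains a half-line and thus has infinite volume. Measurability in $\omega$ is inherited from Lemma \ref{qA measurable} through the monotone limit in the proof of Proposition \ref{loc-opt}.
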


\begin{proof}
See Corollary 3.10 of \cite{huesmann2010optimal}.
\end{proof}

\begin{corollary}
 In the case $\vartheta(r)=r$, given an optimal coupling $q^\bullet$ of $m$ and a point process $\mu^\bullet$ of unit intensity on M with $\mbox{dim}(M)\geq 2$, there exists an allocation rule $T$ such that the optimal coupling is given by
$$q^\omega=\left(Id,T^\omega\right)_*m.$$

Moreover, for a.e. $\omega$ and any center $\xi\in\Xi(\omega):=\mathrm{supp}(\mu^\omega)$, the associated cell
 $$S^\omega(\xi)\ = \ {(T^\omega)^{-1}(\{\xi\})}$$
is starlike with respect to $\xi$.
\end{corollary}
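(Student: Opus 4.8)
The plan is to first read off the existence of $T^\omega$ from the uniqueness and representation results already established, and then to extract the starlikeness from the fact that $\supp(q^\omega)$ is $c$-cyclically monotone for the cost $c(x,y)=d(x,y)$. For the existence statement: since $\mu^\bullet$ has unit intensity, Lemma~\ref{semicoupling gleich coupling} makes the optimal semicoupling $q^\bullet$ --- unique by Theorem~\ref{uniqueness} --- an actual coupling, and Theorem~\ref{opt loc opt} makes it locally optimal, so Proposition~\ref{loc-opt} supplies a density $\rho^\omega$ and a $c$-cyclically monotone map $T^\omega$ with $q^\omega=(\mathrm{Id},T^\omega)_*(\rho^\omega\lambda^\omega)$; since $(\pi_1)_*q^\omega=\lambda^\omega=m$ we get $\rho^\omega\equiv 1$, i.e. $q^\omega=(\mathrm{Id},T^\omega)_*m$. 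This is the one place where $\dim M\ge 2$ enters: it is exactly the hypothesis under which Lemma~\ref{leb-dirac} produces a transport map for the cost $d(x,y)$ between an absolutely continuous measure and a discrete one. I would then set $S^\omega(\xi):=\{x:\ (x,\xi)\in\supp(q^\omega)\}$, a closed set agreeing with $(T^\omega)^{-1}(\{\xi\})$ up to an $m$-null set, with $m(S^\omega(\xi))=\mu^\omega(\{\xi\})\ge 1$.

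For the geometric statement I would argue by contradiction: suppose $x\in S^\omega(\xi)$, $z$ lies strictly between $\xi$ and $x$ on a minimizing geodesic $\gamma$ from $\xi$ to $x$, but $(z,\eta)\in\supp(q^\omega)$ for some center $\eta\ne\xi$. Applying $c$-cyclical monotonicity of $\supp(q^\omega)$ to the pairs $(x,\xi)$ and $(z,\eta)$ gives $d(x,\xi)+d(z,\eta)\le d(x,\eta)+d(z,\xi)$; inserting $d(x,\xi)=d(x,z)+d(z,\xi)$ (valid since $z\in\gamma$) yields $d(x,z)+d(z,\eta)\le d(x,\eta)$, which by the triangle inequality must be an equality, so $z$ also lies on a minimizing geodesic from $x$ to $\eta$. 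Since $z$ is an interior point of the minimizing geodesic $\gamma$, it is not in the cut locus of $x$, so the minimizing geodesic from $x$ to $z$ is unique; hence $[x,\xi]$ and $[x,\eta]$ coincide up to $z$ and prolong past $z$ along the single geodesic $\gamma$. Thus $\xi$ and $\eta$ both lie on $\gamma$ on the side of $z$ away from $x$, and two cases remain: either $\xi$ lies between $z$ and $\eta$, so $d(z,\eta)=d(z,\xi)+d(\xi,\eta)$, or $\eta$ lies between $z$ and $\xi$, so $d(z,\xi)=d(z,\eta)+d(\eta,\xi)$ and, $\gamma$ being minimizing from $x$ to $\xi$ and passing through $\eta$, also $d(x,\xi)=d(x,\eta)+d(\eta,\xi)$.

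In the first case I would test $(z,\eta)$ against an arbitrary $(w,\xi)\in\supp(q^\omega)$: $c$-cyclical monotonicity gives $d(z,\eta)+d(w,\xi)\le d(z,\xi)+d(w,\eta)$, and cancelling $d(z,\xi)$ via $d(z,\eta)=d(z,\xi)+d(\xi,\eta)$ leaves $d(\xi,\eta)+d(w,\xi)\le d(w,\eta)$, which the triangle inequality forces to the equality $d(w,\eta)=d(w,\xi)+d(\xi,\eta)$ for every $w\in S^\omega(\xi)$. In the second case I would test $(x,\xi)$ against $(w,\eta)\in\supp(q^\omega)$ and use $d(x,\xi)=d(x,\eta)+d(\eta,\xi)$ to get $d(w,\xi)=d(w,\eta)+d(\eta,\xi)$ for every $w\in S^\omega(\eta)$. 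Either way an entire cell of positive $m$-measure is contained in a set $N_{a,b}:=\{w:\ d(w,a)=d(w,b)+d(b,a)\}$ with $a\ne b$, i.e. in the set of points from which $b$ lies on a minimizing geodesic towards $a$. But $N_{a,b}$ is a countable union of geodesic arcs --- the non-conjugate minimizing prolongations beyond $b$ of the minimizing geodesics from $a$ to $b$, the remainder lying in the $m$-null cut locus of $a$ --- hence $m$-null as soon as $\dim M\ge 2$, contradicting $m(S^\omega(\xi))\ge 1$ (resp. $m(S^\omega(\eta))\ge 1$). Therefore $\eta=\xi$, i.e. $z\in S^\omega(\xi)$, and the closed cell $S^\omega(\xi)$ --- which agrees with $(T^\omega)^{-1}(\{\xi\})$ up to a null set --- is starlike with respect to $\xi$; since $T^\omega$ is defined only $m$-a.e., the argument is run on $\supp(q^\omega)$ throughout.

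The distance bookkeeping forced by $c$-cyclical monotonicity is routine. The part I expect to be the main obstacle is the measure-theoretic input: verifying carefully that $N_{a,b}$ is $m$-null on an arbitrary complete Riemannian manifold of dimension at least two --- which needs attention when the cut locus is large --- together with the clean passage from the $m$-a.e.-defined map $T^\omega$ to the closed cells. This is precisely the step that collapses in dimension one, where a ``geodesic prolongation'' is a genuine interval; correspondingly, uniqueness of the allocation itself can fail there, consistently with the remarks following Lemma~\ref{leb-dirac}.
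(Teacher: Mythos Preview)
Your distance--chasing via $c$-cyclical monotonicity is correct and is a genuinely different route from the paper's. The paper does not argue by contradiction on $\supp(q^\omega)$; instead it invokes the Kantorovich potentials from Remark~\ref{shape of optimal transport map}: on each bounded approximation one has $T_n^\omega(x)=\xi_0$ iff $\Phi_0(x)>\Phi_j(x)$ for all $j$, with $\Phi_i(x)=-d(x,\xi_i)+b_i$, so the cell is the intersection of the ``halfspaces'' $H_j^0=\{\Phi_0>\Phi_j\}$. The paper then shows, by a one--line triangle--inequality computation, that each $H_j^0$ is starlike about $\xi_0$ (after checking $T(\xi_0)=\xi_0$ via the null--set $N$). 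Both approaches rest on the same measure--theoretic input, namely that $\{w:d(w,a)=d(w,b)+d(b,a)\}$ is $m$--null when $\dim M\ge2$; the paper simply asserts this, while you sketch it.

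There is, however, a genuine gap in your argument. What your contradiction establishes is: for every $z$ strictly between $\xi$ and $x$ and every $\eta\ne\xi$, one has $(z,\eta)\notin\supp(q^\omega)$. From this you write ``therefore $\eta=\xi$, i.e.\ $z\in S^\omega(\xi)$'', but that inference needs the extra fact that $(z,\eta)\in\supp(q^\omega)$ for \emph{some} $\eta$, equivalently $\pi_1(\supp(q^\omega))=M$. This is not automatic from your support--based definition of the cells: for $L^1$--cost, cyclical monotonicity gives no a priori bound on transport distances, so one cannot immediately exclude that a sequence $(w_n,\eta_{w_n})\in\supp(q^\omega)$ with $w_n\to z$ has $\eta_{w_n}\to\infty$, leaving $z$ outside every $S^\omega(\eta)$. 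The paper's potential description sidesteps this entirely, because for every $x$ the finite family $\{\Phi_j(x)\}$ has a maximizer, so the open cells are defined pointwise and manifestly cover a set of full measure. If you wish to keep your approach you must close this gap --- for instance by pushing the positive--measure set $(T^\omega)^{-1}(\xi)$ near $x$ along geodesics towards $\xi$ (a radial rescaling via $\exp_\xi$) to produce positive $m$--mass of $(T^\omega)^{-1}(\xi)$ in every neighborhood of $z$; but this step needs to be written out, and some care is required if $x$ lies on the cut locus of $\xi$.
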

\begin{proof}
By Proposition \ref{loc-opt} we know that $T^\omega=\lim_{n\to\infty}T_n^\omega$, where $T_n^\omega$ is an optimal transportation map from some set $A_n^\omega$ to $K_n$. From the classical theory (see \cite{Brenier,GangboMcCann1996}) we know that,
$$ T_n^\omega(x)=\xi_0\quad \Leftrightarrow \quad -d(x,\xi_0)+b_{\xi_0} > -d(x,\xi)+b_\xi \quad \forall \xi\in\Xi(\omega)\cap K_n, \:\xi\neq\xi_0.$$
Hence, the cell can again be written as the intersection of ``halfspaces'' $H^0_j:=\{x: - d(x,\xi_0)+b_{\xi_0} > -d(x,\xi_j)+b_{\xi_j}\}.$ Therefore, it is sufficient to show that for any $z\in H^0_j$ the whole geodesic from z to $\xi_0$ lies inside $H^0_j$. For convenience we write $\Phi_0(x)= -d(x,\xi_0)+b_{\xi_0}$ and $\Phi_j(x)=-d(x,\xi_j)+b_{\xi_j}.$ 

Assume $\xi_0\in\partial H^0_j$ and w.l.o.g. $b_{\xi_0}=0$. Then, we have
\[
 \Phi_0(\xi_0)=0=\Phi_j(\xi_0) \Rightarrow b_{\xi_j}=d(\xi_j,\xi_0).
\]
The set $N=\{z\in M: d(\xi_j,z)=d(\xi_j,\xi_0)+d(\xi_0,z)\}$ is a $m$-null set. For all $z\notin N$ we have
\[
 \Phi_j(z)=-d(\xi_j,z)+b_{\xi_j}>-d(\xi_j,\xi_0)+b_{\xi_j}-d(\xi_0,z)=\Phi_0(z)
\]
This implies that $m(T_n^{-1}(\xi_i))=0$ contradicting the assumption of $T$ being an allocation. Thus, $\xi_0\notin\partial H^0_j$ and in particular $T(\xi_0)\in \Xi=\supp(\mu).$

Assume $T(\xi_0)\neq \xi_0$. Then, there is a $\xi_j\neq \xi_0$ such that $T(\xi_0)=\xi_j$, i.e. $\Phi_j(\xi_0)=-d(\xi_0,\xi_j)+b_{\xi_j}>b_{\xi_0}=\Phi_0(\xi_0)$. Then, we have for any $p\in M, p\neq \xi_0$
\[
 -d(p,\xi_j)+b_{\xi_j}\geq -d(p,\xi_0)-d(\xi_0,\xi_j)+b_{\xi_j}>-d(p,\xi_0)+b_{\xi_0}.
\]
This implies, that $m(T^{-1}(\xi_0))= 0$ contradicting the assumption of $T$ being an allocation. Thus, $T(\xi_0)=T_n(\xi_0)=\xi_0$. 

Take any $w\in T_n^{-1}(\xi_0)$ (hence, $\Phi_0(w)>\Phi_j(w)$ for all $j\neq 0$) and $p\in M$ such that $d(\xi_0,w)=d(\xi_0,p)+d(p,w)$, i.e. $p$ lies on the minimizing geodesic from $\xi_0$ to $w$. Then, we have for any $j\neq 0$ by using the triangle inequality once more
\begin{eqnarray*}
 -d(p,\xi_0)+b_{\xi_0}&=&-d(\xi_0,w)+d(p,w)+b_{\xi_0}\\
&\geq& -d(\xi_0,w)+b_{\xi_0}+d(w,\xi_j)-d(p,\xi_j)\\
&>&-d(p,\xi_j)+b_{\xi_j},
\end{eqnarray*}
which means that $\Phi_0(p)>\Phi_j(p)$ for all $j\neq 0$. Hence, $p\in  H^0_j$ proving the claim.
\end{proof}

\begin{remark}
 i) \ Questions on the geometry of the cells of fair allocations are highly connected to the very difficult problem of the regularity of optimal transportation maps (see \cite{ma2005, loeper2009regularity, kim-2007}). The link is of course the cyclical monotonicity. The geometry of the cells of the ``optimal fair allocation'' is dictated by the cyclical monotonicity and the optimal choice of cyclical monotone map to get an asymptotic optimal coupling. 

Consider the classical transport problem between two probability measures one being absolutely continuous to the volume measure on M with full support on a convex set and the other one being a convex combination of N Dirac masses. Assume that the cell being transported to one of the N points is not connected. Then, it is not difficult to imagine that it is possible to smear out the Dirac masses slightly to get two absolutely continuous probability measures (even with very nice densities) but a discontinuous transportation map.

\medskip

ii)\ Considering $L^p$ cost on $\R^d$ with $p\notin\{1,2\}$, the cell structure is much more irregular than in the two cases considered above. The cells do not even have to be connected. Indeed, just as in the proof of the two Corollaries above it holds also for general p that $T^\omega(x)= \xi_0$ iff $\Phi_0(x)>\Phi_i(x)$ for all $i\neq 0$ where $\Phi_i(x)= - |x-\xi_i|^p+b_i$ for some constants $b_i$ (see also Example 1.6 in \cite{GangboMcCann1996}). By considering the sets $\Phi_i\equiv \Phi_0$ it is not difficult to cook up examples of probability measures such that the cells do not have to be connected. 

In the case that $p\in(0,1)$ similar to the case that $p=1$ we always have that the center of the cells lies in the cell, that is $T(\xi_i)=\xi_i$ for all $\xi_i\in\supp(\mu^\bullet)$ because the cost function defines a metric (see \cite{GangboMcCann1996}).

\medskip

iii)\ As was shown by Loeper in section 8.1 of \cite{loeper2009regularity} the cells induced by the optimal transportation problem in the hyperbolic space between an absolutely continuous measure and a discrete measure with respect to the cost function $c(x,y)=d^2(x,y)$ do not have to be connected. In the same article he shows that for the same problem on the sphere the cells have to be connected. In \cite{von2009regularity} von Nessi studies more general cost functions on the sphere, including the $L^p$ cost function $c(x,y)=d^p(x,y).$ He shows that in general for $p\neq 2$ the cells do not have to connected. This suggests that on a general manifold the cell structure will probably be rather irregular.

\end{remark}

\section{Construction}\label{s:construction}

We fix again a pair of equivariant random measures $\lambda^\bullet$ and $\mu^\bullet$ of unit resp. subunit intensity with finite optimal mean transportation cost $\mathfrak c_{e,\infty}$ such that $\lambda^\bullet$ is absolutely continuous. Additionally, we assume that $G$ satisfies some  growth condition. Recall that the $2^r$ neighbourhood of the identity element in the Cayley graph $\Delta(G,S)$ of G is denoted by $\Lambda_r$ and the range of its action on the fundamental region by $B_r$, i.e. $B_r=\bigcup_{g\in\Lambda_r} gB_0$. Then, we assume that for any $g\in G$
\begin{equation}\label{amenability}
 \frac{|\Lambda_r\triangle g\Lambda_r|}{|\Lambda_r|}\to 0 \text{ as } r\to\infty,
\end{equation}
where $|A|$ denotes the cardinality of A.  This of course implies for any $g\in G$
$$ \frac{m(B_r\triangle gB_r)}{m(B_r)}\to 0 \text{ as } r\to\infty.$$

The aim of this section is to construct the optimal semicoupling and thereby proving Theorem \ref{main thm 2}. The construction is based on approximation by semicouplings on bounded sets. We will also show a nice convergence result of these approximations, proving Theorem \ref{main thm 3}. The proofs in the first part of this section can mostly be copied from the respective results in \cite{huesmann2010optimal}. Therefore, we omit some of them and only stress those where something new happens.

\subsection{Symmetrization and Annealed Limits}
The crucial step in our construction of optimal semicouplings between $\lambda^\bullet$ and $\mu^\bullet$ is the introduction of a \emph{symmetrization} or \emph{second randomization}. We want to construct the optimal semicoupling by approximation of optimal semicouplings on bounded sets. The difficulty in this approximation lies in the estimation of the contribution of the fundamental regions $gB_0$ to the transportation cost, i.e. what does it cost to transport mass into $gB_0$? How can the cost be bounded in order to be able to conclude that the limiting measure still transports the right amount of mass into $gB_0$? The solution is to mix several optimal semicouplings and thereby get a symmetry which will be very useful (see proof of Lemma \ref{Qg-tight} (i)). One can also think of the mixing as an expectation of the random choice of increasing sequences of sets $hB_r$ exhausting $M$.

For each $g\in G$ and $r\in \N$, recall that $\Q_{gB_r}$ denotes the minimizer of $\Cost$ among the semicouplings of $\lambda^\bullet$ and $1_{gB_r}\mu^\bullet$ as constructed in Theorem \ref{eu:Q+q}. It inherits the equivariance from $\lambda^\bullet$ and $\mu^\bullet$, namely $Q_{gA}(g\cdot,g\cdot,\theta_g\omega)= Q_A(\cdot,\cdot,\omega)$  (see Corollary \ref{invariance of Q}). In particular, the stationarity of $\P$  implies $(\tau_h)_*Q_{gBr}\stackrel{d}{=}Q_{hgB_r}$. Put
$$ Q_g^r(dx,dy,d\omega)\ :=\ 1_{gB_0}(y) \frac1{|\Lambda_r|}\sum_{h\in g\Lambda_r} Q_{hB_r}(dx,dy,d\omega).$$
\begin{figure*}
\begin{center}
\begin{tikzpicture}
 \draw[thick,red] (0,0)node[anchor=north west]{$gB_0$} rectangle +(0.5,0.5);
\draw (-2,-2)node[anchor=north west]{$h_1B_r$} rectangle +(4,4);
\draw (-1,-3)node[anchor=north west]{$h_2B_r$} rectangle +(4,4);
\draw (-0.5,-1)node[anchor=north west]{$h_3B_r$} rectangle +(4,4);
\end{tikzpicture}
\end{center}
\caption{Schematic picture of the mixing procedure.}
\end{figure*}
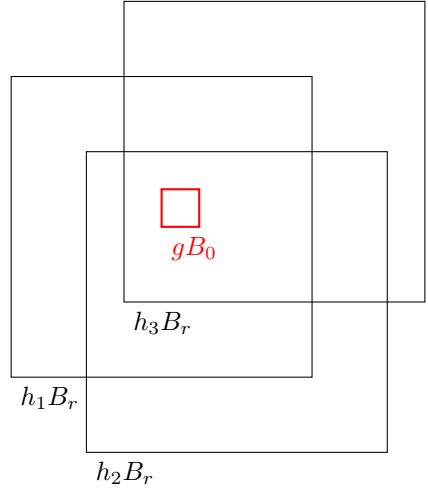

The measure $Q_g^r$ defines a semicoupling between $\lambda^\bullet$ and $1_{gB_0}\mu^\bullet$. It is a deterministic, fractional allocation in the following sense:
\begin{itemize}
\item for any $\omega$ it is a deterministic function of $\lambda^\omega$ and $\mu^\omega$ and does not depend on any additional randomness, 
\item for any $\omega$ the first marginal is absolutely continuous with respect to $\lambda^\omega$ with density $\leq 1$.
\end{itemize}
The last fact  implies that the semicoupling $\Q_g^r$ is \emph{not} optimal in general, e.g. if one transports the Lebesgue measure to a point process. The first fact implies that all the objects derived from $\Q_g^r$ in the sequel -- like $\Q_g^\infty$ and $\Q^\infty$ -- are also deterministic. Moreover, $Q_g^r$ shares the equivariance properties of the measures $Q_{hB_r}$.

\begin{lemma}\label{Qg-tight}
\begin{enumerate}
\item For each $r\in\mathbb N$ and $g\in G$
$$ \int\limits_{M\times gB_0\times \Omega}c(x,y)\Q_g^r(dx,dy,d\omega)\le {\mathfrak c_\infty}.$$
\item The family $(\Q_g^r)_{r\in\mathbb N}$ of probability measures on $M\times M\times \Omega$ is relatively compact in the weak topology.
\item There exist probability measures $\Q_g^\infty$ and a subsequence $(r_l)_{l\in\mathbb N}$ such that for all $g\in G$:
$$\Q^{r_l}_g\quad\longrightarrow\quad\Q_g^\infty\qquad\text{ weakly as $l\to\infty$.}$$
\end{enumerate}
\end{lemma}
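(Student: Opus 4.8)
The plan is to prove the three assertions in turn; the content is entirely in (1), while (2) and (3) are soft compactness arguments.

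\textbf{Proof of (1).} Unfolding the definition of $\Q_g^r$,
$$\int_{M\times gB_0\times\Omega}c(x,y)\,\Q_g^r(dx,dy,d\omega)=\frac1{|\Lambda_r|}\sum_{h\in g\Lambda_r}\EE\Big[\int_{M\times gB_0}c(x,y)\,q_{hB_r}^\omega(dx,dy)\Big].$$
I would set $\phi(u,v):=\EE\big[\int_{M\times uB_0}c(x,y)\,q_{vB_r}^\omega(dx,dy)\big]$ for $u,v\in G$. Since $G$ acts by isometries (so $c$ is $G$-invariant), $\P$ is stationary, and each $q_{vB_r}^\bullet$ is equivariant (Corollary~\ref{invariance of Q}), $\phi$ is invariant under the diagonal action of $G$; it is finite (dominated by $\Cost(\Q_{vB_r})\le|\Lambda_r|\,\mathfrak c_{e,\infty}<\infty$) and vanishes unless $uB_0\subset vB_r$, i.e.\ unless $u\in v\Lambda_r$, which (as $\Lambda_r$ is symmetric) is equivalent to $v\in u\Lambda_r$. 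Applying the mass transport principle to $\phi$ with first argument $g$ gives $\sum_{v\in G}\phi(g,v)=\sum_{v\in G}\phi(v,g)$. The left-hand side is $|\Lambda_r|$ times the quantity we want to bound. The right-hand side, using that $gB_r=\bigcup_{v\in g\Lambda_r}vB_0$ is a disjoint union (fundamental-region property) and that $q_{gB_r}^\bullet$ is concentrated on $M\times gB_r$, equals $\EE\big[\int_{M\times gB_r}c(x,y)\,q_{gB_r}^\omega(dx,dy)\big]=\Cost(\Q_{gB_r})$. Hence $\int_{M\times gB_0\times\Omega}c\,d\Q_g^r=|\Lambda_r|^{-1}\Cost(\Q_{gB_r})$. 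Finally, for any semicoupling $q^\bullet$ of $\lambda^\bullet$ and $\mu^\bullet$ the restriction $1_{M\times gB_r}q^\bullet$ is a semicoupling of $\lambda^\bullet$ and $1_{gB_r}\mu^\bullet$, so optimality of $\Q_{gB_r}$, together with $m(gB_r)=|\Lambda_r|\,m(B_0)=|\Lambda_r|$ and $gB_r\in\text{Adm}(M)$, yields
$$\frac1{|\Lambda_r|}\Cost(\Q_{gB_r})\ \le\ \frac1{m(gB_r)}\EE\Big[\int_{M\times gB_r}c(x,y)\,q^\omega(dx,dy)\Big]\ \le\ \mathfrak C(q^\bullet);$$
taking the infimum over $q^\bullet$ gives the bound $\mathfrak c_\infty$.

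\textbf{Proof of (2).} Each $\Q_g^r$ is concentrated on $M\times\overline{gB_0}\times\Omega$ and has total mass $\EE[\mu^\bullet(gB_0)]=\beta$, independent of $r$, so by Prokhorov's theorem it suffices to exhibit, for each $\varepsilon>0$, a compact $K\subset M\times M\times\Omega$ with $\Q_g^r(\complement K)<\varepsilon$ for all $r$. Now $\Omega$ is compact by assumption, $\overline{gB_0}$ is compact (a fundamental region is bounded and $M$ is complete), and for $R>0$ the closed $R$-neighbourhood $A_R$ of $\overline{gB_0}$ is compact by Hopf--Rinow. Since $c(x,y)\ge\vartheta(R)$ whenever $x\notin A_R$ and $y\in\overline{gB_0}$, part (1) and Markov's inequality give
$$\Q_g^r\big(\complement A_R\times M\times\Omega\big)\ \le\ \frac1{\vartheta(R)}\int_{M\times gB_0\times\Omega}c\,d\Q_g^r\ \le\ \frac{\mathfrak c_\infty}{\vartheta(R)},$$
which tends to $0$ as $R\to\infty$ (here $\mathfrak c_\infty\le\mathfrak c_{e,\infty}<\infty$ and $\vartheta(R)\to\infty$). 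Taking $R$ large and $K:=A_R\times\overline{gB_0}\times\Omega$ proves relative compactness.

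\textbf{Proof of (3) and main difficulty.} Since $G$ is finitely generated it is countable; enumerating $G=\{g_1,g_2,\dots\}$ and applying (2) repeatedly, extract a nested chain of subsequences along which $\Q_{g_1}^r$, then $\Q_{g_2}^r$, $\dots$ converge weakly, and pass to the diagonal subsequence $(r_l)_l$; then $\Q_g^{r_l}\to\Q_g^\infty$ weakly for every $g\in G$ (equivalently, $\big((\Q_g^r)_{g\in G}\big)_r$ lives in the countable product $\prod_{g\in G}\mathcal M(M\times M\times\Omega)$, relatively compact by Tychonoff). The one genuinely delicate point is the reindexing identity in (1): that the total $c$-cost collected on a \emph{single} fundamental region $gB_0$ by all the ``big-box'' semicouplings $\Q_{hB_r}$ with $h\in g\Lambda_r$ equals $|\Lambda_r|^{-1}\Cost(\Q_{gB_r})$. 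This is exactly where equivariance of the $\Q_{hB_r}$, stationarity of $\P$, and the symmetry of $\Lambda_r$ all enter, through the mass transport principle; once it is in place, (2) and (3) are routine.
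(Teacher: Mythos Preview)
Your proof is correct and follows the same approach as the paper. The only cosmetic difference is that you make the reindexing in (1) explicit via the mass transport principle applied to $\phi(u,v)$, whereas the paper states the same identity verbally (``$g$ attains each possible position inside $\Lambda_r$ with equal probability'') and invokes invariance of the $Q_{hB_r}$ directly; parts (2) and (3) are identical tightness-plus-diagonal arguments.
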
 

\begin{proof}
(i)\ Let us fix $g\in G$ and start with the important observation: \emph{For given $r\in \N$ and $g\in G$ averaging over all $h\Lambda_r$ with $h\in g\Lambda_r$ has the effect that ``$g$ attains each possible position inside $\Lambda_r$ with equal probability''} (see also the proof of Theorem \ref{opt loc opt}).

Hence, together with the invariance of $Q_{kB_r}$ we obtain
\begin{eqnarray*}
 && \int_{M\times gB_0\times\Omega} c(x,y) Q_g^r(dx,dy,d\omega)\\
&=& \frac1{|\Lambda_r|} \sum_{h\in g\Lambda_r} \int_{M\times gB_0\times\Omega} c(x,y) Q_{hB_r}(dx,dy,d\omega)\\
&=& \frac1{|\Lambda_r|} \int_{M\times gB_r\times\Omega} c(x,y) Q_{gB_r}(dx,dy,d\omega)\\
&=& \frac1{|\Lambda_r|} \CCo_{gB_r} =: \mathfrak c_r \leq \mathfrak c_\infty,
\end{eqnarray*}
by definition of $\mathfrak c_\infty$.

\medskip

(ii)\ In order to prove tightness of $(Q^r_g)_{r\in\N}$, let $(gB_0)_l$ denote the closed l--neighborhood of $gB_0$ in M. Then,
\begin{eqnarray*}
 Q^r_g(\complement(gB_0)_l, gB_0, \Omega) &\leq& \frac1{\vartheta(l)} \int_{M\times gB_0\times\Omega} c(x,y) Q_g^r(dx,dy,d\omega)\\
&\leq& \frac1{\vartheta(l)} \mathfrak c_\infty.
\end{eqnarray*}
  Since $\vartheta(l)\to\infty$ as $l\to\infty$ this proves tightness of the family $(\Q_g^r)_{r\in\mathbb N}$
on $M\times M\times\Omega$. (Recall that $\Omega$ was assumed to be compact from the very beginning.)

\medskip

(iii)\ Tightness yields the existence of $\Q_g^\infty$ and of a converging subsequence for each $g\in G$. A standard  argument ('diagonal sequence') then gives convergence for all $g\in G$ along a common subsequence (G is countable as it is finitely generated).
\end{proof}

Note that the measures $Q_g^\infty$ inherit as weak limits the property $Q_{hg}^\infty(h\cdot,h\cdot,\theta_h\cdot)=Q_g^\infty(\cdot,\cdot,\cdot)$ from the measures $Q_g^r$ (see also the proof of the equivariance property in Proposition \ref{abstract exist}).
The next Lemma allows to control the difference in the first marginals of $\Q^\infty_g$ and $\Q^\infty_{h}$ for $g\neq h$. This is the first point where we use the growth condition.

\begin{lemma}\label{Qg-disjoint}
 \begin{itemize}
  \item[i)] For all $l>0$ there exists numbers $\epsilon_r(l)$ with $\epsilon_r(l)\to 0$ as $r\to\infty$ s.t. for all $g,g^{'}\in G$ and all $r\in \N$
$$ \frac1{|\Lambda_r|} \sum_{h\in g^{'}\Lambda_r} Q_{hB_r}(A)\leq \frac1{|\Lambda_r|} \sum_{h\in g\Lambda_r} Q_{hB_r}(A) + \epsilon_r(d_\Delta(g,g^{'}))\cdot \sup_{h\in g^{'}\Lambda_r} Q_{hB_r}(A)$$
for any Borel set $A\subset M\times M\times \Omega.$
\item[ii)] For all $g_1,\ldots,g_n\in G$, all $r\in\N$ and all Borel sets $A\subset M, D\subset \Omega$
$$ \sum_{i=1}^n Q_{g_i}^r(A,M,D) \leq \left(1+\sum_{i=1}^n \epsilon_r(d_\Delta(g_1,g_i))\right) \cdot \lambda(D,A),$$
where $\lambda(D,A):= \int_D\int_A\lambda^\omega(dx)\P(d\omega).$
 \end{itemize}

\end{lemma}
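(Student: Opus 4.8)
The plan is to construct the sequence $(\epsilon_r(l))_r$ by hand from the growth hypothesis (\ref{amenability}) and then to read off both parts by elementary set splitting. First I would use that $G$ is finitely generated, so that every ball $F_l:=\{k\in G:\ d_\Delta(1,k)\le l\}$ in the Cayley graph is \emph{finite}, and set
\[
\epsilon_r(l)\ :=\ \max_{k\in F_l}\ \frac{|k\Lambda_r\triangle\Lambda_r|}{|\Lambda_r|}.
\]
By (\ref{amenability}) each term $|k\Lambda_r\triangle\Lambda_r|/|\Lambda_r|$ tends to $0$ as $r\to\infty$, and the maximum is over a finite set, so $\epsilon_r(l)\to0$ as $r\to\infty$, as required. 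The compatibility with the statement comes from left invariance of the word metric: for $g,g'\in G$ and $k:=g^{-1}g'$ one has $d_\Delta(g,g')=d_\Delta(1,k)$, and since left translation by $g$ is a bijection of $G$ (and bijections commute with $\triangle$), $g'\Lambda_r\triangle g\Lambda_r=g\,(k\Lambda_r\triangle\Lambda_r)$, hence $|g'\Lambda_r\triangle g\Lambda_r|=|k\Lambda_r\triangle\Lambda_r|\le\epsilon_r(d_\Delta(g,g'))\,|\Lambda_r|$.

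For part (i) I would fix a Borel set $A\subset M\times M\times\Omega$ and $g,g'\in G$, write $l=d_\Delta(g,g')$, and split the sum over $g'\Lambda_r$ by membership in $g\Lambda_r$. Since every $Q_{hB_r}(A)\ge0$,
\[
\sum_{h\in g'\Lambda_r}Q_{hB_r}(A)\ =\ \sum_{h\in g\Lambda_r\cap g'\Lambda_r}Q_{hB_r}(A)+\sum_{h\in g'\Lambda_r\setminus g\Lambda_r}Q_{hB_r}(A)\ \le\ \sum_{h\in g\Lambda_r}Q_{hB_r}(A)+\sum_{h\in g'\Lambda_r\setminus g\Lambda_r}Q_{hB_r}(A),
\]
and the last sum is at most $|g'\Lambda_r\setminus g\Lambda_r|\cdot\sup_{h\in g'\Lambda_r}Q_{hB_r}(A)\le\epsilon_r(l)\,|\Lambda_r|\cdot\sup_{h\in g'\Lambda_r}Q_{hB_r}(A)$ by the previous paragraph. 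Dividing by $|\Lambda_r|$ gives (i).

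For part (ii), I would restrict to pairwise distinct $g_1,\dots,g_n$ (which is what makes the sets $g_iB_0$ pairwise disjoint, the feature the statement exploits). From the definition $Q_g^r(dx,dy,d\omega)=1_{gB_0}(y)\,\frac1{|\Lambda_r|}\sum_{h\in g\Lambda_r}Q_{hB_r}$ one gets $Q_{g_i}^r(A,M,D)=\frac1{|\Lambda_r|}\sum_{h\in g_i\Lambda_r}Q_{hB_r}(A\times g_iB_0\times D)$ for Borel $A\subset M$, $D\subset\Omega$. Applying (i) to the Borel set $A\times g_iB_0\times D$ with the pair $(g_1,g_i)$, and bounding the supremum by the first-marginal constraint of a semicoupling (Theorem \ref{eu:Q+q}), namely $Q_{hB_r}(A\times g_iB_0\times D)\le Q_{hB_r}(A\times M\times D)\le\lambda^\bullet\P(A\times D)=\lambda(D,A)$, yields
\[
Q_{g_i}^r(A,M,D)\ \le\ \frac1{|\Lambda_r|}\sum_{h\in g_1\Lambda_r}Q_{hB_r}(A\times g_iB_0\times D)\ +\ \epsilon_r\big(d_\Delta(g_1,g_i)\big)\,\lambda(D,A).
\]
Summing over $i$, the disjointness of the $g_iB_0$ turns $\sum_i Q_{hB_r}(A\times g_iB_0\times D)$ into $Q_{hB_r}(A\times\bigcup_i g_iB_0\times D)\le Q_{hB_r}(A\times M\times D)\le\lambda(D,A)$, and $|g_1\Lambda_r|=|\Lambda_r|$ collapses the remaining average, giving $\sum_{i=1}^n Q_{g_i}^r(A,M,D)\le\big(1+\sum_{i=1}^n\epsilon_r(d_\Delta(g_1,g_i))\big)\lambda(D,A)$, which is (ii).

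I do not expect a serious obstacle: the one substantive point is that $\epsilon_r(l)$ must be chosen \emph{uniformly} over all pairs $g,g'$ with $d_\Delta(g,g')=l$, and this uniformity is exactly what finiteness of the balls $F_l$ in the finitely generated group $G$ provides; everything else is bookkeeping with the F\o lner estimate (\ref{amenability}), the left invariance of the word metric, and the marginal inequality for semicouplings.
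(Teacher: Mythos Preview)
Your argument is correct and follows essentially the same route as the paper: split the sum over $g'\Lambda_r$ into the part lying in $g\Lambda_r$ and the remainder, bound the remainder by $|g'\Lambda_r\setminus g\Lambda_r|\cdot\sup(\dots)$, and then for (ii) apply (i) with $A\times g_iB_0\times D$, sum, and use disjointness of the $g_iB_0$ together with the first-marginal bound. Your definition $\epsilon_r(l)=\max_{k\in F_l}|k\Lambda_r\triangle\Lambda_r|/|\Lambda_r|$ is in fact a little cleaner than the paper's, which writes ``$\epsilon_r(d_\Delta(id,g))=|\Lambda_r\triangle g\Lambda_r|/|\Lambda_r|$'' and so does not, strictly speaking, depend on $d_\Delta(id,g)$ alone; your use of the finiteness of $F_l$ is exactly the right way to obtain a function of $l$ only. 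Your remark that (ii) requires the $g_i$ to be pairwise distinct is also well taken and is implicit in how the lemma is used later.
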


\begin{proof}
 (i)\ First note that for all $g,g^{'}\in G$ and $r\in\N$ we have
$$ g^{'}\in g\Lambda_r \quad \Leftrightarrow \quad g \in g^{'}\Lambda_r.$$
In this case, for $h\in g\Lambda_r$ with $g^{'}\in h\Lambda_r$ we also have $h\in g^{'}\Lambda_r$ and  $g\in h\Lambda_r$. Moreover, 
$$ \frac{|\{h\in g\Lambda_r: g^{'} \notin h\Lambda_r\}|}{|\Lambda_r|}\leq \epsilon_r(d_\Delta(g,g^{'})), $$
for some $\epsilon_r(l)$ with $\epsilon_r(l)\to 0$ as $r\to\infty$. One possible choice for $\epsilon_r$ is
$$\epsilon_r(d_\Delta(id,g))=\frac{|\Lambda_r\triangle g\Lambda_r|}{|\Lambda_r|},$$
which tends to zero as $r$ tends to infinity for any $g\neq id$ by assumption. This implies that for each pair $g,g^{'}\in G$ and each $r\in\N$
$$\frac{|\{h\in g\Lambda_r : g^{'}\in h\Lambda_r\}|}{|\Lambda_r|} \geq 1- \epsilon_r(d_\Delta(g,g^{'})).$$
Therefore, for each Borel set $A\subset M\times M\times \Omega$
$$\frac1{|\Lambda_r|}\sum_{h\in g^{'}\Lambda_r} Q_{hB_r}(A) \leq \frac1{|\Lambda_r|} \sum_{h\in g\Lambda_r} Q_{hB_r}(A) + \epsilon_r(d_\Delta(g,g^{'}))\cdot \sup_{h\in g^{'}\Lambda_r} Q_{hB_r}(A).$$

\medskip

(ii)\ According to the previous part (i), for each Borel sets $A\subset M, D\subset \Omega$
\begin{eqnarray*}
&& \sum_{i=1}^n Q^r_{g_i}(A,M,D) \\
&=& \sum_{i=1}^n \frac1{|\Lambda_r|} \sum_{h\in g_i\Lambda_r} Q_{hB_r}(A,g_iB_0,D)\\
&\leq& \sum_{i=1}^n \left(\frac1{|\Lambda_r|} \sum_{h\in g_1\Lambda_r} Q_{hB_r}(A,g_iB_0,D) + \epsilon_r(d_\Delta(g_1,g_i))\cdot \sup_{h\in g_i\Lambda_r} Q_{hB_r}(A,g_iB_0,D)\right)\\
&\leq& \left(1+\sum_{i=1}^n \epsilon_r(d_\Delta(g_1,g_i))\right)\lambda(D,A)
\end{eqnarray*}

\end{proof}

Having these results at our hands we can copy basically line to line the respective proofs from \cite{huesmann2010optimal} (Theorem 4.3 and Corollary 4.4) to get

\begin{theorem}\label{Q infty}
 The measure $\Q^\infty:=\sum_{g\in G} Q_g^\infty$ is an optimal semicoupling of $\lambda^\bullet$ and $\mu^\bullet$.
\end{theorem}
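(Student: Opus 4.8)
The plan is to check, one after another, that $\Q^\infty=\sum_{g\in G}\Q_g^\infty$ is a locally finite semicoupling of $\lambda^\bullet$ and $\mu^\bullet$, that it is equivariant, and that it realizes the optimal value $\mathfrak c_{e,\infty}$; each property is passed to the limit from the approximants $\Q_g^{r_l}$ supplied by Lemma \ref{Qg-tight}. Local finiteness is immediate once we know $(\pi_2)_*\Q_g^\infty=1_{gB_0}\mu^\bullet\P$: indeed $\Q_g^r$ is a semicoupling of $\lambda^\bullet$ and $1_{gB_0}\mu^\bullet$, so its second marginal is the fixed measure $1_{gB_0}\mu^\bullet\P$, of constant total mass $\beta=\EE[\mu^\bullet(B_0)]$, and since weak convergence of finite measures preserves total mass, no mass escapes and the second marginals converge, giving $(\pi_2)_*\Q_g^\infty=1_{gB_0}\mu^\bullet\P$. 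Then any compact $K\subset M\times M\times\Omega$ — whose middle projection is compact, hence meets only finitely many $gB_0$ — is charged by only finitely many $\Q_g^\infty$, so $\Q^\infty$ is well defined and Radon, and summing $(\pi_2)_*\Q_g^\infty=1_{gB_0}\mu^\bullet\P$ over the partition $\{gB_0\}_{g\in G}$ gives $(\pi_2)_*\Q^\infty=\mu^\bullet\P$.

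For the first marginal I would use Lemma \ref{Qg-disjoint}(ii). Extending the set inequality there to nonnegative functions, for any $f\in C_c(M)$, $h\in C(\Omega)$ with $f,h\ge0$ and any finite family $g_1,\dots,g_n\in G$,
$$\sum_{i=1}^n\int f(x)h(\omega)\,\Q_{g_i}^{r}(dx,dy,d\omega)\ \le\ \Big(1+\sum_{i=1}^n\epsilon_r\big(d_\Delta(g_1,g_i)\big)\Big)\int f(x)h(\omega)\,\lambda^\omega(dx)\,\P(d\omega).$$
Holding $g_1,\dots,g_n$ fixed and letting $r=r_l\to\infty$, the left-hand side converges because $(x,y,\omega)\mapsto f(x)h(\omega)$ is bounded and continuous, while each $\epsilon_{r_l}(d_\Delta(g_1,g_i))\to0$ by the growth assumption (\ref{amenability}). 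So the inequality survives without the error term for the $\Q_{g_i}^\infty$, and letting $n\to\infty$ with monotone convergence yields $\int f h\,d\Q^\infty\le\int f h\,d(\lambda^\bullet\P)$, i.e. $(\pi_1,\pi_\Omega)_*\Q^\infty\le\lambda^\bullet\P$. Hence $\Q^\infty$ is a semicoupling. Its equivariance is inherited from the approximants: the relation $\Q_{hg}^\infty(h\cdot,h\cdot,\theta_h\cdot)=\Q_g^\infty(\cdot,\cdot,\cdot)$ — a weak-limit version of the corresponding identity for $\Q_g^r$ (cf.\ the equivariance argument in the proof of Proposition \ref{abstract exist}) — says $(\tau_h,\tau_h,\theta_h)_*\Q_g^\infty=\Q_{hg}^\infty$; summing over $g$ and re-indexing by the bijection $g\mapsto hg$ of $G$ gives $(\tau_h,\tau_h,\theta_h)_*\Q^\infty=\Q^\infty$.

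It remains to show $\mathfrak C(\Q^\infty)=\mathfrak c_{e,\infty}$. Since $\Q^\infty$ is equivariant, the Remark following the definition of optimality gives $\mathfrak C(\Q^\infty)=\int_{M\times B_0\times\Omega}c(x,y)\,\Q^\infty$; because $(\pi_2)_*\Q_g^\infty$ is carried by $gB_0$ and $B_0\cap gB_0=\emptyset$ for $g\neq\mathrm{id}$, only the $g=\mathrm{id}$ term contributes, and as $\Q_{\mathrm{id}}^\infty$ is carried by $M\times B_0\times\Omega$ this reads $\mathfrak C(\Q^\infty)=\Cost(\Q_{\mathrm{id}}^\infty)$. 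By lower semicontinuity of $\Cost(\cdot)$ under weak convergence (as established in the proof of Proposition \ref{abstract exist}) together with Lemma \ref{Qg-tight}(i),
$$\Cost(\Q_{\mathrm{id}}^\infty)\ \le\ \liminf_{l\to\infty}\Cost(\Q_{\mathrm{id}}^{r_l})\ =\ \liminf_{l\to\infty}\int_{M\times B_0\times\Omega}c(x,y)\,\Q_{\mathrm{id}}^{r_l}\ \le\ \mathfrak c_\infty\ \le\ \mathfrak c_{e,\infty}.$$
On the other hand $\Q^\infty$ is an equivariant semicoupling, so $\mathfrak C(\Q^\infty)\ge\mathfrak c_{e,\infty}$. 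Hence equality holds throughout, $\mathfrak C(\Q^\infty)=\mathfrak c_{e,\infty}$ (and, as a by-product, $\mathfrak c_\infty=\mathfrak c_{e,\infty}$), so $\Q^\infty$ is an optimal semicoupling.

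The step needing the most care is the first-marginal estimate: one must control how much the first marginals of the pieces $\Q_g^\infty$ overlap, which is exactly what the F\o lner/growth hypothesis (\ref{amenability}) buys through the error terms $\epsilon_r$ of Lemma \ref{Qg-disjoint}, and one must be disciplined about the order of limits — keeping the finite family $g_1,\dots,g_n$ fixed while $r\to\infty$ (so that each $\epsilon_r(d_\Delta(g_1,g_i))\to0$) and only afterwards summing over all of $G$. Everything else reduces to routine manipulations with weak limits, the partition property of $\{gB_0\}$, and the already-available lower semicontinuity of the cost functional.
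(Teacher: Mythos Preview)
Your proof is correct and follows essentially the same approach as the one the paper defers to in \cite{huesmann2010optimal}: verify the second marginal via stability of the fixed marginal $(1_{gB_0}\mu^\bullet)\P$ under weak limits, control the first marginal by passing to the limit in Lemma~\ref{Qg-disjoint}(ii) with the F\o lner error terms vanishing, inherit equivariance from the pieces, and bound the cost via lower semicontinuity together with Lemma~\ref{Qg-tight}(i). One small remark: your phrase ``weak convergence of finite measures preserves total mass'' is only true because Lemma~\ref{Qg-tight} supplies tightness (so the convergence is genuine weak convergence against $C_b$, not merely vague); it would be cleaner simply to note that projections are continuous, so the second marginals of $Q_g^{r_l}$---which are all equal to $(1_{gB_0}\mu^\bullet)\P$---converge to the second marginal of $Q_g^\infty$, forcing the latter to be $(1_{gB_0}\mu^\bullet)\P$.
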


\begin{corollary}
 (i) For $r\to\infty$, the sequence of measures $\Q^r\ :=\ \sum\limits_{g\in G} \Q^r_g$, $r\in\N$, converges vaguely to the unique optimal semicoupling $\Q^\infty$.

(ii) For each $g\in G$ and $r\in\N$ put
$$\tilde Q^r_g(dx,dy,d\omega) \ :=\  \frac1{|\Lambda_r|}\sum_{h\in g\Lambda_r} Q_{hB_r}(dx,dy,d\omega). $$
The sequence $(\tilde Q^r_g)_{r\in\N}$ converges vaguely to the unique optimal semicoupling $\Q^\infty$.
\end{corollary}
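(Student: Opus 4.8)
The plan is to deduce both parts from the material already in hand: the subsequential weak convergence $Q_g^{r_l}\to Q_g^\infty$ ($g\in G$) of Lemma \ref{Qg-tight}, the fact (Theorem \ref{Q infty}) that $\Q^\infty=\sum_gQ_g^\infty$ is an optimal semicoupling, the uniqueness Theorem \ref{uniqueness}, and the F\o lner estimate of Lemma \ref{Qg-disjoint}. The one bookkeeping fact I would record first is a compact-support reduction. Given $f\in C_c(M\times M\times\Omega)$ with $f\ge 0$ (which suffices for vague convergence), pick a compact $A\subset M$ with $\supp(f)\subset M\times A\times\Omega$. Since $\{gB_0:g\in G\}$ is a locally finite tessellation of $M$, the set $F:=\{g\in G:gB_0\cap A\neq\emptyset\}$ is finite, and $1_{M\times gB_0}$ vanishes on $\supp(f)$ for $g\notin F$. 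As $Q_g^r$ and $Q_g^\infty$ have second marginal $1_{gB_0}\mu^\bullet\P$, it follows that $\Q^r(f)=\sum_{g\in F}Q_g^r(f)$ and, by the same property of $\Q^\infty$, that $\Q^\infty(f)=\sum_{g\in F}Q_g^\infty(f)$; similarly $\tilde Q_g^r(f)=\sum_{g'\in F}1_{M\times g'B_0}\tilde Q_g^r(f)$ once $r$ is large enough that $F\subset g\Lambda_r$.

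For part (i), I would first note that along the subsequence $(r_l)$ of Lemma \ref{Qg-tight} one has $\Q^{r_l}(f)=\sum_{g\in F}Q_g^{r_l}(f)\to\sum_{g\in F}Q_g^\infty(f)=\Q^\infty(f)$, so $\Q^{r_l}\to\Q^\infty$ vaguely. To upgrade this to convergence of the full sequence I would invoke the usual subsequence principle in the (metrizable) vague topology on $\mathcal M(M\times M\times\Omega)$: given any subsequence of $(\Q^r)_r$, the uniform tightness bounds of Lemma \ref{Qg-tight} allow extraction of a further subsequence along which $Q_g^r$ converges weakly, say to $\hat Q_g^\infty$, for all $g$ simultaneously; the proof of Theorem \ref{Q infty} uses only such subsequential convergence together with the bounds of Lemmas \ref{Qg-tight}--\ref{Qg-disjoint}, all uniform in $r$, so $\sum_g\hat Q_g^\infty$ is again an optimal semicoupling and hence equals $\Q^\infty$ by Theorem \ref{uniqueness}. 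The compact-support reduction then forces that further subsequence to converge vaguely to $\Q^\infty$; since every subsequence has such a further subsequence, $\Q^r\to\Q^\infty$ vaguely.

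For part (ii), I would compare $\tilde Q_g^r$ with $\Q^r$ termwise over $g'\in F$. Using that $gB_r=\bigcup_{h\in g\Lambda_r}hB_0$ is a disjoint union and that the fundamental regions are pairwise disjoint, $1_{M\times g'B_0}\tilde Q_g^r$ and $Q_{g'}^r=1_{M\times g'B_0}\tilde Q_{g'}^r$ differ only in that the averaging runs over $h\in g\Lambda_r$ rather than over $h\in g'\Lambda_r$. Applying Lemma \ref{Qg-disjoint}(i) to the superlevel sets of $f$ (and once in each direction, interchanging the roles of $g$ and $g'$), together with the uniform bound $Q_{hB_r}(M\times g'B_0\times\Omega)\le\EE[\mu^\bullet(B_0)]<\infty$, yields
\[
\bigl|\tilde Q_g^r(f)-\Q^r(f)\bigr|\ \le\ \|f\|_\infty\,\EE[\mu^\bullet(B_0)]\sum_{g'\in F}2\,\epsilon_r\bigl(d_\Delta(g,g')\bigr),
\]
which tends to $0$ as $r\to\infty$ because $\epsilon_r(l)\to0$ for each fixed $l$ by the growth assumption (\ref{amenability}). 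Combined with part (i) this gives $\tilde Q_g^r(f)\to\Q^\infty(f)$ for every such $f$, i.e. $\tilde Q_g^r\to\Q^\infty$ vaguely.

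Almost all of the content sits in the results already proved; the only genuinely new ingredients are the conversion of subsequential convergence into convergence of the whole sequence -- which hinges essentially on the uniqueness Theorem \ref{uniqueness} -- and the F\o lner control of the averaging window in part (ii). I would expect the former to be the subtler point, since one has to make sure that the argument proving Theorem \ref{Q infty} applies to an \emph{arbitrary} subsequential limit of the $Q_g^r$ and not merely to the one extracted in Lemma \ref{Qg-tight}; this is indeed the case because that argument only uses the estimates of Lemmas \ref{Qg-tight} and \ref{Qg-disjoint}, which are uniform in $r$, but it deserves an explicit remark.
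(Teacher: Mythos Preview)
Your argument is correct and follows the natural route; the paper itself omits the proof and refers to Corollary 4.4 of \cite{huesmann2010optimal}, whose strategy is precisely the one you outline --- subsequence principle combined with the uniqueness Theorem \ref{uniqueness} for part (i), and the F\o lner comparison of averaging windows via Lemma \ref{Qg-disjoint}(i) for part (ii). Your explicit remark that the proof of Theorem \ref{Q infty} uses only the uniform-in-$r$ estimates of Lemmas \ref{Qg-tight}--\ref{Qg-disjoint}, and hence applies to \emph{any} subsequential limit, is exactly the point that makes the subsequence argument go through.

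Two minor cosmetic remarks. First, the condition ``once $r$ is large enough that $F\subset g\Lambda_r$'' in your compact-support reduction is not actually needed: the decomposition $\tilde Q_g^r(f)=\sum_{g'\in F}(1_{M\times g'B_0}\tilde Q_g^r)(f)$ holds for all $r$ simply because the $g'B_0$ partition $M$. Second, rather than passing through superlevel sets, one can observe directly that the set-inequality in Lemma \ref{Qg-disjoint}(i) extends to nonnegative measurable functions by the layer-cake formula; this gives your displayed bound with the same constant. Neither point affects the validity of the proof.
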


\begin{corollary}\label{lim inf change}
 Denote the set of all semicouplings of $\lambda^\bullet$ and $\mu^\bullet$ by $\Pi_s$. Then it holds
\begin{eqnarray*} \lefteqn{\inf_{q^\bullet\in \Pi_s} \liminf_{r\to\infty}\frac1{m(B_r)} \EE\left[\int_{M\times B_r} c(x,y) q^\bullet(dx,dy)\right]}\\
& = &\liminf_{r\to\infty}\inf_{q^\bullet\in \Pi_s}\frac1{m(B_r)} \EE\left[\int_{M\times B_r} c(x,y) q^\bullet(dx,dy)\right].
 \end{eqnarray*}

In particular, we have
$$\mathfrak c_\infty \ =\ \inf_{q^\bullet\in\Pi_s}\mathfrak C(q^\bullet)\ =\ \inf_{q^\bullet\in\Pi_{es}}\mathfrak C(q^\bullet)\ =\ \mathfrak c_{e,\infty}.$$
\end{corollary}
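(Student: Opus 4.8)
The plan is to reduce both sides of the displayed identity to the single sequence $\mathfrak c_r:=\frac1{m(B_r)}\EE[\CCo_{B_r}]$ and then pin this sequence down using the optimal semicoupling $Q^\infty$ produced in Theorem \ref{Q infty}. The first step is to show that for every $r\in\N$
$$\inf_{q^\bullet\in\Pi_s}\frac1{m(B_r)}\EE\Big[\int_{M\times B_r}c(x,y)\,q^\bullet(dx,dy)\Big]\ =\ \mathfrak c_r .$$
The inequality ``$\ge$'' holds because for any $q^\bullet\in\Pi_s$ the measure $1_{M\times B_r}q^\bullet$ is a semicoupling of $\lambda^\bullet$ and $1_{B_r}\mu^\bullet$, so by Theorem \ref{eu:Q+q} its cost is $\ge\CCo_{B_r}(\omega)$ for a.e.\ $\omega$. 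For ``$\le$'' one takes the optimal semicoupling $q_{B_r}^\bullet$ of $\lambda^\bullet$ and $1_{B_r}\mu^\bullet$ and completes it: since $\pi_1(q_{B_r}^\omega)$ has finite total mass $\mu^\omega(B_r)$ while $\lambda^\omega(M)=\infty$ a.s., the residuals $\lambda^\bullet-\pi_1(q_{B_r}^\bullet)$ and $1_{\complement B_r}\mu^\bullet$ admit a measurably chosen semicoupling, whose sum with $q_{B_r}^\bullet$ is a $q^\bullet\in\Pi_s$ with $1_{M\times B_r}q^\bullet=q_{B_r}^\bullet$. Consequently the right-hand side of the identity in the corollary equals $\liminf_{r\to\infty}\mathfrak c_r$; and since $B_r\in\text{Adm}(M)$ we also get $\mathfrak C(q^\bullet)\ge\frac1{m(B_r)}\EE[\int_{M\times B_r}c\,q^\bullet]$ for every $q^\bullet$, hence $\mathfrak c_r\le\mathfrak c_\infty$ for all $r$.

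The inequality ``$\ge$'' in the corollary is then immediate: for fixed $q^\bullet$ and each $r$ one has $\frac1{m(B_r)}\EE[\int_{M\times B_r}c\,q^\bullet]\ge\mathfrak c_r$ by the above, so taking $\liminf_r$ and then $\inf_{q^\bullet}$ gives $\mathrm{LHS}\ge\liminf_r\mathfrak c_r=\mathrm{RHS}$. For the reverse inequality I would run the construction of this section along a subsequence $(r_l)$ chosen so that, in addition to the convergences $Q^{r_l}_g\to Q^\infty_g$ of Lemma \ref{Qg-tight}, one has $\mathfrak c_{r_l}\to\liminf_r\mathfrak c_r$: first pick a subsequence realising the $\liminf$, then extract a further subsequence by the tightness/diagonal argument of Lemma \ref{Qg-tight}. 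By Lemma \ref{Qg-tight}(i) together with the equality established in its proof, $\int_{M\times B_0\times\Omega}c\,Q^{r_l}_{\mathrm{id}}=\mathfrak c_{r_l}$.

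Now lower semicontinuity of the cost functional (as in Proposition \ref{abstract exist}(i)) applied to $Q^{r_l}_{\mathrm{id}}\to Q^\infty_{\mathrm{id}}$ yields $\int_{M\times B_0\times\Omega}c\,Q^\infty_{\mathrm{id}}\le\liminf_l\mathfrak c_{r_l}=\liminf_r\mathfrak c_r$. Since $Q^\infty=\sum_{g}Q^\infty_g$ coincides with $Q^\infty_{\mathrm{id}}$ on $M\times B_0\times\Omega$ (the other summands being carried by the essentially disjoint sets $M\times gB_0\times\Omega$) and $Q^\infty$ is equivariant and optimal by Theorem \ref{Q infty}, this gives $\mathfrak C(Q^\infty)=\EE[\int_{M\times B_0}c\,Q^\infty]\le\liminf_r\mathfrak c_r$. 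Plugging $q^\bullet=Q^\infty\in\Pi_s$ into the left-hand side of the corollary (for which the sequence in $r$ is constant by equivariance and the mass transport principle) we obtain $\mathrm{LHS}\le\mathfrak C(Q^\infty)\le\liminf_r\mathfrak c_r=\mathrm{RHS}$, and combined with the trivial direction this proves the identity.

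For the ``in particular'' part, stringing the inequalities together gives $\mathfrak c_r\le\mathfrak c_\infty\le\mathfrak c_{e,\infty}=\mathfrak C(Q^\infty)\le\liminf_r\mathfrak c_r$, so all these quantities coincide (and $\mathfrak c_r\to\mathfrak c_\infty$); in particular $\mathfrak c_\infty=\mathfrak c_{e,\infty}$, which by definition is the chain $\mathfrak c_\infty=\inf_{q^\bullet\in\Pi_s}\mathfrak C(q^\bullet)=\inf_{q^\bullet\in\Pi_{es}}\mathfrak C(q^\bullet)=\mathfrak c_{e,\infty}$. The one delicate point in this plan is the lower semicontinuity of $Q\mapsto\int_{M\times B_0\times\Omega}c\,Q$: because $B_0$ is merely a measurable fundamental region and not open, one must argue through cut-off functions and control possible mass of $\mu^\bullet$ on $\partial B_0$; but this is precisely the issue already resolved in the proof that $Q^\infty$ is an optimal semicoupling (Theorem \ref{Q infty}), so it can legitimately be invoked here.
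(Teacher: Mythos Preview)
Your proof is correct and follows essentially the same route as the paper: both arguments identify the right-hand side with $\liminf_r\mathfrak c_r$, then close the chain $\mathfrak c_r\le\mathfrak c_\infty\le\mathfrak c_{e,\infty}=\mathfrak C(Q^\infty)\le\liminf_r\mathfrak c_r$ using the optimal semicoupling $Q^\infty$ from Theorem~\ref{Q infty} and Lemma~\ref{Qg-tight}. Your explicit extension argument (completing $q_{B_r}^\bullet$ to a full semicoupling) and the remark on lower semicontinuity over $B_0$ make points the paper leaves implicit; the extra step of passing to a subsequence realising the $\liminf$ is harmless but unnecessary, since the final chain forces $\mathfrak c_r\to\mathfrak c_\infty$ anyway.
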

\begin{proof}
For any semicoupling $q^\bullet$ we have due to the supremum in the definition of $\mathfrak C(\cdot)$ that
$$\liminf_{r\to\infty}\frac1{m(B_r)} \EE\left[\int_{M\times B_r} c(x,y) q^\bullet(dx,dy)\right]\ \leq\ \mathfrak C(q^\bullet).$$
Hence, the left hand side is bounded from above by $\inf_{q^\bullet\in \Pi_s} \mathfrak C(q^\bullet).$ However, we just constructed an equivariant semicoupling, the unique optimal semicoupling $Q^\infty$ which attains equality, i.e. with $Q^\infty=q^\bullet\P$
$$\liminf_{r\to\infty}\frac1{m(B_r)} \EE\left[\int_{M\times B_r} c(x,y) q^\bullet(dx,dy)\right]\ =\ \mathfrak C(Q^\infty).$$
Hence, the left hand side equals $\inf_{q^\bullet\in \Pi_s} \mathfrak C(q^\bullet).$\\
The right hand side equals $\liminf_{r\to\infty}\mathfrak c_r$ which is bounded by $\mathfrak c_\infty=\inf_{q^\bullet\in \Pi_s} \mathfrak C(q^\bullet)$ by Lemma \ref{super}. By our construction, the asymptotic transportation cost of $Q^\infty$ are bounded by the right hand side, i.e.
$$\mathfrak C(Q^\infty)\leq \liminf_{r\to\infty}\mathfrak c_r$$
by Lemma \ref{Qg-tight}. Hence, also the right hand side equals $\inf_{q^\bullet\in \Pi_s} \mathfrak C(q^\bullet).$ Thus, we have equality.
\end{proof}

\begin{remark}\label{rem inf eq inf}
i) Because of the uniqueness of the optimal semicoupling the limit of the sequence $Q^r$ does not depend on the choice of fundamental region. The approximating sequence $(Q^r)_{r\in\N}$ does of course depend on $B_0$ and also the choice of generating set S that defines the Cayley graph.

\medskip

ii) In the construction of the semicoupling $Q^\infty$ we only used finite transportation cost, invariance of $Q_A$ in sense that $(\tau_h)_*Q_A\stackrel{d}{=} Q_{hA}$ and the amenability assumption on G. The only specific property of $\lambda^\bullet$ and $\mu^\bullet$ that we used is the uniqueness of the semicoupling on bounded sets which makes is easy to choose a good optimal semicoupling $Q_{gB_r}$. Hence, we can use the same algorithm to construct an optimal coupling between two arbitrary random measures. In particular this shows, that $\mathfrak c_\infty=\mathfrak c_{e,\infty}$ (see also Proposition \ref{abstract exist}).

Indeed, given two arbitrary equivariant measures $\nu^\bullet$ and $\mu^\bullet$ of unit respectively subunit intensity. For any $r\in\N$ let $Q_{B_r}=q_{B_r}^\bullet\P$ be an optimal semicoupling between $\nu^\bullet$ and $1_A\mu^\bullet$. In particular, we made some measurable choice of optimal semicoupling for each $\omega$ (they do not have to be unique), e.g. like in Corollary 5.22 of \cite{villani2009optimal}. \emph{Define} $Q_{gB_r}$ via $q_{gB_r}^{\theta_g\omega}(d(gx),d(gy)):= q^\omega_{B_r}(dx,dy).$ Due to equivariance, this is again a measurable choice of optimal semicouplings. Stationarity of $\P$ implies  $(\tau_h)_*Q_{B_r}\stackrel{d}{=} Q_{hB_r}$. Hence, by the same construction there is some optimal semicoupling $Q^\infty$ of $\nu^\bullet$ and $\mu^\bullet$ with cost bounded by $\mathfrak c_\infty.$

\end{remark}

\subsection{Quenched Limits}

According to section \ref{s:u}, the unique optimal semicoupling between $\lambda^\bullet$ and $\mu^\bullet$ can be represented on $M\times M\times\Omega$ as
$$Q^\infty(dx,dy,d\omega)=\delta_{T(x,\omega)}(dy)\,\rho^\omega(x)\lambda^\omega(dx)\,\P(d\omega)$$
by means of a measurable map
$$T:M\times\Omega\to M\cup\{\eth\},$$
defined uniquely almost everywhere and a density $\rho^\omega$. Similarly, for each $g\in G$ and $r\in\N$ there exists a measurable map
$$T_{g,r}:M\times\Omega\to M\cup\{\eth\}$$
and a density $\rho_{g,r}^\omega$ such that the measure
$$Q_{gB_r}(dx,dy,d\omega)=\delta_{T_{g,r}(x,\omega)}(dy)\,\rho_{g,r}^\omega\lambda^\omega(dx)\,\P(d\omega)$$
on
$M\times M\times\Omega$ is the unique optimal semicoupling
of
$\lambda^\bullet$ and $1_{gB_r}\,\mu^\bullet$.

\begin{proposition}\label{t-map convergence}
 For every $g\in G$
$$T_{g,r}(x,\omega)\quad\to \quad T(x,\omega)\quad\mbox{as}\quad r\to\infty\quad \mbox{in } \lambda^\bullet\otimes\P \mbox{-measure}.$$
\end{proposition}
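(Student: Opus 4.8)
The plan is to deduce convergence of the transport maps from the vague convergence $\tilde Q^r_g \to Q^\infty$ (Corollary following Theorem \ref{Q infty}) together with the uniqueness of the optimal semicoupling. First I would recall that $\tilde Q^r_g(dx,dy,d\omega) = \delta_{T_{g,r}(x,\omega)}(dy)\,\rho_{g,r}^\omega(x)\lambda^\omega(dx)\,\P(d\omega)$ and $Q^\infty(dx,dy,d\omega) = \delta_{T(x,\omega)}(dy)\,\rho^\omega(x)\lambda^\omega(dx)\,\P(d\omega)$. The essential point is that, by Proposition \ref{loc-opt}, both $\tilde Q^r_g$ and $Q^\infty$ are concentrated on graphs, and the target of the limiting object is forced by uniqueness. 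Since $\tilde Q^r_g$ is a semicoupling, its first marginals $\rho_{g,r}^\omega\lambda^\omega$ are dominated by $\lambda^\omega$; passing to the limit along the vague convergence shows that the first marginal of $Q^\infty$ is the limit of these, and by uniqueness (Theorem \ref{uniqueness}) the limit is exactly $\rho^\omega\lambda^\omega$.

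The key step is a standard measure-theoretic lemma: if $\gamma_n(dx,dy) = \delta_{S_n(x)}(dy)\,\nu_n(dx) \to \gamma(dx,dy) = \delta_{S(x)}(dy)\,\nu(dx)$ vaguely on a product of Polish spaces, with $\nu_n \to \nu$ and $\gamma$ concentrated on the graph of a measurable map $S$, then $S_n \to S$ locally in $\nu$-measure. I would prove this by testing against functions of the form $f(x)\,\psi(d(y,S(x)) \wedge 1)$ for $f \in C_c$ nonnegative and $\psi$ continuous, nondecreasing, $\psi(0)=0$: the limit $\int f(x)\,\psi(d(S(x),S(x))\wedge 1)\,\nu(dx) = 0$ forces $\int f(x)\,\psi(d(S_n(x),S(x))\wedge 1)\,\nu_n(dx) \to 0$, which (after a truncation argument handling the ``cemetery'' point $\eth$ and the region where $\rho = 0$) yields convergence in measure on compacts. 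Here one must be slightly careful because $S_n = T_{g,r}$ may equal $\eth$ on a set that shrinks, and because $\nu_n = \rho_{g,r}^\bullet\lambda^\bullet$ and $\nu = \rho^\bullet\lambda^\bullet$ differ; but since $\rho_{g,r}^\omega\lambda^\omega \le \lambda^\omega$ and converges weakly on compacts to $\rho^\omega\lambda^\omega$, and all these measures are absolutely continuous with respect to $\lambda^\bullet\otimes\P$, one can work on the fixed reference measure $\lambda^\bullet\otimes\P$.

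The main obstacle I anticipate is the bookkeeping around the cemetery point $\eth$ and the non-transported mass: $T_{g,r}$ is defined on $\{\rho_{g,r}^\omega > 0\}$ and the sets $\{\rho_{g,r}^\omega > 0\}$ need not be monotone in $r$, so one cannot directly argue as in Proposition \ref{loc-opt}. The remedy is to extend all maps to $M\times\Omega$ with value $\eth$ off their domains, compactify $M$ to $M\cup\{\eth\}$, extend the cost as in the lemma preceding Lemma \ref{continuity bd sc}, and observe that vague convergence of $\tilde Q^r_g$ on $M\times M\times\Omega$ upgrades to weak convergence of the associated couplings to $\tilde\mu^\bullet$-type measures on the compactification; on the region $\{\rho^\omega > 0\}$ the target $T$ is an honest point of $M$, and convergence in measure there is what the statement asks for. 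A second, milder subtlety is that the density $\rho^\omega_{g,r}$ converges to $\rho^\omega$ only in a weak sense, so the convergence in measure is genuinely local (i.e. on sets of finite $\lambda^\bullet\otimes\P$ measure intersected with $\{\rho^\omega > 0\}$), which is exactly the assertion of the proposition.

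Since the first part of this section was stated to follow \cite{huesmann2010optimal} essentially line by line, I would also note that this is the analogue of the corresponding quenched-convergence statement there, and the proof can be transcribed with the obvious modifications to account for the general density $\rho$ (rather than $\rho \in \{0,1\}$) and the manifold setting.
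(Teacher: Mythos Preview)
Your plan contains a genuine gap at the very first step. You write
\[
\tilde Q^r_g(dx,dy,d\omega) = \delta_{T_{g,r}(x,\omega)}(dy)\,\rho_{g,r}^\omega(x)\lambda^\omega(dx)\,\P(d\omega),
\]
but this is false: by definition $\tilde Q^r_g = |\Lambda_r|^{-1}\sum_{h\in g\Lambda_r} Q_{hB_r}$ is an \emph{average} of graph measures with different maps $T_{h,r}$ and different densities $\rho_{h,r}$, and it is \emph{not} itself concentrated on a single graph. The object that is concentrated on the graph of $T_{g,r}$ is $Q_{gB_r}$, and for that measure you do \emph{not} have vague convergence to $Q^\infty$ a priori --- in fact this is essentially the conclusion, not the hypothesis. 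So the ``standard measure-theoretic lemma'' you invoke (graphs converging to a graph implies convergence of the maps) does not apply to the sequence $\tilde Q^r_g$.

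The paper confronts exactly this difficulty. It uses a variant of your lemma (Lemma~\ref{ae-convergence}(ii)) tailored to \emph{mixtures}: if $\int_{X'}\delta_{T_n(x,x')}(dy)\,\theta'(dx')\,\theta(dx) \to \delta_{T(x)}(dy)\,\theta(dx)$ vaguely, then $T_n(x,x')\to T(x)$ in $\theta\otimes\theta'$-measure. Applying this with $X'=g\Lambda_r$ and $\theta'_r$ the uniform measure gives convergence of $T_{h,r}$ for ``most'' $h$; then Lemma~\ref{ae-convergence}(i) turns this back into vague convergence $Q_{gB_r}\to Q^\infty$ for one good $g$, and equivariance propagates it to all $g$. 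A second ingredient you also underestimate is the density convergence $\rho^\omega_{g,r}\nearrow\rho^\omega$: this is not a soft consequence of vague convergence of first marginals but requires a separate monotonicity argument (Lemma~\ref{ugly proof}) using the growth condition on $G$. Without it, the weighting factor $\rho_{h,r}$ in front of the Dirac cannot be removed and the conclusion about $T_{g,r}$ itself does not follow.
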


The claim  relies on  the following two Lemmas. For the first one we use the growth assumption once more. The second one is a slight modification (and  extension) of a result in \cite{Ambrosio-ln-ot}.

\begin{lemma}\label{ugly proof}
 \begin{itemize}
  \item[i)] Fix $\omega\in\Omega$ and take two disjoint bounded Borel sets $A,B\subset M$. Let $q_A^\omega=(id,T_A^\omega)_*(\rho_A^\omega\lambda^\omega)$ be the optimal semicoupling between $\lambda^\omega$ and $1_A\mu^\omega.$  Similarly, let $q_B^\omega$ and $q_{A\cup B}^\omega$ be the unique optimal semicouplings between $\lambda^\omega$ and $1_B\mu^\omega$ respectively $1_{A\cup B}\mu^\omega$ with transport maps $T^\omega_B$ and $T^\omega_{A\cup B}$ and densities $\rho^\omega_B$ and $\rho^\omega_{A\cup B}$. Then, it holds that
$$\rho_{A\cup B}^\omega(x)\geq \max\{\rho^\omega_A(x),\rho^\omega_B(x)\} \quad \lambda^\omega a.s..$$
\item[ii)] For any $g\in G$ and $r\in\N$ we have $\rho^\omega_{g,r}(x)\leq \rho^\omega(x) \quad (\lambda^\bullet\otimes\P)$ a.s..
\item[iii)] For any $g\in G$  we have $\lim_{r\to\infty}\rho^\omega_{g,r}(x)\nearrow \rho^\omega(x) \quad (\lambda^\bullet\otimes\P)$ a.s..
 \end{itemize}
\end{lemma}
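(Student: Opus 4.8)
The plan is to treat $(i)$ as the substantive statement — the genuinely technical ``ugly'' one — and then obtain $(ii)$ and $(iii)$ from it by feeding it into the convergence $\tilde Q^r_g\to Q^\infty$ established earlier in this section. Throughout, everything reduces to bounded deterministic problems: by the reduction used in Lemma~\ref{continuity bd sc} (and the discussion preceding it), for fixed $\omega$ the semicouplings $q_A^\omega,q_B^\omega,q_{A\cup B}^\omega$ coincide with the optimal semicouplings obtained after replacing $\lambda^\omega$ by $1_{B(p,R)}\lambda^\omega$ with $R$ large, so we may argue in the setting of Proposition~\ref{u sc bd set}.

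\medskip

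For $(i)$ I would argue by contradiction: suppose $\lambda^\omega(E)>0$ for $E:=\{\rho_A^\omega>\rho_{A\cup B}^\omega\}$ (the assertion for $B$ is symmetric). I would use three structural facts about $q_{A\cup B}^\omega$: it is concentrated on the graph of a map $T_{A\cup B}^\omega$ whose closure is $c$-cyclically monotone (Proposition~\ref{u sc bd set}); the sub-transport $1_{M\times A}q_{A\cup B}^\omega$ is the \emph{unique} optimal semicoupling between $q_{A\cup B}^\omega(\cdot,A)$ and $1_A\mu^\omega$ (sub-transports of optimal transports are optimal, Proposition~\ref{loc-opt}); and, by the structure given by Proposition~\ref{fig obs} as exploited in the proof of Proposition~\ref{u sc bd set}, $T_{A\cup B}^\omega=\mathrm{id}$ on $\{0<\rho_{A\cup B}^\omega<1\}$. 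On $E$ the recipe of $q_A^\omega$ is a strictly more efficient way of feeding the $A$-targets than that of $q_{A\cup B}^\omega$ (it is the optimum with the larger budget $\lambda^\omega$), and splicing it into $q_{A\cup B}^\omega$ clashes only with the $B$-part $1_{M\times B}q_{A\cup B}^\omega$ over the \emph{bounded} overlap $\{\rho_A^\omega>0\}\cap\{T_{A\cup B}^\omega\in B\}$; using the identity structure on $\{0<\rho_{A\cup B}^\omega<1\}$ together with cyclical monotonicity one repairs this overlap while preserving the gain on $E$, producing a semicoupling of $\lambda^\omega$ and $1_{A\cup B}\mu^\omega$ of strictly smaller $\CCost$, contradicting optimality. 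I expect this repair step to be the main obstacle — it is precisely the ``ugly'' part, and the identity structure on $\{0<\rho<1\}$ is invoked exactly to make it go through.

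\medskip

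Granting $(i)$, apply it with $A=gB_r$ and $B=gB_{r+1}\setminus gB_r$: then $r\mapsto\rho_{g,r}^\omega$ is $(\lambda^\bullet\otimes\P)$-a.s.\ nondecreasing, so the limit $\bar\rho_g:=\lim_{r\to\infty}\rho_{g,r}$ exists and $\rho_{g,r}\le\bar\rho_g\le1$. It remains to identify $\bar\rho_g$ with $\rho$. From the Corollary to Theorem~\ref{Q infty}, $\tilde Q^r_g=\frac1{|\Lambda_r|}\sum_{h\in g\Lambda_r}Q_{hB_r}$ converges vaguely to $Q^\infty$, hence so do the first marginals,
$$\frac1{|\Lambda_r|}\sum_{h\in g\Lambda_r}\rho_{h,r}\,\lambda^\bullet\P\ \longrightarrow\ \rho\,\lambda^\bullet\P\qquad\text{in }\mathcal M(M\times\Omega).$$
On the other hand, the $Q_{hB_r}$ are equivariant (Corollary~\ref{invariance of Q}), so each $\rho_{h,r}^\omega$ is, after a shift of $(x,\omega)$, one of the $\rho_{g,r}^\omega$; by the amenability assumption~(\ref{amenability}) — this is where the growth condition enters, in the quantitative form of the constants $\epsilon_r$ of Lemma~\ref{Qg-disjoint} — the terms of the above average with $h$ near $\partial(g\Lambda_r)$ are asymptotically negligible, while the interior terms increase with $r$ to $\bar\rho_g$ in the Campbell sense (stationarity of $\P$ and monotonicity from $(i)$). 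Hence the same average also converges to $\bar\rho_g\,\lambda^\bullet\P$, forcing $\bar\rho_g=\rho$ $(\lambda^\bullet\otimes\P)$-a.s. This proves $(iii)$; and $(ii)$, the bound $\rho_{g,r}^\omega\le\rho^\omega$, is then immediate from $\rho_{g,r}\le\bar\rho_g=\rho$.
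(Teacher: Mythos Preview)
Your overall plan is sound, and for $(ii)$/$(iii)$ it is a legitimate reorganization of the paper's argument. But there are gaps in both halves.

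\textbf{Part $(i)$.} You leave the ``repair step'' open and propose to close it via the identity structure $T_{A\cup B}=\mathrm{id}$ on $\{0<\rho_{A\cup B}<1\}$. This tool can be vacuous: if $\mu^\omega$ is discrete (e.g.\ a point process), the remark after Proposition~\ref{u sc bd set} gives $\rho_{A\cup B}\in\{0,1\}$ a.e., so the set on which you want to recruit replacement mass is null, and the splice cannot be repaired that way. The paper takes a different route: it compares $q_{A\cup B}^\omega$ not against $q_A^\omega$ but against the optimality of $q_B^\omega$, and runs an \emph{iterative} push--pull. With $f=\rho_B$, $h=\rho_{A\cup B}$, $T=T_B$, $S=T_{A\cup B}$, one sets $\mu_1:=T_*(f-h)_+\lambda$, chooses $h_1\le h$ with $S_*(h_1\lambda)=\mu_1$, then $\mu_2:=T_*(h_1\lambda)$, chooses $h_2\le h$ with $S_*(h_2\lambda)=\mu_2$, and so on. By finiteness of mass one of two alternatives must occur: either some $\{h_j>0\}$ meets $\{h>f\}$, in which case optimality of $q_B$ produces a strictly cheaper competitor to $q_{A\cup B}$; or some $\{h_j>0\}$ overlaps an earlier $\{h_i>0\}$, which yields a cycle contradicting $c$-cyclical monotonicity of $q_{A\cup B}$. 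This is the ``ugly'' part, and it does not rely on the identity structure.

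\textbf{Parts $(ii)$/$(iii)$.} Your order (monotone limit $\bar\rho_g$ first, then identification with $\rho$) is fine, and one half of the identification is immediate: from $\rho_{h,r}\le\bar\rho_h=\bar\rho_g$ (the limits agree for different base points by the same nesting argument you use for monotonicity) one gets $\zeta_{g,r}\le\bar\rho_g$, hence $\rho\le\bar\rho_g$ after passing to the vague limit. The other half, $\bar\rho_g\le\rho$, is exactly $(ii)$, and your justification (``interior terms increase to $\bar\rho_g$ in the Campbell sense'') is too vague. What is needed is the quantitative lower bound
\[
\zeta_{g,r+n}^\omega(x)\ \ge\ \frac{|\Lambda_n|}{|\Lambda_{r+n}|}\,\rho_{g,r}^\omega(x),
\]
obtained by observing that for every $h$ with $d_\Delta(g,h)$ small enough one has $gB_r\subset hB_{r+n}$ and hence, by $(i)$, $\rho_{g,r}\le\rho_{h,r+n}$; combined with the amenability ratio $|\Lambda_n|/|\Lambda_{r+n}|\to1$ this forces $\rho_{g,r}\le\rho$ via the vague convergence of first marginals. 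The paper in fact proves $(ii)$ first (by this bound) and then $(iii)$; your ordering collapses the two but the substance is the same.
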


\begin{proof}
 i)\ Firstly, note that if $\{\rho^\omega_A>0\}\cap\{\rho^\omega_B>0\}=\emptyset$ we have $\rho^\omega_{A\cup B}=\rho^\omega_A+\rho_B^\omega.$ Because of the symmetry in A and B it is sufficient to prove that $\rho_{A\cup B}^\omega\geq \rho^\omega_B.$ The proof is rather technical and involves an iterative choice of possibly different densities.

For simplicity of notation we will suppress $\omega$ and write $f=\rho_B$ and $h=\rho_{A\cup B}$ and $T=T_B, S=T_{A\cup B}$. We will show the claim by contradiction. Assume there is a set D of positive $\lambda$ measure such that $f(x)> h(x)$ on D. Put $f_+:=(f-h)_+$ and $\mu_1:= T_*(f_+\lambda).$ Let $h_1\leq h$ be such that $S_*(h_1\lambda)=\mu_1$, that is $h_1$ is a subdensity of $h$ such that $T_*(f_+\lambda)=S_*(h_1\lambda)$ (for finding this density we can use disintegration as in the proof of Proposition \ref{fig obs revisited}). 

If $1_{\{h_1>0\}}h>f$ on some set $D_1$ of positive $\lambda$ measure, we are done. Indeed, as $f$ is the  unique $\CCost$ minimizing choice for the semicoupling between $\lambda$ and $1_B\mu$ the transport $S_*(1_{D_1}h_1\lambda)=:\tilde \mu_1$ must be more expensive than the respective transport $T_*(1_{\tilde D_1} f_+\lambda)=\tilde \mu_1$ for some suitable set $\tilde D_1$. Hence, $q_{A\cup B}$ cannot be minimizing and therefore not optimal, a contradiction.

If $1_{\{h_1>0\}}h\leq f$ we can assume wlog that $T_*(h_1\lambda)=\mu_2$ and $\mu_1$ are singular to each other. Indeed, if they are not singular we can choose a different $h_1$ because $1_B\mu$ has to get its mass from somewhere. To be more precise, if $\tilde h\leq h_1$ is such that $T_*(\tilde h\lambda)\leq\mu_1$ we have $T_*((f_+ + \tilde h)\lambda)>\mu_1.$ Therefore, there must be some density $h'$ such that $h'+h_1\leq h$ and $S_*((h'+h_1)\lambda)=T_*((f_+ + \tilde h)\lambda).$ Because, $f_+>0$ on some set of positive measure and $T_*(f\lambda)\leq S_*(h\lambda)$, there must be such an $h_1$ as claimed.

Take a density $h_2\leq h$ such that $S_*(h_2\lambda)=\mu_2.$ If $1_{\{h_2>0\}}h>f$ on some set $D_2$ of positive $\lambda$ measure, we are done. Indeed, the optimality of $q_B$ implies that the choice of $f_+$ and $h_1$ is cheaper than the choice of $h_1$ and $h_2$ for the transport into $\mu_1+\mu_2$ (or maybe subdensities of these).

If $1_{\{h_2>0\}}h\leq f$ and $\{h_2>0\}\cap\{f_+>0\}$ has positive $\lambda$ measure, we get a contradiction of optimality of $q_{A\cup B}$ by cyclical monotonicity. Otherwise, we can again assume that $T_*(h_2\lambda)=:\mu_3$ and $\mu_2$ are singular to each other. Hence, we can take a density $h_3\leq h$ such that $S_*(h_3\lambda)=\mu_3$. 

Proceeding in this manner, because $f_+\lambda(M)=h_i\lambda(M)>0$ for all i and the finiteness of $q_B(M,M)$ one of the following two alternatives must happen
\begin{itemize}
 \item there is $j$ such that $1_{\{h_j>0\}}h > f$ on some set of positive $\lambda$ measure.
\item there are $j\neq i$ such that $\{h_j>0\}\cap\{h_i>0\}$ on some set of positive $\lambda$ measure with $f_+=h_0$.
\end{itemize}
Both cases lead to a contradiction by using the optimality of $q_B$, either by producing a cheaper semicoupling (in the first case) or by arguing via cyclical monotonicity (in the second case).

\medskip

ii)\ Fix $\omega, g$ and r. Denote the density of the first marginal of $\tilde Q^l_f$ by $\zeta^\omega_{f,l}.$ It is a convex combination of $\rho^\omega_{h,l}$ with $h\in f\Lambda_l$. For $h\in G$ with $d(g,h)\leq n$ we have $g\Lambda_r \subset h\Lambda_{r+n}$. Hence, we have $\rho^\omega_{g,r}\leq \rho^\omega_{h,r+n}$ by the first part of the Lemma. Therefore, the contribution of $\rho^\omega_{g,r}(x)$ to $\zeta^\omega_{g,r+n}(x)$ is at least the number of $h\in G$ such that $d(g,h)\leq n$ divided by $|\Lambda_{r+n}|$. Hence,
$$\frac{|\Lambda_n|}{|\Lambda_{r+n}|}\rho^\omega_{g,r}(x) \leq \zeta^\omega_{g,r+n}(x).$$
By the assumption (\ref{amenability}) we have
$$ \lim_{r\to\infty} \frac{|K\Lambda_r\triangle\Lambda_r|}{|\Lambda_r|}=0,$$
for any finite $K\subset G$. If we take $K=\{ h:d(h,id)=r\}$ we can conclude
$$ \frac{|\Lambda_{n+r}|}{|\Lambda_{n}|} \leq 1+ \frac{|K\Lambda_n\triangle\Lambda_n|}{|\Lambda_n|}\to 1 \text{ as } n\to\infty.$$
Fix $\epsilon>0$. If $\rho^\omega_{g,r} >\epsilon+ \rho^\omega$ on some positive $(\lambda^\bullet\otimes\P)-$ set, we have that $\zeta^\omega_{g,r+n}(x)>\rho^\omega(x) + \epsilon/2$ on some positive $(\lambda^\bullet\otimes\P)-$ set for all $n$ such that $\frac{|\Lambda_n|}{|\Lambda_{n+r}|}\geq 1-\epsilon/2,$ because $\rho^\omega_{g,r}\leq 1$ and thus $\rho^\omega\leq 1-\epsilon.$ Denote this set by $A$, so $A\subset M\times \Omega.$ Then, we have $\tilde Q^{r+n}_g(A\times M)> Q^\infty(A\times M)+\epsilon/2$ for all n big enough. However, this is a contradiction to the vague convergence of $\tilde Q^r_g$ to $\Q^\infty$ which was shown in the last section.

\medskip

iii)\ The last part allows to interpret $\rho^\omega_{g,r}$ as a density of $(\rho^\omega\lambda^\omega)$ instead of as a density of $\lambda^\omega.$ We will adopt this point of view and show that $\rho^\omega_{g,r}$ converges to 1 $(\lambda^\bullet\otimes \P)$ a.s..

Assume that $\rho^\omega_{g,r}(x)\leq \gamma<1$ for all $r\in\N$. Moreover, assume that there is $k\in G$ and $s\in\N$ such that $\rho^\omega_{k,s}(x)>\gamma.$ Then there is a $t\in\N$ such that $g\Lambda_t\supset k\Lambda_s$. The first part of the Lemma then implies that $\rho^\omega_{g,t}(x)\geq \rho^\omega_{k,s}(x)>\gamma$ which contradicts the assumption of $\rho^\omega_{g,r}(x)\leq \gamma$. Hence, if we have $\rho^\omega_{g,r}(x)\leq \gamma<1$ for all $r\in\N$ on a set of positive $(\lambda^\bullet\otimes\P)$ measure we must have $\rho^\omega_{k,s}(x)\leq\gamma$ for all $k\in G$ ans $s\in\N$ on this set. Denote this set again by $A$, $A\subset M\times \Omega.$ As $\zeta^\omega_{g,r}$ is a convex combination of the densities $\rho^\omega_{h,r}$ it must also be bounded away from 1 by $\gamma$ on the set A. However, this is again a contradiction to the vague convergence of $\tilde Q^r_g$ to $\Q^\infty$.

\end{proof}

\begin{lemma}\label{ae-convergence}
Let $X, Y$ be locally compact separable spaces,
  $\theta$  a Radon measure on $X$
 and $\rho$ a metric on $Y$ compatible with the topology.

 (i) For all $n\in\N$ let $T_n,T:X\to Y$ be Borel measurable maps. Put $Q_n(dx,dy):=\delta_{T_n(x)}(dy)\theta(dx)$ and $Q(dx,dy):=\delta_{T(x)}(dy)\theta(dx)$. Then,
$$ T_n\to T \:\: \mbox{ locally in measure on }X \quad \Longleftrightarrow\quad Q_n\to Q \mbox{ vaguely in }\mathcal M(X\times Y).$$

(ii) More generally, let $T$ and $Q$ be as before whereas
$$Q_n(dx,dy):=\int_{X'}\delta_{T_n(x,x')}(dy)\,\theta'(dx')\,\theta(dx)$$
for some probability space
$(X',\frak A',\theta')$ and
suitable measurable maps $T_n: X\times X'\to Y$. Then
$$Q_n\to Q \mbox{ vaguely in }\mathcal M(X\times Y)\quad \Longrightarrow\quad
 T_n(x,x')\to T(x) \:\: \mbox{ locally in measure on }X\times X'.$$
\end{lemma}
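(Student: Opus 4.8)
\textbf{Proof plan for Lemma \ref{ae-convergence}.}

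The plan is to treat the two parts by reducing everything to a criterion for convergence in measure phrased in terms of integrals of functions of the form $\phi(x)\psi(y)$, and then to use the graph structure $Q_n = (\mathrm{id},T_n)_*\theta$ (resp.\ the averaged version) to identify such integrals. For part (i), the forward implication ($T_n\to T$ locally in measure $\Rightarrow Q_n\to Q$ vaguely) is the easy direction: fix $f\in C_c(X\times Y)$; then $\int f\,dQ_n = \int_X f(x,T_n(x))\,\theta(dx)$ and $\int f\,dQ = \int_X f(x,T(x))\,\theta(dx)$. On a relatively compact set $K$ carrying the $x$-support of $f$, local convergence in measure of $T_n$ to $T$ together with uniform continuity of $f$ and dominated convergence (the integrand is bounded by $\|f\|_\infty$ and supported in $K$) give $\int f\,dQ_n\to\int f\,dQ$. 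For the reverse implication in (i), suppose $Q_n\to Q$ vaguely but $T_n\not\to T$ locally in measure; then there is a relatively compact $B\subset X$ with $\theta(B)<\infty$, an $\varepsilon>0$ and a subsequence along which $\theta(\{x\in B:\rho(T_n(x),T(x))>\varepsilon\})\ge\varepsilon$. Choose $\phi\in C_c(X)$ with $0\le\phi\le1$ and $\phi\equiv1$ on a slightly shrunk version of $B$, and a bounded continuous $h:Y\times Y\to[0,1]$ with $h(y,y)=0$ and $h(y,y')\ge c>0$ when $\rho(y,y')\ge\varepsilon$ (e.g.\ $h(y,y')=\min(\rho(y,y')/\varepsilon,1)$). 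Consider the function $g(x,y)=\phi(x)\,h(T(x),y)$. This is not continuous in general because $T$ need only be Borel; the standard fix (this is exactly the point in \cite{Ambrosio-ln-ot}) is to approximate $T$ in $\theta$-measure on $B$ by a continuous map $\tilde T$ via Lusin's theorem, work with $\tilde g(x,y)=\phi(x)h(\tilde T(x),y)\in C_c(X\times Y)$, and absorb the error. Then $\int \tilde g\,dQ = \int_X\phi(x)h(\tilde T(x),T(x))\,\theta(dx)$ is small (since $\tilde T\approx T$), while $\int\tilde g\,dQ_n=\int_X\phi(x)h(\tilde T(x),T_n(x))\,\theta(dx)\ge c\cdot\varepsilon/2$ along the subsequence for $n$ large, contradicting $\int\tilde g\,dQ_n\to\int\tilde g\,dQ$.

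For part (ii) the mechanism is the same: assuming $Q_n\to Q$ vaguely, suppose towards a contradiction that $T_n(x,x')\not\to T(x)$ locally in measure on $X\times X'$. Then there is a relatively compact $B\subset X$, an $\varepsilon>0$ and a subsequence with $(\theta\otimes\theta')(\{(x,x')\in B\times X':\rho(T_n(x,x'),T(x))>\varepsilon\})\ge\varepsilon$. With $\phi$, $h$ and (via Lusin) a continuous $\tilde T$ as above, the crucial identity is Fubini:
$$\int_{X\times Y}\phi(x)h(\tilde T(x),y)\,Q_n(dx,dy)=\int_X\int_{X'}\phi(x)\,h\bigl(\tilde T(x),T_n(x,x')\bigr)\,\theta'(dx')\,\theta(dx),$$
so that averaging over $x'$ does not destroy the lower bound: the right-hand side is at least $c\cdot\varepsilon/2$ along the subsequence for $n$ large, whereas $\int\phi(x)h(\tilde T(x),y)\,Q(dx,dy)=\int_X\phi(x)h(\tilde T(x),T(x))\,\theta(dx)$ is as small as we like. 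This contradicts vague convergence, and $\varepsilon$ was arbitrary, so $T_n\to T$ locally in measure on $X\times X'$.

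The main obstacle is purely the measurability issue in the reverse directions: the test function $(x,y)\mapsto\phi(x)h(T(x),y)$ that one would \emph{like} to use is only Borel, not in $C_c(X\times Y)$, so vague convergence does not directly apply to it. The remedy — Lusin's theorem to replace $T$ by a continuous map off a set of arbitrarily small $\theta$-measure, then a careful $\varepsilon/3$-type bookkeeping to control the three error terms (the swap $T\to\tilde T$ inside $Q$, inside $Q_n$, and the size of the bad set) — is routine but is the one place where care is needed; everything else is dominated convergence and Fubini. Note that the hypotheses $X$ locally compact separable and $\theta$ Radon are exactly what make Lusin's theorem and the existence of the cutoff $\phi\in C_c(X)$ available, and $\rho$ being a compatible metric on $Y$ is what lets us build the modulus function $h$.
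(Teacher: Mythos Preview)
Your overall strategy is the right one and matches what the paper has in mind: the paper does not give its own proof here but refers to \cite{huesmann2010optimal} and notes that the statement is a slight extension of a result in \cite{Ambrosio-ln-ot}, and your Lusin-based argument is exactly the Ambrosio approach.

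There is, however, one genuine gap in your write-up. You claim that the test function $\tilde g(x,y)=\phi(x)\,h(\tilde T(x),y)$ lies in $C_c(X\times Y)$. It does not: with $h(y,y')=\min(\rho(y,y')/\varepsilon,1)$, for every $y$ at distance $\ge\varepsilon$ from the compact set $\tilde T(\mathrm{supp}\,\phi)$ one has $h(\tilde T(x),y)=1$, so $\tilde g(x,y)=\phi(x)$ there, and the $y$-support of $\tilde g$ is all of $Y$. Thus $\tilde g\in C_b(X\times Y)$ only, and vague convergence alone does not let you integrate it. The standard fix (and this is the one step you should add) is to upgrade vague to weak convergence using the fact that all the $Q_n$ and the limit $Q$ share the same first marginal $\theta$: for fixed $\phi\in C_c(X)$ the measures $\phi(x)\,Q_n(dx,dy)$ are finite with total mass $\int\phi\,d\theta$, they converge vaguely to $\phi(x)\,Q(dx,dy)$, and the vague limit also has total mass $\int\phi\,d\theta$; hence $\phi\,Q_n\to\phi\,Q$ weakly and you may test against any bounded continuous function, in particular against $\tilde g$. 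The same remark applies verbatim in part~(ii). Once this is inserted, your $\varepsilon/3$ bookkeeping goes through as you describe.
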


For a proof we refer to section 4 of \cite{huesmann2010optimal}

\begin{proof}[Proof of the Proposition]
 Firstly, we will show that the Proposition holds for  'sufficiently many' $g\in G$. We want to apply the previous Lemma. Recall that 
$$\tilde Q^r_g \to\Q^\infty \quad \text{ vaguely on } M\times M\times\Omega,$$
where
$$\Q^\infty(dx,dy,d\omega) = \delta_{T(x,\omega)}(dy) \rho^\omega(x)\lambda^\omega(dx)\P(d\omega)$$
and
$$\tilde Q^r_g(dx,dy,d\omega)\ =\ \frac{1}{|\Lambda_r|} \sum_{h\in gB_r} Q_{hB_r}(dx,dy,d\omega)\ =\ \frac{1}{|\Lambda_r|} \sum_{h\in gB_r} \delta_{T_{h,r}(x)}(dy) \rho_{h,r}^\omega(x)\lambda^\omega(dx)\P(d\omega),$$
with transport maps $T, T_{h,r} : M\times \Omega\to M\cup \{\eth\}$ and densities $\rho, \rho_{h,r}:M\times\Omega\to\R_+$. The Lemma above allows to interpret $\rho_{h,r}$ as density of the measure $\rho\lambda^\bullet$. Fix $k\in G$ and let $\theta_r'$ be the uniform measure on $k\Lambda_r$. Take $\theta=\rho\lambda^\bullet\otimes\P, X=M\times \Omega$ and $Y=M\cup\{\eth\}$. Apply the same reasoning as in the proof of the second assertion in the last lemma, however, now with changing $\theta'$, to get
\begin{equation}\label{theta theta cvg}
 \lim_{r\to\infty} (\theta\otimes \theta_r')\left(\left\{(x,\omega,h)\in \tilde K\times G\ :\ \rho^\omega_{h,r}(x)\cdot d(T_{h,r}(x,\omega),T(x,\omega))\geq \epsilon\right\}\right)=0.
\end{equation}
Let $H\subset G$ be those h for which 
$$ \lim_{r\to\infty}\theta\left(\left\{(x,\omega)\in \tilde K \ :\ \rho^\omega_{h,r}(x)d(T_{h,r}(x,\omega),T(x,\omega))\geq \epsilon\right\}\right)>0.$$
Because we know that (\ref{theta theta cvg}) holds, we must have $\lim_{r\to\infty} \theta'_r(H)=0$. Hence, there are countably many $g\in G$ such that 
$$  \lim_{r\to\infty}\theta\left(\left\{(x,\omega)\in \tilde K \ :\ d(T_{g,r}(x,\omega),T(x,\omega))\geq \epsilon\right\}\right)=0,$$
where we used that $\rho^\omega_{g,r}\nearrow 1$ for $(\lambda^\bullet\otimes\P)$ a.e. $(x,\omega)$, according to the Lemma above. This shows that the Proposition holds for those $g$. 

Pick one such $g\in G$. Then the first part of the previous lemma implies
$$ Q_{gB_r} \to Q^\infty \quad \text{ vaguely on } M\times M\times \Omega.$$
This in turn implies that for any $h\in G$ $(\tau_h)_* Q_{gB_r} \to (\tau_h)_* Q^\infty \stackrel{(d)}= Q^\infty$ by invariance of $Q^\infty.$ Moreover, by Corollary \ref{invariance of Q} we have $(\tau_h)_* Q_{gB_r} \stackrel{(d)}= Q_{hgB_r}$. This means, that for any $h\in G$ we have 
$$ Q_{hgB_r} \to Q^\infty\quad \text{ vaguely on } M\times M\times \Omega.$$
Applying once more the first part of the previous Lemma proves the Proposition.
\end{proof}

\begin{corollary}
 There is a measurable map $\Psi:\mathcal M(M)\times \mathcal M(M)\to\mathcal M(M\times M)$ s.t. $q^\omega:=\Psi(\lambda^\omega,\mu^\omega)$ denotes the unique optimal semicoupling between $\lambda^\omega$ and $\mu^\omega$. In particular the optimal semicoupling is a factor.
\end{corollary}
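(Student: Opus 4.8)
The plan is to realise $\Psi$ as a limit, along a suitable subsequence, of the maps producing the optimal semicouplings on the bounded sets $B_r$. Fix $g\in G$ to be the identity element and, for $r\in\N$, set $\Psi_r(\lambda,\mu):=\Phi\bigl(\lambda,1_{B_r}\mu\bigr)$, where $\Phi$ is the map whose continuity is established in Lemma~\ref{continuity bd sc}, sending a pair to its unique optimal semicoupling; as in the proof of Lemma~\ref{qA measurable}, $\Psi_r$ is a Borel map $\mathcal M(M)\times\mathcal M(M)\to\mathcal M(M\times M)$ (continuous on its natural domain, extended off it by any fixed Borel rule). For $\P$-a.e.\ $\omega$ --- namely those with $\lambda^\omega\ll m$ of infinite mass --- we have $\Psi_r(\lambda^\omega,\mu^\omega)=q_{gB_r}^\omega$, the unique optimal semicoupling of $\lambda^\omega$ and $1_{B_r}\mu^\omega$ (Proposition~\ref{u sc bd set}), which in the notation of this section reads $q_{gB_r}^\omega=\bigl(id,T_{g,r}(\cdot,\omega)\bigr)_*\bigl(\rho_{g,r}^\omega\lambda^\omega\bigr)$.

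Next I would let $r\to\infty$. By Proposition~\ref{t-map convergence} the maps $T_{g,r}$ converge to $T$ locally in $\lambda^\bullet\otimes\P$-measure; exhausting $M$ by relatively compact sets and diagonalising, extract a subsequence $(r_k)_k$ along which $T_{g,r_k}(x,\omega)\to T(x,\omega)$ for $(\lambda^\bullet\otimes\P)$-a.e.\ $(x,\omega)$. By Lemma~\ref{ugly proof}(iii) one also has $\rho_{g,r}^\omega(x)\nearrow\rho^\omega(x)$ for $(\lambda^\bullet\otimes\P)$-a.e.\ $(x,\omega)$. By Fubini's theorem there is a $\P$-full set of $\omega$ for which both convergences hold for $\lambda^\omega$-a.e.\ $x$. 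For such an $\omega$ and any $\varphi\in C_c(M\times M)$ --- adopting the convention $\varphi(x,\eth):=0$ --- the integrand $x\mapsto\varphi\bigl(x,T_{g,r_k}(x,\omega)\bigr)\,\rho_{g,r_k}^\omega(x)$ is dominated by $\|\varphi\|_\infty\,1_{\pi_1(\supp\varphi)}$, which is $\lambda^\omega$-integrable since $\pi_1(\supp\varphi)$ is compact, and it converges $\lambda^\omega$-a.e.\ to $x\mapsto\varphi\bigl(x,T(x,\omega)\bigr)\,\rho^\omega(x)$: on $\{\rho^\omega>0\}$ because then $T_{g,r_k}(x,\omega)\in M$ for large $k$ and $\varphi$ is continuous, and on $\{\rho^\omega=0\}$ because there $\rho_{g,r_k}^\omega(x)=0$ for every $k$. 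Dominated convergence now yields $q_{gB_{r_k}}^\omega\to q^\omega$ vaguely in $\mathcal M(M\times M)$ for $\P$-a.e.\ $\omega$, where $q^\omega$ is the $\omega$-disintegration of the unique optimal semicoupling $Q^\infty$ of $\lambda^\bullet$ and $\mu^\bullet$ (Theorems~\ref{uniqueness} and~\ref{Q infty}).

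To conclude, let $E\subset\mathcal M(M)\times\mathcal M(M)$ be the Borel set of pairs $(\lambda,\mu)$ for which $\bigl(\Psi_{r_k}(\lambda,\mu)\bigr)_k$ converges vaguely in $\mathcal M(M\times M)$ --- a Borel condition, since $\mathcal M(M\times M)$ is Polish --- and put $\Psi:=\lim_k\Psi_{r_k}$ on $E$ and $\Psi:=0$ off $E$. Then $\Psi$ is Borel, being a pointwise limit of Borel maps on $E$, and by the preceding paragraph $(\lambda^\omega,\mu^\omega)\in E$ with $\Psi(\lambda^\omega,\mu^\omega)=\lim_k q_{gB_{r_k}}^\omega=q^\omega$ for $\P$-a.e.\ $\omega$. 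This is the asserted representation, and since $\Psi$ is Borel it shows that $q^\bullet$ is $\sigma(\lambda^\bullet,\mu^\bullet)$-measurable, i.e.\ a factor of $(\lambda^\bullet,\mu^\bullet)$. That the subsequence $(r_k)$, and hence $\Psi$, is chosen using knowledge of $(\lambda^\bullet,\mu^\bullet)$ is harmless: all that the corollary requires is the existence of \emph{one} Borel map doing the job.

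The step I expect to be the main obstacle is upgrading the mere convergence in measure of Proposition~\ref{t-map convergence} to a genuine $\P$-a.e.\ statement about vague convergence of the realisations $q_{gB_r}^\omega$; this is what forces the subsequence extraction and, crucially, uses the monotone density control $\rho_{g,r}^\omega\nearrow\rho^\omega$ of Lemma~\ref{ugly proof}(iii) --- without it one has no grip on the approximating transport maps over the region that is ultimately left untransported, and no reason for the limit to carry the correct first marginal. Everything else is a routine combination of Fubini's theorem, dominated convergence and the Polishness of $\mathcal M(M\times M)$; note in particular that one cannot instead simply appeal to a measurable-selection theorem for ``the unique locally optimal semicoupling'', since for a \emph{deterministic} pair of measures such a semicoupling need not be unique (Remark~\ref{remark on loc opt}).
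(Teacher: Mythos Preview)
Your proof is correct and follows essentially the same idea as the paper's, namely that the approximating semicouplings $q_{gB_r}^\omega$ are already deterministic Borel functions of $(\lambda^\omega,\mu^\omega)$ and that the optimal $q^\omega$ is obtained from them by a limiting procedure; hence $q^\bullet$ is $\sigma(\lambda^\bullet,\mu^\bullet)$-measurable. The paper's own proof is a two-sentence sketch that simply records this observation, whereas you supply the missing analytic step---upgrading the convergence in $\mathcal M(M\times M\times\Omega)$ (resp.\ the convergence in measure of Proposition~\ref{t-map convergence}) to genuine $\omega$-wise vague convergence along a subsequence, via Lemma~\ref{ugly proof}(ii)--(iii) and dominated convergence. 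Your use of the \emph{unaveraged} $q_{gB_r}^\omega$ rather than the symmetrized $\tilde Q^r_g$ is a harmless simplification, legitimate precisely because Proposition~\ref{t-map convergence} already gives convergence for each fixed $g$. The dependence of the extracted subsequence on the law $\P$ is, as you note, irrelevant for the statement.
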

\begin{proof}
 We showed that the optimal semicoupling $Q^\infty$ can be constructed as the unique limit point of a sequence of deterministic functions of $\lambda^\bullet$ and $\mu^\bullet$. Hence, the map $\omega\mapsto q^\omega$ is measurable with respect to the sigma algebra generated by $\lambda^\bullet$ and $\mu^\bullet$. Thus, there is a measurable map $\Psi$ such that $q^\bullet=\Psi(\lambda^\bullet,\mu^\bullet).$
\end{proof}

\subsubsection{Semicouplings of $\lambda^\bullet$ and a point process.}

If $\mu^\bullet$ is known to be a point process the above convergence result can be significantly improved. Just as in Theorem 4.8 and Corollary 4.9 of \cite{huesmann2010optimal} we get

\begin{theorem}
 For any $g\in G$ and every bounded Borel set $A\subset M$
$$\lim_{r\to\infty} (\lambda^\bullet\otimes \P)\left(\left\{ (x,\omega)\in A\times \Omega\  :\ T_{g,r}(x,\omega) \neq T(x,\omega) \right\}\right)\ =\ 0.$$
\end{theorem}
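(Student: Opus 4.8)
The plan is to cover the bad set $\{(x,\omega)\in A\times\Omega:\ T_{g,r}(x,\omega)\neq T(x,\omega)\}$ by four pieces, three of which vanish as $r\to\infty$ once two auxiliary parameters $R,\eta$ are frozen, and the fourth of which is controlled by $R,\eta$ alone, so that an $\epsilon/4$ argument closes. Write $\theta:=\lambda^\bullet\otimes\P$. Since $\mu^\bullet$ is a point process, each $1_{gB_r}\mu^\omega$ is purely atomic, so by the remark following Proposition \ref{u sc bd set} the densities satisfy $\rho^\omega,\rho_{g,r}^\omega\in\{0,1\}$ $\theta$-a.e., and $T^\omega,T_{g,r}^\omega$ are genuine transport maps into $\Xi(\omega):=\supp(\mu^\omega)$ (respectively $\Xi(\omega)\cap gB_r$), extended by $\eth$ off $\{\rho^\omega>0\}$ (respectively $\{\rho_{g,r}^\omega>0\}$). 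From Lemma \ref{ugly proof} recall $\rho_{g,r}^\omega\le\rho^\omega$ and $\rho_{g,r}^\omega\nearrow\rho^\omega$ $\theta$-a.e. as $r\to\infty$. I would also first record that the mean cost of $\Q^\infty$ restricted to first coordinate in $A$ is finite: the function $f(g,h):=\int_{gB_0\times hB_0}c\,\Q^\infty$ is invariant under the diagonal $G$-action (equivariance of $\Q^\infty$ and stationarity of $\P$), so by the mass transport principle the mean cost of transporting \emph{out of} $B_0$ equals the mean cost $\mathfrak c_{e,\infty}$ of transporting \emph{into} $B_0$; covering $A$ by finitely many $G$-translates of $B_0$ gives $C_A:=\int_{A\times M}c(x,y)\,\Q^\infty(dx,dy,d\omega)<\infty$. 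Writing $A_R$ for the $R$-neighbourhood of $A$ and $B(R):=\{(x,\omega)\in A\times\Omega:\ \rho^\omega(x)=1,\ d(x,T^\omega(x))>R\}$, Chebyshev's inequality then yields $\theta(B(R))\le C_A/\vartheta(R)$.

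Now for the decomposition. On $A\times\Omega$, if $T_{g,r}\neq T$ then either (i) $\rho_{g,r}^\omega(x)<\rho^\omega(x)$, i.e. $(x,\omega)\in E_1(r):=\{\rho_{g,r}^\omega<\rho^\omega\}$; or (ii) $\rho_{g,r}^\omega(x)=\rho^\omega(x)$, which forces this common value to be $1$ (otherwise both maps equal $\eth$), and then $T^\omega(x),T_{g,r}^\omega(x)\in\Xi(\omega)$ are two \emph{distinct} atoms. Splitting (ii) according to whether $(x,\omega)\in B(R)$ and, if not, whether $d(T_{g,r}^\omega(x),T^\omega(x))\ge\eta$ or lies in $(0,\eta)$, one obtains
\begin{eqnarray*}
\{(x,\omega)\in A\times\Omega:\ T_{g,r}^\omega(x)\neq T^\omega(x)\} &\subseteq& E_1(r)\ \cup\ B(R)\\
&&\cup\ \{(x,\omega)\in A\times\Omega:\ d(T_{g,r}^\omega(x),T^\omega(x))\ge\eta\}\ \cup\ F(R,\eta),
\end{eqnarray*}
where $F(R,\eta):=\{(x,\omega)\in A\times\Omega:\ \rho^\omega(x)=1,\ d(x,T^\omega(x))\le R,\ \exists\,\xi\in\Xi(\omega)\setminus\{T^\omega(x)\}\text{ with }d(T^\omega(x),\xi)<\eta\}$. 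The four terms are handled as follows. First, $\theta(E_1(r))\to0$ as $r\to\infty$: since the densities are $\{0,1\}$-valued, $\mathbf 1_{E_1(r)}=|\rho_{g,r}^\omega-\rho^\omega|\to0$ $\theta$-a.e. by Lemma \ref{ugly proof}(iii), and it is dominated by $\mathbf 1_{A\times\Omega}$ which has finite $\theta$-mass. Second, $\theta(B(R))\le C_A/\vartheta(R)$. Third, the convergence-in-measure term tends to $0$ as $r\to\infty$ for each fixed $\eta$: Proposition \ref{t-map convergence} gives $T_{g,r}\to T$ locally in $\theta$-measure with respect to a metric on the one-point compactification $M\cup\{\eth\}$, and restricting to the set where the transports stay in the fixed compact set $\overline{A_R}$ (which, outside $B(R)$, contains both $T^\omega(x)$ and — up to the case already absorbed into $E_1(r)$ — $T_{g,r}^\omega(x)$, with the ``escaping to $\eth$'' case also macroscopic in the compactification metric), this upgrades to convergence in the Riemannian distance on $A$. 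Fourth, $\theta(F(R,\eta))\to0$ as $\eta\to0$ for each fixed $R$: since $(T^\omega)_*\big(\mathbf 1_{\{\rho^\omega=1\}}\lambda^\omega\big)=\mu^\omega$ and, on $A$, $d(x,T^\omega(x))\le R$ forces $T^\omega(x)\in\overline{A_R}$, we have
$$\theta(F(R,\eta))\ \le\ \EE\Big[\sum_{\xi\in\Xi(\omega)\cap\overline{A_R}}\mu^\omega(\{\xi\})\,\mathbf 1_{\{\,\exists\,\xi'\in\Xi(\omega)\setminus\{\xi\}:\,d(\xi,\xi')<\eta\,\}}\Big];$$
because $\mu^\bullet$ is a point process, $\Xi(\omega)\cap\overline{A_{R+1}}$ is $\P$-a.s. finite, so for $\eta<1$ small (depending on $\omega$) every indicator in the sum vanishes, while $\sum_{\xi\in\Xi(\omega)\cap\overline{A_R}}\mu^\omega(\{\xi\})=\mu^\omega(\overline{A_R})$ has finite expectation; dominated convergence applies. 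Given $\epsilon>0$, choose $R$ with $C_A/\vartheta(R)<\epsilon/4$, then $\eta$ with $\theta(F(R,\eta))<\epsilon/4$, then $r$ large enough that $\theta(E_1(r))<\epsilon/4$ and $\theta(\{d(T_{g,r},T)\ge\eta\}\cap(A\times\Omega))<\epsilon/4$; hence $\limsup_{r\to\infty}\theta(\{T_{g,r}\neq T\}\cap(A\times\Omega))\le\epsilon$, and $\epsilon>0$ is arbitrary.

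The main obstacle is the pairing of term $F(R,\eta)$ with the convergence-in-measure term: convergence in measure (Proposition \ref{t-map convergence}) only rules out disagreements that are macroscopic in distance, whereas two distinct atoms of $\mu^\omega$ may be arbitrarily close, so one genuinely needs the point-process-specific observation that only a $\theta$-small mass of $(x,\omega)$ has $T^\omega(x)$ lying within $\eta$ of another atom — quantified above via local finiteness (a.s. positive gaps) and finiteness of the intensity. Everything else — the vanishing of $E_1(r)$ from Lemma \ref{ugly proof}(iii), the $B(R)$ tail estimate from the finite mean cost, and the upgrade of Proposition \ref{t-map convergence} from the compactification metric to the Riemannian metric on the bounded set $A$ — is routine.
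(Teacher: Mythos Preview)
Your proof is correct and follows essentially the same route as the paper (which defers to Theorem 4.8 of \cite{huesmann2010optimal}): upgrade the convergence in measure from Proposition \ref{t-map convergence} to exact agreement by exploiting that both $T_{g,r}$ and $T$ take values in the locally finite set $\Xi(\omega)$, so ``close'' implies ``equal'', while separately handling the density mismatch via Lemma \ref{ugly proof}(iii) and the long-range tail via the finite mean cost.

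One imprecision to clean up: in your treatment of the third term you assert that outside $B(R)\cup E_1(r)$ the set $\overline{A_R}$ contains $T_{g,r}^\omega(x)$ as well. This is false --- $T_{g,r}^\omega(x)$ can be any atom in $gB_r$ --- but it is also unnecessary. All you need is that $T^\omega(x)\in\overline{A_R}$ (a fixed compact), since for any compact $K\subset M$ and any $\eta>0$ there is $\delta>0$ such that $y\in K$ and $\tilde d(z,y)<\delta$ forces $z\in M$ and $d(z,y)<\eta$, where $\tilde d$ is the compactification metric. Thus $\theta\big(\{d(T_{g,r},T)\ge\eta\}\cap(A\times\Omega)\setminus B(R')\big)\le \theta\big(\{\tilde d(T_{g,r},T)\ge\delta\}\big)\to 0$ for every $R'$, and letting $R'\to\infty$ gives the claim. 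With this correction the four-piece $\epsilon/4$ argument goes through exactly as you wrote.
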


\begin{corollary} There exists a subsequence $(r_l)_l$ such that
$$T_{g,r_l}(x,\omega)\quad\to \quad T(x,\omega)\qquad\mbox{as}\quad l\to\infty$$
for almost every $x\in M$, $\omega\in\Omega$ and every $g\in G$.
Indeed, the sequence $(T_{g,r_l})_l$ is finally stationary. That is,  there exists a random variable $l_g:M \times\Omega\to\N$ such that almost surely
$$T_{g,r_l}(x,\omega)\quad = \quad T(x,\omega)\qquad \mbox{for all }\ l\ge l_g(x,\omega).$$
\end{corollary}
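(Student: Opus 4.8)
The plan is to deduce the corollary from the two preceding results, namely the previous Theorem (convergence of the transport maps in $(\lambda^\bullet\otimes\P)$-measure for a point process, where the statement is that the $\lambda^\bullet\otimes\P$-measure of the ``bad set'' $\{T_{g,r}\neq T\}$ on $A\times\Omega$ tends to $0$) and the enumeration of $G$ as a countable set. First I would fix an exhaustion $A_1\subset A_2\subset\cdots$ of $M$ by bounded Borel sets with $\bigcup_k A_k=M$ and an enumeration $g_1,g_2,\ldots$ of the (countable) group $G$. Writing $E_{g,r}:=\{(x,\omega)\in M\times\Omega:\ T_{g,r}(x,\omega)\neq T(x,\omega)\}$, the previous Theorem gives, for each fixed $k$ and each fixed $g\in G$, that $(\lambda^\bullet\otimes\P)\big(E_{g,r}\cap(A_k\times\Omega)\big)\to 0$ as $r\to\infty$.

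Next I would extract the subsequence by a diagonal argument. For the pair $(g_1,A_1)$ pick $r_1$ with $(\lambda^\bullet\otimes\P)\big(E_{g_1,r_1}\cap(A_1\times\Omega)\big)\le 2^{-1}$; having chosen $r_1<\cdots<r_{l-1}$, enumerate the finitely many pairs $(g_i,A_j)$ with $i,j\le l$ and use the previous Theorem finitely many times to pick $r_l>r_{l-1}$ with
$$(\lambda^\bullet\otimes\P)\big(E_{g_i,r_l}\cap(A_j\times\Omega)\big)\ \le\ 2^{-l}\qquad\text{for all } i,j\le l.$$
Then for every fixed $g=g_i$ and every fixed $k$, once $l\ge\max(i,k)$ we have $(\lambda^\bullet\otimes\P)\big(E_{g_i,r_l}\cap(A_k\times\Omega)\big)\le 2^{-l}$, so $\sum_l (\lambda^\bullet\otimes\P)\big(E_{g_i,r_l}\cap(A_k\times\Omega)\big)<\infty$, and by Borel--Cantelli $(\lambda^\bullet\otimes\P)$-a.e.\ $(x,\omega)\in A_k\times\Omega$ lies in only finitely many of the sets $E_{g_i,r_l}$. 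Letting $k\to\infty$ and intersecting over the countably many $g\in G$, we get a single $(\lambda^\bullet\otimes\P)$-full set on which, for every $g\in G$, $(x,\omega)$ belongs to $E_{g,r_l}$ for only finitely many $l$; equivalently $T_{g,r_l}(x,\omega)=T(x,\omega)$ for all large $l$. Defining $l_g(x,\omega):=\inf\{l_0:\ T_{g,r_l}(x,\omega)=T(x,\omega)\ \forall l\ge l_0\}$ (and $l_g:=1$ off the full set) gives the ``finally stationary'' statement, which a fortiori yields $T_{g,r_l}(x,\omega)\to T(x,\omega)$ a.e.\ for every $g$.

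The only genuinely delicate point is the measurability of the random variable $l_g:M\times\Omega\to\N$; but this is routine since for each fixed $l_0$ the set $\{(x,\omega):\ T_{g,r_l}(x,\omega)=T(x,\omega)\ \forall l\ge l_0\}=\bigcap_{l\ge l_0}\{(x,\omega):\ T_{g,r_l}(x,\omega)=T(x,\omega)\}$ is a countable intersection of Borel sets (the $T_{g,r}$ and $T$ being Borel maps into the Polish space $M\cup\{\eth\}$, the coincidence set of two such maps is Borel), hence Borel, so $l_g=\inf\{l_0:\ (x,\omega)\in\text{that set}\}$ is Borel measurable. Everything else is a bookkeeping diagonalization plus Borel--Cantelli, so I do not expect any real obstacle; the proof is essentially the one indicated by the reference to Corollary~4.9 of \cite{huesmann2010optimal}.
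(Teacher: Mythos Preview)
Your proposal is correct and follows exactly the standard route one would expect: diagonal extraction from the preceding Theorem (which gives $(\lambda^\bullet\otimes\P)\big(\{T_{g,r}\neq T\}\cap(A\times\Omega)\big)\to0$) combined with Borel--Cantelli, handling the countably many $g\in G$ and an exhaustion of $M$ simultaneously. The paper itself does not spell out a proof but simply refers to Corollary~4.9 of \cite{huesmann2010optimal}, where precisely this argument is carried out; your write-up matches that approach.
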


\section{The other semicouplings}\label{s:other}
In the previous sections we studied semicouplings between two equivariant random measures $\lambda^\bullet$ and $\mu^\bullet$ with intensities 1 and $\beta\leq 1$ respectively.
In this section we want to remark on  the case that $\mu^\bullet$ has intensity $\beta>1$. Then, $q^\bullet$ is a semicoupling between $\lambda^\bullet$ and $\mu^\bullet$ iff for all $\omega\in\Omega$
$$ (\pi_1)_*q^\omega=\lambda^\omega\quad \text{ and }\quad (\pi_2)_*q^\omega\leq\mu^\omega.$$
This will complete the picture of semicouplings with one marginal being absolutely continuous. In the terminology of section \ref{section 2.2} we should better talk about semicouplings between $\mu^\bullet$ and $\lambda^\bullet$. However, we prefer to keep $\lambda^\bullet$ as first marginal as it better suits our intuition of transporting a continuous quantity somewhere. We will only prove the key technical lemma, existence and uniqueness of optimal semicouplings on bounded sets. From that result one can deduce following the reasoning of the previous sections the respective results on existence and uniqueness for optimal semicouplings. We will not give the proofs because they are completely the same or become easier as we do not have to worry about densities.

\begin{lemma}
 Let $\rho\in L^1(M,m)$ be a nonnegative density. Let $\mu$ be an arbitrary measure on M with $\mu(M)\geq (\rho\cdot m)(M).$ Then, there is a unique semicoupling $q$ between $(\rho\cdot m)$ and $\mu$ minimizing $\CCost(\cdot)$. Moreover, $q=(id,T)_*(\rho\cdot m)$ for some measurable cyclically monotone map $T$.
\end{lemma}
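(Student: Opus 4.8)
The plan is to reduce this statement to the already-established theory for the opposite configuration (one absolutely continuous marginal being the \emph{second} one, or equivalently the case $\beta \le 1$ handled in Proposition \ref{u sc bd set} and Lemma \ref{leb-dirac}), by the same compactification trick used in the unnumbered lemma preceding Lemma \ref{continuity bd sc}. Concretely, since $\mu(M)\ge(\rho\cdot m)(M)=:N<\infty$, and since $\rho$ is compactly supported up to an $m$-null set (if not, we first restrict attention to the support; but here $\rho\in L^1$ so we may truncate to a large ball with only a controlled loss, exactly as in Lemma \ref{continuity bd sc}), we add a cemetery point $\eth$ to $M$ and replace $\mu$ by $\tilde\mu = \mu - (\text{excess mass})$ — more precisely, a semicoupling $q$ between $\rho\cdot m$ and $\mu$ with $(\pi_2)_*q\le\mu$ corresponds bijectively and cost-preservingly to a \emph{coupling} between $\rho\cdot m$ and a measure $\mu$ restricted appropriately; the cleanest formulation is that we are looking at the transport of $\rho\cdot m$ to some submeasure $\mu_q\le\mu$ with $\mu_q(M)=N$. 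This is a transport problem with free boundary values on the target side, which is the mirror image of Figalli's setup in Proposition \ref{fig obs}.

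First I would record the basic compactness and lower semicontinuity facts: $\CCost(\cdot)$ is lower semicontinuous for weak convergence (verbatim as in step (i) of the proof of Proposition \ref{u sc bd set}), and the set $\mathcal O$ of semicouplings $q$ with $(\pi_1)_*q=\rho\cdot m$, $(\pi_2)_*q\le\mu$ satisfying $\CCost(q)\le 2\inf\CCost$ is tight: the first marginal is fixed (hence tight), and the second marginal mass near infinity is controlled by $\CCost(q)/\vartheta(r)$ as in step (ii) there. Thus a minimizer $q$ exists; its first marginal is $\rho\cdot m\ll m$, so by Theorem \ref{u mfd} together with Lemma \ref{leb-dirac} applied in the compactified space (target $=\mu_q + (N - \mu_q(M))\delta_\eth$ is a probability-type measure once normalized, or just a finite measure with total mass $N$), the minimizer is induced by a measurable map $T:M\to M\cup\{\eth\}$, i.e. $q=(id,T)_*(\rho\cdot m)$, and $T$ is $c$-cyclically monotone.

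For uniqueness I would adapt the Figalli-observation argument (Proposition \ref{fig obs}) to the target side. Given a $\CCost$-minimizing semicoupling $q$, write $(\pi_2)_*q = \mu_q \le \mu$. Then $q$ together with the ``diagonal remainder'' $(id,id)_*(\mu - \mu_q)$ — wait, that has the wrong first marginal; instead, observe that $\tilde q := q + (\eth, id)_*(\mu - \mu_q)$ is a coupling between $\rho\cdot m + (N'-N)\delta_\eth$ (with $N' = \mu(M)$) and $\mu$ on $(M\cup\{\eth\})\times M$ with respect to a cost $\tilde c$ that equals $c$ on $M\times M$ and vanishes on $\{\eth\}\times M$. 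One checks, exactly as in Figalli's proof, that $\tilde q$ must be the \emph{unique} optimal coupling for this modified Monge--Kantorovich problem (any cheaper competitor would, upon restriction to $M\times M$, give a cheaper semicoupling between $\rho\cdot m$ and $\mu$, contradicting minimality of $q$; here one uses that $\vartheta$ is strictly increasing so that moving mass off the cemetery that need not be moved is strictly suboptimal). Since the modified problem has an absolutely continuous first marginal (after normalization, $\rho\cdot m$ is still absolutely continuous; the atom at $\eth$ is a single point, so Theorem \ref{u mfd}/Lemma \ref{leb-dirac} still yield a unique optimal \emph{coupling}), $\tilde q$ is uniquely determined, hence so is $q = 1_{M\times M}\tilde q$, which forces $\mu_q$ and $T$ to be unique as well. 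As the excerpt itself notes (``completely the same or become easier as we do not have to worry about densities''), the only genuine subtlety compared to the $\beta\le1$ case is that the \emph{second} marginal, not the first, is the one that gets to ``choose'', so I expect the main obstacle to be bookkeeping the direction of the cemetery construction correctly — in particular verifying that uniqueness of the optimal coupling in the compactified problem really does transfer back to uniqueness of $\mu_q$ (the submeasure of $\mu$ actually served), which follows because $\mu - \mu_q$ is determined as the second marginal of $(\eth,id)_*(\cdot)$ inside the unique $\tilde q$.
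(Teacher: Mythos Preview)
Your existence argument and the map representation are essentially correct and match the paper: a minimizer $q$ exists by lower semicontinuity and tightness, and since $q$ is automatically an optimal coupling between its own marginals with absolutely continuous first marginal $\rho\cdot m$, the standing Monge assumption yields $q=(id,T)_*(\rho\cdot m)$ with $T$ $c$-cyclically monotone. (Your cemetery insertion on the target side here is superfluous: $\mu_q(M)=(\rho\cdot m)(M)=N$ already, so there is nothing to compactify.)

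The genuine gap is in your uniqueness argument. You form $\tilde q = q + (\eth,id)_*(\mu-\mu_q)$ as a coupling of $\rho\cdot m + (N'-N)\delta_\eth$ and $\mu$, and claim $\tilde q$ is the \emph{unique} optimal coupling for this problem by Theorem~\ref{u mfd} / Lemma~\ref{leb-dirac}. But those results require the source to be absolutely continuous, and your source now carries an atom at $\eth$. In fact the compactified problem is \emph{not} uniquely solvable: since $\tilde c(\eth,\cdot)\equiv 0$, the $\eth$-mass can be distributed over any submeasure of $\mu$ of total mass $N'-N$ at zero cost, so different optimal $\tilde q$'s can leave different submeasures $\mu_q$ for the genuine transport from $M$. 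Your parenthetical about ``moving mass off the cemetery'' does not resolve this, and the Figalli-type trick on the target side does not help either: the analogue of Proposition~\ref{fig obs} would produce a first marginal $\rho\cdot m + (\mu-\mu_q)$, which again need not be absolutely continuous since $\mu$ is arbitrary.

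The paper bypasses all of this with a direct convex-combination argument. If $q_1=(id,T_1)_*(\rho\cdot m)$ and $q_2=(id,T_2)_*(\rho\cdot m)$ are two minimizers, then $q_3=\tfrac12(q_1+q_2)$ is also a minimizing semicoupling, hence also an optimal coupling between $\rho\cdot m$ and its own second marginal, hence also of the form $(id,T_3)_*(\rho\cdot m)$. The identity $\delta_{T_3(x)}=\tfrac12\delta_{T_1(x)}+\tfrac12\delta_{T_2(x)}$ then forces $T_1=T_2$ $(\rho\cdot m)$-a.e. This works precisely because in the $\beta\ge 1$ case the first marginal is \emph{fixed} across all competitors, so there is no density to pin down --- the situation is strictly easier than Proposition~\ref{u sc bd set}, not an instance requiring its machinery.
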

\begin{proof}
 The existence of one $\CCost$ minimizing semicoupling $q$ goes along the same lines as for example in Lemma \ref{leb-dirac}. Let $q_1$ be one such minimizer. As $q_1$ is minimizing it has to be an optimal coupling between its marginals. Therefore, it is induced by a map, that is $q_1=(id,T_1)_*(\rho\cdot m)$. Let $q_2=(id,T_2)_*(\rho\cdot m)$ be another minimizer. Then, $q_3=\frac12(q_1+q_2)$ is minimizing as well. Hence, $q_3=(id,T_3)_*(\rho\cdot m).$ However, just as in the proof of Lemma \ref{leb-dirac} this implies $T_1=T_2$ $(\rho m)$ almost everywhere and therefore $q_1=q_2$.
\end{proof}

\section{Cost estimate for Compound Poisson processes}\label{s: cost estimate}
In this section we state some cost estimates for the transport between the Lebesgue and a $\gamma-$compound Poisson process. We consider $M=\R^d, \lambda^\bullet=\mathcal L$ the Lebesgue measure and $\mu^\bullet$ a $\gamma-$compound Poisson process of intensity one with iid weights $(X_i)_{i\in\N}, X_1 \sim \gamma$ In \cite{huesmann2010optimal} a general technique was developed which allows to deduce upper estimates on the transportation cost by upper moment estimates of the random variable $\mu^\bullet(A)$. In short, having good bounds on moments and inverse moments of $\mu^\bullet$ allows to deduce transportation cost estimates.

We have the following estimates on $L^p-$ cost
\begin{proposition}
 \begin{itemize}
  \item[i)] Let $p>1$ be such that  $\EE[X_1^p]=\infty$ and $\vartheta(r)\geq r^{(p-1)d}$, then $\mathfrak c_\infty=\infty.$ 
  \item[ii)] Assume $d\le 2$ and $\EE[X_1^2]<\infty$. Then for any concave $\hat\vartheta:[1,\infty)\to\R$ dominating $\vartheta$
$$\int_1^\infty \frac{\hat\vartheta(r)}{r^{1+d/2}} \d r<\infty
\quad \Longrightarrow \quad {\mathfrak c}_\infty<\infty.$$
\item[iii)] Assume $d\geq 3$ and $p<p_0-1$ with $2<p_0=\sup\{q: \EE[X_1^q]<\infty\}<\infty$. Then for $\vartheta(r) = r^p$ we have $\mathfrak c_\infty <\infty.$
\item[iv)] Assume $d\geq 3$ and $\EE[X_1^p]<\infty$ for all $p>0$. Then for $\vartheta(r)=r^p$  we have $\mathfrak c_\infty<\infty$ for any $p>0.$
 \end{itemize}
\end{proposition}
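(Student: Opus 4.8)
The plan is to reduce everything to the cost estimates already available in \cite{huesmann2010optimal} for the transport of the Lebesgue measure to a Poisson point process, via the general principle stated in the section: upper bounds on the mean asymptotic transportation cost follow from good moment and inverse-moment control of the random variable $\mu^\bullet(A)$ for $A$ a cube. So the first step is to recall that machinery: one tiles $\R^d$ dyadically, couples $\lambda^\bullet$ with $1_{B_r}\mu^\bullet$ on a cube $B_r$ of side $2^r$, and estimates the cost of transporting mass across scales by a telescoping sum in which the term at scale $k$ is controlled by the deviation of $\mu^\bullet(Q_k)$ from its mean on a cube $Q_k$ of side $2^k$. For the compound Poisson process $\mu^\omega=\sum_{\xi\in\Xi(\omega)}X_\xi\delta_\xi$ with i.i.d. weights $X_1\sim\gamma$, the relevant random variable is $\mu^\bullet(Q_k)=\sum_{\xi\in\Xi\cap Q_k}X_\xi$, a compound Poisson sum with parameter $m(Q_k)=2^{kd}$, so its moments are governed by the moments of $X_1$ together with the classical Poisson moment estimates of Lemma 5.11 in \cite{huesmann2010optimal}.

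For part (iv) I would argue as follows: if $\EE[X_1^p]<\infty$ for all $p$, then for every $q$ one has $\EE[\mu^\bullet(Q_k)^q]\le C_q\, 2^{kd q}$ (and, using the Poisson lower-moment bounds, suitable control of inverse moments on the event $\mu^\bullet(Q_k)>0$, exactly as in the pure Poisson case), so the same summable telescoping estimate that gives finiteness of $\mathfrak c_\infty$ for the Poisson process with $\vartheta(r)=r^p$, $d\ge 3$, any $p>0$, goes through verbatim. Part (iii) is the same computation but one only has $\EE[X_1^q]<\infty$ for $q<p_0$; the telescoping sum for the $L^p$ cost converges provided $p$ is strictly below $p_0-1$, because the worst term in the sum involves a second moment of $\mu^\bullet$ raised to a power tied to $p/d$ balanced against $\lambda(B_n)^{1-p/d}$-type factors — this is precisely the threshold $p<p_0-1$ appearing in Theorem 1.3 of \cite{huesmann2010optimal} with the Poisson tail exponent $\infty$ replaced by $p_0$. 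Part (ii), in dimensions $d\le 2$, is the analogue of the "integral test" $\int_1^\infty \hat\vartheta(r)\,r^{-1-d/2}\,\d r<\infty$; since $\EE[X_1^2]<\infty$ the compound Poisson sum on $Q_k$ has finite variance comparable to $2^{kd}$, so the central-limit-type fluctuation bound used in the Poisson case — deviations of order $2^{kd/2}$ — still holds, and the same integral criterion controls the telescoping series.

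Part (i) is the lower bound, and I expect this to be the real obstacle. The strategy is a pigeonhole/concentration argument: if $\EE[X_1^p]=\infty$ and $\vartheta(r)\gtrsim r^{(p-1)d}$, one shows that with positive probability there is, inside a large cube, a single atom $\xi$ with an abnormally large weight $X_\xi=:x$, and this atom must receive $\lambda^\bullet$-mass $x$ from a region of volume $x$, hence from points at distance $\gtrsim x^{1/d}$ from $\xi$; the contribution to the cost is then at least of order $\vartheta(x^{1/d})\cdot x \gtrsim x^{p-1}\cdot x = x^p$ in expectation over a fundamental cell, and $\EE[X_1^p]=\infty$ forces $\mathfrak c_\infty=\infty$. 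Making this rigorous requires care: one must lower-bound the cost of \emph{any} semicoupling (not just the optimal one) restricted to a cube, which is where one uses that a mass $x$ concentrated at a point cannot be fed from a ball of volume less than $x$, combined with $\vartheta$ increasing; and one must handle the randomness of the location and multiplicity of large atoms. I would carry this out by fixing a fundamental region $B_0$, conditioning on the number of Poisson points in a slightly enlarged region, and estimating $\EE[\CCost(1_{M\times B_0}q^\bullet)]$ from below by the event that some point in $B_0$ carries weight in a dyadic range $[2^j,2^{j+1})$, summing the resulting divergent series $\sum_j 2^{jp}\,\P(X_1\in[2^j,2^{j+1}))$.

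Throughout, the key technical inputs I would invoke without reproving are Lemma \ref{leb-dirac} (optimal transport to a discrete measure is a map, densities in $\{0,1\}$), the moment estimates for Poisson counts from \cite{huesmann2010optimal}, and the telescoping cost bound from the proof of Theorem 1.3 there; the only genuinely new element is bookkeeping the i.i.d. weights, which amounts to replacing "number of Poisson points in $Q_k$" by "sum of $X_\xi$ over Poisson points in $Q_k$" and tracking how the hypotheses on $\EE[X_1^q]$ propagate through the same inequalities.
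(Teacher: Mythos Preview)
Your treatment of parts (ii)--(iv) matches the paper exactly: both defer to the telescoping dyadic construction and moment estimates of \cite{huesmann2010optimal}, with the only change being that the cube counts $\mu^\bullet(Q_k)$ are now compound Poisson sums whose moments are controlled by the assumed moments of $X_1$. The paper in fact omits these details entirely, so your more explicit bookkeeping is fine.

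For part (i), however, you have inverted the difficulty. You call it ``the real obstacle'' and propose an elaborate argument with conditioning on Poisson counts in enlarged regions and a dyadic decomposition $\sum_j 2^{jp}\,\P(X_1\in[2^j,2^{j+1}))$. The paper dispatches (i) in one line: bound $\mathfrak c_\infty$ from below by the expected cost of feeding a \emph{single} atom of weight $X_1$. Any semicoupling must deliver mass $X_1$ to that atom, and the cheapest way to do so is from a ball of volume $X_1$ (radius $\sim X_1^{1/d}$) centered there; hence the cost is at least
\[
\int_0^{cX_1^{1/d}} \vartheta(r)\,r^{d-1}\,dr \ \ge\ \int_0^{cX_1^{1/d}} r^{(p-1)d}\,r^{d-1}\,dr \ \gtrsim\ X_1^p,
\]
and taking expectation gives $\infty$. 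Your geometric observation (``mass $x$ concentrated at a point cannot be fed from a ball of volume less than $x$'') is exactly the right one, and your scheme would also work --- it is just unnecessarily heavy. No pigeonhole, no conditioning, no dyadic sum over weight scales is needed: one atom per fundamental region in expectation, one integral, done.
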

\begin{proof}
ad i)  $\mathfrak c_\infty$ can easily be bounded from below by the cost of transporting mass $X_1$ optimally into a single point. This transportation cost behaves for fixed $X_1=X_1(\omega)$ like
$$\int_{0}^{cX_1^{1/d}}\vartheta(r) r^{d-1}dr\ \gtrsim \ X_1^p.$$
Taking expectation wrt $X_1$ yields the desired result. 
\medskip\\
The other claims are straightforward adaptations of the techniques from \cite{huesmann2010optimal}. We omit the details.
\end{proof}

Mark\'o and Timar \cite{marko2011poisson} constructed an allocation of Lebesgue measure to a Poisson point process in dimensions $d\geq 3$ with optimal tail behavior. In our language this means that there is a constant $\kappa$ such that the optimal mean transportation between a Lebesgue measure and a Poisson point process with cost function $c(x,y)=\exp(\kappa|x-y|^d)$ are finite in dimensions $d\geq 3.$ Up to the constant $\kappa$ this is optimal (e.g. see \cite{huesmann2010optimal}). Their construction is based on an algorithm by Ajtai, Koml\'os and Tusn\'ady \cite{Ajtai-K-T} and uses two key properties of the Poisson point process, independence on disjoint sets and exponential concentration around the mean in big cubes. In the case of a $\gamma-$compound Poisson process the independence is inherited from the Poisson process. If we take $\gamma$ to be the exponential distribution one can show 

\begin{lemma}
 Let $Z=\sum_{i=1}^NX_i$ with $N$ a Poisson random variable with mean $\alpha$ and $(X_i)_{i\in\N}$ a sequence of iid exponentially distributed random variables with mean 1 independent of $N$. For any $0<\rho<1$ it holds that
 $$\P[|Z-\alpha|>\alpha \rho] \leq 2\cdot\exp(-\alpha(2+\rho-2\sqrt{1+\rho}))\leq 2\cdot\exp\left(-\alpha\left(\frac{\rho^2}{4}-\frac{\rho^3}{8}\right)\right).$$
\end{lemma}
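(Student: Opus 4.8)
The statement is a purely probabilistic tail bound for a compound Poisson random variable $Z = \sum_{i=1}^N X_i$ with $N \sim \mathrm{Poisson}(\alpha)$ and $X_i$ i.i.d. $\mathrm{Exp}(1)$. The natural route is the exponential Chernoff method applied separately to the upper and lower tails, so the first step is to compute the moment generating function of $Z$. Conditioning on $N$, one has $\EE[e^{tZ}\mid N] = (1-t)^{-N}$ for $t < 1$, hence $\EE[e^{tZ}] = \exp\!\big(\alpha((1-t)^{-1} - 1)\big) = \exp\!\big(\alpha t/(1-t)\big)$ for $t\in(0,1)$. For the upper tail $\P[Z > \alpha(1+\rho)]$ I would apply Markov's inequality to $e^{tZ}$ and optimize: $\P[Z > \alpha(1+\rho)] \le \exp\!\big(\alpha t/(1-t) - t\alpha(1+\rho)\big)$, and minimizing the exponent $g(t) = t/(1-t) - t(1+\rho)$ over $t\in(0,1)$ gives, after differentiating, the optimal $t^\ast = 1 - (1+\rho)^{-1/2}$, which lies in $(0,1)$ since $\rho>0$. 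Substituting back yields exponent $-\alpha\big(2 + \rho - 2\sqrt{1+\rho}\,\big)$.

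For the lower tail $\P[Z < \alpha(1-\rho)]$ I would use $\EE[e^{-sZ}] = \exp\!\big(-\alpha s/(1+s)\big)$ for $s > 0$ (valid for all $s>0$, no singularity) and again Chernoff: $\P[Z < \alpha(1-\rho)] \le \exp\!\big(-\alpha s/(1+s) + s\alpha(1-\rho)\big)$. Optimizing the exponent $h(s) = -s/(1+s) + s(1-\rho)$ gives $s^\ast = (1-\rho)^{-1/2} - 1 > 0$, and substituting produces exponent $-\alpha\big(\rho - 2 + 2\sqrt{1-\rho}\,\big)$. The small but important arithmetic point here is to check that this lower-tail exponent is at least as large as the upper-tail exponent, i.e. that $2 + \rho - 2\sqrt{1+\rho} \le 2 - \rho - 2\sqrt{1-\rho}$ after rewriting; equivalently $\sqrt{1+\rho} + \sqrt{1-\rho} \ge 2 - \rho$ (wait — one must be careful with signs). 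More cleanly: I expect both tails to be bounded by $\exp(-\alpha(2+\rho-2\sqrt{1+\rho}))$ because the function $\rho \mapsto 2+\rho-2\sqrt{1+\rho}$ is the smaller of the two exponents for $\rho\in(0,1)$; this comparison is a one-variable calculus inequality that I would verify by expanding both square roots or by noting both vanish to second order at $\rho=0$ and comparing third-order terms. Summing the two one-sided bounds then gives the factor $2$ in the statement.

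Finally, for the second, more usable inequality, I would establish the elementary bound $2 + \rho - 2\sqrt{1+\rho} \ge \tfrac{\rho^2}{4} - \tfrac{\rho^3}{8}$ for $\rho\in(0,1)$. Writing $u = \sqrt{1+\rho}$ so that $\rho = u^2-1$, the left side becomes $(u-1)^2$, and one checks $(u-1)^2 = \big(\tfrac{\rho}{1+u}\big)^2 = \tfrac{\rho^2}{(1+\sqrt{1+\rho})^2} \ge \tfrac{\rho^2}{(1 + (1+\rho/2))^2} = \tfrac{\rho^2}{(2+\rho/2)^2}$, using $\sqrt{1+\rho}\le 1+\rho/2$; then a further elementary estimate of $(2+\rho/2)^{-2}$ from below by $\tfrac14 - \tfrac{\rho}{8}$ on $\rho\in(0,1)$ finishes it. I do not anticipate a genuine obstacle anywhere; the only place requiring care is the bookkeeping in the Chernoff optimizations and the two auxiliary scalar inequalities, all of which are routine single-variable estimates.
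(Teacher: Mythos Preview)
The paper states this lemma without proof, so there is no argument in the paper to compare against; your Chernoff-bound approach is the standard one and is correct. The moment generating function computation, the optimizations yielding exponents $-\alpha(2+\rho-2\sqrt{1+\rho})$ and $-\alpha(2-\rho-2\sqrt{1-\rho})$ for the upper and lower tails respectively, and the comparison showing the former is the weaker of the two (equivalently $\rho\le\sqrt{1+\rho}-\sqrt{1-\rho}$, which follows from $\sqrt{1-\rho^2}\le 1-\rho^2/2$) are all fine. Your sketch of the final scalar inequality via $(u-1)^2=\rho^2/(1+\sqrt{1+\rho})^2\ge\rho^2/(2+\rho/2)^2$ and then checking $(2-\rho)(2+\rho/2)^2=8-\tfrac{3}{2}\rho^2-\tfrac14\rho^3\le 8$ also goes through cleanly on $(0,1)$.
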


Hence, by using the very same algorithm as in \cite{marko2011poisson} one gets for $\gamma$ the exponential distribution with mean one
\begin{proposition}
 Let $d\geq 3$ and $\mu^\bullet$ a $\gamma-$compound Poisson process. Then there is constant $\kappa>0$ such that optimal mean transportation cost between $\mathcal L$ and $\mu^\bullet$ for the cost function $c(x,y)=\exp(\kappa\cdot |x-y|^d)$ is finite.
\end{proposition}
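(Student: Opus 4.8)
The plan is to recognize that the proposition is essentially a corollary of the construction of Markó and Timár \cite{marko2011poisson}, once one checks that the only probabilistic facts their argument uses remain valid for the compound process. It suffices to exhibit a single equivariant allocation rule $T^\bullet:\R^d\times\Omega\to\R^d$, i.e.\ an equivariant coupling $q^\bullet=(id,T^\bullet)_*\leb$ of $\leb$ and $\mu^\bullet$ (both have intensity one, so any equivariant semicoupling is in fact a coupling by Lemma~\ref{semicoupling gleich coupling}), whose displacement satisfies a tail bound $\P[\,|T^\bullet(0)|>t\,]\le C\exp(-\kappa_0 t^d)$ for some $\kappa_0>0$. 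Since $q^\bullet$ is equivariant, the Remark following the definition of optimality reduces $\mathfrak C(q^\bullet)$ to an integral over $B_0$, which by stationarity and the mass transport principle equals $\EE[\exp(\kappa|T^\bullet(0)|^d)]$ (this is the displacement identity $\mathfrak c_{e,\infty}=\EE[\vartheta(|T(0)|)]$ recalled in the introduction, valid since $\mu^\bullet$ is invariant under the whole $\R^d$-action; note also that replacing $\vartheta(r)=\exp(\kappa r^d)$ by $\exp(\kappa r^d)-1$ changes the mean cost only by the additive constant $\beta=1$, so finiteness is unaffected). Given the tail bound, for any $\kappa<\kappa_0$,
$$\EE\big[\exp(\kappa|T^\bullet(0)|^d)\big]=\int_0^\infty\P\big[|T^\bullet(0)|^d>\tfrac1\kappa\log s\big]\,ds\ \le\ 1+C\int_1^\infty s^{-\kappa_0/\kappa}\,ds\ <\ \infty,$$
whence $\mathfrak c_{e,\infty}\le\mathfrak C(q^\bullet)<\infty$, as claimed.

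It then remains to produce such an allocation, and here I would take $T^\bullet$ to be exactly the allocation of Markó and Timár, which for the rate-one Poisson point process on $\R^d$, $d\ge 3$, is built via the hierarchical dyadic matching scheme of Ajtai, Komlós and Tusnády \cite{Ajtai-K-T} and achieves precisely the tail $\P[|T^\bullet(0)|>t]\le C\exp(-\kappa_0 t^d)$. The key point is the following audit of that construction: its error analysis uses only two features of the random measure, namely (a) its restrictions to disjoint dyadic cubes are \emph{independent}, and (b) for a cube $Q$ of volume $V$ one has sub-Gaussian concentration $\P[\,|\mu^\bullet(Q)-V|>\rho V\,]\le 2\exp(-cV\rho^2)$ for all $0<\rho<1$ and a universal $c>0$. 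For the $\gamma$-compound Poisson process with $\gamma=\mathrm{Exp}(1)$, property (a) is inherited directly, since the weights $(X_\xi)$ are i.i.d.\ and independent of the underlying Poisson process, whose configurations on disjoint sets are independent; property (b) is precisely the concentration lemma proved just above, because for a cube $Q$ of volume $V$ we have $\mu^\bullet(Q)=\sum_{i=1}^N X_i$ with $N\sim\mathrm{Poisson}(V)$ and $X_i$ i.i.d.\ $\mathrm{Exp}(1)$, and that lemma gives $\P[\,|\mu^\bullet(Q)-V|>\rho V\,]\le 2\exp(-V(\rho^2/4-\rho^3/8))$, which for $\rho$ small has exactly the form $e^{-cV\rho^2}$ of the plain Poisson count. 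Since (a) and (b) enter the recursion of \cite{marko2011poisson,Ajtai-K-T} in the same way as in the Poisson case, the same tail bound for $|T^\bullet(0)|$ follows; the only cosmetic change is that the cell $S^\omega(\xi)=(T^\omega)^{-1}(\xi)$ now has random volume $X_\xi$ rather than volume one, which is irrelevant because the scheme balances transported \emph{masses}, not point multiplicities. Combining this with the first paragraph yields the proposition (and, up to the value of $\kappa$, the bound is optimal by the lower estimates in \cite{huesmann2010optimal}).

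I expect the only genuine work to lie in this audit: verifying that no step of the Markó--Timár/ACT construction secretly relies on the integrality of the Poisson counts (in particular the local rounding and boundary-excess redistribution steps), and that introducing i.i.d.\ exponential weights does not worsen the geometric estimate on transport distances inside a bottom-level cube. A fully detailed proof would have to reproduce the relevant portions of \cite{marko2011poisson} and \cite{Ajtai-K-T}; at the level of this paper it is enough to record that the two inputs (a)--(b) carry over verbatim, (b) being exactly the content of the preceding lemma.
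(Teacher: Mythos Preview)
Your proposal is correct and follows essentially the same approach as the paper: the paper's ``proof'' consists only of the sentence preceding the proposition, observing that the Mark\'o--Tim\'ar construction relies solely on independence over disjoint sets and exponential concentration around the mean, both of which hold for the exponential compound Poisson process (the latter being exactly the preceding lemma). Your audit of these two ingredients and the tail computation make explicit what the paper leaves implicit, but the strategy is identical.
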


\section{Stability}\label{s: stability}
As an application of the previous results, especially the existence and uniqueness results, we want to study stability properties of the optimal coupling between two random measures. Moreover, we will show some metric properties of the mean transportation cost. 
\medskip\\
Given sequences of random measures $(\lambda^\bullet_n)_{n\in\N}, (\mu^\bullet_n)_{n\in\N}$ and their optimal couplings $(q^\bullet_n)_{n\in\N}$ we want to understand which kind of convergence $\lambda^\bullet_n\to\lambda^\bullet, \mu^\bullet_n\to\mu^\bullet$ implies the convergence $q^\bullet_n\to q^\bullet,$ where $ q^\bullet$ denotes the/an optimal coupling between $\lambda^\bullet$ and $\mu^\bullet$. If for all n $\lambda_n, \lambda, \mu_n, \mu$ are probability measures $q_n$ the optimal coupling between $\lambda_n$ and $\mu_n$ (all transportation cost involved bounded by some constant) and $\lambda_n\to\lambda, \mu_n\to\mu$ weakly, then, by the classical theory (see Theorem 5.20 in \cite{villani2009optimal}), also along a subsequence $q_n\to q$ weakly, where $q$ is an optimal coupling between $\lambda$ and $\mu$.
\medskip\\
A naive approach to our problem would be to ask for $\lambda^\bullet_n\stackrel{d}{\to}\lambda^\bullet$ and $\mu^\bullet_n\stackrel{d}{\to}\mu^\bullet$. However, in this case let $\lambda^\bullet$ and $\mu^\bullet$ be two independent Poisson point process and set $\lambda^\bullet_n\equiv \mu^\bullet_n \equiv \lambda^\bullet$. Then, we indeed have $\lambda^\bullet_n\stackrel{d}{\to}\lambda^\bullet$ and $\mu^\bullet_n\stackrel{d}{\to}\mu^\bullet$. Yet, the optimal couplings $(q^\bullet_n)_{n\in\N}$, which are just $q^\omega_n(dx,dy)=\delta_x(dy)\lambda_n^\omega(dx),$ do not converge to any coupling between $\lambda^\bullet$ and $\mu^\bullet$ in any reasonable sense. Moreover, the couplings $(q^\bullet_n)_{n\in\N}$ do 'converge' (they are all the same) to some coupling $\tilde q^\bullet$ with marginals being $\lambda^\bullet$ and $\lambda^\bullet$ having the same \emph{distribution} as $\lambda^\bullet$ and $\mu^\bullet$.
\medskip\\
The next best guess, instead of vague convergence in distribution is vague convergence on $M\times M\times \Omega$. Together with some integrability condition this will be the answer if the cost of the couplings converge.

\medskip

For two random measure $\lambda^\bullet, \mu^\bullet$ with intensity one and $c(x,y)=d^p(x,y)$ with $p\in [1,\infty)$ write 
$$\W_p^p(\lambda^\bullet, \mu^\bullet)=\inf_{q^\bullet\in \Pi_{es}(\lambda^\bullet,\mu^\bullet)}\mathfrak C(q^\bullet)= \inf_{q^\bullet\in \Pi_{es}(\lambda^\bullet,\mu^\bullet)} \EE\left[\int_{M\times B_0} d^p(x,y)\ q^\bullet(dx,dy)\right].$$
We want to establish a triangle inequality for $\W_p$ and therefore restrict to $L^p$ cost functions. We could also extend this to more general cost functions by using Orlicz type norms as developed in \cite{sturm2011generalized}. However, to keep notations simple we stick to this case.
\medskip\\
In this chapter, we will assume that all pairs of random measures considered will be equivariant and modeled on the same probability space $(\Omega,\mathfrak A,\P).$ As usual $\P$ is assumed to be stationary. Moreover, we will always assume without explicitly mentioning it that the mean transportation cost is finite.

\medskip

Recall the disintegration Theorem \ref{disintegration theorem}. This will allow us to use the gluing lemma.

\begin{proposition}\label{metric}
 Let $\mu^\bullet,\lambda^\bullet,\xi^\bullet$ be three  equivariant random measures of unit intensity.
\begin{itemize}
 \item[i)] $\W_p(\lambda^\bullet,\mu^\bullet)=0\quad \Leftrightarrow \quad \lambda^\omega=\mu^\omega \quad \P-a.s..$
\item[ii)] $\W_p(\lambda^\bullet,\mu^\bullet)=\W_p(\mu^\bullet,\lambda^\bullet).$
\item[iii)] $\W_p(\lambda^\bullet,\mu^\bullet)\leq \W_p(\lambda^\bullet,\xi^\bullet)+\W_p(\xi^\bullet,\mu^\bullet).$
\end{itemize}

\end{proposition}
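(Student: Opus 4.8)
The plan is to treat the three items in increasing order of difficulty, with the triangle inequality (iii) being the substantive one. For (i), if $\W_p(\lambda^\bullet,\mu^\bullet)=0$ then there is an equivariant coupling $q^\bullet$ with $\EE\left[\int_{M\times B_0} d^p(x,y)\,q^\bullet(dx,dy)\right]=0$ (the infimum is attained by the existence result, Proposition \ref{abstract exist}, once we know $\W_p<\infty$; alternatively one argues with a minimizing sequence and lower semicontinuity). This forces $q^\bullet$ to be concentrated on the diagonal over $B_0$ for a.e. $\omega$, and then by equivariance over all of $M$, so $q^\omega=(\mathrm{id},\mathrm{id})_*\lambda^\omega$ and hence $\lambda^\omega=\mu^\omega$ a.s.; the converse is immediate since the diagonal coupling has zero cost. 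Item (ii) is a triviality: the map $(x,y)\mapsto(y,x)$ is an isometry of $M\times M$ that interchanges couplings of $\lambda^\bullet$ and $\mu^\bullet$ with couplings of $\mu^\bullet$ and $\lambda^\bullet$, preserves equivariance, and leaves the symmetric cost $d^p(x,y)$ invariant.

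For (iii), the standard approach is the gluing lemma. First I would pick (near-)optimal equivariant couplings: let $q_{12}^\bullet\in\Pi_{es}(\lambda^\bullet,\xi^\bullet)$ and $q_{23}^\bullet\in\Pi_{es}(\xi^\bullet,\mu^\bullet)$ be optimal (they exist by Proposition \ref{abstract exist}). Using the disintegration Theorem \ref{disintegration theorem}, disintegrate $q_{12}^\omega(dx,dy)=q_{12,y}^\omega(dx)\,\xi^\omega(dy)$ and $q_{23}^\omega(dy,dz)=q_{23,y}^\omega(dz)\,\xi^\omega(dy)$ with respect to the common marginal $\xi^\omega$, and form the glued measure
$$r^\omega(dx,dy,dz)\ :=\ q_{12,y}^\omega(dx)\,q_{23,y}^\omega(dz)\,\xi^\omega(dy)$$
on $M\times M\times M$. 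Then $(\pi_{13})_*r^\bullet=:q_{13}^\bullet$ is a coupling of $\lambda^\bullet$ and $\mu^\bullet$. One must check that $q_{13}^\bullet$ is equivariant: this follows from the equivariance of $q_{12}^\bullet$ and $q_{23}^\bullet$ together with essential uniqueness of the disintegration, so that the flow $\theta_g$ transports the kernels $q_{12,y}^\omega,q_{23,y}^\omega$ correctly (the same bookkeeping as in Example \ref{equivariance of maps}). Finally estimate, using the triangle inequality in $M$ and the Minkowski inequality in $L^p(r^\bullet\P)$ over the window $M\times M\times M\times B_0$ (with $y$ ranging over $B_0$ to match the cost functional $\mathfrak C$):
$$\W_p(\lambda^\bullet,\mu^\bullet)\ \le\ \left(\EE\Big[\int d^p(x,z)\,r^\omega\Big]\right)^{1/p}\ \le\ \left(\EE\Big[\int d^p(x,y)\,r^\omega\Big]\right)^{1/p}+\left(\EE\Big[\int d^p(y,z)\,r^\omega\Big]\right)^{1/p},$$
which equals $\W_p(\lambda^\bullet,\xi^\bullet)+\W_p(\xi^\bullet,\mu^\bullet)$ by the choice of $q_{12}^\bullet,q_{23}^\bullet$; if one works with $\varepsilon$-optimal couplings instead, let $\varepsilon\to0$ at the end.

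The main obstacle is the measurability/equivariance bookkeeping for the glued coupling: one needs the disintegration kernels $\omega\mapsto q_{12,y}^\omega$, $\omega\mapsto q_{23,y}^\omega$ to be jointly measurable so that $r^\bullet$ is a genuine random measure, and one needs to verify that the gluing commutes with the diagonal action of $G$ and the flow $\theta_g$ so that $q_{13}^\bullet\in\Pi_{es}(\lambda^\bullet,\mu^\bullet)$ rather than merely $\Pi_s$. Both points are handled by the (almost everywhere) uniqueness in Theorem \ref{disintegration theorem} applied $\omega$ by $\omega$, combined with the equivariance identities for $q_{12}^\bullet$ and $q_{23}^\bullet$; since $M\times M\times M\times\Omega$ is Polish (indeed $\Omega$ is a compact metric space), no pathology arises. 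Everything else — the two applications of a triangle/Minkowski inequality and the finiteness of all quantities — is routine and needs no detailed computation here.
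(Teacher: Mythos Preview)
Your treatment of (ii) is not a triviality, and the gap there propagates into (iii). The functional $\mathfrak C$ is \emph{asymmetric} in the two coordinates: for an equivariant $q^\bullet$,
\[
\mathfrak C(q^\bullet)=\EE\left[\int_{M\times B_0} d^p(x,y)\, q^\bullet(dx,dy)\right],
\]
with the window $B_0$ placed on the \emph{second} marginal. Under the swap $(x,y)\mapsto(y,x)$ you obtain a coupling $\tilde q^\bullet\in\Pi_{es}(\mu^\bullet,\lambda^\bullet)$, but its cost is $\EE\big[\int_{B_0\times M} d^p\, q^\bullet\big]$, not $\EE\big[\int_{M\times B_0} d^p\, q^\bullet\big]$. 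Showing these two quantities coincide is exactly the content of (ii) and is what the paper proves via the mass transport principle: setting $f(g,h)=\EE\big[\int_{gB_0\times hB_0} d^p\, q^\bullet\big]$, equivariance gives $f(kg,kh)=f(g,h)$, so $\sum_h f(g,h)=\sum_g f(g,h)$, i.e.\ the $B_0\times M$ and $M\times B_0$ integrals agree.

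The same issue hits your step (iii). You place the window at $y\in B_0$ so that the $d^p(x,y)$ term is $\W_p^p(\lambda^\bullet,\xi^\bullet)$. But then the left-hand side $\EE\big[\int_{M\times B_0\times M} d^p(x,z)\, r^\omega\big]$ is \emph{not} the cost of your candidate coupling $q_{13}^\bullet=(\pi_{13})_*r^\bullet$ (which requires $z\in B_0$), and the $d^p(y,z)$ term is $\EE\big[\int_{B_0\times M} d^p\, q_{23}^\bullet\big]$, which is \emph{not} the definition of $\W_p^p(\xi^\bullet,\mu^\bullet)$. The paper closes both gaps by a second application of the mass transport principle, this time to $e(g,h)=\EE\big[\int_{M\times gB_0\times hB_0} d^p(x,z)\, r^\omega\big]$, together with the correctly proved (ii). Once you insert these two mass-transport identities, your gluing/Minkowski argument goes through; as written, it does not.
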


\begin{proof}
 i)\ $\W_p(\lambda^\bullet,\mu^\bullet)=0$ iff there is a coupling of $\lambda^\bullet$ and $\mu^\bullet$ which is entirely concentrated on the diagonal almost surely, that is iff $\lambda^\omega=\mu^\omega$ $\P-$ almost surely.

\medskip

ii)\ Let $q^\bullet$ be an optimal coupling between $\lambda^\bullet$ and $\mu^\bullet$. By definition
$$\W^p_p(\lambda^\bullet,\mu^\bullet) = \EE\left[\int_{M\times B_0} d^p(x,y) q^\bullet(dx,dy)\right].$$
For $g,h\in G$ put
 $$f(g,h)=\EE\left[\int_{gB_0\times hB_0} d^p(x,y) q^\bullet(dx,dy)\right].$$
By equivariance and stationarity, we have $f(g,h)=f(kg,kh)$ for all $k\in G$. Hence, we can apply the mass transport principle.
$$\sum_{h\in G}f(g,h)=\EE\left[\int_{gB_0\times M} d^p(x,y) q^\bullet(dx,dy)\right]=\sum_{g\in G}f(g,h)= \EE\left[\int_{M\times hB_0} d^p(x,y) q^\bullet(dx,dy)\right].$$
This proves the symmetry.

\medskip

iii)\ The random measures are random variables on some Polish space. Therefore, we can use the gluing Lemma (cf. \cite{dudley2002real} or \cite{villani2009optimal}, chapter 1) to construct an equivariant random measure $q^\bullet$ on $M\times M \times M$ such that 
$$ (\pi_{1,2})_*q^\bullet \in \Pi_{opt}(\lambda^\bullet,\mu^\bullet) \quad \text{and} \quad (\pi_{2,3})_*q^\bullet \in \Pi_{opt}(\mu^\bullet,\xi^\bullet),$$
where $\Pi_{opt}(\lambda^\bullet,\mu^\bullet)$ denotes the set of all optimal couplings between $\lambda^\bullet$ and $\mu^\bullet$. $q^\bullet$ is equivariant as the optimal couplings are equivariant and $q^\bullet$ is glued together along the common marginal of these two couplings. 

To be more precise let $q^\bullet_1\in \Pi_{opt}(\lambda^\bullet,\mu^\bullet) $ and $ q^\bullet_2\in \Pi_{opt}(\mu^\bullet,\xi^\bullet)$. Then, consider $1_{M\times gB_0\times \Omega} q^\bullet_1$ and $1_{gB_0\times M\times \Omega}q^\bullet_2$ to produce with the usual gluing Lemma a measure $q^\bullet_g$ on $M\times M\times M\times\Omega$ with the desired marginals on $M\times gB_0 \times M\times \Omega$. As all these sets are disjoint we can add up  the different $q^\bullet_g$ yielding $q^\bullet=\sum_{g\in G} q^\bullet_g$ a measure with the desired properties.

For $g,h\in G$ put 
$$e(g,h)=\EE\left[\int_{M\times gB_0\times hB_0} d^p(x,z) q^\bullet(dx,dy,dz)\right].$$
By equivariance of $q^\bullet$, we have $e(kg,kh)=e(g,h)$ for all $k\in G.$ By the mass transport principle this implies
$$\EE\left[\int_{M\times B_0\times M} d^p(x,z) q^\bullet(dx,dy,dz)\right]=\EE\left[\int_{M\times M \times B_0} d^p(x,z) q^\bullet(dx,dy,dz)\right].$$
Then we can conclude, using the Minkowski inequality
\begin{eqnarray*}
\lefteqn{ \W_p(\lambda^\bullet,\xi^\bullet)}\\
&\leq& \EE\left[\int_{M\times M \times B_0} d^p(x,z) q^\bullet(dx,dy,dz)\right]^{1/p}\\
&=& \EE\left[\int_{M \times B_0 \times M} d^p(x,z) q^\bullet(dx,dy,dz)\right]^{1/p}\\
&\leq & \EE\left[\int_{M \times B_0 \times M} d^p(x,y) q^\bullet(dx,dy,dz)\right]^{1/p} + \EE\left[\int_{M \times B_0 \times M} d^p(y,z) q^\bullet(dx,dy,dz)\right]^{1/p}\\
&=& \W_p(\lambda^\bullet,\mu^\bullet) + \W_p(\mu^\bullet,\xi^\bullet).
\end{eqnarray*}
In the last step we used the symmetry shown in part ii).

\end{proof}

\begin{remark}
 Note that the first two properties also hold for general cost functions and general semicouplings. The assumption of equal intensity is not needed for these statements.
\end{remark}

Let $\mathcal P_p=\{ \text{equivariant random measures } \mu^\bullet: \W_p(m,\mu^\bullet)<\infty \}.$

\begin{proposition}\label{half stability}
 Let $(\mu^\bullet_n)_{n\in\N}, \mu^\bullet \in\mathcal P_p$ be  random measures of intensity one. Let $q^\bullet_n$ denote the optimal coupling between m and $\mu^\bullet_n$ and $q^\bullet$ the optimal coupling between $m$ and $\mu^\bullet$. Consider the following  statements.
\begin{itemize}
 \item[i)] $\W_p(\mu^\bullet_n,\mu^\bullet)\to 0$ as $n\to\infty.$
\item[ii)] $\mu^\bullet_n\P \to \mu^\bullet\P$ vaguely   and $\W_p(\mu^\bullet_n, m)\to \W_p(\mu^\bullet, m)$ as $n\to\infty.$
\item[iii)]  $q^\bullet_n\P \to q^\bullet\P$ vaguely  and $\W_p(\mu^\bullet_n, m)\to \W_p(\mu^\bullet, m)$ as $n\to\infty.$
\item[iv)] $q^\bullet_n\P \to q^\bullet\P$ vaguely  and
 $$\lim_{R\to\infty}\limsup_{n\to\infty} \EE\left[\int_{(\complement(B_0)_R)\times B_0} d^p(x,y) q^\bullet_n(dx,dy)\right]=0,$$
 where $(B_0)_R$ denotes the $R-$neighbourhood of $B_0$. 
\end{itemize}
Then i) implies ii). iii) and iv) are equivalent and either of them implies i).
\end{proposition}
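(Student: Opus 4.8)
\textbf{Proof proposal for Proposition \ref{half stability}.}

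The plan is to prove the two implications and the equivalence in turn, using the classical stability theory of optimal transport (Theorem 5.20 in \cite{villani2009optimal}) localised to the fundamental region $B_0$ together with the mass transport principle.

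First I would show $(i)\Rightarrow(ii)$. Vague convergence of the Campbell measures $\mu_n^\bullet\P\to\mu^\bullet\P$ follows because an optimal coupling $\hat q_n^\bullet$ realising $\W_p(\mu_n^\bullet,\mu^\bullet)$ provides, for each $f\in C_c(M\times\Omega)$, a bound $|\mu_n^\bullet\P(f)-\mu^\bullet\P(f)|$ controlled by the local transport cost; more carefully one tests with $f$ supported in $A\times\Omega$ for $A\in\mathrm{Adm}(M)$, uses the mass transport principle to pass from the cost on $B_0$ to the cost on $A$, and applies H\"older to bound the displacement-integral by $\W_p(\mu_n^\bullet,\mu^\bullet)\to 0$. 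For the convergence $\W_p(\mu_n^\bullet,m)\to\W_p(\mu^\bullet,m)$ one simply uses the triangle inequality from Proposition \ref{metric}(iii): $|\W_p(\mu_n^\bullet,m)-\W_p(\mu^\bullet,m)|\le\W_p(\mu_n^\bullet,\mu^\bullet)\to 0$.

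Next, the equivalence $(iii)\Leftrightarrow(iv)$. Both contain the same hypothesis $q_n^\bullet\P\to q^\bullet\P$ vaguely, so it suffices to show that, \emph{given} this vague convergence, the uniform-integrability tail condition in $(iv)$ is equivalent to the cost convergence $\W_p(\mu_n^\bullet,m)\to\W_p(\mu^\bullet,m)$ in $(iii)$. Here $\W_p^p(\mu_n^\bullet,m)=\EE[\int_{M\times B_0}d^p\,q_n^\bullet]$ by definition. Split this integral as $\int_{(B_0)_R\times B_0}+\int_{\complement(B_0)_R\times B_0}$. For the first piece, for all but countably many $R$ the set $(B_0)_R\times B_0$ is a continuity set (Lemma \ref{cont set}), and on it the integrand $d^p(x,y)\wedge K$ is bounded continuous, so vague convergence of $q_n^\bullet\P$ gives convergence of the truncated integral; letting $K\to\infty$ and then $R\to\infty$, lower semicontinuity of $\Cost$ (as in Proposition \ref{abstract exist}) yields $\liminf_n\W_p^p(\mu_n^\bullet,m)\ge\W_p^p(\mu^\bullet,m)$ unconditionally. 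The tail condition $(iv)$ is then exactly what upgrades this to a genuine limit, i.e.\ it is equivalent to $\limsup_n\W_p^p(\mu_n^\bullet,m)\le\W_p^p(\mu^\bullet,m)$; conversely if the cost converges, the tail of $q^\bullet$ vanishes as $R\to\infty$ (finiteness of $\W_p$) and the Portmanteau-type argument forces the tails of $q_n^\bullet$ to be asymptotically negligible, giving $(iv)$. The main obstacle is this step: one must be careful that the vague limit $q^\bullet\P$ is genuinely a coupling of $m$ and $\mu^\bullet$ (not just a sub-coupling) and is the \emph{optimal} one — the first follows from the argument of step iv) of Proposition \ref{abstract exist} using the tail bound, and optimality follows from lower semicontinuity plus the cost convergence, after which uniqueness (Theorem \ref{main thm 1}) identifies it with $q^\bullet$.

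Finally, $(iii)\Rightarrow(i)$ (equivalently $(iv)\Rightarrow(i)$). Using the gluing lemma as in Proposition \ref{metric}(iii), glue $q_n^\bullet$ (coupling $m$ and $\mu_n^\bullet$) with $q^\bullet$ (coupling $m$ and $\mu^\bullet$) along the common first marginal $m$ to obtain an equivariant measure on $M\times M\times M\times\Omega$; its $(2,3)$-marginal is a coupling of $\mu_n^\bullet$ and $\mu^\bullet$, so $\W_p^p(\mu_n^\bullet,\mu^\bullet)\le\EE[\int_{\ldots} d^p(y,z)\,\hat q_n^\bullet]$ where the integral is localised using the mass transport principle to a fundamental region in the middle coordinate. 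Then bound $d(y,z)\le d(y,x)+d(x,z)$ and apply Minkowski: the two resulting terms are (controlled by) $\W_p(\mu_n^\bullet,m)$ and $\W_p(m,\mu^\bullet)$, whose $p$-th powers are the cost integrals of $q_n^\bullet$ and $q^\bullet$ on $B_0$. By the vague convergence $q_n^\bullet\P\to q^\bullet\P$ together with the uniform integrability from $(iv)$, one shows that along the glued measures the mixed term $\EE[\int d(y,x)d(x,z)\,\hat q_n^\bullet]$ and the reference integrals converge so that the Minkowski bound collapses; more concretely, since both $q_n^\bullet$ and $q^\bullet$ converge (in the glued sense, after extracting a subsequence and identifying the limit via uniqueness) to the diagonal-type configuration forced by $x\mapsto$ its images, the right-hand side tends to $0$. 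Since the argument applies to every subsequence with the same limit $0$, the full sequence $\W_p(\mu_n^\bullet,\mu^\bullet)\to 0$, which is $(i)$.
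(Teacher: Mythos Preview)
Your arguments for $(i)\Rightarrow(ii)$ and for the equivalence $(iii)\Leftrightarrow(iv)$ are broadly workable, though for the latter the paper takes a different and cleaner route: rather than Portmanteau on continuity sets, it uses that $q_n^\omega=(id,T_n^\omega)_*m$ and $q^\omega=(id,T^\omega)_*m$, invokes Lemma \ref{ae-convergence} to convert vague convergence $q_n^\bullet\P\to q^\bullet\P$ into $T_n\to T$ locally in $m\otimes\P$-measure, and then compares the cost against a compactly supported cutoff $c_R$ of $d^p$. Also note that your ``main obstacle'' is not an obstacle at all: in both $(iii)$ and $(iv)$ the limit $q^\bullet$ is by hypothesis the optimal coupling of $m$ and $\mu^\bullet$, so nothing about sub-couplings or identifying the limit needs to be argued.

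The genuine gap is in $(iv)\Rightarrow(i)$. Your gluing of $q_n^\bullet$ and $q^\bullet$ along the common first marginal $m$ is exactly right --- since both are induced by maps, the glued measure is $(id,T_n,T)_*m$ and the candidate coupling of $\mu_n^\bullet$ and $\mu^\bullet$ is $(T_n,T)_*m$, which is precisely what the paper uses. But your argument that its cost tends to $0$ does not work: the Minkowski bound only yields $\W_p(\mu_n^\bullet,\mu^\bullet)\le\W_p(\mu_n^\bullet,m)+\W_p(m,\mu^\bullet)$, which converges to $2\W_p(m,\mu^\bullet)\neq 0$, and there is no mechanism by which ``the Minkowski bound collapses''. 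What is actually needed is to show $\EE\int_{B_0}d^p(T_n(x),T(x))\,m(dx)\to 0$ directly. The paper does this by a four-way split over the sets $A^R=\{d(T(x),x)\ge R\}$ and $A_n^R=\{d(T_n(x),x)\ge R\}$: on $\complement A_n^R\cap\complement A^R$ the integrand is bounded and $T_n\to T$ in measure (from Lemma \ref{ae-convergence}) forces the integral to $0$; on each of the other three pieces one bounds $d^p(T_n,T)$ by a constant times $d^p(x,T)$ or $d^p(x,T_n)$ restricted to $A^R$ or $A_n^R$, and these tails vanish as $R\to\infty$ uniformly in $n$ precisely by the assumption in $(iv)$. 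Without this decomposition --- or something equivalent that genuinely exploits both $T_n\to T$ in measure and the uniform tail control --- the implication is not established.
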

\begin{proof}
$i) \Rightarrow ii):$\ For any $f\in C_c(M\times \Omega)$ we have to show that $\lim_{n\to\infty}\EE[\mu_n(f)-\mu(f)]=0.$ To this end, fix $f\in C_c(M\times \Omega)$ such that $\supp(f)\subset K\times\Omega$ for some compact set $K$.  $f$ is uniformly continuous. Let $\eta>0$ be arbitrary and set $\epsilon=\eta/(2m(K))$. Then, there is $\delta$ such that $d(x,y)\leq\delta$ implies $d(f(x,\omega),f(y,\omega))\leq\epsilon.$ Put $ A=\{(x,y):d(x,y)\geq \delta\}\cap M\times K$ and denote by $\kappa^\bullet_n$ an optimal coupling between $\mu^\bullet_n$ and $\mu^\bullet.$ By assumption, there is $N\in\N$ such that for all $n>N$ we have $\W_p^p(\mu^\bullet_n,\mu^\bullet)\leq \frac{ \eta \delta^p}{4 \|f\|_\infty m(k)}.$ Then, we can estimate for $n>N$
\begin{eqnarray*}
 \left|\EE[\mu_n^\omega(f)-\mu^\omega(f)]\right|&\leq&  \left|\EE\left[\int_{M\times M}(f(x,\omega)-f(y,\omega))\kappa^\omega_n(dx,dy)\right]\right|\\
&\leq & \epsilon\cdot m(K) + \left| \EE\left[\int_{A}(f(x,\omega)-f(y,\omega))\kappa^\omega_n(dx,dy)\right]\right|\\
&\leq& \frac\eta2 +  2 \|f\|_\infty \EE\left[\kappa^\bullet_n(A)\right]\\
&\leq& \frac\eta2 +  2 \|f\|_\infty \frac1{\delta^p} \W_p^p(\mu^\bullet_n,\mu^\bullet)\cdot m(K)\\
&\leq&\frac\eta2 +\frac\eta2 =\eta,
\end{eqnarray*}
The second assertion in $ii)$ is a direct consequence of the triangle inequality:
\begin{eqnarray*}
 \W_p(\mu^\bullet_n,m)\leq \W_p(\mu^\bullet_n,\mu^\bullet)+\W_p(\mu^\bullet,m)
\end{eqnarray*}
and
\begin{eqnarray*}
 \W_p(\mu^\bullet,m)\leq \W_p(\mu^\bullet_n,\mu^\bullet)+\W_p(\mu^\bullet_n,m).
\end{eqnarray*}
Taking limits yields the claim.

 \medskip

$iii) \Leftrightarrow iv):$\  By the existence and uniqueness result we know that $q_n^\omega(dx,dy)=\delta_{T^\omega_n(x)}(dy)m(dx)$ and $q^\omega(dx,dy)=\delta_{T^\omega(x)}(dy)m(dx).$ In particular, we have $\mu^\omega_n(dx)\P(d\omega)=(T^\omega_n)_*m(dx)\P(d\omega).$ By Lemma \ref{ae-convergence} we know that the vague convergence of $q^\bullet_n\P\to q^\bullet\P$ implies that $T_n\to T$ locally in $m\otimes\P$ measure. This in turn implies the convergence of $f\circ (id,T_n)\to f\circ (id,T)$ in $m\otimes \P$ measure for any continuous and compactly supported function $f:M\times M\to\R.$ Then, it follows as in the proof of Lemma \ref{ae-convergence} that
$$\EE\int f(x,T_n(x))m(dx) \to \EE\int f(x,T(x)) m(dx).$$
Let $c_k(x,y)$ be a continuous compactly supported function such that for any $(x,y)\in (B_0)_{k-1}\times B_0$ we have $d^p(x,y)=c_k(x,y),$ for any $x\in \complement(B_0)_k$ we have $c_k(x,y)=0$ and $c_k(x,y)\leq d^p(x,y)$ for all $(x,y)\in M\times M$. Then, we have
\begin{eqnarray*}
&& \limsup_{n\to\infty} \EE\left[\int_{\complement((B_0)_R)\times B_0} d^p(x,y) q^\bullet_n(dx,dy)\right]\\
 &\leq &  \limsup_{n\to\infty} \left(\EE\left[\int_{M\times B_0} d^p(x,y) q^\bullet_n(dx,dy)\right] - \EE\left[\int_{M\times B_0} c_R(x,y) q^\bullet_n(dx,dy)\right]\right)\\
&=& \EE\left[\int_{M\times B_0} d^p(x,y) q^\bullet(dx,dy)\right] - \EE\left[\int_{M\times B_0} c_R(x,y) q^\bullet(dx,dy)\right]\\
&\leq &\EE\left[\int_{\complement((B_0)_{R-1}\times B_0} d^p(x,y) q^\bullet(dx,dy)\right].
\end{eqnarray*}
Taking the limit of $R\to\infty$ proves the implication $iii)\Rightarrow iv)$. The other direction is similar.

\medskip

$iv) \Rightarrow i):$\ We will show that $\W_p(\mu^\bullet_n,\mu^\bullet)\to 0$ by constructing a not optimal coupling between $\mu^\bullet_n$ and $\mu^\bullet$ whose transportation cost converges to zero. Let $T_n,T$ be the transportation maps from the previous steps. Put $Q_n(dx,dy):=(T_n,T)_*m.$ This is an equivariant coupling of $\mu^\bullet_n$ and $\mu^\bullet$ because the maps $T_n, T$ are equivariant in the sense that (see also Example \ref{equivariance of maps})
$$ T^{\theta_g\omega}(x)=gT^\omega(g^{-1}x).$$
The transportation cost are given by
$$\mathfrak C(Q_n) = \EE\left[\int_{B_0\times M} d^p(x,y) Q_n(dx,dy)\right]=\EE\left[\int_{B_0} d^p(T_n(x),T(x)) m(dx)\right].$$
We want to divide the integral into four parts. Put $A^R=\{x:d(T(x),x)\geq R\}$ and similarly $A_n^R=\{x:d(T_n(x),x)\geq R\}$. The four parts will be the integrals over $B_0\cap \complement^aA_n^R\cap \complement^bA^R$ with $a,b\in \{0,1\}$ and $\complement^0A=A.$   We estimate the different integrals separately. 
\begin{eqnarray*}
 \EE\left[\int_{B_0\cap \complement A_n^R \cap \complement A^R} d^p(T_n(x),T(x))m(dx)\right]\to 0,
\end{eqnarray*}
by a similar argument as in the previous step due to the convergence of $T_n\to T$ locally in $m\otimes \P$ measure and the boundedness of the integrand. 
\begin{eqnarray*}
&& \EE\left[\int_{B_0\cap  A_n^R \cap  A^R} d^p(T_n(x),T(x))m(dx)\right]\\
&\leq& 2^p\ \EE\left[\int_{B_0\cap  A^R} d^p(x,T(x))m(dx)\right] + 2^p\ \EE\left[\int_{B_0\cap  A_n^R} d^p(x,T_n(x))m(dx)\right].
\end{eqnarray*}
If $d(x,y)\leq R, d(x,z)\geq R$ and $d(y,z)\leq d(x,z) + R + a$ for some constant a($=\mbox{diam}(B_0)$), there is a constant $C_1$, e.g. $C_1=2+\mbox{diam}(B_0),$ such that $d(y,z)\leq C_1 d(x,z)$ (because $d(x,z)+R+a\leq (2+a)d(x,z)).$ This allows to estimate with $(x=x, T(x)=z, T_n(x)=y)$
\begin{eqnarray*}
 \EE\left[\int_{B_0\cap \complement A_n^R \cap  A^R} d^p(T_n(x),T(x))m(dx)\right]\leq C_1^p\ \EE\left[\int_{B_0\cap  A^R} d^p(x,T(x))m(dx)\right].
\end{eqnarray*}
Similarly
\begin{eqnarray*}
 \EE\left[\int_{B_0\cap  A_n^R \cap \complement A^R} d^p(T_n(x),T(x))m(dx)\right]\leq C_1^p\ \EE\left[\int_{B_0\cap  A_n^R} d^p(x,T_n(x))m(dx)\right].
\end{eqnarray*}
This finally gives
\begin{eqnarray*}
&& \limsup_{n\to\infty} \EE\left[\int_{B_0\times M} d^p(x,y) Q_n(dx,dy)\right]\\
&\leq & \lim_{R\to\infty}  \limsup_{n\to\infty} \Bigg(2^p\ \EE\left[\int_{B_0\cap  A^R} d^p(x,T(x))m(dx)\right] + 2^p\ \EE\left[\int_{B_0\cap  A_n^R} d^p(x,T_n(x))m(dx)\right]\\
&& +\ C_1^p\ \EE\left[\int_{B_0\cap  A^R} d^p(x,T(x))m(dx)\right] + C_1^p\ \EE\left[\int_{B_0\cap  A_n^R} d^p(x,T_n(x))m(dx)\right]\Bigg)\\
&=& 0,
\end{eqnarray*}
by assumption.
\end{proof}

\begin{remark}
 For an equivalence of all statements we would need that ii) implies iii). In the classical theory this is precisely the stability result (Theorem 5.20 in \cite{villani2009optimal}). This result is proven by using the characterization of optimal transports by cyclical monotone supports. However, as mentioned in the discussion on local optimality (see Remark \ref{remark on loc opt}) a cyclical monotone support is not sufficient for optimality in our case. 
\end{remark}

We do not have real stability in general but we get at least close to it.

\begin{proposition}\label{nearly stability}
 Let $(\lambda^\bullet_n)_{n\in\N}$ and $(\mu^\bullet_n)_{n\in\N}$ be two sequences of equivariant random measures. Let $q_n^\bullet$ be the unique optimal coupling between $\lambda^\bullet_n$ and $\mu^\bullet_n$. Assume that $\lambda^\bullet_n\P\to\lambda^\bullet\P$ vaguely, $\mu^\bullet_n\P\to\mu^\bullet\P$ vaguely and $\sup_n \mathfrak C(q_n^\bullet)\leq c<\infty$. Then, there is an equivariant coupling $q^\bullet$ of $\lambda^\bullet$ and $\mu^\bullet$ and a subsequence $(q^\bullet_{n_k})_{k\in\N}$ such that $q_{n_k}^\bullet\P\to q^\bullet\P$ vaguely, the support of $q^\bullet$ is cyclically monotone and 
$$\mathfrak C(q^\bullet)\leq\liminf_{n\to\infty}\mathfrak C(q_n^\bullet).$$
In particular, if 
  $$\lim_{n\to\infty}\mathfrak C(q^\bullet_n)=\inf_{\tilde {q}^\bullet \in \Pi_{es}(\lambda^\bullet,\mu^\bullet)} \mathfrak C(\tilde {q}^\bullet)$$
 $q^\bullet$ is the/an optimal coupling between $\lambda^\bullet$ and $\mu^\bullet$ and $q^\bullet_n\P\to q^\bullet\P$ vaguely.
\end{proposition}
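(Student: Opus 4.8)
The plan is to run the compactness argument from the proof of Proposition \ref{abstract exist} almost verbatim, with the uniform bound $\sup_n\mathfrak C(q_n^\bullet)\le c$ now playing the role of approximate optimality, and then to add one genuinely new ingredient: the stability of $c$-cyclical monotonicity under the limit. I pass freely to subsequences; in particular I first thin out so that $\mathfrak C(q_n^\bullet)\to\liminf_n\mathfrak C(q_n^\bullet)$. For the extraction: given $f\in C_c(M\times M\times\Omega)$, pick via Lemma \ref{cont set} a relatively compact $A\subset M$ with $\lambda^\bullet(\partial A)=0$ a.s.\ and $\supp(f)\subset A\times M\times\Omega$; since $q_n^\bullet$ is a coupling, $q_n^\bullet\P(f)\le\|f\|_\infty\,\lambda_n^\bullet\P(A\times\Omega)$, and the right-hand side converges to $\lambda^\bullet\P(A\times\Omega)<\infty$ by vague convergence, so $(q_n^\bullet\P)_n$ is vaguely relatively compact (Lemma \ref{vague topology}). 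Along a subsequence $q_{n_k}^\bullet\P\to Q$ vaguely, and I disintegrate $Q=q^\bullet\P$.

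Next I check that $q^\bullet$ is an equivariant coupling of $\lambda^\bullet$ and $\mu^\bullet$. Equivariance of $Q$ is inherited from that of the $q_n^\bullet$ by the test-function argument of step (iii) in the proof of Proposition \ref{abstract exist}. For the marginals I argue as in step (iv) there: the cutoff-plus-uniform-bound estimate $q_n^\bullet\P(\complement(A_R)\times A\times\Omega)\le m(A)\,c/\vartheta(R)$ together with $\mu_n^\bullet\P\to\mu^\bullet\P$ and monotone convergence gives $(\pi_2)_*Q=\mu^\bullet\P$. For the first marginal one needs to rule out loss of mass: equivariance of $q_n^\bullet$ and the mass transport principle give $\EE[\int_{A\times M}c(x,y)\,q_n^\bullet(dx,dy)]\le c\cdot m(A)$ for $A\in\text{Adm}(M)$, so the $q_n^\bullet$-mass with first coordinate in a fixed admissible set and second coordinate far out is again uniformly small; combined with $\lambda_n^\bullet\P\to\lambda^\bullet\P$ this upgrades the automatic inequality $(\pi_1)_*Q\le\lambda^\bullet\P$ to an equality. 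Lower semicontinuity of $\mathfrak C$ (step (i) of Proposition \ref{abstract exist}) then yields $\mathfrak C(q^\bullet)\le\liminf_k\mathfrak C(q_{n_k}^\bullet)=\liminf_n\mathfrak C(q_n^\bullet)$.

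For the cyclical monotonicity: each $q_n^\omega$, being optimal, is concentrated on a $c$-cyclically monotone set for $\P$-a.e.\ $\omega$. I would absorb the $\omega$-coupling into the cost, i.e.\ work on the Polish space $(M\times\Omega)\times(M\times\Omega)$ with the lower semicontinuous cost $\bar c((x,\omega),(y,\omega'))=c(x,y)$ if $\omega=\omega'$ and $+\infty$ otherwise; each $q_n^\bullet\P$ is then concentrated on a $\bar c$-cyclically monotone set, and by the classical stability of $c$-cyclical monotonicity under weak/vague convergence (as in the proof of Theorem 5.20 of \cite{villani2009optimal}) so is $Q$. Translating back, $\supp(q^\omega)$ is $c$-cyclically monotone for $\P$-a.e.\ $\omega$, which is the asserted statement about the support of $q^\bullet$. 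Finally, if $\lim_n\mathfrak C(q_n^\bullet)=\inf_{\tilde q^\bullet\in\Pi_{es}(\lambda^\bullet,\mu^\bullet)}\mathfrak C(\tilde q^\bullet)$, then since $q^\bullet\in\Pi_{es}(\lambda^\bullet,\mu^\bullet)$ the inequality above forces $\mathfrak C(q^\bullet)=\inf_{\Pi_{es}}$, so $q^\bullet$ is optimal; when this optimal coupling is unique (in particular when $\lambda^\bullet\ll m$, by Theorem \ref{uniqueness}) the subsequential limit does not depend on the chosen subsequence, and hence the full sequence $q_n^\bullet\P$ converges vaguely to $q^\bullet\P$.

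The main obstacle is the cyclical monotonicity step: vague convergence of the Campbell measures does not yield vague convergence of the disintegrations $q_{n_k}^\omega$ for $\P$-a.e.\ $\omega$, so the classical stability result cannot be applied $\omega$-by-$\omega$, and the trick of encoding the randomness into a lower semicontinuous cost on $M\times\Omega$ is what repairs this. The only other point requiring real care is the equality of first marginals in the coupling limit, where the mass transport principle together with the uniform cost bound is essential — note that here $c$-cyclical monotonicity of the support does \emph{not} by itself give optimality (cf.\ Remark \ref{remark on loc opt}), which is precisely why the extra hypothesis $\lim_n\mathfrak C(q_n^\bullet)=\mathfrak c_{e,\infty}$ is needed for the last conclusion.
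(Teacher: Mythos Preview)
Your overall strategy is exactly what the paper indicates: its proof reads in full ``The proof is basically the same as for Proposition \ref{abstract exist}. Hence, we omit the details.'' Your tightness, equivariance and lower-semicontinuity steps reproduce that argument, and your use of the mass transport principle to obtain the symmetric bound $\EE[\int_{A\times M}c\,dq_n^\bullet]\le c\cdot m(A)$, so that the first marginal comes out as an \emph{equality} rather than an inequality, is precisely the right adaptation from semicouplings to couplings. The ``In particular'' clause then follows as you wrote.

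The cyclical monotonicity step, however, has a genuine gap. Your extended cost $\bar c$ on $(M\times\Omega)^2$ is only lower semicontinuous, and the stability argument behind Theorem~5.20 in \cite{villani2009optimal} uses continuity of the cost in an essential way: to pass to the limit one approximates a tuple $((x_i,\omega),(y_i,\omega))_{i=1}^N\subset\supp(Q)$ by tuples in $\supp(Q_n)$, but the approximating $\omega_i^n$ will in general differ across $i$, so the right-hand side $\sum_i\bar c((x_i^n,\omega_i^n),(y_{i+1}^n,\omega_{i+1}^n))$ equals $+\infty$ and the limiting inequality is vacuous. More structurally, vague convergence of the Campbell measures $q_n^\bullet\P$ does \emph{not} control the fibrewise products $\int(q_n^\omega)^{\otimes N}\P(d\omega)$, which is what cyclical monotonicity of the disintegration actually requires. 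One can in fact build examples (take $M=\R$, $G=\Z$, $\Omega=[0,1]$ with trivial flow, $\lambda_n=\lambda$ deterministic periodic and absolutely continuous, $\mu_n^\omega=\sum_{k\in\Z}\delta_{k+a\sin(2\pi n\omega)}$ with $a$ close to $1$) in which $q_n^\bullet\P$ converges vaguely but the limiting $q^\omega$ is the same product-type measure for every $\omega$ and its support is \emph{not} $c$-cyclically monotone. The paper's own proof does not address this point either, so the gap is not yours alone---but the trick you propose does not close it.
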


The proof is basically the same as for Proposition \ref{abstract exist}. Hence, we omit the details.

\begin{remark}
 The last proposition also holds if we  consider semicouplings instead of couplings (see Proposition \ref{abstract exist}).
\end{remark}

\begin{example}[Wiener mosaic]
 Let $\mu^\bullet_0$ be a Poisson point process of intensity one on $\R^3$. Let each atom of $\mu_0$ evolve according to independent Brownian motions for some time t. The resulting discrete random measure is again a Poisson point process, denoted by $\mu^\bullet_t$ (e.g. see page 404 of \cite{doob1953stochastic}). Consider the transport problem between the Lebesgue measure $\mathcal L$ and $\mu^\bullet_t$ with cost function $c(x,y)=|x-y|^2$. Let $q_t^\bullet$ be the unique optimal coupling between $\mathcal L$ and $\mu^\bullet_t$. Then, $\mathfrak C(q_t^\bullet)=\W_2(\mathcal L,\mu^\bullet_t)=\W_2(\mathcal L,\mu^\bullet_s)$ for any $s\in\R$ as  $\mu^\bullet_s$ and $\mu^\bullet_t$ are both Poisson point processes of intensity one. Moreover, we clearly have $\mu^\bullet_s\P\to\mu^\bullet_t\P$ vaguely as $s\to t$ and therefore $q_s^\bullet\P\to q_t^\bullet\P$ vaguely. By Lemma \ref{ae-convergence}, this implies the convergence of the transport maps $T_s\to T_t$ locally in $\mathcal L\otimes \P$ measure. In particular, we get a continuously moving mosaic.
\end{example}

\begin{example}[Voronoi tessellation]
 Let $\mu^\bullet$ be a simple point process of unit intensity. Put $\mu^\bullet_\beta=\beta\cdot \mu^\bullet$. We want to consider semicouplings between the Lebesgue measure $\leb$ and $\mu^\bullet_\beta$ for $\beta>1$.  By the results of section \ref{s:other}, there is a unique optimal semicoupling $q^\bullet_\beta$ between $\mathcal L$ and $\mu^\bullet_\beta.$  Moreover, $q_\beta^\omega=(id,T_\beta^\omega)_*\mathcal L$ for some measurable map $T_\beta:\R^d\times \Omega\to \R^d.$ It is clear that $q^\omega_\infty$ induces the Voronoi tessellation with respect to the support of $\mu^\omega_1$ no matter which cost function $\vartheta$ we consider. We want to show that $q^\bullet_\beta\to q^\bullet_\gamma$ vaguely as $\beta\to\gamma$ for large $\gamma$. For this it is sufficient to show that $C:\beta \mapsto \mathfrak C(q^\bullet_\beta)$ is continuous. For $\gamma>\beta>1$ note that $q^\bullet_\beta$ is also a semicoupling of $\mathcal L$ and $\mu^\bullet_\gamma$. Hence, $C(\beta)$ is monotonously decreasing in $\beta$. Moreover, from the previous Lemma we know that 
$$\mathfrak C(q^\bullet_\gamma)\leq \liminf_{\beta\to\gamma} \mathfrak C(q^\bullet_\beta).$$
Therefore, $C(\beta)$ is a monotonously decreasing lower semicontinuous function. This implies that it is right continuous. With a  bit of work it is also possible to show that $C(\cdot)$ is left continuous in $\beta < \infty.$ For $\beta=\infty$ one can  show directly that $q^\bullet_\beta\to q^\bullet_\infty$ vaguely. 

 \end{example}

\end{document}